\title{Outward compactness}%{Compactness Characterizations of Strong cardinals and the Woodinness of Ord}
\DeclareMathOperator{\Lim}{Lim}
\DeclareMathOperator{\Ord}{Ord}
\newcommand{\Pow}{P}
\DeclareMathOperator{\id}{id}
\newcommand{\LL}{\mathcal{L}}
\newcommand{\TT}{\mathcal{T}}
\newcommand{\UU}{\mathcal{U}}
\DeclareMathOperator{\ot}{ot}
\DeclareMathOperator{\crit}{crit}
\DeclareMathOperator{\rank}{rank}
\DeclareMathOperator{\ZFC}{ZFC}
\newcommand{\Coll}{\mathrm{Col}}
\newcommand{\Set}[2]{\{#1~\vert~#2\}}
\newcommand{\pow}{\mathcal{P}}
\newcommand{\anf}[1]{{\text{``}\hspace{0.3ex}{#1}\hspace{0.01ex}\text{''}}}
\newcommand{\Str}{Str}
\newcommand{\map}[3]{{#1}:{#2}\longrightarrow{#3}}
\newcommand{\Map}[5]{{#1}:{#2}\longrightarrow{#3};~{#4}\longmapsto{#5}}
\newcommand{\seq}[2]{\langle{#1}~\vert~{#2}\rangle}
\newcommand{\ooo}{\mathscr{o}}
\newcommand{\up}{{{}_\uparrow}}
\definecolor{dblue}{rgb}{0,0,0.70}
\author{Peter Holy}
\address{Peter Holy. Institut f\"ur Diskrete Mathematik und Geometrie, TU Wien, Wiedner Hauptstra{\ss}e 8--10/104, 1040 Wien, Austria}
\email{peter.holy@tuwien.ac.at}
\author{Philipp L\"{u}cke}
\address{Philipp L\"{u}cke. Fachbereich Mathematik, Universit\"at Hamburg, Bundesstra{\ss}e 55, Hamburg, 20146, Germany}
\email{philipp.luecke@uni-hamburg.de}
\author{Sandra M\"uller}
\address{Sandra M\"uller. Institut f\"ur Diskrete Mathematik und Geometrie, TU Wien, Wiedner Hauptstra{\ss}e 8--10/104, 1040 Wien, Austria}
\email{sandra.mueller@tuwien.ac.at}
\subjclass[2020]{(Primary) 03E55; (Secondary) 03B16, 03C55, 03C85, 03E75}
\keywords{Large cardinals, compactness, second-order logic, abstract logics, elementary embeddings} 
\thanks{The authors would like to thank Menachem Magidor for very helpful discussions on some of the topics of this paper. 
 In addition, the authors would like to thank the anonymous referee for numerous helpful suggestions and corrections.
The first and third authors gratefully acknowledge funding
from Austrian Science Fund (FWF). This research was funded in whole or in part by the Austrian Science Fund (FWF) [10.55776/V844, 10.55776/Y1498, 10.55776/I6087]. For open access purposes, the authors have applied a CC BY public copyright license to any author accepted manuscript version arising from this submission.
 The second author gratefully acknowledges supported by the Deutsche Forschungsgemeinschaft (Project number 522490605) and the Spanish Government under grant EUR2022-134032.
}
\begin{document}

\begin{abstract}
  We introduce and study a new type of compactness principle for strong logics that, roughly speaking, infers the satisfiability of a theory from the satisfiability of its small subtheories in certain outer models of the set-theoretic universe. We refer to this type of compactness property as \emph{outward compactness}, and we show that instances of this type of principle for second-order logic can be used to characterize various large cardinal notions between measurability and extendibility, directly generalizing a classical result of Magidor that characterizes extendible cardinals as the strong compactness cardinals of second-order logic. 
 In addition, we generalize a result of Makowsky that shows that Vop\v{e}nka's Principle is equivalent to the existence of compactness cardinals for all abstract logics by characterizing the principle \anf{$\Ord$ is Woodin} through outward compactness properties of abstract logics.  
  \end{abstract}

\maketitle

%\tableofcontents 

\newcommand{\chkfront}{\scalebox{1.6}[1.0]{$\vee$}}
\newcommand{\GG}{\mathcal G}
\newcommand{\PP}{\mathcal P}
\newcommand{\Hyp}{\mathrm{Hyp}}

\newtheorem{fact}{Fact}[section]
\newtheorem{lemma}[fact]{Lemma}
\newtheorem{theorem}[fact]{Theorem}
\newtheorem{corollary}[fact]{Corollary}
\newtheorem{claim}{Claim}
\newtheorem*{claim*}{Claim}
\newtheorem{subclaim}[fact]{Subclaim}
\newtheorem{conjecture}[fact]{Conjecture}
\newtheorem{observation}[fact]{Observation}
\newtheorem{proposition}[fact]{Proposition}
\newtheorem{counterexample}[fact]{Counterexample}
\theoremstyle{definition}
\newtheorem{question}[fact]{Question}
\newtheorem{remark}[fact]{Remark}
\newtheorem{definition}[fact]{Definition}
\newtheorem{example}[fact]{Example}

\newcommand{\thistheoremname}{}
\newtheorem*{genericthm}{\thistheoremname}
\newenvironment{namedthm}[1]
  {\renewcommand{\thistheoremname}{#1}%
   \begin{genericthm}}
  {\end{genericthm}}

%%%%%%%%%%%%%%
%%%%%%%%%%%%%%

\section{Introduction}\label{section:introduction}

The work presented in this paper contributes to the study of the deep connections between large cardinals and strong logics. Its starting point is a classical result of Magidor \cite{MR0295904} that relates the existence of extendible cardinals to the compactness properties of second-order logic~$\LL^2$. 
Remember that an infinite cardinal $\kappa$ is \emph{extendible} if for every ordinal $\eta>\kappa$, there is an ordinal~$\zeta$ and a non-trivial elementary embedding $\map{j}{V_\eta}{V_\zeta}$ satisfying $\crit(j)=\kappa$ and $j(\kappa)>\eta$. 
Moreover, given a cardinal $\kappa$, an $\LL^2$-theory $T$ is \emph{${<}\kappa$-satisfiable} if every subtheory of $T$ of cardinality less than $\kappa$ is satisfiable. Finally, an infinite cardinal $\kappa$ is a \emph{strong compactness cardinal for $\LL^2$} if every ${<}\kappa$-satisfiable $\LL^2$-theory is satisfiable. The next theorem summarizes Magidor's characterization of extendible cardinals through compactness properties of $\LL^2$:

\begin{theorem}[Magidor \cite{MR0295904}]\label{theorem:Magidor}
 A cardinal $\kappa$ is  a strong compactness cardinal for $\LL^2$ if and only if there is an extendible cardinal less than or equal to $\kappa$.  
\end{theorem}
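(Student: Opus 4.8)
The plan is to prove the two implications separately; the forward direction (extendible $\Rightarrow$ strongly compact) is the shorter one, while the converse is where the real work lies.

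\smallskip
\noindent\textbf{From an extendible cardinal to strong compactness.} First observe that strong compactness cardinals for $\LL^2$ are closed upwards: if $\lambda\le\kappa$ is one and $T$ is ${<}\kappa$-consistent, then $T$ is a fortiori ${<}\lambda$-consistent, hence consistent. So it suffices to show that an extendible cardinal $\lambda$ is itself a strong compactness cardinal for $\LL^2$. Given a ${<}\lambda$-consistent $\LL^2$-theory $T$ over a signature $\tau$, put $\mu=|T|+|\tau|+\aleph_0$ and pick $\eta>\mu$ with $T,\tau\in V_\eta$, $\cof(\eta)>\mu$ and $V_\eta\prec_{\Sigma_2}V$ (such $\eta$ exist cofinally: the $\Sigma_2$-correct ordinals form a club, within which one finds points of cofinality $>\mu$). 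Then $T\subseteq V_{\eta_0}$ for some $\eta_0<\eta$, and $V_\eta$ correctly sees the $\Pi_2$ assertion that $T$ is ${<}\lambda$-consistent. Using extendibility, fix an elementary $\map{j}{V_\eta}{V_\zeta}$ with $\crit(j)=\lambda$ and $j(\lambda)>\eta$. Then $j[T]\in V_\zeta$ (indeed even $j\upharpoonright T\in V_\zeta$, since $T\subseteq V_{\eta_0}$), and it is a subtheory of $j(T)$ of $V_\zeta$-cardinality at most $|T|\le\mu<\eta<j(\lambda)$. By elementarity $V_\zeta\models$ \anf{$j(T)$ is ${<}j(\lambda)$-consistent}, so $V_\zeta\models$ \anf{$j[T]$ has a model}. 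Since $\zeta$ is a limit ordinal, $V_\zeta$ is closed under power sets, and therefore satisfaction of $\LL^2$-sentences in set-sized structures is absolute between $V_\zeta$ and $V$; hence $j[T]$ genuinely has a model $N$. As each $j(\varphi)$ is literally $\varphi$ with every symbol $s\in\tau$ replaced by $j(s)$, reinterpreting $N$ by $s^{N'}:=j(s)^N$ yields a $\tau$-structure $N'\models T$, so $T$ is consistent. The crucial feature exploited here is that the target of the embedding is a genuine rank initial segment $V_\zeta$, closed under power sets, which makes second-order satisfaction absolute for it; because $\LL^2$ lacks a Löwenheim--Skolem property, a mere supercompactness embedding $\map{j}{V}{M}$ would not do, and this is why extendibility is the right notion.

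\smallskip
\noindent\textbf{From strong compactness to an extendible cardinal.} Assume $\kappa$ is a strong compactness cardinal for $\LL^2$ and suppose, for a contradiction, that no cardinal $\le\kappa$ is extendible. The heart of the argument is:

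\smallskip
\noindent\emph{Claim.} For every ordinal $\eta>\kappa$ there are a cardinal $\bar\kappa\le\kappa$, an ordinal $\zeta$, and an elementary $\map{j}{V_\eta}{V_\zeta}$ with $\crit(j)=\bar\kappa$ and $j(\bar\kappa)>\eta$.

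\smallskip
\noindent Granting the Claim, the contradiction runs as follows: for each cardinal $\lambda\le\kappa$, non-extendibility of $\lambda$ provides an $\eta_\lambda>\lambda$ such that no elementary embedding of $V_{\eta_\lambda}$ has critical point $\lambda$ and sends $\lambda$ beyond $\eta_\lambda$. Put $\eta^{*}=\bigl(\sup_{\lambda\le\kappa}\eta_\lambda\bigr)+1$ and apply the Claim to $\eta^{*}$, obtaining $\bar\kappa\le\kappa$ and an elementary $\map{j}{V_{\eta^{*}}}{V_\zeta}$ with $\crit(j)=\bar\kappa$ and $j(\bar\kappa)>\eta^{*}$. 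Since $\eta_{\bar\kappa}<\eta^{*}$, a routine reflection computation shows that $j\upharpoonright V_{\eta_{\bar\kappa}}$ is an elementary embedding of $V_{\eta_{\bar\kappa}}$ into $V_{j(\eta_{\bar\kappa})}$ with critical point $\bar\kappa$ and $j(\bar\kappa)>\eta^{*}>\eta_{\bar\kappa}$ --- contradicting the choice of $\eta_{\bar\kappa}$.

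\smallskip
\noindent To prove the Claim (say for inaccessible $\eta$; the general case follows by rounding $\eta$ up, using that the strong compactness of $\LL^2$ already supplies enough inaccessibles) I would construct an $\LL^2$-theory $T$ in a signature with a binary symbol $E$, constants $c_a$ for $a\in V_\eta$, and finitely many auxiliary constants, whose axioms assert: $E$ is well-founded and extensional and the $E$-structure is a model of second-order $\ZFC$ (so any model of $T$ collapses to a genuine $(V_\zeta,\in)$ with $\zeta$ inaccessible); the full $\LL^2$-elementary diagram of $(V_\eta,\in,(a)_{a\in V_\eta})$ holds of the $c_a$, so that $a\mapsto\pi(c_a)$, with $\pi$ the Mostowski collapse, is an elementary embedding $\map{k}{V_\eta}{V_\zeta}$; and, using the auxiliary constants in the spirit of the classical argument that strong compactness cardinals for $\LL^2$ are measurable, further requirements forcing that $\crit(k)$ exists and is $\le\kappa$ and that $k$ maps $\crit(k)$ above $\eta$. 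Crucially, every individual axiom of $T$ is satisfied by interpreting the universe as $V_\eta$ itself with $c_a:=a$ --- second-order $\ZFC$ holds in the inaccessible rank $V_\eta$, and any ${<}\kappa$ of the auxiliary constraints can be met by suitable ordinals below $\kappa$ --- so $T$ is ${<}\kappa$-consistent; by the strong compactness of $\LL^2$, $T$ has a model, and collapsing it and reading off $k$ gives the desired embedding.

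\smallskip
\noindent The step I expect to be the main obstacle is exactly the design of the theory $T$ for the Claim. One has to arrange, using only $\LL^2$-sentences and while keeping $T$ genuinely ${<}\kappa$-consistent, that the recovered embedding both has critical point at most $\kappa$ and moves that critical point past $\eta$; these requirements pull against each other, since the natural \anf{small} name for the critical region is bounded by $\kappa$ whereas $\eta$ is far larger, so one cannot simply stipulate that the critical point equals $\kappa$. Making this work calls for careful bookkeeping of which ${<}\kappa$-sized fragments of $T$ can appear together (so that each is realized inside $V_\eta$), full use of the ability of $\LL^2$ to pin down well-foundedness and the cumulative hierarchy, and --- very likely --- a preliminary reduction establishing that the strong compactness of $\LL^2$ already entails strong large-cardinal properties below $\kappa$ (measurability, supercompactness, and enough local reflection) to underwrite the ${<}\kappa$-consistency of $T$ in full.
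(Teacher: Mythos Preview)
The paper does not contain its own proof of this theorem; it is cited as Magidor's classical result and used as a black box (most notably in the proof of Lemma~\ref{lemma:ExtLarge}). So there is no in-paper argument to compare against directly. I can nonetheless assess your proposal.

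Your forward direction is correct and standard; essentially the same argument appears in the paper's proof of the backward implication of Lemma~\ref{lemma:ExtLarge}, where an embedding into a genuine rank initial segment is used to transfer ${<}\lambda$-consistency of $j(T)$ to consistency of $j[T]$, and hence of $T$, via absoluteness of $\LL^2$-satisfaction.

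For the backward direction your overall strategy is right, but there is a real gap in your Claim. The theory you describe --- the $\LL^2$-elementary diagram of $V_\eta$, a second-order sentence pinning the model down as some $V_\zeta$, and constants $d_\gamma$ placed below $c_\kappa$ --- does produce, after collapsing, an elementary $\map{j}{V_\eta}{V_\zeta}$ with $j(\kappa)>\eta$, hence $\crit(j)\le\kappa$. But it does \emph{not} yield $j(\crit(j))>\eta$: the $d_\gamma$ sit below $c_\kappa$, not below the critical point, which is unknown in advance. Your Claim as stated asks for this stronger conclusion, and your self-contained reduction to extendibility (via the $\eta_\lambda$'s) relies precisely on it. You correctly sense the tension in your final paragraph, but the proposed remedy --- bootstrapping through measurability and supercompactness to justify a more elaborate $T$ --- is not how the gap is closed.

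The standard fix is to weaken the Claim to ``for every $\eta>\kappa$ there is a nontrivial elementary $\map{j}{V_\eta}{V_\zeta}$ with $\crit(j)\le\kappa$'' (which your theory does deliver, and whose ${<}\kappa$-consistency is straightforward), and to replace your hand-rolled reduction by the following nontrivial fact, which the paper itself cites as {\cite[Proposition 23.15]{kanamori}}: if no cardinal $\le\kappa$ is extendible, then there exists $\alpha>\kappa$ such that there is \emph{no} nontrivial elementary embedding $\map{j}{V_\alpha}{V_\beta}$ with $\crit(j)\le\kappa$ at all. Its proof iterates the critical point and invokes the Kunen inconsistency (cf.\ Lemma~\ref{lemma:IteratedCrit} in the paper). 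With this lemma in hand, the weaker Claim immediately gives a contradiction, and no preliminary large-cardinal bootstrapping is needed.
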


 The results of this paper are motivated by the aim to obtain analogous characterizations for various well-studied large cardinal properties below extendibility, {e.g.,}  measurability, strongness and supercompactness.\footnote{Analogous characterizations for large cardinal properties below measurability can be found in \cite{bdgm}, \cite{FuSa22} and \cite{SubtleOrd}.} 
 More precisely, for a given large cardinal property $\varphi$, we want to assign, uniformly in a parameter~$\kappa$, natural and rich classes 
 $\TT_\kappa$ of ${<}\kappa$-satisfiable $\LL^2$-theories to infinite cardinals $\kappa$ in a way that  ensures that for all such $\kappa$, all theories in $\TT_\kappa$ are satisfiable if and only some $\lambda\le\kappa$ satisfies $\varphi(\lambda)$.\footnote{Characterizations of large cardinals through strong logics of a somewhat different flavor can be found in \cite{MR4093885} and  \cite{bdgm}.}   
 We start by presenting a characterization of measurability through compactness properties of~$\LL^2$ that  illustrates the main concepts on which our more general results are based.\footnote{Note that, using similar arguments, folklore results already provide a characterization  of measurability through strong logics, by showing that measurability corresponds to  chain compactness for the logic $\LL(Q^{WF})$, the extension of first-order logic by the well-foundedness quanitifier     (see {\cite[Section 2]{bdgm}} and {\cite[Exercise 4.2.6]{MR409165}}, where  stronger infinitary logics are used).}

 As a first step, we show that for every uncountable cardinal $\kappa$, there is a canonical ${<}\kappa$-satisfiable $\LL^2$-theory $T_\kappa$ whose satisfiability is equivalent to the existence of a measurable cardinal less than or equal to $\kappa$. 
    To obtain this theory, first recall that there is an $\LL^2$-sentence in the language of set theory that holds in a model   if and only if its $\in$-relation is well-founded. Next,  work in the language that extends the language of set theory by constant symbols  $c_x$ for every $x\in V_{\kappa+1}$ and constant symbols $d_\gamma$ for all $\gamma\leq\kappa$, and let $T_\kappa$ be the $\LL^2$-theory consisting of the union of the following theories:
  \begin{itemize}
    \item The elementary first-order diagram of $V_{\kappa+1}$ in the language of set theory, making use of the constant symbols $c_x$. 

    \item All (first-order) sentences of the form $d_\beta\in d_\gamma\in c_\kappa$ for $\beta<\gamma\leq\kappa$. 
    
    \item The (second-order) sentence stating that the $\in$-relation is well-founded. 
  \end{itemize}
 The theory $T_\kappa$ is obviously ${<}\kappa$-satisfiable, since for every subtheory $F$ of $T_\kappa$ of cardinality less than $\kappa$, we can easily find a model of $F$ with underlying set $V_{\kappa+1}$. 
 Moreover, if there is a measurable cardinal less than or equal to $\kappa$, then $T_\kappa$ is satisfiable because this assumption allows us to use 
  standard iteration arguments (see {\cite[Corollary 19.7(b)]{kanamori}}) to find a transitive class $M$ and a non-trivial elementary embedding $\map{j}{V}{M}$ with $j(\crit(j))>\kappa$, and this allows us to construct a model $N$ of $T_\kappa$ with underlying set $j(V_{\kappa+1})$, $c_x^N=j(x)$ for all $x\in V_{\kappa+1}$ and $d_\gamma^N=\gamma$ for all $\gamma\leq\kappa$. 
 Finally, if $T_\kappa$ is satisfiable, then there exists a transitive set $N$ and an elementary embedding $\map{j}{V_{\kappa+1}}{N}$ with $j(\kappa)>\kappa$, and the existence of such an embedding directly implies that some cardinal less than or equal to $\kappa$ is measurable.

 Following the approach outlined above, we now want to isolate a natural satisfiability property of a theory $T$ with respect to infinite cardinals $\kappa$, that strengthens ${<}\kappa$-satisfiability, is possessed by $T_\kappa$ at every infinite cardinal $\kappa$, and implies the satisfiability of $T$ at cardinals $\kappa$ that are greater than or equal to a measurable cardinal. 
 Our definition of this property is motivated by the observation that for every cardinal $\lambda<\kappa$, 
 the theory $T_\kappa$ is not only ${<}\lambda$-satisfiable in our ground model $V$, but  remains ${<}\lambda$-satisfiable when we pass to outer models\footnote{{I.e.,} models of $\ZFC$ in which $V$ is a transitive class containing all ordinals.}  in which $\lambda$ is still a cardinal, as witnessed by the structure $V_{\kappa+1}^V$,\footnote{Whenever $M$ is an inner or outer model of our set-theoretic universe $V$ and $\alpha$ is an ordinal, we write $V_\alpha^M=\Set{x\in M}{\rank(x)<\alpha}$. In particular, when working in some outer model of $V$, we use the notation $V_\alpha^V$ for initial segments of (the ground model) $V$.} by the absoluteness of well-foundedness. 
 In addition, if we na\"ively assumed that the property that an $\LL^2$-theory $T$ is ${<}\lambda$-satisfiable in every outer model in which $\lambda$ is a cardinal could be uniformly expressed by a first-order formula with parameters $T$ and $\lambda$, it would follow that for every cardinal $\kappa$ greater than or equal to a measurable cardinal, every $\LL^2$-theory $T$ with this property  is in fact satisfiable in $V$. 
 We would argue as follows: By our assumptions, there is a cardinal $\lambda$ with $T\in H_\lambda$ and an elementary embedding $\map{j}{V}{M}$ with $\crit(j)\leq\kappa$ and $j(\kappa)>\lambda$. 
Using our na\"ive assumption, we apply the elementarity of $j$ to conclude that the $\LL^2$-theory $j(T)$ is ${<}\lambda$-satisfiable in every outer model of $M$ in which $\lambda$ is still a cardinal. 
In particular, since $V$ is such an outer model of $M$, it follows that $j(T)$ is a ${<}\lambda$-satisfiable $\LL^2$-theory in $V$, and this implies that the pointwise image $j[T]\subseteq j(T)$ of $T$ under $j$, which is of size less than $\lambda$, is a satisfiable $\LL^2$-theory in $V$. 
But this conclusion also shows that $T$ is satisfiable in $V$, because the finitary character of $\LL^2$-formulae allows us to identify $T$ and $j[T]$  via the renaming of the symbols of the given language induced by $j$.

 Since the above assumption on the uniform first-order definability of statements about truth in outer models is easily seen to be too na\"ive, we will introduce several concepts that allow us to turn the above approach into a mathematically sound  argument.

\begin{definition}
  Given a subtheory $F$ of $\ZFC$ and a transitive set $M$, a transitive set $N\supseteq M$ is an \emph{outer $F$-model of $M$} if $F$ holds in $N$ and the sets $M$ and $N$ have the same ordinals. 
\end{definition}

 In the following, we let $\ZFC^*$ denote the subtheory of $\ZFC$ that contains all the single axioms of $\ZFC$, together with the replacement and separation schemes  restricted to $\Sigma_2$-formulae. 
 Note that this theory proves the \emph{$\Sigma_2$-Recursion Theorem} and therefore proves that all levels of the \emph{von Neumann hierarchy} $\seq{V_\alpha}{\alpha\in\Ord}$ are sets. 
 Moreover, it is strong enough to yield the existence of the second-order satisfaction relation $\models_{\LL^2}$ for set-sized models,\footnote{This is because the second-order satisfaction relation  can be defined by a recursion over subformulae that is based on a $\Sigma_2$-function: we proceed as for the first-order satisfaction relation, however we have to essentially assert that $\exists W ~ W=\mathcal{P}(M)$ holds for each set-sized model with underlying set $M$, which is already a $\Sigma_2$-statement, and then we can bound all other relevant quantifiers by $W$. To prove that such a form of recursion can successfully be performed only requires replacement for $\Sigma_2$-formulae (a version of this argument for $\Sigma_1$-recursion can be found in \cite[Theorem 6.4]{barwise}).} 
  in a way that for every second-order formula $\varphi(v_0,\ldots,v_{m-1},W_0,\ldots,W_{n-1})$ with first-order variables $v_0,\ldots,v_{m-1}$ and second-order variables $W_0,\ldots,W_{n-1}$,\footnote{In the following, we will usually use uppercase letters for second-order variables and lowercase letters for first-order variables in $\LL^2$-formulas.}
  $\ZFC^*$ proves that for every non-empty set $M$, all $x_0,\ldots,x_{m-1}\in M$ and all $Y_0,\ldots,Y_{n-1}\in\mathcal P(M)$, the statement $$\langle M,\in\rangle\models_{\LL^2}\varphi(x_0\ldots,x_{m-1},Y_0,\ldots,Y_{n-1})$$ holds if and only if the first order formula $$\varphi^M(x_0\ldots,x_{m-1},Y_0,\ldots,Y_{n-1})$$ holds, where $\varphi^M$ denotes the  \emph{first-order relativization}  of $\varphi$ to $M$.\footnote{Inductively defined by   $(v_0\in v_1)^M\equiv v_0\in v_1$, 
  % for all first-order variables $v_0$ and $v_1$, 
  $(v\in W)^M\equiv v\in W$, 
  % for every first-order variable $v$ and every second-order variable $w$, 
  $(\neg\varphi)^M\equiv\neg\varphi^M$, $(\varphi\wedge\psi)^M\equiv\varphi^M\wedge\psi^M$, $(\forall x ~ \varphi(x))^M\equiv\forall x\in M ~ \varphi^M(x)$ and $(\forall X ~ \varphi(X))^M\equiv\forall x\subseteq M ~ \varphi^M(x)$. Note that, violating our above convention, we consider $W$ to be a first order variable after relativizing $v\in W$ to $M$ (alternatively, we could replace $W$ by a new and otherwise unused variable symbol $w$).} 
 In particular, the theory $\ZFC^*$ allows us to  uniformly speak about the satisfiability of $\LL^2$-theories.

\begin{definition}\label{definition:outward compactnessSimple}
 \begin{enumerate}
  \item Given an infinite cardinal $\kappa$, an $\LL^2$-theory $T$ is \emph{${<}\kappa$-outward satisfiable} if  for all cardinals $\lambda<\kappa$ and all 
  sufficiently large cardinals $\vartheta>\kappa$ with $T\in V_\vartheta$, % and all cardinals $\nu<\kappa$, 
  the partial order $\Coll(\omega,\vartheta)$ forces that \anf{\emph{$T$ is ${<}\lambda$-satisfiable in every outer $\ZFC^*$-model of $V_\vartheta^V$ in which $\lambda$ is  a cardinal}}. 
  
    \item A cardinal $\kappa$ is an \emph{outward compactness cardinal for $\LL^2$} if all ${<}\kappa$-outward satisfiable $\LL^2$-theories are satisfiable. 
 \end{enumerate}
\end{definition}

 Note that the only properties of the partial order $\Coll(\omega,\vartheta)$ that will be relevant in this paper are that it forces $\vartheta$ to be countable, and its definition is absolute between $V$ and all of its inner models. 
  Moreover,  note that in any $\Coll(\omega,\vartheta)$-generic extension $V[G]$, any  outer $\ZFC^*$-model~$N$ of $V_\vartheta^V$  is countable in $V[G]$: $N$ provides any of its levels $V_\alpha^N$ with some cardinality less than~$\vartheta$, and since $\vartheta$ is countable in $V[G]$, it follows that $N$ is a countable union of countable sets, and thus itself countable. 
Before we state our characterization of measurable cardinals, we make some easy observations that relate   outward compactness cardinals to compactness cardinals.

\begin{proposition}\label{proposition:BasicProp}
 \begin{enumerate}
  \item\label{item:UpOutwards} If $\kappa$ is an outward compactness cardinal for $\LL^2$, then every cardinal greater than $\kappa$ is an outward compactness cardinal for $\LL^2$. 
  
  \item\label{item:OutwardConsBoundedCons}   Given a limit cardinal $\kappa$, every ${<}\kappa$-outward satisfiable $\LL^2$-theory is ${<}\kappa$-satisfiable.  
  
  \item\label{item:StrongCompactOutward} Every strong compactness cardinal for $\LL^2$ is an outward compactness cardinal for $\LL^2$. 
 \end{enumerate}
\end{proposition}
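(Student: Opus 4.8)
I would prove part~\eqref{item:UpOutwards} and part~\eqref{item:StrongCompactOutward} by reduction to part~\eqref{item:OutwardConsBoundedCons}, which carries the actual content. For part~\eqref{item:UpOutwards}, it suffices to compare quantifier ranges in Definition~\ref{definition:outward compactnessSimple}: if $\kappa' > \kappa$ and $T$ is ${<}\kappa'$-outward consistent, then every cardinal $\lambda < \kappa$ is also below $\kappa'$, and any threshold witnessing ``sufficiently large $\vartheta > \kappa'$'' also witnesses ``sufficiently large $\vartheta > \kappa$'', so $T$ is ${<}\kappa$-outward consistent and therefore consistent since $\kappa$ is an outward compactness cardinal for $\LL^2$. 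I expect no difficulty here.

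For part~\eqref{item:OutwardConsBoundedCons}, fix a limit cardinal $\kappa$, a ${<}\kappa$-outward consistent $\LL^2$-theory $T$, and a subtheory $S \subseteq T$ with $\vert S\vert < \kappa$; the goal is to produce a model of $S$ in $V$. Since $\kappa$ is a limit cardinal, I would fix a cardinal $\lambda$ with $\vert S\vert < \lambda < \kappa$, and then pick a cardinal $\vartheta$ that is ``sufficiently large'' in the sense of ${<}\kappa$-outward consistency applied with $\lambda$, additionally arranging $T \in V_\vartheta$ and $V_\vartheta \models \ZFC^*$; the latter is possible because the class of $\vartheta$ with $V_\vartheta \models \ZFC^*$ is closed unbounded. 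Let $G$ be $\Coll(\omega,\vartheta)$-generic over $V$. The crucial observation is that in $V[G]$ the ground model set $V_\vartheta^V$ is an outer $\ZFC^*$-model of itself and that $\lambda$ is still a cardinal in $V_\vartheta^V$, since a surjection from a smaller ordinal onto $\lambda$ lying in the transitive set $V_\vartheta^V \subseteq V$ would already witness in $V$ that $\lambda$ is not a cardinal. So ${<}\kappa$-outward consistency yields that, in $V[G]$, $V_\vartheta^V$ believes $T$ to be ${<}\lambda$-consistent; as $S \in V_\vartheta^V$ has size less than $\lambda$ there, this produces a structure $\mathcal M \in V_\vartheta^V$ with $V_\vartheta^V \models (\mathcal M \models_{\LL^2} S)$. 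The last statement only quantifies over the fixed set $V_\vartheta^V$, so it holds in $V$ as well, and since $V_\vartheta^V$ is a transitive model of $\ZFC^*$ containing $\mathcal M$, $S$ and $\mathcal P(\dom(\mathcal M))$, the $\ZFC^*$-provable characterization of $\models_{\LL^2}$ via first-order relativization upgrades it to the genuine statement $\mathcal M \models_{\LL^2} S$ in $V$. Thus $S$ is consistent, and $T$ is ${<}\kappa$-consistent. I expect the point requiring the most care to be precisely this chain of absoluteness transfers between $V$, the collapse extension $V[G]$, and the submodel $V_\vartheta^V$, since --- as the introduction stresses --- truth in outer models is not naively first-order definable; here one wants to be sure that the single outer model that gets used, namely $V_\vartheta^V$ itself, behaves correctly.

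For part~\eqref{item:StrongCompactOutward}, let $\kappa$ be a strong compactness cardinal for $\LL^2$. By Theorem~\ref{theorem:Magidor} there is an extendible cardinal $\mu \le \kappa$, and a second application of that theorem shows that $\mu$ is itself a strong compactness cardinal for $\LL^2$. Since extendible cardinals are inaccessible, $\mu$ is a limit cardinal, so by part~\eqref{item:OutwardConsBoundedCons} every ${<}\mu$-outward consistent $\LL^2$-theory is ${<}\mu$-consistent and hence consistent; that is, $\mu$ is an outward compactness cardinal for $\LL^2$. Now part~\eqref{item:UpOutwards} (or triviality, if $\mu = \kappa$) shows that $\kappa$ is an outward compactness cardinal for $\LL^2$.
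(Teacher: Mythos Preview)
Your proposal is correct and follows essentially the same approach as the paper: part~\eqref{item:UpOutwards} by monotonicity of the quantifier ranges in Definition~\ref{definition:outward compactnessSimple}, part~\eqref{item:OutwardConsBoundedCons} by taking $N=V_\vartheta^V$ as an outer $\ZFC^*$-model of itself and transferring the resulting consistency of the small fragment back to $V$, and part~\eqref{item:StrongCompactOutward} by invoking Magidor's theorem to pass to an extendible cardinal and then combining~\eqref{item:OutwardConsBoundedCons} with~\eqref{item:UpOutwards}. The only cosmetic differences are that the paper argues~\eqref{item:OutwardConsBoundedCons} by contradiction and summarizes your absoluteness chain with a single appeal to ``the nature of the satisfaction relation of~$\LL^2$''.
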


\begin{proof}
 \eqref{item:UpOutwards} Let $\rho>\kappa$ be a cardinal and let $T$ be a ${<}\rho$-outward satisfiable $\LL^2$-theory.  
 Let   $\lambda<\kappa$ be a cardinal, let $\vartheta>\rho$ be a sufficiently large cardinal  with $T\in V_\vartheta$, let $G$ be $\Coll(\omega,\vartheta)$-generic over $V$, and let $N$ be an outer $\ZFC^*$-model of $V_\vartheta^V$ in $V[G]$ in which $\lambda$ is  a cardinal. Then, $T$ is ${<}\lambda$-satisfiable in $N$. 
 This shows that $T$ is ${<}\kappa$-outward satisfiable, and hence $T$ is satisfiable.

 \eqref{item:OutwardConsBoundedCons}  Let $T$ be an $\LL^2$-theory that is ${<}\kappa$-outward satisfiable, and assume, for the sake of a contradiction, that it is not ${<}\kappa$-satisfiable. Fix an unsatisfiable subtheory $T_0$ of $T$ of cardinality less than $\kappa$ and a cardinal $\lambda<\kappa$ with $\vert T_0\vert<\lambda$. 
 Pick a cardinal $\vartheta>\kappa$ such that $V_\vartheta$ is a model of $\ZFC^*$ and $T$ is an element of $V_\vartheta$. Let $G$ be $\Coll(\omega,\vartheta)$-generic over $V$. Then, in $V[G]$, the set $V_\vartheta^V$ is an outer $\ZFC^*$-model of itself in which $\lambda$ is a cardinal and hence $T$ is ${<}\lambda$-satisfiable in $V_\vartheta^V$. This shows that $T_0$ is satisfiable in $V_\vartheta^V$ and, by the nature of the satisfaction relation of $\LL^2$, we can conclude that $T_0$ is satisfiable in $V$, contradicting our assumption.

 \eqref{item:StrongCompactOutward} Let $\rho$ be a strong compactness cardinal for $\LL^2$. By Theorem \ref{theorem:Magidor}, there exists an extendible cardinal $\kappa\leq\rho$. In this situation, Theorem \ref{theorem:Magidor} together with \eqref{item:OutwardConsBoundedCons} implies that $\kappa$ is an outward compactness cardinal for $\LL^2$, and we can apply \eqref{item:UpOutwards} to conclude that $\rho$ also has this property. 
\end{proof}

 We now present the characterization of measurable cardinals that realizes the approach outlined above. The arguments appearing in its proof already contain many of the key ideas utilized in the later sections of this paper. Some parts of these arguments already appeared in a slightly different form in the na\"ive approach outlined above, but will be repeated within the proof below for the sake of clarity of presentation.

\begin{theorem}\label{theorem:measurable}
 A cardinal  $\kappa$ is an outward compactness cardinal for $\LL^2$ if and only if there exists a measurable cardinal less than or equal to $\kappa$.  
\end{theorem}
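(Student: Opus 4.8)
Here is how I would approach the proof; the forward implication rests on the canonical theory $T_\kappa$ from the introduction, while the harder reverse implication combines iterated ultrapowers with a generic‑coding argument that, I expect, will be the main obstacle.

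For the forward direction, assume $\kappa$ is an outward compactness cardinal for $\LL^2$ and apply this to $T_\kappa$. First I would check that $T_\kappa$ really is ${<}\kappa$-outward consistent: fix $\lambda<\kappa$, a sufficiently large $\vartheta$, a $\Coll(\omega,\vartheta)$-generic $G$ over $V$, and an outer $\ZFC^*$-model $N$ of $V_\vartheta^V$ in $V[G]$ in which $\lambda$ is a cardinal, and let $S$ be a subtheory of $T_\kappa$ with $S\in N$ and $\vert S\vert^N<\lambda$. Letting $\Gamma_S\subseteq\kappa+1$ collect the indices of the constants $d_\gamma$ occurring in $S$, the order type of $\Gamma_S$ in $N$ is below $\lambda<\kappa$ (as $\lambda$ is a cardinal of $N$), so $N$ contains a strictly increasing $\map{f}{\Gamma_S}{\kappa}$ with range bounded in $\kappa$; then $N$ sees that the structure with underlying set $V_{\kappa+1}^V$ interpreting each $c_x$ as $x$ and each $d_\gamma$ as $f(\gamma)$ satisfies $S$ — the elementary‑diagram part by the identity interpretation, the sentences $d_\beta\in d_\gamma\in c_\kappa$ because $f$ is increasing with range inside $\kappa$, and well‑foundedness because it is absolute. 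Hence $T_\kappa$ is consistent. Any model of it is extensional and well‑founded, so its transitive collapse yields a transitive $\bar N$ and an elementary $\map{e}{V_{\kappa+1}^V}{\bar N}$ (sending $x$ to the collapse of $c_x$), and the sentences $d_\beta\in d_\gamma\in c_\kappa$ produce a strictly increasing $(\kappa+1)$-sequence of ordinals of $\bar N$ below $e(\kappa)$, whence $e(\kappa)>\kappa$ and $\crit(e)\le\kappa$. Putting $\mu:=\crit(e)$, the set $\Set{X\subseteq\mu}{\mu\in e(X)}$ is a $\mu$-complete nonprincipal ultrafilter — completeness checked by applying $e$ to a function $\map{g}{\mu}{\gamma}$ ($\gamma<\mu$) coding a partition of $\mu$, which is an element of $V_{\kappa+1}^V$ — so $\mu\le\kappa$ is measurable.

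For the reverse direction, assume there is a measurable cardinal $\le\kappa$. By Proposition \ref{proposition:BasicProp}\eqref{item:UpOutwards} it suffices to treat the case that $\kappa$ is the least measurable cardinal, so fix a ${<}\kappa$-outward consistent $\LL^2$-theory $T$; I want $T$ consistent. (If $\vert T\vert<\kappa$ this follows at once from Proposition \ref{proposition:BasicProp}\eqref{item:OutwardConsBoundedCons} since $\kappa$ is a limit cardinal, but the argument below is uniform.) Fix a sufficiently large cardinal $\vartheta>\kappa$ with $T\in V_\vartheta$, and use iterated ultrapowers of a normal measure on $\kappa$ to get an elementary $\map{j}{V}{M}$ with $\crit(j)=\kappa$ and $j(\kappa)>\vartheta$. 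By elementarity $M$ believes that $j(T)$ is ${<}j(\kappa)$-outward consistent; instantiating Definition \ref{definition:outward compactnessSimple} inside $M$ at the cardinal $\lambda:=(\vert T\vert^+)^V<\vartheta<j(\kappa)$ and at a suitably large cardinal $\vartheta_M$ of $M$ with $j(T)\in V_{\vartheta_M}^M$ and with $V_{\vartheta_M}^M$ and $V_{\vartheta_M}^V$ both models of $\ZFC^*$, we obtain
\[ M\models\text{``}\Coll(\omega,\vartheta_M)\text{ forces that }j(T)\text{ is }{<}\lambda\text{-consistent in every outer }\ZFC^*\text{-model of }V_{\vartheta_M}^M\text{ in which }\lambda\text{ is a cardinal''.} \]

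The crux — and the step I expect to be the main difficulty, since it is exactly where the naïve argument of the introduction breaks down — is to turn this into a genuine model of $T$ in $V$. The plan is: force over $V$ with a large collapse so that in the extension $W$ both $\Coll(\omega,\vartheta_M)$ and $\pow(\vartheta_M)^M$ are countable; then, since $\Coll(\omega,\vartheta_M)^M$ is now a countable poset with only countably many dense subsets lying in $M$, build inside $W$ an $M$-generic filter $H$ for it whose generic also codes a wellfounded extensional relation whose transitive collapse is $V_{\vartheta_M}^V$, so that $V_{\vartheta_M}^V\in M[H]$. Since $H$ is $M$-generic, $M[H]$ satisfies the forced statement, and $V_{\vartheta_M}^V$ is among the outer $\ZFC^*$-models it ranges over: it has the same ordinals as $V_{\vartheta_M}^M$, extends it, models $\ZFC^*$, and $\lambda$ is a cardinal inside the fixed set $V_{\vartheta_M}^V$. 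Hence $j(T)$ is ${<}\lambda$-consistent in $V_{\vartheta_M}^V$, an assertion about a fixed set with parameters inside it, so it is absolute back to $V$. Finally, the pointwise image $j[T]$ lies in $V_{\vartheta_M}^V$ and, since $j\restriction V_\vartheta^V\in V$, has cardinality $\vert T\vert<\lambda$ there, so $j[T]$ is consistent in $V_{\vartheta_M}^V$; as $V_{\vartheta_M}^V$ computes the second-order satisfaction relation correctly (exactly as in the proof of Proposition \ref{proposition:BasicProp}\eqref{item:OutwardConsBoundedCons}), $j[T]$ is consistent in $V$, and then, by the finitary character of $\LL^2$ and the renaming of symbols induced by $j$, so is $T$.
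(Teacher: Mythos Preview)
Your forward direction is correct and matches the paper's argument essentially verbatim.

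In the reverse direction you have the right global architecture --- iterate a measure to push $\kappa$ past the size of $T$, apply elementarity to $j(T)$, exhibit $V_\vartheta^V$ as an outer $\ZFC^*$-model of $V_\vartheta^M$, deduce that $j[T]$ and hence $T$ is consistent --- but the transfer step is where your argument breaks. You propose to build, inside a large collapse extension $W$ of $V$, a filter $H$ that is $\Coll(\omega,\vartheta_M)$-generic over $M$ \emph{and} whose generic surjection $g\colon\omega\to\vartheta_M$ encodes a real whose transitive collapse is $V_{\vartheta_M}^V$, so that $V_{\vartheta_M}^V\in M[H]$. This is not justified, and there is no reason to expect it can be arranged: $M[H]$ is a set-forcing extension of $M$ by a poset of $M$-size $\vartheta_M$, and the sets it contains are exactly the evaluations of names in $M$. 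Even though the \emph{real} coding $V_{\vartheta_M}^V$ exists in $W$, you have no mechanism for realising that specific real as an element of $M[H]$; interleaving ``meet the next $M$-dense set'' with ``record the next bit'' does not work, because $M[H]$ does not know the bookkeeping sequence you used (that enumeration of dense sets lives in $W$, not in $M[H]$), so it cannot decode the bits. In short, you cannot simply choose an $M$-generic to smuggle an arbitrary set from $W$ into $M[H]$.

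The paper resolves this by a different, and cleaner, device. One first takes $\vartheta$ to be a fixed point of $j$ with $V_\vartheta\models\ZFC^*$, so that a single $\Coll(\omega,\vartheta)$-generic $G$ over $V$ is automatically generic over $M$ as well. In $M[G]$ the statement ``$j(T)$ is ${<}\lambda$-consistent in every countable outer $\ZFC^*$-model of $V_\vartheta^M$ in which $\lambda$ is a cardinal'' holds by the forcing theorem; since all the parameters are countable in $M[G]$, this is $\Pi^1_2$, and Shoenfield absoluteness transfers it to $V[G]$. There $V_\vartheta^V$ is trivially available as such an outer model, and the rest of the argument proceeds exactly as you outlined. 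So the missing idea is not a coding trick but the observation that the transfer statement is $\Pi^1_2$ and hence absolute between $M[G]$ and $V[G]$.
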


\begin{proof} 
 First, assume that  $\kappa$ is a measurable cardinal and  $T$ is a ${<}\kappa$-outward satisfiable $\LL^2$-theory. Pick a cardinal $\lambda>\kappa$ with $T\in H_\lambda$. 
 %Set $\rho=\vert T\vert^+$.  
  Using standard iteration arguments (see {\cite[Corollary 19.7(b)]{kanamori}}), we  find an inner model $M$ and an elementary embedding $\map{j}{V}{M}$ such that  $\crit(j)=\kappa$ and $j(\kappa)>\lambda$. 
 Pick a sufficiently large cardinal $\vartheta>j(\kappa)$ such that $j(\vartheta)=\vartheta$ and~$V_\vartheta$ is a model of $\ZFC^*$.  
 %Then  \cite[Corollary 19.7(c)]{kanamori} shows that $j(\theta)=\theta$. 
  Elementarity ensures that in $M$, the $\LL^2$-theory $j(T)$ is ${<}j(\kappa)$-outward satisfiable. 
  Let $G$ be $\Coll(\omega,\vartheta)$-generic over $V$. Since $V_\vartheta^M$ is a model of $\ZFC^*$, the observation after Definition \ref{definition:outward compactnessSimple} shows that $V_\vartheta^M$ is countable in $M[G]$. 
 Our setup ensures that, in $M[G]$, the $\LL^2$-theory $j(T)$ is ${<}\lambda$-satisfiable in every countable outer $\ZFC^*$-model of $V_\vartheta^M$ in which $\lambda$ is  a cardinal. 
  This statement can be formulated by a $\Pi_1$-formula using parameters contained in $H_{\aleph_1}^{M[G]}$ and is therefore provably equivalent to a $\mathbf{\Pi}^1_2$-statement whose parameters are real numbers  coding the original parameters (see {\cite[Lemma 25.25]{MR1940513}}). 
  Therefore, \emph{Shoenfield absoluteness} implies that the given statement also holds in $V[G]$. 
  But, in $V[G]$, the set $V_\vartheta^V$ is a countable outer $\ZFC^*$-model of $V_\vartheta^M$ in which $\lambda$ is  a cardinal, and hence $j(T)$ is ${<}\lambda$-satisfiable in $V_\vartheta^V$.  
   Now, note that $j[T]\subseteq j(T)$ is an element of $V_\vartheta^V$ and has cardinality less than $\lambda$ in $V_\vartheta^V$. 
      Therefore, we can conclude that $j[T]$ is satisfiable in $V_\vartheta^V$, and this implies that $j[T]$ is a satisfiable $\LL^2$-theory in $V$. 
   But this also shows that $T$ is satisfiable in $V$, because the finitary character of $\LL^2$-formulae ensures that we can identify $T$ and $j[T]$ via the  renaming of symbols induced by $j$. These computations show that $\kappa$ is an outward compactness cardinal for $\LL^2$ and we can now apply Proposition \ref{proposition:BasicProp}.\eqref{item:UpOutwards} to see that every cardinal greater than $\kappa$ is also an outward compactness cardinal for $\LL^2$.

 Next, assume that $\kappa$ is an outward compactness cardinal for $\LL^2$. Consider the  $\LL^2$-theory $T_\kappa$ defined earlier, {i.e.,} we extend the language of set theory by constant symbols  $c_x$ for every $x\in V_{\kappa+1}$ and constant symbols $d_\gamma$ for all $\gamma\leq\kappa$, and define $T_\kappa$ to consist of: 
  \begin{itemize}
    \item The elementary first-order diagram of $V_{\kappa+1}$, making use of the constant symbols $c_x$. 
    
    \item All (first-order) sentences of the form $d_\beta\in d_\gamma\in c_\kappa$ for $\beta<\gamma\leq\kappa$. 
    
    \item The (second-order) sentence stating that the $\in$-relation is well-founded. 
  \end{itemize}

  \begin{claim*}
    The $\LL^2$-theory $T_\kappa$ is ${<}\kappa$-outward satisfiable. 
  \end{claim*}
  
  \begin{proof}[Proof of the Claim]
   Let $\lambda<\kappa$ be a cardinal, let $\vartheta>\kappa$ be a cardinal  with  $T_\kappa\in V_\vartheta$, let $G$ be  $\Coll(\omega,\vartheta)$-generic over $V$ and let  $N\in V[G]$ be an outer $\ZFC^*$-model  of $V_\vartheta^V$ in which $\lambda$ is a cardinal. Then $V_{\kappa+1}^V$ is an element of $N$.  
   Now, work in $N$ and fix a subtheory    $T_0$ of $T_\kappa$ of cardinality less than $\lambda$. 
  Since $\lambda$ is a cardinal, the axioms of $\ZFC^*$ allow us to construct a model of $T_0$ with domain $V_{\kappa+1}^V$ that interprets all constant symbols $c_x$ appearing in $T_0$ by the corresponding element $x$,  and interprets the (less than $\lambda$-many) constant symbols $d_\beta$ appearing in  $T_0$ as suitable  elements of $\lambda$.  
  \end{proof}

  Using the fact  that $\kappa$ is an outward compactness cardinal for $\LL^2$, we thus obtain that  $T_\kappa$ is satisfiable. 
  The definition of $T_\kappa$ now ensures that $T_\kappa$ has a transitive model $M$ and, by sending $x\in V_{\kappa+1}$ to $c_x^M$,   we obtain a non-trivial  elementary embedding $\map{j}{V_{\kappa+1}}{M}$ with  $j(\kappa)>d_\kappa^M\geq \kappa$. 
 As above, the existence of such an embedding implies that there is a measurable cardinal less than or equal to $\kappa$.   
\end{proof}

\begin{remark}
 \begin{enumerate}
  \item  In \cite[Theorem 4]{MR0295904}, Magidor proves that a cardinal $\kappa$ is extendible if and only if $\kappa$ is a strong compactness cardinal for the logic $\LL^2_{\kappa,\omega}$, that extends $\LL^2$ by allowing conjunctions and disjunctions of length less than $\kappa$. 
 An easy adaptation of the proof of Theorem \ref{theorem:measurable} shows that a cardinal $\kappa$ is measurable if and only if $\kappa$ is an \emph{outward compactness cardinal for $\LL^2_{\kappa,\omega}$}, where the latter is defined by replacing $\LL^2$ with $\LL^2_{\kappa,\omega}$ in Definition \ref{definition:outward compactnessSimple}. 
 Similar remarks will apply to all characterizations of large cardinal notions presented in this paper. 
 In our opinion, characterizations in which the cardinal $\kappa$ which is being characterized is not a parameter in the definition of the  logic that is being used for the characterization are more interesting. 
 Therefore, we will omit making these remarks in the remainder of the paper. 
 
 \item In the statement of Theorem \ref{theorem:measurable}, one could also replace $\LL^2$ by the logic $\LL(Q^{WF})$, as is obvious from the above proof. 
\end{enumerate}
\end{remark}

 In Sections \ref{section:PsiLarge} and \ref{section:CharLL2}, we will refine the notion of outward compactness in order to isolate analogous characterizations for other  large cardinal notions below extendibility. 
 The main result of these sections provides a general correspondence between objects in this region of the large cardinal hierarchy and outward compactness principles for second-order logic (see Theorem \ref{theorem:LL2Duality}) that we will afterwards apply to several well-studied large cardinal notions (see Corollary \ref{corollary:IndividualChar}). 
 This correspondence generalizes both Magidor's characterization of extendibility in  Theorem \ref{theorem:Magidor} and our characterization of measurability in Theorem \ref{theorem:measurable}. 
 Towards this goal, we will introduce  the more general notion of \emph{$\Psi$-outward compactness} for a first-order formula $\Psi$ in Definition \ref{definition:outward compactness} below. 
 This notion strengthens the concept of outward compactness by restricting the class of relevant outer $\ZFC^*$-models in Definition \ref{definition:outward compactnessSimple}.  
 We will introduce formulas $\Psi_{str}$, $\Psi_{sc}$, $\Psi_{stc}$ and $\Psi_{ext}$ that will allow us to obtain the following characterizations:

 \begin{theorem}
 \begin{enumerate}
%  \item A  cardinal $\kappa$ is a  $\Psi_{ms}$-outward compactness cardinal for $\LL^2$ if and only if there  is a measurable cardinal less than or equal to $\kappa$.  
  
  \item A cardinal $\kappa$ is a  $\Psi_{str}$-outward compactness cardinal for $\LL^2$ if and only if there  is a strong cardinal less than or equal to $\kappa$.  
    
   \item A cardinal $\kappa$ is a  $\Psi_{sc}$-outward compactness cardinal for $\LL^2$ if and only if there  is a supercompact cardinal less than or equal to $\kappa$. 
   
  \item A cardinal $\kappa$ is a  $\Psi_{stc}$-outward compactness cardinal for $\LL^2$ if and only if $\kappa$ is $\omega_1$-strongly compact. 
  
  \item A  cardinal $\kappa$ is a  $\Psi_{ext}$-outward compactness cardinal for $\LL^2$ if and only if there  is an extendible cardinal less than or equal to $\kappa$.  
 \end{enumerate}
\end{theorem}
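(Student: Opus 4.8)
The plan is to derive all four equivalences from a single duality result (the announced Theorem~\ref{theorem:LL2Duality}), which is the common generalization of the two halves of the proof of Theorem~\ref{theorem:measurable}. For each of the four notions one first isolates the appropriate parameter $\Psi$ — a $\ZFC^*$-definable list of demands placed on the outer $\ZFC^*$-models occurring in Definition~\ref{definition:outward compactness} — subject to two competing requirements: $\Psi$ must be \emph{weak enough} that the ground-model initial segments $V_\vartheta^V$ satisfy it (so that the canonical test theories below stay $\Psi$-outward consistent), and \emph{strong enough} that a transitive model of the canonical theory produces an embedding of exactly the desired strength. Roughly, $\Psi_{str}$ asks the outer model to contain $V_\eta^V$ as a rank initial segment for the relevant $\eta$; $\Psi_{sc}$ additionally imposes a closure/covering demand matching $\lambda$-supercompactness; $\Psi_{ext}$ asks the outer model to recognize $V_\eta^V$ as some $V_\zeta$ with $\zeta>\eta$, for arbitrarily large $\eta$; and $\Psi_{stc}$ is the weakened ``$\omega_1$-version'' in which only subtheories of size $\omega_1$ need to be realized, which is what causes it to pin down $\omega_1$-strong compactness of $\kappa$ itself rather than the existence of a witness $\le\kappa$.

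For the direction from the large cardinal to $\Psi$-outward compactness, one runs the argument from the first half of the proof of Theorem~\ref{theorem:measurable}, using the embedding appropriate to the notion at hand: starting from a $\Psi$-outward consistent $\LL^2$-theory $T$, pick $\lambda$ with $T\in H_\lambda$ and an extender embedding $\map{j}{V}{M}$ with $\crit(j)=\kappa$, $j(\kappa)>\vartheta$ and $V_\vartheta\subseteq M$ for strongness; one with $M^\vartheta\subseteq M$ for supercompactness; and an extendibility embedding $\map{j}{V_\eta}{V_\zeta}$ for extendibility (applied with $\eta$ large). In each case one arranges $j(\vartheta)=\vartheta$, transports $\Psi$-outward consistency of $T$ to that of $j(T)$ in the target by elementarity, collapses $\vartheta$ so the relevant initial segments become countable, expresses ``$j(T)$ is ${<}\lambda$-consistent in every countable outer $\ZFC^*$-model of $V_\vartheta^M$ satisfying $\Psi$ in which $\lambda$ is a cardinal'' as a Shoenfield-absolute ($\mathbf{\Pi}^1_2$ after coding) statement about reals in $M[G]$, pulls it down to $V[G]$, and applies it to $V_\vartheta^V$, which in $V[G]$ is a countable outer $\ZFC^*$-model witnessing $\Psi$. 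As in Theorem~\ref{theorem:measurable}, this gives that $j[T]$ is consistent in $V_\vartheta^V$, hence in $V$, and the finitary character of $\LL^2$ allows one to rename $j[T]$ back to $T$.

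For the converse, for each notion one augments the theory $T_\kappa$ from the proof of Theorem~\ref{theorem:measurable} to a canonical theory $T_\kappa^\Psi$ that forces the extracted embedding to have the intended target: for strongness, add constants $c_x$ for $x\in V_\eta$, the first-order diagram of $V_\eta$, and sentences expressing $j(\kappa)>\check\eta$; for extendibility, add an $\LL^2$-sentence asserting the model is some $V_\zeta$ together with sentences forcing $\zeta>\eta$; for supercompactness, add sentences encoding the covering of $V_\eta$ witnessed by the closure of the target. One then verifies $\Psi$-outward consistency of $T_\kappa^\Psi$ exactly as in the Claim inside the proof of Theorem~\ref{theorem:measurable}: in a collapse extension $V_\vartheta^V$ is an outer $\ZFC^*$-model of itself satisfying $\Psi$ in which every ${<}\lambda$-subtheory of $T_\kappa^\Psi$ is realized on the domain $V_\eta^V$ with the constants $d_\gamma$ interpreted inside $\lambda$. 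Now $\Psi$-outward compactness of $\kappa$ yields a model of $T_\kappa^\Psi$; the well-foundedness sentence makes it transitive, and reading off $x\mapsto c_x^M$ gives an embedding witnessing the relevant large-cardinal property at some $\bar\kappa\le\kappa$ — for extendibility this last step is precisely Magidor's analysis underlying Theorem~\ref{theorem:Magidor}.

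The main obstacle is the calibration of $\Psi$ so that both halves succeed at once. The subtlest point is that the natural demand ``the outer model contains $V_\eta^V$ (as a rank initial segment)'' is not a property of the outer model in isolation — it refers back to $V$ — so one must formulate it, and verify its preservation under $j$ and under Shoenfield absoluteness, with some care; this is where the real content of Definition~\ref{definition:outward compactness} and Theorem~\ref{theorem:LL2Duality} lies. A secondary difficulty is the supercompact case, where the statement to be made absolute carries a closure/covering requirement on the outer models, so one must check it still reduces to a $\mathbf{\Pi}^1_2$ (hence Shoenfield-absolute) statement after the collapse rather than needing a stronger generic-absoluteness principle. Finally, part (3) requires the separate observation that the $\omega_1$-variant of the construction delivers $\omega_1$-strong compactness of $\kappa$ \emph{itself}, since the extracted covering there is only for $\LL^2$-theories of size $\omega_1$ while the critical point may be anything $\le\kappa$. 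Once the four parameters $\Psi_{str},\Psi_{sc},\Psi_{stc},\Psi_{ext}$ are fixed correctly, all four equivalences — and hence Corollary~\ref{corollary:IndividualChar} — follow by instantiating Theorem~\ref{theorem:LL2Duality}.
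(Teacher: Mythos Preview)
Your high-level plan is right: all four equivalences are instances of the general duality Theorem~\ref{theorem:LL2Duality}, combined with the fact (Proposition~\ref{proposition:FormulasAreL2measures}) that each of $\Psi_{str},\Psi_{sc},\Psi_{stc},\Psi_{ext}$ is an $\LL^2$-measure of closeness. However, there is a genuine gap in your converse direction, and a misidentification of $\Psi_{stc}$.

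For the converse you write that a model of the canonical theory $T_\kappa^\Psi$ yields an embedding ``witnessing the relevant large-cardinal property at some $\bar\kappa\le\kappa$''. This is not immediate for strongness or supercompactness. A single model of your theory gives one embedding $\map{j}{V_{\theta+1}}{M}$ with, say, $V_\lambda\subseteq M$ and $j(\kappa)>\lambda$ for the particular $\eta,\theta,\lambda$ encoded in that theory; this only shows $\crit(j)$ is $\lambda$-strong, not strong. Ranging over all $\eta,\theta,\lambda$ gives many such embeddings, but a priori with \emph{different} critical points below $\kappa$, so no single cardinal $\le\kappa$ need be strong. The paper resolves this by passing through the intermediate notion of $\Psi$-largeness: Theorem~\ref{theorem:LL2Duality} shows $\kappa$ is $\Psi$-large, and then separate, substantial arguments (Lemmas~\ref{lemma:CharStrong} and~\ref{lemma:PsiLargeSupercompact}) use the Kunen inconsistency together with iterated superstrongness (respectively, $n$-hugeness) to force a \emph{fixed} $\mu\le\kappa$ to be strong (respectively, supercompact). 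Your sketch omits these arguments entirely; without them the backward directions of (1) and (2) do not go through.

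Your description of $\Psi_{stc}$ is also incorrect: it is not an ``$\omega_1$-version'' in which only subtheories of size $\omega_1$ are realized. The paper's $\Psi_{stc}(N,M,\mu,\nu,\rho)$ is a covering condition---every $d\in N\cap\pow(\rho)$ admitting no injection of $\mu$ into it in $N$ is covered by some $c\in M\cap\pow(\rho)$ admitting no injection of $\nu$ into it in $M$---and the definition of $\Psi$-outward consistency still quantifies over all $\lambda<\kappa$. The characterization in terms of $\omega_1$-strong compactness of $\kappa$ itself (rather than the existence of a witness $\le\kappa$) comes from the Bagaria--Magidor analysis of $\sigma$-complete fine ultrafilters on $\pow_\kappa(\lambda)$ used in Section~\ref{subsection:Omega1StronglyCompact}, not from any restriction on subtheory sizes.
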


 We then discuss the naturalness of the obtained large cardinal characterizations in Section \ref{section:Natural?}, and consider the question which other types of large cardinal properties below extendibility can be characterized through outward compactness principles. 
 Finally, in Sections \ref{section:FragmentsVP} and \ref{section:AbstractLogics}, we extend our concepts to arbitrary abstract logics (see Definition \ref{definition:AbstractLogic}) and large cardinal properties beyond extendibility. 
The main results of these sections will  provide a general correspondence between outward compactness for abstract logics and fragments of \emph{Vop\v{e}nka's Principle} (see Theorems \ref{theorem:ForwardGlobalPrinciples} and \ref{theorem:SchemesBackward}) that directly leads to outward compactness characterizations for  Vop\v{e}nka's Principle and the principle \anf{\emph{$\Ord$ is Woodin}} (see Corollaries \ref{corollary:OutwardVP} and \ref{corollary:OutwardOrdWoodin}). The given correspondence directly generalizes a classical result of Makowsky in \cite{MR780522} that characterizes the validity of Vop\v{e}nka's Principle through the existence of strong compactness cardinals for all abstract logics (see Theorem \ref{theorem:Makowsky}). 
These results will be based on the (even more general) notion of \emph{$F$-$\Psi$-outward compactness} introduced in Definition \ref{definition:FPsiOutward} below, that strengthens the concept of $\Psi$-outward compactness by requiring the  relevant outer models to satisfy stronger fragments $F$ of $\ZFC$.

 \begin{theorem}
  \begin{enumerate}
   \item The following schemes are equivalent over $\ZFC$: 
     \begin{enumerate}
       \item Vop\v{e}nka's Principle. 
    
       \item For every natural number $n$ and every abstract logic $\LL$, there exists a $\ZFC_n$-$\Psi_{ext}$-outward compactness cardinal for $\LL$. 
            
       \item For every natural number $n$ and every abstract logic $\LL$ with simple formulas, there exists a $\ZFC_n$-$\Psi_{sc}$-outward compactness cardinal for $\LL$. 
     \end{enumerate}
 
  \item The following schemes are equivalent over $\ZFC$: 
    \begin{enumerate}
       \item $\Ord$ is Woodin. 
    
       \item For every natural number $n$ and every abstract logic $\LL$ with simple formulas, there exists a $\ZFC_n$-$\Psi_{str}$-outward compactness cardinal for $\LL$. 
     \end{enumerate}
  \end{enumerate}
\end{theorem}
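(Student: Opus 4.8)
The two assertions are, respectively, Corollary \ref{corollary:OutwardVP} and Corollary \ref{corollary:OutwardOrdWoodin}, and I would obtain both as instances of the general correspondence provided by Theorems \ref{theorem:ForwardGlobalPrinciples} and \ref{theorem:SchemesBackward}, read off against the standard decomposition of the two global principles into definable fragments. The starting observation is that Vop\v{e}nka's Principle is, over $\ZFC$, literally the conjunction of its fragments $\mathrm{VP}(\Sigma_n)$ (equivalently, by a theorem of Bagaria, of the statements \anf{there is a $C^{(n)}$-extendible cardinal}), and that \anf{$\Ord$ is Woodin} is likewise equivalent over $\ZFC$ to the conjunction over $n$ of the statements \anf{there is a $C^{(n)}$-strong cardinal} (the global analogue of the fact that a Woodin cardinal $\delta$ is characterised by the existence, below $\delta$, of $A$-strong cardinals for all $A\subseteq V_\delta$). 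So it suffices to match, uniformly in $n$, the $n$-th fragment of each global principle to the corresponding $\ZFC_n$-$\Psi$-outward compactness property from Definition \ref{definition:FPsiOutward}, quantified over abstract logics of the appropriate kind.

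For the forward directions I would proceed as in the proof of Theorem \ref{theorem:measurable}, but with $C^{(n)}$-large cardinals in place of a measurable cardinal. Assume Vop\v{e}nka's Principle, fix a natural number $n$ and an abstract logic $\LL$. Because $\LL$ is an abstract logic, there is a fixed $m$ such that the $\LL$-satisfaction relation, and hence both the notion of consistency of an $\LL$-theory and the statement that an $\LL$-theory is ${<}\lambda$-consistent in every outer $\ZFC_n$-model of a given $V_\vartheta^V$, is $\Sigma_m$-definable; choosing $n'$ large relative to $n$ and $m$, Vop\v{e}nka's Principle yields a $C^{(n')}$-extendible cardinal, and then Theorem \ref{theorem:ForwardGlobalPrinciples} delivers a $\ZFC_n$-$\Psi_{ext}$-outward compactness cardinal for $\LL$. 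The $\Psi_{sc}$-version for logics with simple formulas is the same argument, now using the $C^{(n')}$-supercompactness embeddings $\map{j}{V}{M}$ supplied by the same cardinal: since these are not of the form $V_\eta\to V_\zeta$, one needs the consistency-in-outer-models statement to be absolute enough between $M$ (or $M[G]$, after collapsing $\vartheta$) and $V$, and it is precisely the restriction to simple formulas that keeps this statement low enough in complexity for a Shoenfield-style absoluteness argument, exactly as in Theorem \ref{theorem:measurable}. Part (2) is obtained in the same way, starting from \anf{$\Ord$ is Woodin}, extracting $C^{(n')}$-strong cardinals, and invoking Theorem \ref{theorem:ForwardGlobalPrinciples} with $\Psi_{str}$.

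For the backward directions I would apply Theorem \ref{theorem:SchemesBackward} with $\LL$ ranging over a family of abstract logics whose satisfaction relations realise cofinally many definability levels — the logics appearing in the proof of Makowsky's Theorem \ref{theorem:Makowsky} serve this purpose, and in the \anf{simple formulas} cases one restricts to the versions of these logics that have simple formulas. Feeding the hypothesised outward compactness cardinals into Theorem \ref{theorem:SchemesBackward}, one recovers, for every $n$, the $n$-th fragment of the relevant global principle: a $C^{(n)}$-extendible cardinal in cases (1)(b) and (1)(c), a $C^{(n)}$-strong cardinal in case (2)(b); by the decompositions recalled above this yields Vop\v{e}nka's Principle and \anf{$\Ord$ is Woodin}, respectively. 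The reason (1)(c) still produces the full principle rather than merely \anf{a proper class of supercompact cardinals} is the amplification phenomenon underlying Makowsky's theorem: quantifying over all abstract logics with simple formulas forces the supercompact-flavoured outward compactness all the way up the $C^{(n)}$-hierarchy, just as quantifying over all abstract logics forces strong compactness up to extendibility in Theorem \ref{theorem:Makowsky}.

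I expect the genuine work — already carried out in Theorems \ref{theorem:ForwardGlobalPrinciples} and \ref{theorem:SchemesBackward}, so that the present proof is only an assembly — to be the complexity bookkeeping: verifying that for an abstract logic whose satisfaction relation is $\Sigma_m$-definable, the statement \anf{$T$ is ${<}\lambda$-consistent in every outer $\ZFC_n$-model of $V_\vartheta^V$ in which $\lambda$ is a cardinal} can be arranged to transfer along the $C^{(n')}$-embeddings in use, and conversely that the failure of a fixed fragment of the target principle can be reflected into the failure of outward compactness for a single suitably complex logic. This is where the distinction between the $\Psi_{ext}$-version (where the embeddings $V_\eta\to V_\zeta$ are automatically fully correct, so no restriction on the logic is needed) and the $\Psi_{sc}$- and $\Psi_{str}$-versions (where only $V\to M$ embeddings are available, forcing the \anf{simple formulas} hypothesis) becomes decisive, and it is also the point at which one must check that the \anf{outward} and \anf{$\ZFC_n$-fragment} decorations do not interfere with the Makowsky-style reflection argument.
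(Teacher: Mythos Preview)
Your overall skeleton is right --- the two parts are Corollaries \ref{corollary:OutwardVP} and \ref{corollary:OutwardOrdWoodin}, and the substantive work sits in Theorems \ref{theorem:ForwardGlobalPrinciples} and \ref{theorem:SchemesBackward} --- but there is a genuine gap in your forward direction for (1)(a)$\Rightarrow$(1)(b), and your route through $C^{(n)}$-large cardinals is a detour the paper does not take.

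The gap: Theorem \ref{theorem:ForwardGlobalPrinciples} has \emph{two} hypotheses you overlook. It requires $\Psi$ to be a \emph{strong} measure of closeness, and it only produces outward compactness cardinals for abstract logics \emph{with simple formulas}. So you cannot invoke it to obtain (1)(b), which quantifies over \emph{all} abstract logics. The paper handles (1)(a)$\Rightarrow$(1)(b) by an entirely different (and much shorter) argument, namely Proposition \ref{proposition:VPoutwarCompact}: for any $\Delta_1^{\ZFC}$-measure of closeness $\Psi$, at any limit cardinal $\kappa$, every $\ZFC_n$-$\Psi$-outward consistent $\LL$-theory is already ${<}\kappa$-consistent --- simply because $V_\vartheta$ is itself an outer $\ZFC_n$-model of $V_\vartheta$ satisfying $\Psi(V_\vartheta,V_\vartheta,\lambda,\kappa,\eta)$ by clause \ref{Prop:B} of Definition \ref{definition:Closenessss}. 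Then Vop\v{e}nka's Principle plus Makowsky's Theorem \ref{theorem:Makowsky} supplies a strong compactness cardinal for $\LL$, which is therefore a $\ZFC_n$-$\Psi_{ext}$-outward compactness cardinal. No embedding argument is needed here at all.

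On the detour: the paper does not decompose the global principles into $C^{(n)}$-extendible, $C^{(n)}$-supercompact, or $C^{(n)}$-strong cardinals. Its intermediary notion is uniformly \anf{$\Psi$-large for $A$} (Definition \ref{definition:PsiLargeA}), and the bridges to the global principles are Lemma \ref{lemma:LargeVP} (Vop\v{e}nka $\Leftrightarrow$ existence of $\Psi_{sc}$-large-for-$A$ cardinals for every definable $A$) and Lemma \ref{lemma:OrdWoodinEqui} ($\Ord$ is Woodin $\Leftrightarrow$ existence of $\Psi_{str}$-large-for-$A$ cardinals for every definable $A$). The backward directions then go: outward compactness for all suitable logics $\Rightarrow$ (via Theorem \ref{theorem:SchemesBackward}, which builds the specific logic $\LL_\varphi$ adding an atomic formula encoding a given definable class) a $\Psi$-large-for-$A$ cardinal for every definable $A$ $\Rightarrow$ the global principle. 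Your backward sketch is compatible with this, but your phrasing (\anf{recover a $C^{(n)}$-extendible cardinal}) does not match what Theorem \ref{theorem:SchemesBackward} actually delivers.
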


%%%%%%%%%%%%%%%%%% 
 %%%%%%%%%%%%%%%%%%

\section[Psi-large cardinals]{$\Psi$-large cardinals}\label{section:PsiLarge}

 We now introduce a broad framework for large cardinal properties between measurability and extendibility. Our results will later show that all notions that fall into this framework can be characterized through compactness properties of $\LL^2$. 
  In order to achieve this goal, we first define a property to express a certain amount of closeness between two transitive models $M\subseteq N$ of set theory. 
 In the following, we say that a class is \emph{closed under basic set operations} if it is closed under taking pairs, products and intersections. Note that for every class $M$ closed under basic set operations and every limit ordinal $\lambda$, the set $M\cap V_\lambda$ is closed under basic set operations.

\begin{definition}\label{definition:Closenessss}
 \begin{enumerate}
  \item A tuple $\langle N,M,\mu,\nu,\rho\rangle$ of sets is \emph{suitable} if $M\subseteq N$ are transitive sets closed under basic set operations with $M\cap\Ord=N\cap\Ord\in\Lim$ and   $\mu<\nu<\rho\in M$ are limit ordinals  with the property that  $\mu$ is a cardinal in $N$.  
  
  \item\label{item:Closeness} 
    A first-order formula $\Psi(v_0,\ldots,v_4)$ in the language of set theory is a \emph{measure of closeness} if $\ZFC$ proves the following statements:     \begin{enumerate}[label=\text{(\Alph*)}]
 
 \item\label{Prop:A} If $\Psi(N,M,\mu,\nu,\rho)$ holds, then the tuple $\langle N,M,\mu,\nu,\rho\rangle$ is suitable. 
  
   \item\label{Prop:B} If $\mu$ is a cardinal and $\nu<\rho<\theta$ are limit ordinals strictly greater than $\mu$, then  $\Psi(V_\theta,V_\theta,\mu,\nu,\rho)$ holds. 
   
  % \item\label{Prop:E} If $\Psi(N,M,\mu,\nu,\rho_1)$ holds and $\rho_0<\rho_1$ is a limit ordinal greater than $\nu$,  then $\Psi(N,M,\mu,\nu,\rho_0)$ holds. \todo{???}
     
  \item\label{Prop:F} Given a suitable tuple $\langle N,M,\mu,\nu,\rho\rangle$ and a limit ordinal $\theta$ with $\rho<\theta\in M$,  we have $$\Psi(N,M,\mu,\nu,\rho) ~ \longleftrightarrow ~ \Psi(N\cap V_\theta,M\cap V_\theta,\mu,\nu,\rho).$$
  \end{enumerate} 
 \end{enumerate}
\end{definition}

As a first example of the concept introduced above, let $\Psi_{ms}(v_0,\ldots,v_4)$ denote the canonical $\Delta_0$-formula in the language of set theory such that $\Psi_{ms}(N,M,\mu,\nu,\rho)$ holds if and only if  the tuple $\langle N,M,\mu,\nu,\rho\rangle$ is suitable.  
  It is easy to check that the formula $\Psi_{ms}$ also satisfies the above properties \ref{Prop:B} and \ref{Prop:F}:

  \begin{proposition}
   The formula $\Psi_{ms}$ is a measure of closeness. \qed 
  \end{proposition}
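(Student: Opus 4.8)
The plan is to verify that $\Psi_{ms}$ satisfies properties \ref{Prop:B} and \ref{Prop:F} from Definition~\ref{definition:Closenessss}, since property \ref{Prop:A} holds by the very definition of $\Psi_{ms}$ (it holds precisely when the tuple is suitable).

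First I would check \ref{Prop:B}. Assume $\mu$ is a cardinal and $\nu < \rho < \theta$ are limit ordinals strictly greater than $\mu$. We must show $\Psi_{ms}(V_\theta, V_\theta, \mu, \nu, \rho)$, i.e., that $\langle V_\theta, V_\theta, \mu, \nu, \rho\rangle$ is suitable. Clearly $V_\theta \subseteq V_\theta$ are transitive sets closed under basic set operations (pairs, products, and intersections of elements of $V_\theta$ lie in $V_\theta$, using that $\theta$ is a limit ordinal). We have $V_\theta \cap \Ord = \theta \in \Lim$, and trivially $V_\theta \cap \Ord = V_\theta \cap \Ord$. The ordinals $\mu < \nu < \rho$ are limit ordinals lying in $V_\theta$ (as $\rho < \theta$), and $\mu$ is a cardinal in $V_\theta$ since being a cardinal is downward absolute to transitive models — more simply, $\mu$ is a cardinal in $V$ and hence in the transitive class $V_\theta$. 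So the tuple is suitable, as required.

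Next I would check \ref{Prop:F}. Let $M \subseteq N$ be transitive sets closed under basic set operations with $M \cap \Ord = N \cap \Ord \in \Lim$ and limit ordinals $\mu < \nu < \rho < \theta \in M$. We want $\Psi_{ms}(N, M, \mu, \nu, \rho) \leftrightarrow \Psi_{ms}(N \cap V_\theta, M \cap V_\theta, \mu, \nu, \rho)$, i.e., that $\langle N, M, \mu, \nu, \rho\rangle$ is suitable if and only if $\langle N \cap V_\theta, M \cap V_\theta, \mu, \nu, \rho\rangle$ is. Here I would use the remark immediately preceding Definition~\ref{definition:Closenessss}: for a class closed under basic set operations and a limit ordinal, the intersection with $V_\lambda$ is again closed under basic set operations; since $\theta \in M$ and $M \cap \Ord \in \Lim$, $\theta$ is a limit ordinal, so $M \cap V_\theta$ and $N \cap V_\theta$ are closed under basic set operations and are clearly transitive. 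Both $M \cap V_\theta$ and $N \cap V_\theta$ have ordinal part exactly $\theta$ (since $\theta \in M$ forces $\theta \subseteq M$ by transitivity, so all ordinals below $\theta$ lie in $M \cap V_\theta$, and no ordinal $\geq \theta$ lies in $V_\theta$), hence they share the same ordinals and $\theta \in \Lim$. The ordinals $\mu < \nu < \rho$ lie in $M \cap V_\theta$ because they are below $\theta$, and the condition that $\mu$ is a cardinal in $N$ is equivalent to $\mu$ being a cardinal in $N \cap V_\theta$, since any surjection witnessing a collapse of $\mu$ would have rank below $\theta$ (as $\mu < \theta$) and membership in $N \cap V_\theta$ is absolute for such objects — more carefully, a bijection between an ordinal $< \mu$ and $\mu$ is a set of pairs of ordinals $< \mu < \theta$, so it has rank $< \theta$ and lies in $N$ iff it lies in $N \cap V_\theta$. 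Putting these equivalences together establishes the biconditional.

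The only mild subtlety — and the step I would be most careful about — is the equivalence of \anf{$\mu$ is a cardinal in $N$} and \anf{$\mu$ is a cardinal in $N \cap V_\theta$}; but this is immediate once one observes that the relevant counterexamples (surjections from a smaller ordinal onto $\mu$) are low-rank objects sitting below $\theta$, so their presence or absence is unaffected by intersecting with $V_\theta$. Everything else is a direct unwinding of the definition of suitability together with the closure remark stated before the definition, so the proof is essentially bookkeeping. \qed
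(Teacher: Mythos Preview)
Your proposal is correct and is precisely the routine verification the paper leaves implicit: the paper gives no argument beyond noting that properties \ref{Prop:B} and \ref{Prop:F} are easy to check, and marks the proposition with a bare \qed. One tiny slip worth flagging: you justify that $\theta$ is a limit ordinal by writing ``since $\theta \in M$ and $M \cap \Ord \in \Lim$,'' but that inference is invalid --- $\theta$ is a limit ordinal simply because the hypothesis of \ref{Prop:F} lists $\mu<\nu<\rho<\theta$ as limit ordinals.
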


  We now continue by associating large cardinal properties to measures of closeness.

\begin{definition}
 Given a first-order formula  $\Psi(v_0,\ldots,v_4)$  in the language of set theory, 
  a cardinal $\kappa$ is \emph{$\Psi$-large}
   if for all limit ordinals $\kappa<\eta<\theta$, 
   there are unboundedly many  cardinals $\lambda\geq\eta$ with the property that there is  a transitive set $M$ and an elementary embedding $\map{j}{V_{\theta+1}}{M}$
    such that  $j(\kappa)>\lambda$ and $\Psi(V_{j(\theta)},M\cap V_{j(\theta)},\lambda,j(\kappa),j(\eta))$ holds. 
\end{definition}

% \todo{Peter: It seems we need $j(\kappa)>\lambda$ only for the case of $\omega_1$-strongly compact cardinals, right? So some comment regarding this should be made somewhere.}

 In the following, we want to show that several well-known large cardinal properties can be characterized through the notion of $\Psi$-largeness:  we can associate a measure of closeness $\Psi$ to the given large cardinal property and  prove that a cardinal $\kappa$ is $\Psi$-large if and only if there is a cardinal with the given large cardinal property that is less than or equal to~$\kappa$. Before we start to derive these characterizations, we observe that no large cardinal property that is implication-wise stronger than extendibility can be characterized in this way.

 \begin{proposition}\label{proposition:ExtendibleIsLargeForAll}
   If $\Psi(v_0,\ldots,v_4)$ is a measure of closeness, then every cardinal greater than or equal to an extendible cardinal is $\Psi$-large.  
 \end{proposition}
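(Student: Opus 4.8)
The plan is to show that if $\lambda_0\le\kappa$ is extendible and $\Psi$ is any measure of closeness, then $\kappa$ is $\Psi$-large, using only properties \ref{Prop:A}, \ref{Prop:B} and \ref{Prop:F} together with the definition of extendibility. Fix limit ordinals $\kappa<\eta<\theta$; we must produce, cofinally in the ordinals, cardinals $\lambda\ge\eta$ admitting an elementary $\map{j}{V_{\theta+1}}{M}$ with $j(\kappa)>\lambda$ and $\Psi(V_{j(\theta)},M\cap V_{j(\theta)},\lambda,j(\kappa),j(\eta))$. So let an arbitrary ordinal be given; I would first enlarge $\theta$ if necessary so that $\theta$ is a limit ordinal above this arbitrary bound, above $\eta$, and with $\theta>\lambda_0$ (shrinking the witnessing class afterwards via \ref{Prop:F} is harmless, so working with a larger $\theta$ only strengthens what we prove). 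Now apply extendibility of $\lambda_0$ at the ordinal $\theta+1$: there is an ordinal $\zeta$ and a nontrivial elementary $\map{j}{V_{\theta+2}}{V_\zeta}$ with $\crit(j)=\lambda_0$ and $j(\lambda_0)>\theta+1$. Restricting, $j\restr V_{\theta+1}$ is an elementary embedding of $V_{\theta+1}$ into $V_{j(\theta)+1}$, and since $V_{j(\theta)}\in V_\zeta$ we may take $M=V_{j(\theta)+1}$ (a genuine transitive set), so $M\cap V_{j(\theta)}=V_{j(\theta)}$.

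The key point is then to choose $\lambda$. Since $\crit(j)=\lambda_0\le\kappa<\theta$ we have $j(\kappa)\ge\kappa$, and in fact elementarity gives $j(\kappa)<j(\theta)$ with $j(\kappa)$ a limit ordinal, likewise $j(\eta)$ is a limit ordinal with $j(\kappa)<j(\eta)<j(\theta)$. Pick $\lambda$ to be any cardinal with $\eta\le\lambda<j(\kappa)$; such a cardinal exists because $j(\kappa)\ge j(\lambda_0)>\theta+1>\eta$ and $j(\kappa)$, being in the range of $j$, is a cardinal fixed by nothing relevant — more simply, $j(\kappa)>\theta\ge\eta$ and any cardinal in the interval $[\eta,j(\kappa))$ works; e.g. $\lambda=\eta^{+}$ if $\eta^{+}<j(\kappa)$, and in general one just takes the least cardinal $\ge\eta$, which is below $j(\kappa)$. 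Moreover we may additionally demand $\lambda$ above the originally given arbitrary ordinal, since $\theta$ (hence $j(\theta)$, hence the room below $j(\kappa)$) was chosen large. Then $\lambda<j(\kappa)<j(\eta)<j(\theta)$ are the relevant ordinals, $j(\kappa)>\lambda$ holds by construction, and $\lambda$ is a cardinal in $V$, hence a cardinal in $V_{j(\theta)}$.

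It remains to verify $\Psi(V_{j(\theta)},V_{j(\theta)},\lambda,j(\kappa),j(\eta))$. Here $N=M\cap V_{j(\theta)}=V_{j(\theta)}$, so this is exactly an instance of property \ref{Prop:B}: $\lambda$ is a cardinal, and $j(\kappa)<j(\eta)<j(\theta)$ are limit ordinals strictly greater than $\lambda$ (the last inequality because $\lambda<j(\kappa)<j(\eta)$). Hence $\Psi(V_{j(\theta)},V_{j(\theta)},\lambda,j(\kappa),j(\eta))$ holds, and $\langle V_{j(\theta)},V_{j(\theta)},\lambda,j(\kappa),j(\eta)\rangle$ is automatically suitable. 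Since the originally given ordinal was arbitrary and $\lambda$ was taken above it, this shows there are unboundedly many such $\lambda$, so $\kappa$ is $\Psi$-large.

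The main obstacle I anticipate is purely bookkeeping: making sure the various ordinals sit in the right order after applying $j$ (in particular that $j(\theta)$ is large enough that a cardinal $\lambda$ in the required interval exists and can be pushed above any prescribed bound), and confirming that the restricted embedding has a bona fide transitive set as target so that the hypotheses of \ref{Prop:B} literally apply with $N=M\cap V_{j(\theta)}=V_{j(\theta)}$ — no appeal to \ref{Prop:F} or to suitability beyond what \ref{Prop:B} already delivers is actually needed. If one instead only had a weaker form of extendibility producing a class-sized target, property \ref{Prop:F} would be used to cut down to $V_{j(\theta)}$, but with the stated definition this is unnecessary.
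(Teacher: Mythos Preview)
Your proof is correct and follows essentially the same route as the paper: obtain an extendibility embedding whose target is a genuine $V_\zeta$, restrict to $V_{\theta+1}$ so that $M\cap V_{j(\theta)}=V_{j(\theta)}$, and invoke property \ref{Prop:B} directly. The paper organizes the quantifiers more cleanly: rather than enlarging $\theta$ to absorb the arbitrary lower bound and then fishing for a cardinal $\lambda$ in $[\eta,j(\kappa))$, it simply fixes an arbitrary cardinal $\lambda>\theta$ at the outset and applies extendibility of $\mu\le\kappa$ at the ordinal $\lambda$ to get $\map{i}{V_\lambda}{V_\rho}$ with $i(\mu)>\lambda$; then $j=i\restriction V_{\theta+1}$ automatically satisfies $j(\kappa)\ge i(\mu)>\lambda$, and \ref{Prop:B} finishes as in your argument. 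This sidesteps your ``enlarge $\theta$, then restrict back'' bookkeeping (and the aside about \ref{Prop:F}, which as you correctly note at the end is never actually needed).
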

 
 \begin{proof}
   Assume that $\mu$ is an extendible cardinal, $\kappa\geq\mu$ is a cardinal,  $\kappa<\eta<\theta$ are limit ordinals and $\lambda>\theta$ is a cardinal.  
  Then there exists an ordinal $\rho$ and an elementary embedding $\map{i}{V_\lambda}{V_\rho}$ with $\crit(i)=\mu$ and $i(\mu)>\lambda$. 
  If we now define $$\map{j=i\restriction V_{\theta+1}}{V_{\theta+1}}{V_{i(\theta)+1}},$$ then $j(\kappa)>\lambda$ and  \ref{Prop:B} in Definition \ref{definition:Closenessss}.\ref{item:Closeness} ensures that $\Psi(V_{j(\theta)},V_{j(\theta)},\lambda,j(\kappa),j(\eta))$ holds. These computations  allow us to conclude that $\kappa$ is $\Psi$-large. 
 \end{proof}

 Next, we use standard arguments to show that $\Psi$-largeness can also be characterized through the existence  of elementary embeddings from $V$ into  inner models.

\begin{lemma}\label{lemma:EmbeddingLarge}
 Given a measure of closeness $\Psi(v_0,\ldots,v_4)$, a  cardinal $\kappa$ is $\Psi$-large if and only if for every limit ordinal $\eta>\kappa$, there are unboundedly many cardinals $\lambda\geq\eta$ with the property that there is an inner model $M$ and an elementary embedding $\map{j}{V}{M}$ such that $j(\kappa)>\lambda$ and  $\Psi(V_\vartheta,V_\vartheta^M,\lambda,j(\kappa),j(\eta))$ holds for all  limit ordinals $\vartheta>j(\eta)$. 
\end{lemma}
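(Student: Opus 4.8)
The plan is to prove both directions by the standard reflection-versus-lifting dictionary, using Properties \ref{Prop:B} and \ref{Prop:F} of a measure of closeness to move between set-sized embeddings on $V_{\theta+1}$ and proper-class embeddings $j\colon V\to M$.

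For the backward direction, suppose the embedding characterization holds. Fix limit ordinals $\kappa<\eta<\theta$; I want unboundedly many cardinals $\lambda\geq\eta$ admitting a transitive $M$ and $j\colon V_{\theta+1}\to M$ with $j(\kappa)>\lambda$ and $\Psi(V_{j(\theta)},M\cap V_{j(\theta)},\lambda,j(\kappa),j(\eta))$. Given any bound, apply the hypothesis with the same $\eta$ to obtain a cardinal $\lambda$ above that bound and above $\theta$, an inner model $M$ and $j\colon V\to M$ with $j(\kappa)>\lambda$ and $\Psi(V_\vartheta,V_\vartheta^M,\lambda,j(\kappa),j(\eta))$ for all limit $\vartheta>j(\eta)$; in particular for $\vartheta=j(\theta)$. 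Now restrict: $\bar{j}=j\restriction V_{\theta+1}\colon V_{\theta+1}\to V_{j(\theta)+1}^M$ is elementary (as a restriction of a class embedding), $\bar{j}(\kappa)=j(\kappa)>\lambda$, and I need $\Psi(V_{j(\theta)},M\cap V_{j(\theta)},\lambda,j(\kappa),j(\eta))$. Since $M$ is an inner model, $M\cap V_{j(\theta)}=V_{j(\theta)}^M$, so this is exactly the instance $\vartheta=j(\theta)$ already obtained. One small check is that $V_{j(\theta)}$, $V_{j(\theta)}^M$, $\lambda$, $j(\kappa)$, $j(\eta)$ are as required to even apply $\Psi$ — but $V$ and $M$ are closed under basic set operations, share all ordinals, and $j(\theta)$ is a limit ordinal, so Property \ref{Prop:A} plus absoluteness of these facts suffice. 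This yields $\Psi$-largeness.

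For the forward direction, suppose $\kappa$ is $\Psi$-large and fix a limit ordinal $\eta>\kappa$ and a bound. I want $\lambda\geq\eta$ above the bound, an inner model $M$ and $j\colon V\to M$ with $j(\kappa)>\lambda$ and $\Psi(V_\vartheta,V_\vartheta^M,\lambda,j(\kappa),j(\eta))$ for \emph{all} limit $\vartheta>j(\eta)$. The idea is to feed $\Psi$-largeness a single sufficiently large $\theta>\eta$, obtain a set embedding $\bar{j}\colon V_{\theta+1}\to M_0$ with the stated $\Psi$-property at level $j(\theta)$, and then use the standard fact that an elementary embedding of a large enough rank initial segment with sufficient closure generates a genuine extender/ultrafilter, which in turn yields an inner model $M$ and $j\colon V\to M$ agreeing with $\bar{j}$ on $V_\kappa$ (or below the relevant point), with $j(\kappa)=\bar{j}(\kappa)>\lambda$ — this is exactly the "standard arguments" alluded to in the lemma statement, e.g.\ the derivation of a $(\kappa,\lambda)$-extender from $\bar j$ and the factor embedding into its ultrapower. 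The delicate part is then transferring the $\Psi$-property: $\bar{j}$ gives me $\Psi$ only at the one level $j(\theta)$ and only for $M_0$, whereas I need it for the ultrapower $M$ at every limit $\vartheta>j(\eta)$. Here Property \ref{Prop:F} is the key tool: it says $\Psi$ is determined by the common-rank truncations at \emph{any} level above the parameters, so if I can arrange that $V_{j(\theta)}^M$ and $M_0$ (suitably truncated) agree — which is what the factoring $k\colon M\to M_0$, $k\circ j=\bar j$ with $\crit(k)$ large gives, since then $V_{j(\theta)}^M=V_{j(\theta)}^{M_0}$ provided $j(\theta)\leq\crit(k)$ — then $\Psi(V_{j(\theta)},V_{j(\theta)}^M,\lambda,j(\kappa),j(\eta))$ follows, and then Property \ref{Prop:F} propagates it from the level $j(\theta)$ to all limit $\vartheta>j(\eta)$ (and down to $j(\eta)^+$-many below $j(\theta)$, then up by \ref{Prop:F} again).

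The main obstacle I anticipate is bookkeeping the closure and size requirements so that the extender derived from $\bar j$ is total on $V$ and the factor map $k$ has critical point above $j(\theta)$ (so that $V^M$ and $V^{M_0}$ agree up through $j(\theta)$), while simultaneously keeping $\lambda$ above the prescribed bound. Concretely one chooses $\theta$ a limit ordinal much larger than $\eta$ and the bound, takes $\lambda$ from $\Psi$-largeness with $j(\kappa)>\lambda\geq\eta$; since $\lambda$ was required $\geq\eta$ and can be taken unbounded, it clears the bound. Everything else — that restrictions of elementary embeddings are elementary, that inner models are closed under basic set operations and compute $V_\alpha$ correctly, and that $M\cap V_\alpha=V_\alpha^M$ — is routine. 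I would present the backward direction in full and the forward direction by citing the standard extender-factoring argument, emphasizing only the use of \ref{Prop:F} to spread the $\Psi$-property across all levels $\vartheta$.
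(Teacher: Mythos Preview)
Your proposal is correct and follows essentially the same route as the paper: restriction of a class embedding for the backward direction, and for the forward direction the extender-plus-factor-map construction together with Property~\ref{Prop:F} to spread $\Psi$ to all levels $\vartheta>j(\eta)$. Two technical points you leave to ``bookkeeping'' that the paper makes explicit: the large $\theta$ (their $\nu$) is chosen as a fixed point of $\beth$ so that $V$ and $V_{\nu+1}$ share the same subsets of $\nu$, and the factor map runs from $\Ult(V_{\nu+1},E)$ to $M_0$ (not from $\Ult(V,E)=M$), with $M$ and $\Ult(V_{\nu+1},E)$ agreeing up to $j(\nu)$---so your ``$k\colon M\to M_0$'' should be read accordingly.
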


\begin{proof}
 First, assume that $\kappa$ is $\Psi$-large and fix  limit ordinals $\zeta>\eta>\kappa$. Pick a  sufficiently large fixed point $\nu>\zeta$ of the $\beth$-function. By our assumption, we find a cardinal $\lambda\geq\zeta$, a transitive set $M_0$, and an elementary embedding $\map{i}{V_{\nu+1}}{M_0}$, such that  $i(\kappa)>\lambda$ and $\Psi(V_{i(\nu)},M_0\cap V_{i(\nu)},\lambda,i(\kappa),i(\eta))$ holds. 
 Since $\nu$ is a strong limit cardinal, we can use standard arguments (see {\cite[Lemma 26.1]{kanamori}}) to find an inner model  $M$ with $V_{i(\nu)}^M=M_0\cap V_{i(\nu)}$, and an elementary embedding $\map{j}{V}{M}$ satisfying $$j\restriction(V_\nu\cup\{\nu\}) ~ = ~ i\restriction(V_\nu\cup\{\nu\}).$$  
That is, we define $E$ to be the $(\kappa,i(\nu))$-extender derived from $i$, defining $E_a$ for every $a\in[i(\nu)]^{<\omega}$ by \[X\in E_a\ \iff\ X\in\mathcal P([\nu]^{|a|})\,\land\,a\in i(X),\] and let $\map{j}{V}{M}$ be the ultrapower embedding of $V$ induced by $E$. Let $\map{j^\prime}{V_{\nu+1}}{M}^\prime$ be the ultrapower embedding of $V_{\nu+1}$ induced by $E$.
Since $V$ and $V_{\nu+1}$ have the same subsets of $\nu$, it follows that $M$ and $M^\prime$ agree up to $j(\nu)=j^\prime(\nu)$, and that $j$ and $j^\prime$ agree up to $\nu$. 
Let $\map{k}{M^\prime}{M_0}$ be the factor embedding satisfying that $i=k\circ j^\prime$.
Then, by \cite[Lemma 26.1]{kanamori}, we first obtain that $\crit(k)>j(\nu)$, yielding that $i(\nu)=j^\prime(\nu)=j(\nu)$, and then consequently that $V_{j(\nu)}^M=V_{j(\nu)}\cap M_0$. If $\vartheta>j(\eta)$ is a limit ordinal, we apply \ref{Prop:F} in Definition \ref{definition:Closenessss}.\ref{item:Closeness} to conclude that $\Psi(V_\vartheta,V_\vartheta^M,\lambda,j(\kappa),j(\eta))$ holds. Summing up, this shows that we have found our desired embedding $\map{j}{V}{M}$.

 In the other direction, assume that for some limit ordinal $\eta>\kappa$, there are unboundedly many cardinals $\lambda\geq\eta$ such that there exists an inner model $M$ and an elementary embedding $\map{j}{V}{M}$ for which $j(\kappa)>\lambda$ and  $\Psi(V_\vartheta,V_\vartheta^M,\lambda,j(\kappa),j(\eta))$ holds for all  limit ordinals $\vartheta>j(\eta)$. 
  Fix  a  limit ordinal $\theta>\eta$ and an ordinal $\zeta>\eta$. Pick a cardinal $\lambda\geq\zeta$, an inner model $M$ and   an elementary embedding $\map{j}{V}{M}$ for which $j(\kappa)>\lambda$ and  $\Psi(V_\vartheta,V_\vartheta^M,\lambda,j(\kappa),j(\eta))$ holds for all  limit ordinals $\vartheta>j(\eta)$. Then, $M_0=V^M_{j(\theta)+1}$ is a transitive set, and $\map{i=j\restriction V_{\theta+1}}{V_{\theta+1}}{M_0}$ is an elementary embedding for which $i(\kappa)>\lambda$ and $\Psi(V_{i(\theta)},M_0\cap V_{i(\theta)},\lambda,i(\kappa),i(\eta))$ holds. These computations yield the converse implication. 
\end{proof}

  Using the above equivalence, we can now easily show that the formula $\Psi_{ms}$ canonically corresponds to the large cardinal notion of measurability.

\begin{corollary}
 A cardinal $\kappa$ is $\Psi_{ms}$-large if and only if there is a measurable cardinal less than or equal to $\kappa$. 
\end{corollary}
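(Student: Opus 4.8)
The plan is to invoke Lemma~\ref{lemma:EmbeddingLarge} to reduce $\Psi_{ms}$-largeness to the existence of an inner model $M$ and an elementary embedding $\map{j}{V}{M}$ with $j(\kappa)>\lambda$ and $\Psi_{ms}(V_\vartheta,V_\vartheta^M,\lambda,j(\kappa),j(\eta))$ holding for all limit ordinals $\vartheta>j(\eta)$, and then observe that the latter side is essentially the standard embedding characterization of ``there is a measurable cardinal $\leq\kappa$.''

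For the backward direction, I would assume there is a measurable cardinal $\mu\leq\kappa$, fix a limit ordinal $\eta>\kappa$, and let $\lambda\geq\eta$ be an arbitrary cardinal (the conclusion requires unboundedly many such $\lambda$, but since the construction works for every $\lambda\geq\eta$ this is automatic). Using a normal measure on $\mu$ and iterating the ultrapower sufficiently far — precisely the standard argument behind \cite[Corollary 19.7(b)]{kanamori} — I would obtain an inner model $M$ and an elementary embedding $\map{j}{V}{M}$ with $\crit(j)=\mu$ and $j(\mu)>\lambda$, hence also $j(\kappa)>\lambda$ since $j$ is at least the identity below $\mu$ and moves $\mu$ past $\lambda$ (if $\mu=\kappa$ this is immediate; if $\mu<\kappa$ then $j(\kappa)\geq j(\mu)>\lambda$). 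It then only remains to check $\Psi_{ms}(V_\vartheta,V_\vartheta^M,\lambda,j(\kappa),j(\eta))$ for every limit ordinal $\vartheta>j(\eta)$; since $\Psi_{ms}$ merely asserts suitability of the tuple, I need: $V_\vartheta^M\subseteq V_\vartheta$ are transitive sets closed under basic set operations with the same ordinals (true because $M$ is an inner model, so $V_\vartheta^M=M\cap V_\vartheta$, and $\vartheta$ is a limit); $\lambda<j(\kappa)<j(\eta)$ are limit ordinals, all below $\vartheta$, lying in $V_\vartheta^M$; and $\lambda$ is a cardinal in $V_\vartheta^M$, which holds because $\lambda$ is a cardinal in $M$ (it is a cardinal in $V$, below $j(\mu)$, and $M$ is closed under $\mu$-sequences enough to preserve this — or more simply, inner models compute cardinals no differently on the relevant initial segment since any collapse of $\lambda$ in $V$ would already witness non-cardinality) and cardinality is absolute between $M$ and its rank-initial segment $V_\vartheta^M$ once $\vartheta$ is large enough, which it is since $\vartheta>j(\eta)>j(\kappa)>\lambda$.

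For the forward direction, I would assume $\kappa$ is $\Psi_{ms}$-large, apply Lemma~\ref{lemma:EmbeddingLarge} to get, for some limit ordinal $\eta>\kappa$ and some cardinal $\lambda\geq\eta$, an inner model $M$ and an elementary embedding $\map{j}{V}{M}$ with $\crit(j)$ defined, $j(\kappa)>\lambda$, and the tuple $\langle V_\vartheta,V_\vartheta^M,\lambda,j(\kappa),j(\eta)\rangle$ suitable for all limit $\vartheta>j(\eta)$. A nontrivial elementary embedding $\map{j}{V}{M}$ into an inner model has a critical point, and by Kunen's standard argument $\crit(j)$ is a measurable cardinal. It remains only to note $\crit(j)\leq\kappa$: since $j(\kappa)>\lambda\geq\eta>\kappa$, the embedding $j$ moves $\kappa$, so $\crit(j)\leq\kappa$. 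Thus there is a measurable cardinal less than or equal to $\kappa$, as desired.

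The main obstacle is essentially bookkeeping rather than mathematical depth: verifying that ``$\lambda$ is a cardinal in $V_\vartheta^M$'' is genuinely the only content of $\Psi_{ms}(V_\vartheta,V_\vartheta^M,\dots)$ beyond trivialities, and confirming that the iterated-ultrapower embedding produced from a measurable $\mu\leq\kappa$ can be arranged to have $j(\kappa)>\lambda$ for arbitrarily large $\lambda$ while keeping $M$ an inner model — both of which are covered by the cited references. One should also double-check that the ``unboundedly many $\lambda$'' clause in Lemma~\ref{lemma:EmbeddingLarge} is harmless in the backward direction, which it is since the construction succeeds for every sufficiently large $\lambda$. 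I expect the proof in the paper to be short, citing Lemma~\ref{lemma:EmbeddingLarge} together with the folklore embedding characterization of measurability.
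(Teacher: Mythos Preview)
Your proposal is correct and follows essentially the same route as the paper: both directions invoke Lemma~\ref{lemma:EmbeddingLarge}, use iterated ultrapowers (citing \cite[Corollary 19.7(b)]{kanamori}) to produce $j(\kappa)>\lambda$ for the backward direction, and observe that $j(\kappa)>\kappa$ forces $\crit(j)\leq\kappa$ to be measurable for the forward direction. Your write-up spells out the suitability check (in particular that $\lambda$ remains a cardinal in $V_\vartheta^M$) more explicitly than the paper, which simply says ``it is easy to see''; for that step you can streamline by noting that cardinality is downward absolute to inner models.
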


\begin{proof}
 First, assume that $\kappa$ is $\Psi_{ms}$-large. By Lemma \ref{lemma:EmbeddingLarge}, there is a transitive class $M$ and an elementary embedding $\map{j}{V}{M}$ with $j(\kappa)>\kappa$. In particular, we know that the critical point $\crit(j)$ of $j$ is a measurable cardinal less than or equal to $\kappa$. 
 In the other direction, assume that there is a measurable cardinal less than or equal to $\kappa$ and  $\lambda>\kappa$ is a cardinal. Standard arguments about iterated ultrapowers (see {\cite[Corollary 19.7(b)]{kanamori}}) now allow us to find an inner model $M$ and an elementary embedding $\map{j}{V}{M}$ with $j(\kappa)>\lambda$. Then it is easy to see that $\Psi_{ms}(V_\vartheta,V_\vartheta^M,\lambda,j(\kappa),j(\eta))$ holds for every limit ordinal $\kappa<\eta\leq\lambda$ and every limit ordinal $\vartheta>j(\eta)$. By Lemma \ref{lemma:EmbeddingLarge}, this shows that the cardinal $\kappa$ is $\Psi_{ms}$-large.  
 \end{proof}

In the remainder of this section, we will show that several other important large cardinal notions in the region between measurability and extendibility canonically correspond to measures of closeness.

\subsection{Strong cardinals}\label{subsection:Strong}
 Let $\Psi_{str}(v_0,\ldots,v_4)$ denote the canonical first-order formula in the language of set theory with the property that $\Psi_{str}(N,M,\mu,\nu,\rho)$ holds if and only if the tuple $\langle N,M,\mu,\nu,\rho\rangle$ is suitable and $N\cap V_\mu\subseteq M$ holds. 
 The following statement is then immediate:

 \begin{proposition}
  The formula $\Psi_{str}$ is a measure of closeness. \qed 
 \end{proposition}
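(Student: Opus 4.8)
The plan is to verify that $\Psi_{str}$ satisfies the three conditions \ref{Prop:A}, \ref{Prop:B} and \ref{Prop:F} in Definition \ref{definition:Closenessss}.\ref{item:Closeness}. Condition \ref{Prop:A} is immediate from the very definition of $\Psi_{str}$: whenever $\Psi_{str}(N,M,\mu,\nu,\rho)$ holds, the tuple $\langle N,M,\mu,\nu,\rho\rangle$ is suitable by fiat, since being suitable is an explicit conjunct of the defining formula.

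For condition \ref{Prop:B}, suppose $\mu$ is a cardinal and $\nu<\rho<\theta$ are limit ordinals with $\mu<\nu$. We already know (from the fact that $\Psi_{ms}$ is a measure of closeness, or by direct inspection) that $\langle V_\theta,V_\theta,\mu,\nu,\rho\rangle$ is suitable. It remains to check the extra clause $N\cap V_\mu\subseteq M$, which here reads $V_\theta\cap V_\mu\subseteq V_\theta$; since $\mu<\theta$ this is just $V_\mu\subseteq V_\theta$, which trivially holds. Hence $\Psi_{str}(V_\theta,V_\theta,\mu,\nu,\rho)$ holds.

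For condition \ref{Prop:F}, fix transitive sets $M\subseteq N$ closed under basic set operations with $M\cap\Ord=N\cap\Ord\in\Lim$ and limit ordinals $\mu<\nu<\rho<\theta\in M$. The equivalence of suitability of $\langle N,M,\mu,\nu,\rho\rangle$ and of $\langle N\cap V_\theta,M\cap V_\theta,\mu,\nu,\rho\rangle$ is exactly condition \ref{Prop:F} for $\Psi_{ms}$, which we may assume. So it suffices to show that the additional clause is preserved in both directions, i.e.\ that $N\cap V_\mu\subseteq M$ if and only if $(N\cap V_\theta)\cap V_\mu\subseteq M\cap V_\theta$. Since $\mu<\theta$, we have $V_\mu\subseteq V_\theta$, so $(N\cap V_\theta)\cap V_\mu=N\cap V_\mu$ and $(M\cap V_\theta)\supseteq M\cap V_\mu$; moreover any element of $N\cap V_\mu$ that lies in $M$ automatically lies in $M\cap V_\theta$ because it has rank below $\mu<\theta$. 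Thus the two conditions are equivalent, which establishes \ref{Prop:F}.

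I do not anticipate a genuine obstacle here: the only content beyond the already-established facts about $\Psi_{ms}$ is the trivial observation that the cut $V_\theta$ occurs above $\mu$, so intersecting with $V_\theta$ does not affect membership of sets of rank below $\mu$. The statement therefore follows with no computation of substance, which is presumably why the paper records it with a \qed-only proof. (If one wanted to be fully careful, one could also note that $N\cap V_\mu$ and $M\cap V_\mu$ are well-defined sets under $\ZFC$, using that $\mu\in M\subseteq N$; but this is subsumed in the standing assumptions.)
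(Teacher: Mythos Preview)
Your proposal is correct and is exactly the routine verification the paper has in mind; the paper records the statement with a bare \qed\ because conditions \ref{Prop:A}--\ref{Prop:F} follow immediately from the definition of $\Psi_{str}$ together with the trivial inclusion $V_\mu\subseteq V_\theta$, just as you spell out.
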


  In the following, we will show that the formula $\Psi_{str}$ corresponds to the large cardinal concept of strongness. These and later arguments will make crucial use of the following well-known consequence of the \emph{Kunen Inconsistency} (see, for example, {\cite[Corollary 23.14]{kanamori}}):

  \begin{lemma}\label{lemma:IteratedCrit}
   Let $M$ be a transitive class and let $\map{j}{V}{M}$ be a non-trivial elementary embedding. If $\lambda$ is a limit ordinal of uncountable cofinality with $V_\lambda\subseteq M$, then there exists a natural number $n$ with $j^n(\crit(j))\geq\lambda$. 
  \end{lemma}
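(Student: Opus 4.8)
The plan is to run the standard reduction to the Kunen Inconsistency. We may clearly assume that $\crit(j)<\lambda$, since otherwise $n=0$ already witnesses the conclusion. Write $\kappa=\crit(j)$ and suppose, towards a contradiction, that $j^n(\kappa)<\lambda$ holds for every $n<\omega$. Since elementary embeddings are strictly increasing on the ordinals and $\kappa<j(\kappa)$, the critical sequence $\seq{j^n(\kappa)}{n<\omega}$ is strictly increasing, so its supremum $\delta$ is a limit ordinal with $\cof(\delta)=\omega$. The hypothesis that $\lambda$ has uncountable cofinality now yields $\delta<\lambda$, because a strictly increasing $\omega$-sequence that is cofinal in $\lambda$ would force $\cof(\lambda)=\omega$; this is the one point at which the cofinality assumption is used.

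Next I would record the two facts needed to apply Kunen's theorem. First, $\delta$ is a fixed point of $j$: the sequence $s=\seq{j^n(\kappa)}{n<\omega}$ is a set (it is definable from $j$ and has domain $\omega$), and since $\omega<\kappa$ elementarity gives $j(s)=\seq{j^{n+1}(\kappa)}{n<\omega}$, whence $j(\delta)=j(\sup\range(s))=\sup\range(j(s))=\delta$. Second, since $\delta<\lambda$ and $\lambda$ is a limit ordinal, we have $\delta+m<\lambda$ for every $m<\omega$, and therefore $V_{\delta+m}\subseteq V_\lambda\subseteq M$ for every $m<\omega$; in particular $V_{\delta+2}\subseteq M$.

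Finally, I would invoke the Kunen Inconsistency. We now have a non-trivial elementary embedding $\map{j}{V}{M}$ into a transitive class $M$ whose critical sequence has supremum $\delta$ and which satisfies $V_{\delta+2}\subseteq M$, contradicting {\cite[Corollary 23.14]{kanamori}}. Hence there is some $n<\omega$ with $j^n(\crit(j))\geq\lambda$, as required. I do not expect any real obstacle here: the argument is entirely routine, and the only subtlety is to use the uncountable cofinality of $\lambda$ exactly where it is needed, namely to keep the Kunen point $\delta$ strictly below $\lambda$ so that a sufficiently long initial segment of $V$ above $\delta$ lies inside $M$.
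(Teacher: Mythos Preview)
Your proof is correct and follows essentially the same route as the paper's: assume the critical sequence stays below $\lambda$, use the uncountable cofinality of $\lambda$ to conclude that its supremum $\delta$ lies strictly below $\lambda$ and is fixed by $j$, and then derive a contradiction with the Kunen Inconsistency from $V_{\delta+2}\subseteq M$. The only cosmetic difference is that the paper phrases the final step by restricting $j$ to a non-trivial elementary embedding $\map{j\restriction V_{\delta+2}}{V_{\delta+2}}{V_{\delta+2}}$, whereas you invoke {\cite[Corollary 23.14]{kanamori}} directly for $\map{j}{V}{M}$; both formulations are standard and equivalent here.
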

  
  \begin{proof}
   Assume, towards a contradiction, that the above conclusion fails and define $$\zeta ~ = ~ \sup_{n<\omega}j^n(\crit(j)).$$ Then, $\zeta$ is a limit ordinal of countable cofinality with $j(\zeta)=\zeta$, and our assumptions ensure that $\zeta+2<\lambda$.  But this implies that the map $\map{j\restriction V_{\zeta+2}}{V_{\zeta+2}}{V_{\zeta+2}}$ is a non-trivial elementary embedding, contradicting the \emph{Kunen Inconsistency}. 
  \end{proof}

  \begin{lemma}\label{lemma:CharStrong}
   A cardinal $\kappa$ is $\Psi_{str}$-large if and only if there is a strong cardinal less than or equal to $\kappa$.
  \end{lemma}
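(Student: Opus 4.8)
The plan is to prove both directions by combining Lemma \ref{lemma:EmbeddingLarge} (which reduces $\Psi_{str}$-largeness to the existence of suitable embeddings $\map{j}{V}{M}$) with the standard theory of strong cardinals and, for one direction, with Lemma \ref{lemma:IteratedCrit}. For the backward direction, suppose $\mu\le\kappa$ is a strong cardinal. Fix a limit ordinal $\eta>\kappa$ and an arbitrary cardinal $\zeta\ge\eta$; I need to produce a cardinal $\lambda\ge\zeta$, an inner model $M$, and an elementary embedding $\map{j}{V}{M}$ with $j(\kappa)>\lambda$ and such that $\Psi_{str}(V_\vartheta,V_\vartheta^M,\lambda,j(\kappa),j(\eta))$ holds for all limit $\vartheta>j(\eta)$. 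Since $\Psi_{str}$ only additionally demands $N\cap V_\mu\subseteq M$ at the \emph{third} coordinate (which will be $\lambda$), and since the fourth coordinate is $j(\kappa)\ge j(\mu)$, it suffices to arrange $V_\lambda\subseteq M$. So pick any cardinal $\lambda\ge\zeta$ that is a strong-limit of sufficiently high cofinality, and use $\mu$-strongness (applied with a target large enough to also push $j(\kappa)$, equivalently $j(\mu)$, past $\lambda$) to get $\map{j}{V}{M}$ with $\crit(j)=\mu$, $V_\lambda\subseteq M$, and $j(\mu)>\lambda$ — hence $j(\kappa)>\lambda$. Then for every limit $\vartheta>j(\eta)$, the tuple $\langle V_\vartheta,V_\vartheta^M,\lambda,j(\kappa),j(\eta)\rangle$ is suitable and $V_\vartheta\cap V_\lambda=V_\lambda\subseteq M$, so $V_\vartheta\cap V_\lambda\subseteq V_\vartheta^M$; thus $\Psi_{str}$ holds. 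By Lemma \ref{lemma:EmbeddingLarge}, $\kappa$ is $\Psi_{str}$-large.

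For the forward direction, assume $\kappa$ is $\Psi_{str}$-large. By Lemma \ref{lemma:EmbeddingLarge}, for every limit $\eta>\kappa$ there are unboundedly many cardinals $\lambda\ge\eta$ admitting an inner model $M$ and $\map{j}{V}{M}$ with $j(\kappa)>\lambda$ and $\Psi_{str}(V_\vartheta,V_\vartheta^M,\lambda,j(\kappa),j(\eta))$ for all limit $\vartheta>j(\eta)$; unpacking $\Psi_{str}$, this says precisely that $V_\lambda\cap V_\vartheta=V_\lambda\subseteq V_\vartheta^M$ for all such $\vartheta$, i.e.\ $V_\lambda\subseteq M$. So I have a single embedding $\map{j}{V}{M}$ with $j(\kappa)>\lambda\ge\eta$ and $V_\lambda\subseteq M$, and I may take $\lambda$ as large as I like. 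Set $\delta=\crit(j)\le\kappa$; I claim $\delta$ is strong. The key move is Lemma \ref{lemma:IteratedCrit}: choosing $\lambda$ to be a limit ordinal of uncountable cofinality (e.g.\ $\lambda$ a cardinal $\ge\eta$, which we may do since such $\lambda$ are unbounded), we get $n<\omega$ with $j^n(\delta)\ge\lambda$. Composing $j$ with itself $n$ times gives an elementary $\map{j^n}{V}{M_n}$ (where $M_n$ is the $n$-fold iterate) with $\crit(j^n)=\delta$ and $V_\lambda\subseteq M_n$ — using that each application of $j$ preserves "$V_\lambda\subseteq\cdot$" together with $V_\lambda\subseteq M$ and elementarity; one has to be slightly careful and instead argue $V_{j^{n-1}(\lambda)}\subseteq M_n$ or just that $V_\lambda\subseteq M_n$ follows since $j^n(\delta)\ge\lambda$ forces $V_\lambda\subseteq M_n$ by $\delta$-preservation of rank-initial segments below the image of the critical point. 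Then $\map{j^n}{V}{M_n}$ together with the fact that $\lambda$ can be taken arbitrarily large witnesses, by the extender characterization of strongness, that $\delta$ is a strong cardinal, and $\delta\le\kappa$.

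The main obstacle I anticipate is the bookkeeping in the forward direction: from $V_\lambda\subseteq M$ with $M$ a \emph{single} iterate of an embedding whose critical point $\delta$ may be strictly below $\kappa$, I must extract $\delta$-strongness for unboundedly many $\lambda$, and the naive composition $j^n$ moves $\lambda$ to $j^n(\lambda)$, so I need to track carefully which rank-initial segment of the iterate is guaranteed to be covered — this is exactly why Lemma \ref{lemma:IteratedCrit} is invoked, to ensure $j^n(\delta)\ge\lambda$ so that the piece of $M_n$ below $\lambda$ is correct. A secondary subtlety is confirming that the inner model $M$ produced by Lemma \ref{lemma:EmbeddingLarge} genuinely gives $V_\lambda\subseteq M$ rather than merely $V_\lambda\cap M=V_\vartheta^M\cap V_\lambda$ for the specific $\vartheta$'s — but since the $\Psi_{str}$ clause holds for \emph{all} limit $\vartheta>j(\eta)$ and $V_\lambda=\bigcup_{\vartheta}(V_\vartheta\cap V_\lambda)$, taking the union over $\vartheta$ settles this cleanly.
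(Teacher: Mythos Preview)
Your backward direction is essentially the paper's, and it is fine.

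The forward direction, however, has a genuine gap. You obtain $j\colon V\to M$ with $\crit(j)=\delta\le\kappa$ and $V_\lambda\subseteq M$, and then propose to ``compose $j$ with itself $n$ times'' to produce $j^n\colon V\to M_n$ with $\crit(j^n)=\delta$, $j^n(\delta)\ge\lambda$ and $V_\lambda\subseteq M_n$. But literal composition of a class embedding $j\colon V\to M$ with itself does not give an elementary embedding into a transitive class: $j[M]$ is not transitive (e.g.\ $\delta\in j(\delta+1)\in j[M]$ while $\delta\notin j[V]\supseteq j[M]$). Your fallback justifications --- that ``$V_{j^{n-1}(\lambda)}\subseteq M_n$'' or that ``$j^n(\delta)\ge\lambda$ forces $V_\lambda\subseteq M_n$ by $\delta$-preservation of rank-initial segments'' --- do not correspond to any valid argument once one tries to write down what $M_n$ actually is. There is also a secondary slip: the critical point $\delta$ depends on the chosen $\lambda$, so even granting your iteration step you would only get, for each $\lambda$, \emph{some} $\delta_\lambda\le\kappa$ that is $\lambda$-strong; a further pigeonhole argument is needed to extract a single strong cardinal.

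The paper's route avoids the ill-defined iteration by working with \emph{superstrongness} instead. One argues by contradiction: pick a regular $\eta>\kappa$ such that no $\mu\le\kappa$ is $\eta$-strong, obtain $j\colon V\to M$ with $V_\lambda\subseteq M$ for some $\lambda>\eta$, set $\mu_i=j^i(\crit(j))$, and use Lemma~\ref{lemma:IteratedCrit} to find the least $n$ with $\mu_n>\eta$. Since $\mu_{n-1}\le\eta<\lambda$, the embedding $j$ itself witnesses that $\mu_0$ is superstrong with target $\mu_1$; by an easy induction using elementarity, it follows that in $M$ the cardinal $\mu_1$ is superstrong with target $\mu_n$. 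The superstrongness embedding $i\colon M\to N$ with $\crit(i)=\mu_1$, $i(\mu_1)=\mu_n$ and $V_{\mu_n}^M\subseteq N$ is then a genuine class of $M$, so the composite $i\circ j\colon V\to N$ is well-defined, elementary, has critical point $\mu_0$, sends $\mu_0$ above $\eta$, and satisfies $V_\eta\subseteq N$ --- contradicting the choice of $\eta$. The key point you are missing is that one does not iterate $j$, but rather uses elementarity to reflect a suitable \emph{set-sized} large-cardinal property (superstrongness with a specified target) into $M$, where it produces an embedding that can legitimately be composed with $j$.
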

  
  \begin{proof}
   First, assume that some cardinal $\mu\leq\kappa$ is strong, and $\lambda>\kappa$ is a cardinal. 
   In this situation, there exists an inner model $M$ and an elementary embedding $\map{j}{V}{M}$ with $\crit(j)=\mu$, $j(\mu)>\lambda$ and $V_\lambda\subseteq M$. Then, $j(\kappa)>\lambda$, and it is easy to see that $\Psi_{str}(V_\vartheta,V_\vartheta^M,\lambda,j(\kappa),j(\eta))$ holds for  all limit ordinals $\kappa<\eta\leq\lambda$ and all limit ordinals $\vartheta>j(\eta)$.  By Lemma   \ref{lemma:EmbeddingLarge}, this shows that $\kappa$ is $\Psi_{str}$-large.

 In the other direction, assume that $\kappa$ is $\Psi_{str}$-large, and no cardinal less than or equal to $\kappa$ is strong. Then, there exists a regular cardinal $\eta>\kappa$ with the property that no cardinal less than or equal to $\kappa$ is $\eta$-strong. 
  Using Lemma \ref{lemma:EmbeddingLarge}, we find a cardinal $\lambda>\eta$, an inner model $M$ with $V_\lambda\subseteq M$ and an elementary embedding $\map{j}{V}{M}$ with $j(\kappa)>\lambda$. Set $\mu=\crit(j)<\kappa$ and $\mu_i=j^i(\mu)$ for all $i<\omega$. Lemma \ref{lemma:IteratedCrit} yields a least natural number $n$ with $\mu_n>\eta$.  Since $\mu<\eta$, we know that $n>0$, and the fact that $\mu$ is not $\eta$-strong then implies that $n>1$.

  \begin{claim*}
   In $M$, the cardinal $\mu_1$ is superstrong with target $\mu_n$.\footnote{Remember that a cardinal $\kappa$ is \emph{superstrong with target $\lambda$} if there is a transitive class $M$ with $V_\lambda\subseteq M$ and an elementary embedding $\map{j}{V}{M}$ with $\crit(j)=\kappa$ and $j(\kappa)=\lambda$.} 
  \end{claim*}
  
  \begin{proof}[Proof of the Claim]
   By induction, we show that, in $M$,   for all $0<i<n$, the cardinal $\mu_1$ is superstrong with target $\mu_{i+1}$. 
   Since $n>1$, the embedding $j$ witnesses that $\mu$ is superstrong with target $\mu_1$ in $V$ and hence elementarity of $j$ ensures that, in $M$, the cardinal $\mu_1$ is superstrong with target $\mu_2$. Next, assume that $i<n-1$ has the property that  the cardinal $\mu_1$ is superstrong with target $\mu_{i+1}$  in $M$. 
   Then there exists a transitive class $N\subseteq M$ with $V^M_{\mu_{i+1}}\subseteq N$ and an elementary embedding $\map{i}{M}{N}$ with $\crit(i)=\mu_1$ and $i(\mu_1)=\mu_{i+1}$. 
   Since $i+1<n$, we have  $\mu_{i+1}\leq\eta$ and hence $V_{\mu_{i+1}}\subseteq N$. 
   Moreover, the map $\map{i\circ j}{V}{N}$ is an elementary embedding with $\crit(i\circ j)=\mu$ and $(i\circ j)(\mu)=\mu_{i+1}$. In particular, this map witnesses that $\mu$ is superstrong with target $\mu_{i+1}$ in $V$. This allows us to conclude that, in $M$, the cardinal $\mu_1$ is superstrong with target  $\mu_{i+2}$. 
   This argument completes the proof of the claim. 
  \end{proof}

  By the above claim, there exists a transitive class $N\subseteq M$ with $V^M_{\mu_n}\subseteq N$ and an elementary embedding $\map{i}{M}{N}$ with $\crit(i)=\mu_1$ and $i(\mu_1)=\mu_n$. But then, $V_\lambda\subseteq N$ and $\map{i\circ j}{V}{N}$ is an elementary embedding with $\crit(i\circ j)=\mu$ and $(i\circ j)(\mu)>\eta$, yielding that $\mu$ is $\eta$-strong, a contradiction.   
  \end{proof}

%%%%%%%%%

\subsection{Supercompact cardinals}

 Let $\Psi_{sc}(v_0,\ldots,v_4)$ denote the canonical first-order formula in the language of set theory with the property that $\Psi_{sc}(N,M,\mu,\nu,\rho)$ holds if and only if the tuple $\langle N,M,\mu,\nu,\rho\rangle$ is suitable and ${}^\mu\rho\cap N\subseteq M$ holds.

  \begin{proposition}
  The formula $\Psi_{sc}$ is a measure of closeness. \qed 
 \end{proposition}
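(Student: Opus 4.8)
The plan is to verify, over $\ZFC$, that $\Psi_{sc}$ satisfies the three conditions \ref{Prop:A}, \ref{Prop:B} and \ref{Prop:F} from Definition \ref{definition:Closenessss}.\ref{item:Closeness}. Condition \ref{Prop:A} is immediate, since suitability of $\langle N,M,\mu,\nu,\rho\rangle$ is part of the definition of $\Psi_{sc}(N,M,\mu,\nu,\rho)$. For \ref{Prop:B}, given a cardinal $\mu$ and limit ordinals $\mu<\nu<\rho<\theta$, I would note that $\langle V_\theta,V_\theta,\mu,\nu,\rho\rangle$ is suitable: $V_\theta$ is transitive and closed under basic set operations, $V_\theta\cap\Ord=\theta\in\Lim$, $\mu<\nu<\rho\in V_\theta$ since $\rho<\theta$, and $\mu$ remains a cardinal in $V_\theta$ by downward absoluteness of cardinality; the additional clause ${}^\mu\rho\cap V_\theta\subseteq V_\theta$ then holds trivially.

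The substance of the proposition lies in \ref{Prop:F}, and the one fact to isolate is that for ordinals $\mu,\rho$ below a limit ordinal $\theta$ one has ${}^\mu\rho\subseteq V_\theta$: any $f\colon\mu\to\rho$ is a subset of $\mu\times\rho\in V_\theta$, so $f\in\mathcal{P}(\mu\times\rho)\subseteq V_{\rank(\mu\times\rho)+1}\subseteq V_\theta$, using that $\theta$ is a limit. Now assume the hypotheses of \ref{Prop:F}: $M\subseteq N$ transitive and closed under basic set operations, $M\cap\Ord=N\cap\Ord\in\Lim$, and limit ordinals $\mu<\nu<\rho<\theta\in M$. Write $N'=N\cap V_\theta$ and $M'=M\cap V_\theta$. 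I would first reconcile the two suitability clauses: $M'\subseteq N'$ are transitive, and closed under basic set operations by the remark preceding Definition \ref{definition:Closenessss}; since $\theta\in M\subseteq N$ with $M,N$ transitive we get $M'\cap\Ord=N'\cap\Ord=\theta\in\Lim$; and $\mu<\nu<\rho\in M'$ because $\rho<\theta$. As for the closure clause: ${}^\mu\rho\subseteq V_\theta$ gives ${}^\mu\rho\cap N={}^\mu\rho\cap N'$, and this common set, being a subset of $V_\theta$, is contained in $M$ exactly when it is contained in $M\cap V_\theta=M'$. Combining these observations yields the biconditional $\Psi_{sc}(N,M,\mu,\nu,\rho)\leftrightarrow\Psi_{sc}(N',M',\mu,\nu,\rho)$ demanded by \ref{Prop:F}.

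I do not expect a real obstacle; the only point needing a moment's care is that ``$\mu$ is a cardinal'' transfers between $N$ and $N'=N\cap V_\theta$ in both directions. The downward direction is standard, and for the upward direction one observes that any function witnessing a failure of $\mu$ to be a cardinal in $N$ --- a surjection from an ordinal below $\mu$ onto $\mu$, or an injection of $\mu$ into such an ordinal --- is a subset of $\mu\times\mu\in V_\theta$ and hence already lies in $N\cap V_\theta=N'$. This is precisely why the closure condition defining $\Psi_{sc}$ is stated with $\rho$, the largest of the three distinguished ordinals, which \ref{Prop:F} places strictly below the truncation ordinal $\theta$: all the relevant function spaces, and all possible counterexamples to cardinality, then have rank below $\theta$ and are unaffected by intersecting with $V_\theta$.
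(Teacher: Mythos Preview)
Your proof is correct. The paper omits the proof entirely (the proposition carries only a \qed), so there is nothing to compare against directly; your verification of \ref{Prop:A}, \ref{Prop:B} and \ref{Prop:F} is the natural one, and it parallels the short argument the paper does spell out for the analogous proposition about $\Psi_{stc}$, where the key point is likewise that all relevant witnesses (there: subsets of $\rho$ and injections into $\rho$; here: functions $\mu\to\rho$ and potential surjections onto $\mu$) already lie in $V_\theta$.
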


 We now  show that this formula corresponds to the large cardinal concept of supercompactness.

 \begin{lemma}\label{lemma:PsiLargeSupercompact}
  A cardinal $\kappa$ is $\Psi_{sc}$-large if and only if there is a supercompact cardinal less than or equal to $\kappa$. 
 \end{lemma}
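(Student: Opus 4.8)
The proof will follow the same two-directional template used for $\Psi_{ms}$ and $\Psi_{str}$, invoking Lemma~\ref{lemma:EmbeddingLarge} to reduce $\Psi_{sc}$-largeness to the existence of embeddings $\map{j}{V}{M}$ into inner models with suitable closure. The forward direction (supercompact $\Rightarrow$ $\Psi_{sc}$-large) should be essentially immediate; the hard work is the converse, and I expect it to parallel the ``pulling back superstrongness along an iterated critical sequence'' argument from Lemma~\ref{lemma:CharStrong}, but with $\lambda$-supercompactness in place of $\lambda$-strongness.

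\textbf{Supercompact $\Rightarrow$ $\Psi_{sc}$-large.} Assume some $\mu\le\kappa$ is supercompact and fix a cardinal $\lambda>\kappa$. Choose a cardinal $\delta>\lambda$ and, by supercompactness of $\mu$, an inner model $M$ and an elementary embedding $\map{j}{V}{M}$ with $\crit(j)=\mu$, $j(\mu)>\delta$ (so in particular $j(\kappa)>\lambda$) and ${}^{\delta}M\subseteq M$. For any limit ordinals $\kappa<\eta\le\lambda$ and $\vartheta>j(\eta)$, the closure ${}^{\delta}M\subseteq M$ gives ${}^{\lambda}j(\eta)\cap V_\vartheta\subseteq M$ (since $j(\eta)<\delta$ and every function from $\lambda$ into $j(\eta)$ is a $\delta$-sequence of ordinals, hence in $M$), so $\Psi_{sc}(V_\vartheta,V_\vartheta^M,\lambda,j(\kappa),j(\eta))$ holds. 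By Lemma~\ref{lemma:EmbeddingLarge}, $\kappa$ is $\Psi_{sc}$-large. One should double-check the bookkeeping that ${}^\mu\rho$ with $\rho=j(\eta)$ really only asks for functions of domain size $\mu<\lambda$, so the closure demanded is well below $\delta$; this makes the direction robust.

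\textbf{$\Psi_{sc}$-large $\Rightarrow$ supercompact below $\kappa$.} Suppose $\kappa$ is $\Psi_{sc}$-large but no cardinal $\le\kappa$ is supercompact. Then there is a regular $\eta>\kappa$ such that no cardinal $\le\kappa$ is $\eta$-supercompact. Apply Lemma~\ref{lemma:EmbeddingLarge} to get a cardinal $\lambda>\eta$, an inner model $M$, and $\map{j}{V}{M}$ with $j(\kappa)>\lambda$ and $\Psi_{sc}(V_\vartheta,V_\vartheta^M,\lambda,j(\kappa),j(\eta))$ for all limit $\vartheta>j(\eta)$; unwinding $\Psi_{sc}$, this yields ${}^{\lambda}j(\eta)\cap V\subseteq M$, in particular $V_\lambda\subseteq M$ and $M$ is closed under $\lambda$-sequences from $V_\lambda$ (equivalently, ${}^{\mu'}\gamma\subseteq M$ for every $\mu'\le\lambda$ and $\gamma<j(\eta)$). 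Set $\mu=\crit(j)<\kappa$ and $\mu_i=j^i(\mu)$. Since $V_\lambda\subseteq M$ and $\lambda$ is (WLOG, after enlarging) of uncountable cofinality, Lemma~\ref{lemma:IteratedCrit} gives a least $n$ with $\mu_n>\eta$; as $\mu<\eta$ we get $n>0$, and since $\mu$ is not $\eta$-supercompact, $n>1$. The plan is now to prove by induction, exactly as in Lemma~\ref{lemma:CharStrong}, that in $M$ the cardinal $\mu_1$ is ``superstrong-like with enough supercompactness'' with target $\mu_n$ --- more precisely that there is $N\subseteq M$ with ${}^{\mu_{n}}(V^M_{\mu_n})\subseteq N$ (or the appropriate ${}^{\mu_1}$-closure) and $\map{i}{M}{N}$ with $\crit(i)=\mu_1$, $i(\mu_1)=\mu_n$. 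Composing, $\map{i\circ j}{V}{N}$ has critical point $\mu$ and maps $\mu$ above $\eta$ with $N$ closed under $\eta$-sequences, contradicting that $\mu$ is not $\eta$-supercompact.

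\textbf{Main obstacle.} The delicate point is matching the \emph{closure} that $\Psi_{sc}$ provides against what ``$\eta$-supercompact'' needs: $\Psi_{sc}$ only gives ${}^\mu\rho\cap N\subseteq M$, i.e.\ closure under sequences whose \emph{domain} has the (small) size $\mu$ of the relevant critical point, not under all $\eta$-sequences of ordinals below $j(\eta)$. So in the induction one must carefully track that at stage $i$ the embedding $\map{i}{M}{N}$ has $\crit(i)=\mu_1$ and the ${}^{\mu_1}$-closure of $V^M_{\mu_{i+1}}$ inside $N$ is exactly what elementarity of $j$ delivers (since in $V$, $j$ witnesses that $\mu$ is, in the relevant sense, ``superstrong with $\mu$-supercompact closure'' with target $\mu_1$), and that at the final step the ${}^{\mu}$-closure at the top transfers, after composing with $j$, into genuine $\eta$-supercompactness of $\mu$ --- here one uses that a $\mu$-supercompactness embedding on $V_{\theta+1}$ unfolds (via Lemma~\ref{lemma:EmbeddingLarge}'s extender argument and \cite[Lemma~26.1]{kanamori}) to an embedding $\map{j}{V}{M}$ with ${}^{\eta}M\subseteq M$, because closure under $\mu$-sequences of the relevant rank initial segment is equivalent, below the next critical value, to closure under $\eta$-sequences of ordinals. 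Verifying this equivalence at each stage, and checking it survives the composition $i\circ j$, is the technical crux; everything else is a routine adaptation of Lemma~\ref{lemma:CharStrong}.
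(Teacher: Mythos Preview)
Your forward direction is correct and essentially matches the paper (the extra $\delta>\lambda$ is harmless overkill; the paper just uses $\lambda$-supercompactness of $\mu$ directly).

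For the converse, your setup is right but the paper takes a cleaner route that sidesteps your ``Main obstacle'' entirely. First, a correction: in the instantiation $\Psi_{sc}(V_\vartheta,V_\vartheta^M,\lambda,j(\kappa),j(\eta))$, the $\mu$-slot of $\Psi_{sc}$ is filled by $\lambda$, not by $\crit(j)$. So the closure you obtain is ${}^{\lambda}j(\eta)\subseteq M$ with $\lambda>\eta$ --- in particular you \emph{do} get all $\eta$-sequences of ordinals below $j(\eta)$ into $M$. The obstacle as you stated it is a phantom caused by a notational collision between the formal parameter $\mu$ of $\Psi_{sc}$ and your $\mu=\crit(j)$.

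The genuine issue is the closure of the \emph{target} of the composed embedding, and here the paper does not iterate a superstrongness-with-closure argument as in Lemma~\ref{lemma:CharStrong}. Instead, from ${}^{\lambda}j(\eta)\subseteq M$ and $\mu_{n-1}\le\eta<\lambda$ one gets $j[\mu_{n-1}]\in M$ in one step, and this directly yields that $\mu$ is $(n{-}1)$-huge with targets $\mu_1,\ldots,\mu_{n-1}$ via the derived normal ultrafilter on $\mathcal{P}(\mu_{n-1})$ ({\cite[Theorem~24.8]{kanamori}}). In particular $\mu$ is huge with target $\mu_{n-1}$, so by elementarity $\mu_1$ is huge in $M$ with target $\mu_n$. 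A single composition then suffices: taking $\map{i}{M}{N}$ with $\crit(i)=\mu_1$, $i(\mu_1)=\mu_n$ and $M\cap{}^{\mu_n}N\subseteq N$, one has $j[\eta]\in M$ and $\eta<\mu_n$, hence $(i\circ j)[\eta]=i[j[\eta]]\in N$, contradicting that $\mu$ is not $\eta$-supercompact. Your inductive plan might be salvageable, but the hugeness shortcut eliminates exactly the bookkeeping you flagged as the crux. Also, take $\eta$ to be a $\beth$-fixed point of uncountable cofinality (not merely regular) so that $V_\eta\subseteq M$ follows from the sequence closure and Lemma~\ref{lemma:IteratedCrit} applies at $\eta$.
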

 
 \begin{proof}
  First, assume that $\mu\leq\kappa$ is a supercompact cardinal, and $\lambda>\kappa$ is a cardinal. Then, there exists a transitive class $M$ with ${}^\lambda M\subseteq M$ and an elementary embedding $\map{j}{V}{M}$ with $\crit(j)=\mu$ and $j(\mu)>\lambda$. 
  If $\kappa<\eta\leq\lambda$ is a limit ordinal, and  $\vartheta>j(\eta)$ is a limit ordinal, then ${}^{{<}\lambda}j(\eta)\subseteq V_\vartheta^M$ and this shows that $\Psi_{sc}(V_\vartheta,V_\vartheta^M,\lambda,j(\kappa),j(\eta))$  holds. By Lemma \ref{lemma:EmbeddingLarge}, this shows that $\kappa$ is $\Psi_{sc}$-large.

  Now, assume that $\kappa$ is $\Psi_{sc}$-large, and no cardinal less than or equal to $\kappa$ is supercompact. Pick a fixed point $\eta>\kappa$ of the $\beth$-function of uncountable cofinality with the property that no cardinal less than or equal to $\kappa$ is $\eta$-supercompact, and apply Lemma \ref{lemma:EmbeddingLarge} to find a cardinal $\lambda>\eta$, a transitive class $M$ with ${}^\lambda j(\eta)\subseteq M$ and an elementary embedding $\map{j}{V}{M}$ with $j(\kappa)>\lambda$. 
  %
  %Note that 
  %
  Define $\mu=\crit(j)<\kappa$, and  $\mu_i=j^i(\mu)$ for all $i<\omega$. Then, our choice of $\eta$ ensures that $V_\eta\subseteq M$, and we can use Lemma \ref{lemma:IteratedCrit} to find a minimal natural number $n>1$ with $\mu_n>\eta$.

  \begin{claim*}
   The cardinal $\mu$ is (n-1)-huge with targets $\mu_1,\ldots,\mu_{n-1}$.\footnote{Remember that, given a natural number $n>0$, a cardinal $\kappa$ is \emph{$n$-huge with targets $\lambda_1,\ldots,\lambda_n$} if there is a transitive class $M$ with ${}^{\lambda_n}M\subseteq M$ and an elementary embedding $\map{j}{V}{M}$ with $\crit(j)=\kappa$ and $j(\lambda_m)=\lambda_{m+1}$ for all $m<n$.} 
  \end{claim*}
  
  \begin{proof}
   Since we have $\mu_{n-1}\leq\eta<\lambda$, the fact that ${}^\lambda j(\eta)\subseteq M$ implies that $j[\mu_{n-1}]\in M$, and hence, we know that $$\Set{A\subseteq\Pow(\mu_{n-1})}{j[\mu_{n-1}]\in j(A)}$$ is a ${<}\mu$-complete normal ultrafilter over $\Pow(\mu_{n-1})$ that contains the set $$\Set{a\subseteq \mu_{n-1}}{\text{$\ot(a\cap\mu_i)=\mu_{i-1}$ for all $0<i<n$}}.$$ By {\cite[Theorem 24.8]{kanamori}}, this implies the statement of the claim. 
  \end{proof}

  By the above claim, it easily follows  that $\mu$ is huge with target $\mu_{n-1}$, and hence, elementarity implies that, in $M$, the cardinal $\mu_1$ is huge with target $\mu_n$. 
  Therefore, we find a transitive class $N\subseteq M$ definable in $M$ with $M\cap{}^{\mu_n}N\subseteq N$, and an elementary embedding $\map{i}{M}{N}$ definable in $M$ with $\crit(i)=\mu_1$ and $i(\mu_1)=\mu_n$. 
 Since $j[\eta]\in{}^\eta j(\eta)\subseteq M$ and $\eta<\mu_n$, we then know that $$(i\circ j)[\eta] ~ = ~ i[j[\eta]] ~ \in ~ M\cap {}^\eta N ~ \subseteq ~ N.$$ 
  This shows that $\map{i\circ j}{V}{N}$ is an elementary embedding with $\crit(i\circ j)=\mu$, $(i\circ j)(\mu)>\eta$ and $(i\circ j)[\eta]\in N$. But this contradicts the fact that $\mu$ is not $\eta$-supercompact.    
 \end{proof}

%%%%%%%%%

\subsection{Extendible cardinals}

 Let $\Psi_{ext}(v_0,\ldots,v_4)$ denote the canonical first-order formula in the language of set theory with the property that $\Psi_{ext}(N,M,\mu,\nu,\rho)$ holds if and only if the tuple $\langle N,M,\mu,\nu,\rho\rangle$ is suitable 
  and $N\cap V_\rho\subseteq M$ holds.

  \begin{proposition}
  The formula $\Psi_{ext}$ is a measure of closeness. \qed 
 \end{proposition}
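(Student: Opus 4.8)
The plan is to check, working in $\ZFC$, the three conditions \ref{Prop:A}, \ref{Prop:B} and \ref{Prop:F} of Definition~\ref{definition:Closenessss}.\ref{item:Closeness} for $\Psi_{ext}$. Condition \ref{Prop:A} needs no argument, since the suitability of $\langle N,M,\mu,\nu,\rho\rangle$ is built into the definition of $\Psi_{ext}(N,M,\mu,\nu,\rho)$. For \ref{Prop:B}, let $\mu$ be a cardinal and let $\nu<\rho<\theta$ be limit ordinals greater than $\mu$. I would first verify that $\langle V_\theta,V_\theta,\mu,\nu,\rho\rangle$ is suitable: $V_\theta$ is transitive and, as $\theta$ is a limit ordinal, closed under basic set operations, with $V_\theta\cap\Ord=\theta\in\Lim$; the limit ordinals $\mu<\nu<\rho$ lie in $V_\theta$; and $\mu$ is still a cardinal in $V_\theta$, since a surjection witnessing otherwise would already belong to $V$. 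The additional clause defining $\Psi_{ext}$ then holds trivially, because $\rho<\theta$ yields $V_\theta\cap V_\rho=V_\rho\subseteq V_\theta$, so $\Psi_{ext}(V_\theta,V_\theta,\mu,\nu,\rho)$ holds.

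The one condition requiring an actual (if short) argument is \ref{Prop:F}. Fix transitive sets $M\subseteq N$ closed under basic set operations with $M\cap\Ord=N\cap\Ord\in\Lim$ and limit ordinals $\mu<\nu<\rho<\theta\in M$. Since $\theta$ is a limit ordinal lying in the common ordinal part of $M$ and $N$, the sets $M\cap V_\theta$ and $N\cap V_\theta$ are transitive, closed under basic set operations (by the remark preceding Definition~\ref{definition:Closenessss}), have ordinal height $\theta\in\Lim$, satisfy $M\cap V_\theta\subseteq N\cap V_\theta$, and still contain the limit ordinals $\mu<\nu<\rho$. The crucial observation is that \anf{$\mu$ is a cardinal in $N$} is absolute between $N$ and $N\cap V_\theta$: any surjection from an ordinal below $\mu$ onto $\mu$ is a subset of $\mu\times\mu$ and so has rank below $\theta$. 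Hence the tuples $\langle N,M,\mu,\nu,\rho\rangle$ and $\langle N\cap V_\theta,M\cap V_\theta,\mu,\nu,\rho\rangle$ are suitable simultaneously; if neither is suitable, both instances of $\Psi_{ext}$ fail and the biconditional holds vacuously. If both are suitable, I would conclude by noting that $\rho<\theta$ gives
\[
 (N\cap V_\theta)\cap V_\rho ~=~ N\cap V_\rho ~\subseteq~ V_\rho ~\subseteq~ V_\theta,
\]
so $N\cap V_\rho\subseteq M$ holds if and only if $N\cap V_\rho\subseteq M\cap V_\theta$ holds, which is exactly the equivalence $\Psi_{ext}(N,M,\mu,\nu,\rho)\leftrightarrow\Psi_{ext}(N\cap V_\theta,M\cap V_\theta,\mu,\nu,\rho)$ demanded by \ref{Prop:F}.

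I do not anticipate any genuine obstacle: this is the same bookkeeping that underlies the (unstated) proofs that $\Psi_{ms}$, $\Psi_{str}$ and $\Psi_{sc}$ are measures of closeness. The only points that need a moment's care are the rank computation making \anf{$\mu$ is a cardinal} absolute between $N$ and its rank-initial segment $N\cap V_\theta$, and the bookkeeping of the case distinction on whether the relevant tuples are suitable at all.
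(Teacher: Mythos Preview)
Your proposal is correct and takes essentially the same approach as the paper, which simply marks the proposition with a \qed and gives no argument: the verification of \ref{Prop:A}, \ref{Prop:B} and \ref{Prop:F} is considered immediate, exactly parallel to the unproved propositions for $\Psi_{ms}$, $\Psi_{str}$ and $\Psi_{sc}$. You have simply spelled out the routine bookkeeping (in particular the rank computation making ``$\mu$ is a cardinal in $N$'' absolute between $N$ and $N\cap V_\theta$) that the paper leaves to the reader.
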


 The following result shows that the formula $\Psi_{ext}$ corresponds to extendibility. In order to motivate the concepts that will be introduced in the next  chapter, we use Theorem \ref{theorem:Magidor} to prove the backward implication of the stated equivalence.\footnote{It would also be possible to establish this equivalence with the help of {\cite[Theorem 2.28]{MR3151400}}, which shows that a cardinal $\kappa$ is extendible if and only if it is \emph{jointly $\lambda$-supercompact and $\lambda$-superstrong} for all $\lambda>\kappa$ (see {\cite[Definition 2.24]{MR3151400}}). However, the proof of the equivalence that we present, using the connections between second-order logic and extendibility, provides an elegant alternative to an adaptation of the proof of Lemma \ref{lemma:PsiLargeSupercompact} to these large cardinal properties.}

 \begin{lemma}\label{lemma:ExtLarge}
  A cardinal $\kappa$ is $\Psi_{ext}$-large if and only if there is an extendible cardinal less than or equal to $\kappa$. 
 \end{lemma}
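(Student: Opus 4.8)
The plan is to prove both directions using Lemma~\ref{lemma:EmbeddingLarge}, together with Magidor's Theorem~\ref{theorem:Magidor} for the backward direction as announced in the footnote.

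\medskip

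\emph{Forward direction.} Suppose $\kappa$ is $\Psi_{ext}$-large; I want to find an extendible cardinal $\mu\leq\kappa$. Assume toward a contradiction that no cardinal $\leq\kappa$ is extendible. Then, since extendibility of $\mu$ is equivalent to $\mu$ being $\eta$-extendible for all $\eta$ (and by the usual reflection of the failure), there should be a single limit ordinal $\eta>\kappa$ witnessing that no $\mu\leq\kappa$ is $\eta$-extendible; pick $\eta$ to be a suitably closed point (e.g.\ a $\beth$-fixed point of uncountable cofinality) so that the structural arguments go through. Applying Lemma~\ref{lemma:EmbeddingLarge} with this $\eta$, I obtain a cardinal $\lambda>\eta$, an inner model $M$ and an elementary embedding $\map{j}{V}{M}$ with $j(\kappa)>\lambda$ such that $\Psi_{ext}(V_\vartheta,V_\vartheta^M,\lambda,j(\kappa),j(\eta))$ holds for all limit $\vartheta>j(\eta)$; in particular $V^M_{j(\eta)}\subseteq M$ agrees with $V_{j(\eta)}$, i.e.\ $V_{j(\eta)}\subseteq M$. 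Now set $\mu=\crit(j)<\kappa$ and $\mu_i=j^i(\mu)$. Since $V_{j(\eta)}\subseteq M$ and $j(\eta)>\eta$ has uncountable cofinality, Lemma~\ref{lemma:IteratedCrit} gives a least $n$ with $\mu_n\geq j(\eta)>\eta$; as $\mu<\eta$ and $\mu$ is not $\eta$-extendible we get $n>1$. The heart of the argument is then a claim, proved by induction on $0<i<n$ exactly parallel to Lemmas~\ref{lemma:CharStrong} and~\ref{lemma:PsiLargeSupercompact}: in $M$, the cardinal $\mu_1$ is ``$\eta$-extendible with the appropriate targets'' — more precisely, using the $\Psi_{ext}$-closure $V_{j(\rho)}^M\subseteq M$ one shows the relevant witnessing embeddings $\map{i}{M}{N}$ have $V^M_{\mu_{i+1}}\subseteq N$, so that $V_{\mu_{i+1}}\subseteq N$ whenever $\mu_{i+1}\leq\eta$ (the crucial use of minimality of $n$), and $\map{i\circ j}{V}{N}$ witnesses a stronger-and-stronger extendibility-type property of $\mu$ in $V$. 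At stage $n-1$ this produces a transitive class $N$ with $V_\lambda\subseteq V_{\mu_n}^M\subseteq N$ and an embedding $\map{i\circ j}{V}{N}$ with $\crit(i\circ j)=\mu$ and $(i\circ j)(\mu)>\eta$ that restricts to an $\eta$-extendibility embedding on $V_{\eta+1}$, contradicting that $\mu$ is not $\eta$-extendible. (Since the footnote offers an alternative via \cite[Theorem~2.28]{MR3151400}, this direction can equally be run by showing $\mu$ is jointly $\lambda$-supercompact and $\lambda$-superstrong for unboundedly many $\lambda$, but I expect the direct embedding-chasing argument above, modeled on the two preceding subsections, to be the intended one.)

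\medskip

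\emph{Backward direction, via Theorem~\ref{theorem:Magidor}.} Suppose there is an extendible cardinal $\mu\leq\kappa$; I want to conclude that $\kappa$ is $\Psi_{ext}$-large. By Proposition~\ref{proposition:ExtendibleIsLargeForAll}, every cardinal $\geq\mu$ is $\Psi$-large for \emph{every} measure of closeness $\Psi$, and in particular for $\Psi_{ext}$, so $\kappa$ is $\Psi_{ext}$-large. This makes the backward direction essentially immediate. The point of phrasing it ``using Theorem~\ref{theorem:Magidor}'', as the lemma's preamble suggests, is presumably to instead argue: a strong compactness cardinal for $\LL^2$ exists (namely any $\rho\geq\mu$, by Theorem~\ref{theorem:Magidor}), and one can read off from the $\LL^2$-compactness a family of embeddings with the closure $N\cap V_\rho\subseteq M$ demanded by $\Psi_{ext}$ — I would set up an $\LL^2$-theory describing an elementary embedding $\map{j}{V_{\theta+1}}{M}$ with $j(\kappa)>\lambda$ whose target $M$ contains $V_\lambda$, note this theory is $<\rho$-consistent (witnessed by suitable rank-initial segments as in the $T_\kappa$ construction), apply strong compactness of $\rho$ to get a model, take its transitive collapse, and verify $\Psi_{ext}$ holds, then invoke Lemma~\ref{lemma:EmbeddingLarge}. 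Either route closes the proof.

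\medskip

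\emph{Main obstacle.} The delicate point is the inductive claim in the forward direction: making sure that the $\Psi_{ext}$-closure hypothesis $V_{j(\rho)}^M\subseteq M$ (equivalently, correctness of $M$ about $V_{j(\eta)}$, resp.\ $V_{j(\rho)}$) transfers downward through the composition $i\circ j$ so that each intermediate class $N$ is correct enough — $V_{\mu_{i+1}}\subseteq N$ — to let the minimality of $n$ force one more step, and finally so that the terminal embedding genuinely witnesses $\eta$-extendibility of $\mu$ (one must check it restricts correctly to $V_{\eta+1}$ and sends $\eta$ above $\kappa$, which is where $\mu_n\geq j(\eta)>\eta$ is used). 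Getting the bookkeeping of the target ranks $\mu_i$ versus $\eta$, $j(\eta)$, $\lambda$ exactly right — so that ``superstrong/huge with target $\mu_{i+1}$'' gets upgraded to the full extendibility witness rather than a weaker iterate — is the step I expect to require the most care, and it is why the authors chose to invoke Theorem~\ref{theorem:Magidor} for the converse instead.
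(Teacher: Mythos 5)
Your easy direction is fine, but you have the roles of Theorem \ref{theorem:Magidor} reversed: the implication from the existence of an extendible cardinal below $\kappa$ to $\Psi_{ext}$-largeness is exactly Proposition \ref{proposition:ExtendibleIsLargeForAll} and needs nothing more, and the paper invokes Magidor's theorem for the \emph{other}, harder implication. Concretely, the paper shows that a $\Psi_{ext}$-large $\kappa$ is a strong compactness cardinal for $\LL^2$: given a ${<}\kappa$-consistent $\LL^2$-theory $T$, choose a cardinal $\eta>\kappa$ with $T\in V_\eta\prec_{\Sigma_2}V$, apply Lemma \ref{lemma:EmbeddingLarge} to get $\map{j}{V}{M}$ with $j(\kappa)>\lambda>\eta$ and $V_{j(\eta)}\subseteq M$, note that $j[T]$ is an element of $M$ of $M$-cardinality less than $j(\kappa)$ and is therefore consistent in $M$ with a model inside $V_{j(\eta)}=V_{j(\eta)}^M\prec_{\Sigma_2}M$, hence consistent in $V$, and finally identify $j[T]$ with $T$ by renaming. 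Theorem \ref{theorem:Magidor} then yields an extendible cardinal $\leq\kappa$. This route is short and avoids all embedding iteration.

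The hard direction of your proposal, by contrast, has a genuine gap. The inductive claim --- ``in $M$, the cardinal $\mu_1$ is $\eta$-extendible with the appropriate targets'' --- is never made precise, and the analogy with Lemmas \ref{lemma:CharStrong} and \ref{lemma:PsiLargeSupercompact} breaks down at exactly this point. In those proofs the witnessing objects are embeddings $\map{i}{M}{N}$ of the whole model $M$ into a transitive class $N$, so the composition $\map{i\circ j}{V}{N}$ is immediately available and witnesses the next stage of the induction in $V$. An extendibility witness is instead an embedding between rank initial segments $V_\alpha$ and $V_\beta$; to compose it with $j$ you must arrange that the range of the relevant restriction of $j$ lands inside the domain, that $V_\alpha^M$ and $V_\beta^M$ agree with the true $V_\alpha$ and $V_\beta$ (which the closure $V_{j(\eta)}\subseteq M$ only guarantees below $j(\eta)$, while the targets $\mu_{i+1}$ and the codomain ranks climb past it), and that the composed map sends $\mu$ above the rank of its own domain, as the definition of extendibility demands. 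None of this bookkeeping is supplied, and the footnote's suggested alternative is not this iteration at all but rather the characterization of extendibility as joint $\lambda$-supercompactness and $\lambda$-superstrongness, i.e., an adaptation of the proof of Lemma \ref{lemma:PsiLargeSupercompact}. As written, the implication from $\Psi_{ext}$-largeness to the existence of an extendible cardinal does not go through.
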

 
 \begin{proof}
  First, note that by Proposition \ref{proposition:ExtendibleIsLargeForAll}, every cardinal greater than or equal to an extendible cardinal is $\Psi_{ext}$-large. 
  In the other direction, assume that $\kappa$ is $\Psi_{ext}$-large and let $T$ be a ${<}\kappa$-satisfiable $\LL^2$-theory. Pick a cardinal $\eta>\kappa$ with $T\in V_\eta\prec_{\Sigma_2}V$. By our assumption, we can apply Lemma \ref{lemma:EmbeddingLarge} to find a cardinal $\lambda>\eta$,  a transitive class $M$ and an elementary embedding $\map{j}{V}{M}$ such that $j(\kappa)>\lambda$  
  %${}^\lambda j(\lambda)\subseteq M$ 
   and $V_{j(\eta)}\subseteq M$. 
  We then know that $V_{j(\eta)}=V^M_{j(\eta)}\prec_{\Sigma_2}M$, and $j(T)\in V_{j(\eta)}$ is a ${<}j(\kappa)$-satisfiable $\LL^2$-theory in $M$. 
  Moreover, our assumptions on $M$ ensure that $j[T]$ is an element of $M$ and has cardinality less than $j(\kappa)$ in $M$. But this implies that $j[T]$ is satisfiable in $M$, and the above observations show that $V_{j(\eta)}$ contains a structure that is a model of $j[T]$ in $M$. We conclude that $j[T]$ is also satisfiable in $V$ and, since we can identify the theories $T$ and $j[T]$ through a renaming of symbols, this shows that $T$ is satisfiable in $V$. These computations show that $\kappa$ is a strong compactness cardinal for $\LL^2$, and hence, Theorem~\ref{theorem:Magidor} ensures that there exists an extendible cardinal less than or equal to $\kappa$.  
 \end{proof}

%%%%%%%%%%%%%

\subsection{$\omega_1$-strongly compact cardinals}\label{subsection:Omega1StronglyCompact}
 Remember that a cardinal $\kappa>\omega_1$ is \emph{$\omega_1$-strongly compact} if for every set $I$, every ${<}\kappa$-complete filter on $I$ can be extended to a countably complete ultrafilter on $I$ (see \cite{MR3152715}, \cite{MR3226024} and \cite{MR926461}). Note that every cardinal above an $\omega_1$-strongly compact cardinal is also $\omega_1$-strongly compact. 
 In the following, we show that the na\"ive attempt to characterize strongly compact cardinals through the concept of $\Psi$ -largeness (motivated by {\cite[Theorem 22.17]{kanamori}}) actually leads to a characterization of $\omega_1$-strongly compact cardinals. 
 This characterization differs strongly from the ones presented above, because, {e.g.,} the first $\omega_1$-strongly compact cardinal can be singular (see {\cite[Theorem 6.1]{MR3152715}}). 
  Let $\Psi_{stc}(v_0,\ldots,v_4)$ denote the canonical first-order formula in the language of set theory with the property that $\Psi_{stc}(N,M,\mu,\nu,\rho)$ holds if and only if the tuple $\langle N,M,\mu,\nu,\rho\rangle$ is suitable   and   for every $d\in N\cap\Pow(\rho)$ with the property that $N$ contains no injection of $\mu$ into $d$, there exists $c\in M\cap\Pow(\rho)$ such that $d\subseteq c$ and $M$ does not contain an injection of $\nu$ into $c$.\footnote{Note that these assumptions are phrased in this way because Definition \ref{definition:Closenessss}.\ref{item:Closeness} only makes minimal assumptions on the closure properties of the sets $M$ and $N$.}

  \begin{proposition}
   The formula $\Psi_{stc}$ is a measure of closeness. 
  \end{proposition}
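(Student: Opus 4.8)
The plan is to check, one scheme at a time, that $\ZFC$ proves \ref{Prop:A}, \ref{Prop:B} and \ref{Prop:F} of Definition \ref{definition:Closenessss}.\ref{item:Closeness} for $\Psi_{stc}$. The recurring observation that makes all three routine is a trivial rank bound: whenever $\rho<\theta$ are ordinals with $\theta$ a limit, every subset of $\rho$ has rank at most $\rho<\theta$, and every injection of $\mu$ or of $\nu$ into such a subset is a set of pairs of ordinals below $\rho$, hence has rank below $\theta$; in either case the object belongs to $V_\theta$. Combined with the already-established fact that $\Psi_{ms}$ is a measure of closeness, which will take care of all the ``suitability'' bookkeeping, this reduces the whole proposition to short arguments.

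Scheme \ref{Prop:A} is immediate, since suitability of $\langle N,M,\mu,\nu,\rho\rangle$ is literally part of the defining clause of $\Psi_{stc}$. For \ref{Prop:B}, where $M=N=V_\theta$, suitability of $\langle V_\theta,V_\theta,\mu,\nu,\rho\rangle$ is inherited from property \ref{Prop:B} for $\Psi_{ms}$; and for the additional clause I would, given $d\in V_\theta\cap\Pow(\rho)$ into which $V_\theta$ admits no injection of $\mu$, simply take $c=d$. Indeed, an injection $\nu\to d$ lying in $V_\theta$ would compose with the inclusion $\mu\hookrightarrow\nu$ (itself a set of rank below $\theta$) to an injection $\mu\to d$ lying in $V_\theta$, contrary to hypothesis.

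For \ref{Prop:F}, I would show that the two instances $\Psi_{stc}(N,M,\mu,\nu,\rho)$ and $\Psi_{stc}(N\cap V_\theta,M\cap V_\theta,\mu,\nu,\rho)$ agree clause by clause. Their suitability clauses are equivalent because $\Psi_{ms}$ satisfies \ref{Prop:F}. For the remaining clause, the rank bound yields $N\cap\Pow(\rho)=(N\cap V_\theta)\cap\Pow(\rho)$ and $M\cap\Pow(\rho)=(M\cap V_\theta)\cap\Pow(\rho)$, and, for any $d,c\in\Pow(\rho)$, that $N$ contains an injection of $\mu$ into $d$ if and only if $N\cap V_\theta$ does, and similarly for $M$, $\nu$ and $c$. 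Hence the outer universal quantifier (ranging over admissible $d$) and the inner existential quantifier (over $c$ with $d\subseteq c$ and the injection condition) carry the same meaning in both instances, so the biconditional \ref{Prop:F} holds in both directions.

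I do not anticipate a real obstacle. The only place that calls for a little care is \ref{Prop:F}, where one must state precisely why replacing $N$ and $M$ by $N\cap V_\theta$ and $M\cap V_\theta$ changes neither the family of subsets of $\rho$ they contain nor which injections between the relevant ordinals they contain; once the bounds $\rho<\theta$ and $\rho+2<\theta$ are noted this is mechanical, and the suitability half of each scheme is simply imported from $\Psi_{ms}$.
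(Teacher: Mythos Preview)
Your proposal is correct and follows essentially the same approach as the paper: the paper dismisses \ref{Prop:A} and \ref{Prop:B} as trivial and handles \ref{Prop:F} by exactly the rank-bound observation you spell out, namely that $M$ and $M\cap V_\theta$ (and likewise $N$ and $N\cap V_\theta$) contain the same subsets of $\rho$ and the same injections from $\mu$ or $\nu$ into $\rho$. Your treatment of \ref{Prop:B} via $c=d$ and restriction of injections is more explicit than the paper's, but the idea is the same.
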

  
  \begin{proof}
    %First, fix transitive sets $M\subseteq N$ closed under taking intersections and  pairs with $M\cap\Ord=N\cap\Ord\in\Lim$ and limit ordinals $\mu<\nu<\rho_0\leq\rho_1\in M$ with the property that $\mu$ is a cardinal in $N$ and for every $d\in N\cap\Pow(\rho_1)$ with the property that $N$ contains no injection of $\mu$ into $d$, there exists $c\in M\cap\Pow(\rho_1)$ such that $d\subseteq c$ and $M$ does not contain an injection of $\nu$ into $c$.  
    %
   % Pick $d\in N\cap\Pow(\rho_0)$ with the property that $N$ contains no injection of $\mu$ into $d$. Then, we can find $c\in M\cap\Pow(\rho_1)$ such that $d\subseteq c$ and $M$ does not contain an injection of $\nu$ into $c$.  We now know that $c\cap\rho_0$ is an element of $M\cap\Pow(\rho_0)$ with the property that $d\subseteq c\cap\rho_0$ and $M$ does not contain an injection of $\nu$ into $c$.  This shows that the formula $\Psi_{stc}$ satisfies \ref{Prop:E} in Definition \ref{definition:Closeness}.  
   % 
The formula $\Psi_{stc}$ trivially satisfies \ref{Prop:A} and \ref{Prop:B} in Definition \ref{definition:Closenessss}.\ref{item:Closeness}. 
Assume that the tuple $\langle N,M,\mu,\nu,\rho\rangle$ is suitable and $\theta>\rho$ is a limit ordinal in $M$.  
    Since $M$ and $M\cap V_\theta$ (respectively, $N$ and $N\cap V_\theta$) contain the same subsets of $\rho$, the same injections from $\mu$ into $\rho$ and the same injections from $\nu$ into $\rho$, it follows directly that $\Psi_{stc}(N,M,\mu,\nu,\rho)$ holds if and only if $\Psi_{stc}(N\cap V_\theta,M\cap V_\theta,\mu,\nu,\rho)$ holds. This shows that $\Psi_{stc}$ also  satisfies \ref{Prop:F} in Definition \ref{definition:Closenessss}.\ref{item:Closeness}.    
  \end{proof}

 \begin{lemma}
  A cardinal is $\omega_1$-strongly compact if and only if it is $\Psi_{stc}$-large. 
 \end{lemma}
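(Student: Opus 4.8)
Both implications are most naturally established through the embedding characterisation of $\omega_1$-strong compactness due to Bagaria and Magidor (see \cite{MR3152715,MR3226024}): a cardinal $\kappa$ is $\omega_1$-strongly compact if and only if for every cardinal $\gamma\ge\kappa$ there are an inner model $M$ and an elementary embedding $\map{j}{V}{M}$ — which may be chosen to be an ultrapower by a countably complete ultrafilter, and which then satisfies $j\restriction\gamma\in V$ — together with a set $s\in M$ such that $j[\gamma]\subseteq s$ and $M\models\vert s\vert<j(\kappa)$. Recall also that $\crit(j)>\omega$ is automatic for any non-trivial $\map{j}{V}{M}$. We combine this with Lemma~\ref{lemma:EmbeddingLarge}, which says that $\kappa$ is $\Psi_{stc}$-large if and only if for every limit ordinal $\eta>\kappa$ there are unboundedly many cardinals $\lambda\ge\eta$ admitting an inner model $M$ and an elementary embedding $\map{j}{V}{M}$ with $j(\kappa)>\lambda$ such that $\Psi_{stc}(V_\vartheta,V_\vartheta^M,\lambda,j(\kappa),j(\eta))$ holds for all limit ordinals $\vartheta>j(\eta)$; unravelling the definition of $\Psi_{stc}$ and letting $\vartheta$ grow, this last condition says exactly that every $d\in V\cap\Pow(j(\eta))$ with $\vert d\vert^V<\lambda$ is contained in some $c\in M\cap\Pow(j(\eta))$ with $M\models\vert c\vert<j(\kappa)$.

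For the backward direction, assume $\kappa$ is $\Psi_{stc}$-large and fix a cardinal $\gamma\ge\kappa$. Pick a limit ordinal $\eta>\gamma$ and, using Lemma~\ref{lemma:EmbeddingLarge}, a cardinal $\lambda\ge\eta$, an inner model $M$ and an elementary embedding $\map{j}{V}{M}$ with $j(\kappa)>\lambda$, $j\restriction\gamma\in V$ and the covering property above. Applying that property to $d=j[\gamma]\in V$, which is a subset of $j(\gamma)\subseteq j(\eta)$ of cardinality $\gamma<\lambda$, yields a set $s\in M$ with $j[\gamma]\subseteq s$ and $M\models\vert s\vert<j(\kappa)$. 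Since $\crit(j)>\omega$, the Bagaria--Magidor criterion shows that $\kappa$ is $\omega_1$-strongly compact.

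For the forward direction, assume $\kappa$ is $\omega_1$-strongly compact, fix a limit ordinal $\eta>\kappa$ and a cardinal $\lambda_0$, and use the Bagaria--Magidor criterion to fix a sufficiently large cardinal $\gamma$ and a countably complete fine ultrafilter $U$ on $\mathcal P_\kappa(\gamma)$, so that for $\map{j=j_U}{V}{M=\Ult(V,U)}$ we have $s=[\id]_U\in M$, $j[\gamma]\subseteq s$ and $M\models\vert s\vert<j(\kappa)$; then $j(\kappa)>\vert s\vert^M\ge\gamma$, and by enlarging $\gamma$ we may fix a cardinal $\lambda$ with $\max(\eta,\lambda_0)\le\lambda<j(\kappa)$ (so $\lambda$ can be made arbitrarily large). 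Suitability of $\langle V_\vartheta,V_\vartheta^M,\lambda,j(\kappa),j(\eta)\rangle$ for limit $\vartheta>j(\eta)$ is routine, using that $M\subseteq V$ (so $\lambda$ remains a cardinal in $M$) and $\lambda<j(\kappa)<j(\eta)$. The remaining point is the covering property: given $d\in V\cap\Pow(j(\eta))$ with $\vert d\vert^V<\lambda$, one covers the part of $d$ lying in $\range(j)$ by $s\cap j(\eta)$ (which has $M$-cardinality below $j(\kappa)$) and covers the rest by decomposing $j(\eta)$ into consecutive blocks of ordertype below $j(\kappa)$: on each block a subset of $d$ of size $<\lambda$ is bounded, hence covered by an initial segment of the block of $M$-cardinality $<j(\kappa)$, and these pieces are assembled into a single $c\in M$ of $M$-cardinality $<j(\kappa)$ using that $M$ is closed under $\omega$-sequences and a judicious choice of $\gamma$, $U$ and $\lambda$ ensuring that $j(\kappa)$ has sufficiently large cofinality and that $\eta$ is simple enough in base $\crit(j)$. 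With the covering property in hand, Lemma~\ref{lemma:EmbeddingLarge} yields that $\kappa$ is $\Psi_{stc}$-large.

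The main obstacle is exactly this last verification: the Bagaria--Magidor criterion directly covers only $j[\gamma]$, and one must leverage it to cover every small subset of $j(\eta)$, including ones outside $\range(j)$; choosing the parameters $\gamma,U,\lambda$ so that the cofinality and closure bookkeeping goes through — in particular so that no cofinality of size strictly between $\kappa$ and $\lambda$ obstructs the boundedness arguments — is the delicate part.
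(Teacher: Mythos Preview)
Your backward direction is essentially the paper's argument: cover $j[\eta]$ using the $\Psi_{stc}$-property and invoke the Bagaria--Magidor criterion. That part is fine.

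The forward direction, however, has a genuine gap, which you yourself flag as ``the delicate part'' without resolving. Your proposed strategy --- covering the part of $d$ inside $\range(j)$ by $s$, and the rest by chopping $j(\eta)$ into blocks of ordertype below $j(\kappa)$ and doing ``cofinality and closure bookkeeping'' --- does not work as stated: even if each block has $M$-cardinality below $j(\kappa)$, there is no way to identify \emph{in $M$} the set of blocks that $d$ meets, since $d$ itself need not be in $M$; and the vague appeal to $\omega$-closure and ``judicious choice'' of parameters does not close this gap. The paper's argument is entirely different and avoids the problem by exploiting that $M$ is an \emph{ultrapower}: one takes the $\sigma$-complete fine ultrafilter $\mathcal{U}$ directly on $\Pow_\kappa(\lambda)$ (so $\lambda$ is simultaneously the target cardinal and the index set), writes each element of $d\subseteq j(\lambda)$ as $[f_\gamma]_\mathcal{U}$ for some $\map{f_\gamma}{\Pow_\kappa(\lambda)}{\lambda}$, and sets $F(a)=\Set{f_\gamma(a)}{\gamma\in a}$. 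Fineness gives $[f_\gamma]_\mathcal{U}\in[F]_\mathcal{U}$ for every $\gamma<\lambda$, so $d\subseteq[F]_\mathcal{U}$, while $\vert F(a)\vert<\kappa$ for all $a$ forces $\vert[F]_\mathcal{U}\vert^M<j(\kappa)$. This single-function trick covers \emph{all} of $d$ uniformly --- there is no need to treat elements outside $\range(j)$ separately, and no block decomposition or cofinality analysis is required.
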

 
 \begin{proof}
  First, assume that $\kappa$ is an $\omega_1$-strongly compact cardinal, and let $\lambda>\kappa$ be a cardinal. 
  The proof of  {\cite[Theorem 4.7]{MR3152715}} then shows that there is a $\sigma$-complete fine ultrafilter $\UU$ on $\Pow_{\kappa}(\lambda)$. Let $\map{j}{V}{M}$ denote the ultrapower embedding induced by $\UU$, and fix $d\subseteq j(\lambda)$ of cardinality at most $\lambda$. Then there exists a sequence $$\seq{\map{f_\gamma}{\Pow_\kappa(\lambda)}{\lambda}}{\gamma<\lambda}$$ of functions with $d=\Set{[f_\gamma]_\UU}{\gamma<\lambda}$. Define $$\Map{F}{\Pow_\kappa(\lambda)}{\Pow_\kappa(\lambda)}{a}{\Set{f_\gamma(a)}{\gamma\in a}}.$$ Given $\gamma<\lambda$, we then have $$\Set{a\in \Pow_\kappa(\lambda)}{f_\gamma(a)\in F(a)} ~ \supseteq ~ \Set{a\in \Pow_\kappa(\lambda)}{\gamma\in a} ~ \in ~ \UU,$$ and this implies that $[f_\gamma]_\UU\in[F]_\UU$. Moreover, since $F(a)$ has cardinality less than $\kappa$ for each $a\in \Pow_\kappa(\lambda)$, we know that $[F]_\UU$ has cardinality less than $j(\kappa)$  in $M$. 
  In particular, if $d$ has cardinality  $\lambda$, then $\lambda=\vert d\vert\leq\vert d\vert^M\leq\vert [F]_\UU\vert^M<j(\kappa)$. 
  These computations show that for every limit ordinal $\kappa<\eta\leq\lambda$ and every $d\in\Pow_\lambda(j(\eta))$, there is $c\in\Pow_{j(\kappa)}(j(\eta))^M$ with $d\subseteq c$. 
 This directly implies that   $\Psi_{stc}(V_\vartheta,V_\vartheta^M,\lambda,j(\kappa),j(\eta))$ holds for every limit ordinal $\kappa<\eta\leq\lambda$ and every limit ordinal $\vartheta>j(\eta)$.  Using Lemma \ref{lemma:EmbeddingLarge}, this allows us to conclude that $\kappa$ is $\Psi_{stc}$-large.

  Now, assume that $\kappa$ is $\Psi_{stc}$-large, and let $\eta>\kappa$  be a limit ordinal. 
  Lemma \ref{lemma:EmbeddingLarge} allows us to find a cardinal $\lambda>\eta$,  a transitive class $M$ and an elementary embedding $\map{j}{V}{M}$ such that $j(\kappa)>\lambda$ and $\Psi_{stc}(V_\vartheta,V_\vartheta^M,\lambda,j(\kappa),j(\eta))$ holds for all  limit ordinals $\vartheta>j(\eta)$. 
  Since $j[\eta]$ is a subset of $j(\eta)$ of cardinality less than $\lambda$, we can now find $b\in\Pow(j(\eta))^M$ such that $j[\eta]\subseteq b$ and~$b$ has cardinality less than $j(\kappa)$ in $M$. By {\cite[Theorem 4.7]{MR3152715}}, these computations show that  $\kappa$ is $\omega_1$-strongly compact.  
 \end{proof}

%%%%%%%%%%%%%%%%%% 
 %%%%%%%%%%%%%%%%%%

\section{$\LL^2$-characterizations}\label{section:CharLL2}
%\section[Characterizing large cardinal notions  via second-order logic]{Characterizing large cardinal notions via compactness properties of second order logic}

We now connect the concepts introduced in the previous section to compactness properties of second-order logic. The next definition provides the first step to establish these connections. 
 In this definition, we consider simply definable measures of closeness with the additional property that the closeness of a transitive model $M$ to $V_{M\cap\Ord}$ is expressible over $M$ by an $\LL^2$-statement.  
Remember that, given a theory $F$ in the language of set theory and a natural number $n>0$, a set-theoretic formula $\varphi(v_0,\ldots,v_{m-1})$ is a \emph{$\Delta_n^F$-formula} if there is a $\Sigma_n$-formula $\psi_0(v_0,\ldots,v_{m-1})$ and a $\Pi_n$-formula $\psi_1(v_0,\ldots,v_{m-1})$ with the property that $$F ~ \vdash ~ \forall x_0,\ldots,x_{m-1} ~ [\varphi(x_0,\ldots,x_{m-1})\longleftrightarrow\psi_i(x_0,\ldots,x_{m-1})]$$ holds for all $i<2$.

\begin{definition}\label{definition:moc}
 A first-order formula $\Psi(v_0,\ldots,v_4)$ in the language of set theory is an \emph{$\LL^2$-measure of closeness} if the following statements hold: 
 \begin{enumerate}
  \item\label{item:L2measure1} $\Psi$ is  a measure of closeness. 
  
  \item\label{item:L2measure2} $\Psi$ is both a $\Delta_1^{\ZFC^*}$- and a $\Delta_1^{\ZFC^-}$-formula.\footnote{As usual, we let $\ZFC^-$ denote the axioms of $\ZFC$ without the Powerset axiom and with the Collection scheme instead of the Replacement scheme.}
  
  \item\label{item:L2measure3} There exists an $\LL^2$-formula $\Psi^*(v_0,v_1,v_2)$ in the language of set theory such that $\ZFC^*$ proves that for all  limit ordinals $\mu<\nu<\rho<\theta$ and every  transitive set $M$ with $\theta\subseteq M\subseteq V_\theta$, the statement $\Psi(V_\theta,M,\mu,\nu,\rho)$ holds if and only if $\langle M,\in\rangle\models_{\LL^2}\Psi^*(\mu,\nu,\rho)$ holds. 
 \end{enumerate}
\end{definition}

\begin{proposition}\label{proposition:FormulasAreL2measures}
 The formulas $\Psi_{ms}$, $\Psi_{str}$, $\Psi_{sc}$, $\Psi_{stc}$ and $\Psi_{ext}$ are all $\LL^2$-measures of closeness. 
\end{proposition}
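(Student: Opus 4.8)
The plan is to verify the three defining conditions of Definition \ref{definition:moc} for each of the five formulas $\Psi_{ms}$, $\Psi_{str}$, $\Psi_{sc}$, $\Psi_{stc}$, $\Psi_{ext}$. Condition \eqref{item:L2measure1} is already done: each formula was shown to be a measure of closeness in the propositions of Section \ref{section:PsiLarge}. So the real content is conditions \eqref{item:L2measure2} and \eqref{item:L2measure3}, and I would treat the five cases in parallel, isolating what is common.

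For condition \eqref{item:L2measure2}, I would first note that being suitable (the common conjunct in all five formulas) is itself $\Delta_1$ over $\ZFC^*$ and over $\ZFC^-$: transitivity, being an ordinal, being a limit ordinal, the pairing/product/intersection-closure clauses, and the equality $M\cap\Ord = N\cap\Ord$ are all absolute between transitive models of a weak fragment; the only slightly delicate point is \anf{$\mu$ is a cardinal in $N$}, but since $N$ is a parameter this is read off inside $N$ and is $\Delta_0$ in $N$, hence $\Delta_1$ overall. Then I would add the extra conjunct case by case: for $\Psi_{str}$ the clause $N\cap V_\mu\subseteq M$ is $\Delta_0$ once one observes $V_\mu$ is computed correctly (both $\ZFC^*$ and $\ZFC^-$ prove the von Neumann hierarchy levels below any given ordinal in $M$ exist, using $\Sigma_1$- or collection-recursion); similarly $N\cap V_\rho\subseteq M$ for $\Psi_{ext}$, and ${}^\mu\rho\cap N\subseteq M$ for $\Psi_{sc}$ (a function from $\mu$ to $\rho$ is an element of $V_{\max(\mu,\rho)+\omega}$, hence quantification over such functions is bounded). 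For $\Psi_{stc}$ the extra clause is a $\forall d\,\exists c$ statement with both quantifiers bounded by $\Pow(\rho)\cap N$ and $\Pow(\rho)\cap M$ respectively, and \anf{no injection of $\mu$ into $d$ in $N$} / \anf{no injection of $\nu$ into $c$ in $M$} are again bounded, so the whole thing is $\Delta_0$ relative to the parameters. In every case I would write the $\Sigma_1$ and $\Pi_1$ forms explicitly enough to see the equivalence is provable in the weak theory, the key technical input being that $\ZFC^*$ and $\ZFC^-$ each prove existence of $V_\alpha$ for $\alpha$ below any ordinal that lives in the transitive set under consideration.

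For condition \eqref{item:L2measure3}, the point is to express, for a transitive $M$ with $\theta\subseteq M\subseteq V_\theta$, the statement $\Psi(V_\theta,M,\mu,\nu,\rho)$ by an $\LL^2$-sentence $\Psi^*(\mu,\nu,\rho)$ interpreted in $\langle M,\in\rangle$. The suitability conjunct is automatic in this situation (it only needs $\mu<\nu<\rho$ limit ordinals and $M$ transitive, which is given), so $\Psi^*$ only has to capture the extra clause relating $M$ and $V_\theta = V_{M\cap\Ord}$. Here second-order logic over $M$ lets us quantify over arbitrary subsets of $M$, so we can refer to \anf{a subset of $M$ coding an element of $V_\theta$} — concretely, a well-founded extensional relation on a subset of $M$ whose rank is below $\theta$ — and this is exactly how one gains access to $V_\theta$-objects not already in $M$. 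For $\Psi_{str}$, $\Psi^*(\mu,\nu,\rho)$ should say: every such coded set of rank $<\mu$ is (isomorphic to) an element of $M$; for $\Psi_{ext}$, the same with $\mu$ replaced by $\rho$; for $\Psi_{sc}$, every coded function from $\mu$ to $\rho$ is an element of $M$ (and note $\mu,\rho<\theta$ so the relevant objects have rank below $\theta$, hence are coded by subsets of $M$); for $\Psi_{stc}$, the clause translates directly into $\LL^2$ since $\Pow(\rho)\cap V_\theta$ objects are subsets of $M$ (as $\rho\subseteq M$) and the injection-nonexistence conditions are expressible by second-order quantifiers over $M$; and for $\Psi_{ms}$ we may simply take $\Psi^*$ to be any valid sentence, e.g. $\mu<\nu<\rho$, since the clause is vacuous. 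In each case the verification that $\ZFC^*$ proves the biconditional uses the standard fact (available in $\ZFC^*$, which proves the $\Sigma_2$-recursion theorem and hence has a well-behaved notion of coded well-founded relation and Mostowski collapse) that subsets of $M$ coding well-founded extensional relations of rank $<\theta$ correspond exactly to elements of $V_\theta$, together with the absoluteness of the second-order satisfaction relation guaranteed by $\ZFC^*$ as recalled in the text.

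The main obstacle I expect is bookkeeping rather than conceptual: making sure that in clause \eqref{item:L2measure3} the coding of $V_\theta$-elements by subsets of $M$ is done uniformly and that the equivalence is genuinely provable in $\ZFC^*$ and not merely true — this requires care because $\ZFC^*$ has only $\Sigma_2$-replacement, so one must check that forming Mostowski collapses of coded relations and computing ranks stays within that fragment (which it does, since these are $\Sigma_1$-recursive constructions bounded inside $V_\theta$). A secondary subtlety, for $\Psi_{sc}$ and $\Psi_{stc}$, is confirming that the relevant auxiliary objects (functions $\mu\to\rho$, subsets of $\rho$, injections) all have rank strictly below $\theta$ so that they are in fact coded by \emph{subsets of $M$} and thus accessible to the second-order quantifiers; this follows from $\mu,\nu,\rho<\theta$ and $\theta$ being a limit ordinal. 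Given these checks, the proposition follows by assembling the per-formula verifications.
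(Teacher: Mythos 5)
Your reduction of condition \eqref{item:L2measure3} of Definition~\ref{definition:moc} rests on the claim that the suitability conjunct is automatic once $\mu<\nu<\rho<\theta$ are limit ordinals and $M$ is transitive with $\theta\subseteq M\subseteq V_\theta$, and in particular that one may take $\Psi^*_{ms}$ to be a trivially valid sentence. This misreads Definition~\ref{definition:Closenessss}: suitability of $\langle V_\theta,M,\mu,\nu,\rho\rangle$ also requires that $M$ be closed under basic set operations and, crucially, that $\mu$ be a cardinal in $N=V_\theta$ (equivalently, a genuine cardinal, since $\theta$ is a limit ordinal above $\mu$). Neither follows from the hypotheses of condition \eqref{item:L2measure3}. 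Concretely, if $\mu$ is a non-cardinal limit ordinal and $M=V_\theta$, then $\Psi_{ms}(V_\theta,V_\theta,\mu,\nu,\rho)$ fails while your proposed $\Psi^*_{ms}\equiv\anf{\mu<\nu<\rho}$ holds in $\langle V_\theta,\in\rangle$, so the required biconditional breaks; the same two conjuncts are missing from your $\Psi^*$ in the other four cases. This is not bookkeeping: expressing $\anf{\mu\text{ is a cardinal in }V}$ over $\langle M,\in\rangle$ is the one place where a second-order quantifier is genuinely indispensable (a surjection from some $\alpha<\mu$ onto $\mu$ is a subset of $M$ by transitivity and closure under pairing, but need not be an \emph{element} of $M$), and closure under basic set operations must itself be asserted first-order before any of your second-order coding of relations and functions as subsets of $M$ can even be set up. The paper's proof devotes its opening step to exactly these two points.

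Two secondary issues. First, your appeal to $\ZFC^-$ proving the existence of the levels $V_\alpha$ cannot work, since $\ZFC^-$ lacks the power set axiom ($H_{\omega_1}$ is a counterexample); a clause such as $N\cap V_\mu\subseteq M$ must instead be unfolded as $\forall x\in N\,(\rank(x)<\mu\rightarrow x\in M)$, using the $\Delta_1^{\ZFC^-}$-definability of the rank function, which is the route the paper indicates. Second, in rendering $N\cap V_\rho\subseteq M$ via well-founded extensional relations coded on subsets of $M$, the real subtlety is not that the relevant objects have rank below $\theta$, but that an arbitrary $x\in V_\rho$ may have transitive closure of cardinality exceeding $|M|$ and hence admit no code over $M$ at all; the equivalence is rescued by an induction on rank (once $V_\alpha\subseteq M$, every subset of $V_\alpha$ is codable over $M$ and is therefore forced into $M$), which needs to be said. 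With the suitability conjunct repaired and these points tightened, your verification of conditions \eqref{item:L2measure2} and \eqref{item:L2measure3} would line up with the paper's argument.
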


\begin{proof}
 First, note that it is easy to find an $\LL^2$-formula $\varphi(v)$ with the property that $\ZFC^*$ proves that for every  transitive set~$M$ closed under basic set operations and every $a\in M$, we have $\langle M,\in\rangle\models_{\LL^2}\varphi(a)$ if and only if $a$ is an element of $M\cap\Ord$ that is a infinite cardinal in $V$.\footnote{This uses that, since $M$ is closed under forming ordered pairs, any functions between ordinals in $M$ is a subset of $M$. In the remainder of this paper, we will omit remarking on similar usage of closure under basic set operations.}
 Since $\Psi_{ms}$ is a $\Delta_0$-formula,  this shows that $\Psi_{ms}$ is an $\LL^2$-measure of closeness.

 Next, there is a canonical $\LL^2$-formula $\varphi(v_0,v_1)$ such that $\ZFC^*$ proves that for every  transitive set $M$ closed under basic set operations and all $\alpha,\beta\in M\cap\Ord$, we have  $\langle M,\in\rangle\models_{\LL^2}\varphi(\alpha,\beta)$ if and only if the set ${}^\alpha\beta$ is a subset of $M$. Since $\Psi_{sc}$ is a $\Delta_0$-formula, it follows that $\Psi_{sc}$ is an $\LL^2$-measure of closeness.

  Now, observe that there is an  $\LL^2$-formula $\varphi(v_0,v_1,v_2)$ such that $\ZFC^*$ proves that for every  transitive set $M$ closed under basic set operations and all $\alpha,\beta,\gamma\in M\cap\Ord$, we have $\langle M,\in\rangle\models_{\LL^2}\varphi(\alpha,\beta,\gamma)$ if and only if for every subset $d$ of $\gamma$ of cardinality less than~$\alpha$, there exists $c\in M$ such that $d\subseteq c\subseteq\gamma$ and $M$ contains no injection from $\beta$ into $c$. Since $\Psi_{stc}$ is a $\Delta_0$-formula, this again shows that $\Psi_{stc}$ is an $\LL^2$-measure of closeness.

  For the remaining formulas, recall that  an argument of Magidor in {\cite[Proof of Theorem~2]{MR0295904}} (see also {\cite[Proof of Fact 2.1]{MR4093885}}) shows that  there is an $\LL^2$-sentence $\psi$ in the language of set theory with the property that for every transitive set $M$, we have $\langle M,\in\rangle\models_{\LL^2}\psi$ if and only if $M=V_{M\cap\Ord}$. 
  In addition, Magidor's argument shows that this equivalence is provable in $\ZFC^*$ 
    % . 
  %\todo{Philipp: Please check! See {\cite[p. 151, line -8]{MR0295904}}.} 
  %  
  and, moreover, it directly provides an $\LL^2$-formula $\varphi(v_0,v_1)$ in the language of set theory with the property that $\ZFC^*$ proves that for every transitive set $M$ and all $a,b\in M$, we have $\langle M,\in\rangle\models_{\LL^2}\varphi(a,b)$   if and only if $a\in M\cap\Ord$ and $b=V_a$. In combination with the fact that  the rank function is definable by a $\Delta_1^{\ZFC^*\cap\ZFC^-}$-formula, we can conclude that the formulas $\Psi_{str}$ and $\Psi_{ext}$ are both $\LL^2$-measures of closeness. 
\end{proof}

The second step to connect $\Psi$-large cardinals to second-order logic is given by the family of compactness properties introduced in the next definition.

\begin{definition}\label{definition:outward compactness}
Let $\Psi(v_0,\ldots,v_4)$ be a first-order formula in the language of set theory and let $\kappa$ be an infinite cardinal. 
 \begin{enumerate}
  \item An $\LL^2$-theory $T$ is \emph{$\Psi$-outward satisfiable at $\kappa$} if there is a limit ordinal $\eta>\kappa$ such that 
  for all infinite cardinals $\lambda<\kappa$ and 
   all cardinals $\vartheta>\eta$ with $T\in H_\vartheta$, % and $V_\vartheta\models\ZFC^*$,  
   the theory  $T$ is ${<}\lambda$-satisfiable in $N$, whenever $G$ is $\Coll(\omega,\vartheta)$-generic over $V$ and $N\in V[G]$ is an outer $\ZFC^*$-model     of $V_\vartheta^V$ with the property that $\Psi(N,V_\vartheta^V,\lambda,\kappa,\eta)$ holds in $V[G]$ .\footnote{Note that this can be written as a first-order statement in the parameters $T$ and $\kappa$.}

%  the partial order  forces that. 
  
  \item A cardinal $\kappa$ is a \emph{$\Psi$-outward compactness cardinal for $\LL^2$} if all $\LL^2$-theories that are $\Psi$-outward satisfiable at $\kappa$ are satisfiable.
 \end{enumerate}
\end{definition}

As noted after Definition \ref{definition:outward compactnessSimple} above, if $\vartheta$ is a limit ordinal, $G$ is $\Coll(\omega,\vartheta)$-generic over $V$ and $N$ is an outer $\ZFC^*$-model of $V_\vartheta^V$ in $V[G]$, then both $N$ and $V_\vartheta^V$ are countable in $V[G]$.  
 Moreover, it is easy to see that, given  an $\LL^2$-theory $T$ and an infinite cardinal $\kappa$, the theory $T$ is ${<}\kappa$-outward satisfiable (as defined in Definition \ref{definition:outward compactnessSimple}) if and only if it is $\Psi_{ms}$-outward satisfiable at $\kappa$. 
 In the following, we make a further basic observation about the above notions that relate them to strong compactness cardinals for $\LL^2$.

 \begin{proposition}\label{proposition:OutwardLessCompact}
  Let $\Psi(v_0,\ldots,v_4)$ be a $\Delta_1^{\ZFC}$-formula that is a measure of closeness. If  $\kappa$  is a limit cardinal, then every $\LL^2$-theory that is $\Psi$-outward satisfiable at $\kappa$ is ${<}\kappa$-satisfiable.  
  In particular, every cardinal greater than or equal to an extendible cardinal is a $\Psi$-outward compactness cardinal for $\LL^2$. 
 \end{proposition}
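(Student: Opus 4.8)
The plan is to follow the proof of Proposition~\ref{proposition:BasicProp}.\eqref{item:OutwardConsBoundedCons}, adding one ingredient that uses the hypothesis that $\Psi$ is a $\Delta_1^{\ZFC}$-formula. First I would isolate the following claim: \emph{if $T$ is $\Psi$-outward consistent at $\kappa$ as witnessed by a limit ordinal $\eta>\kappa$, if $\lambda<\kappa$ is an infinite cardinal, and if $T_0\subseteq T$ has cardinality less than $\lambda$, then $T_0$ is consistent.} To prove it, fix a cardinal $\vartheta>\eta$ with $T\in H_\vartheta$ such that $V_\vartheta$ is a model of $\ZFC^*$, and let $G$ be $\Coll(\omega,\vartheta)$-generic over $V$. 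In $V[G]$ the set $N=V_\vartheta^V$ is transitive, has the same ordinals as $V_\vartheta^V$, and models $\ZFC^*$, so it is an outer $\ZFC^*$-model of $V_\vartheta^V$; granting that $\Psi(N,V_\vartheta^V,\lambda,\kappa,\eta)$ also holds in $V[G]$, the definition of $\Psi$-outward consistency gives that $T$, and hence its subtheory $T_0$, is consistent in $V_\vartheta^V$. Since this consistency statement concerns only the fixed sets $V_\vartheta^V$ and $T_0$, it holds in $V$ as well; and since $V_\vartheta^V$ is a rank initial segment of $V$ that models $\ZFC^*$, it computes the relevant power sets, hence the $\LL^2$-satisfaction relation, as $V$ does, so $T_0$ is consistent in $V$, which is the claim.

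The step I expect to be the main obstacle is verifying that $\Psi(V_\vartheta^V,V_\vartheta^V,\lambda,\kappa,\eta)$ holds in $V[G]$. One cannot simply quote clause~\ref{Prop:B} of Definition~\ref{definition:Closenessss} inside $V[G]$, since $\Coll(\omega,\vartheta)$ collapses $\lambda$, so $\lambda$ is no longer a cardinal there. Instead, clause~\ref{Prop:B} should be applied in the ground model $V$, where $\lambda$ is a cardinal and $\lambda<\kappa<\eta<\vartheta$ are limit ordinals, yielding that $\Psi(V_\vartheta,V_\vartheta,\lambda,\kappa,\eta)$ holds in $V$. Writing $\Psi$ as a $\Sigma_1$-formula to which it is provably equivalent over $\ZFC$ (possible since $\Psi$ is $\Delta_1^{\ZFC}$), upward absoluteness of $\Sigma_1$-formulas from the inner model $V$ to $V[G]$ carries this statement to $V[G]$, where, because $V[G]\models\ZFC$, it coincides with $\Psi(V_\vartheta^V,V_\vartheta^V,\lambda,\kappa,\eta)$, as needed.

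Granting the claim, the first assertion follows at once: if $\kappa$ is a limit cardinal and $T_0\subseteq T$ has cardinality less than $\kappa$, there is an infinite cardinal $\lambda$ with $\vert T_0\vert<\lambda<\kappa$, so the claim applies and $T_0$ is consistent; thus every $\LL^2$-theory that is $\Psi$-outward consistent at $\kappa$ is ${<}\kappa$-consistent. For the second assertion, fix an extendible cardinal $\mu\leq\kappa$ and an $\LL^2$-theory $T$ that is $\Psi$-outward consistent at $\kappa$. Since $\mu$ is inaccessible, it is a limit cardinal, and $\mu\leq\kappa$; hence for every $T_0\subseteq T$ of cardinality less than $\mu$ we may pick an infinite cardinal $\lambda$ with $\vert T_0\vert<\lambda<\mu\leq\kappa$ and apply the claim to conclude that $T_0$ is consistent. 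So $T$ is ${<}\mu$-consistent, and since Theorem~\ref{theorem:Magidor} shows that $\mu$ is a strong compactness cardinal for $\LL^2$, $T$ is consistent. As $T$ was arbitrary, $\kappa$ is a $\Psi$-outward compactness cardinal for $\LL^2$.

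Besides the absoluteness transfer just highlighted, the remaining points are routine: a cardinal $\vartheta$ as above exists because such $\vartheta$ occur cofinally in the ordinals and $T\in H_\vartheta\subseteq V_\vartheta$; the statements that $N$ is an outer $\ZFC^*$-model of $V_\vartheta^V$ and that $T_0$ is consistent in $V_\vartheta^V$ have the same truth value in $V$ and in $V[G]$ because they only concern sets lying in $V$; and passing from ${<}\lambda$-consistency of $T$ to consistency of the subtheory $T_0$ of cardinality less than $\lambda$ is just the definition of ${<}\lambda$-consistency.
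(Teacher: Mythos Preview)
Your argument is correct and follows essentially the same route as the paper: apply clause~\ref{Prop:B} in $V$ to obtain $\Psi(V_\vartheta,V_\vartheta,\lambda,\kappa,\eta)$, transfer this to $V[G]$ by $\Sigma_1$-upward absoluteness using the $\Delta_1^{\ZFC}$ hypothesis, and then use $V_\vartheta^V$ as an outer $\ZFC^*$-model of itself. The paper phrases this as a contrapositive (start from an inconsistent $T_0$ and refute $\Psi$-outward consistency for an arbitrary $\eta$), whereas you argue directly; and you isolate the intermediate claim in a way that does not require $\kappa$ to be a limit cardinal, which lets you spell out the ``in particular'' clause via Theorem~\ref{theorem:Magidor}, something the paper leaves to the reader. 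These are purely cosmetic differences.
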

 
 \begin{proof}
  Let $T$ be an $\LL^2$-theory that contains an unsatisfiable subtheory $T_0$ of cardinality less than $\kappa$, and let $\eta>\kappa$ be a limit ordinal. Pick a cardinal $\vartheta>\eta$ such that $T\in H_\vartheta=V_\vartheta$ and $V_\vartheta$ is sufficiently elementary in $V$. 
   In addition, pick a cardinal $\lambda<\kappa$ with $\vert T_0\vert<\lambda$. Then \ref{Prop:B} in Definition \ref{definition:Closenessss}.\ref{item:Closeness} ensures that $\Psi(V_\vartheta,V_\vartheta,\lambda,\kappa,\eta)$ holds. 
  Moreover, since $T$ is an element of $V_\vartheta$ and $V_\vartheta$ was chosen to be sufficiently elementary in $V$, we also know that $T$ is not ${<}\lambda$-satisfiable in $V_\vartheta$. 
  Now, let $G$ be $\Coll(\omega,\vartheta)$-generic over $V$.  Then  $V_\vartheta^V$ is an outer $\ZFC^*$-model of   $V_\vartheta^V$ in $V[G]$. Moreover, since $\Psi$ is a $\Delta_1^{\ZFC}$-formula, we can use $\Sigma_1$-upwards absoluteness to infer that $\Psi(V_\vartheta^V,V_\vartheta^V,\lambda,\kappa,\eta)$ holds in $V[G]$. Therefore, the theory $T$ is not $\Psi$-outward satisfiable at $\kappa$ in $V$.     
 \end{proof}

 We now combine the above two concepts to prove a general duality theorem that connects $\Psi$-largeness with $\Psi$-outward compactness. Together with our earlier results, this can be seen as a generalization of Theorem \ref{theorem:measurable} (see also Corollary \ref{corollary:IndividualChar} below).
  For the proof of this result, we  inductively define \emph{the second-order relativisation $\Psi\vert v$}  of an $\LL^2$-formula $\Psi$ in the language of set theory to some first-order variable $v$ that does not appear in $\Psi$ by setting: 
  \begin{itemize} 
   \item  ${(v_0\in v_1) \vert v} ~ \equiv ~ v_0\in v_1\in v$.
    \item  ${(u\in W)\vert v} ~ \equiv ~ u\in W\subseteq v$.  
    \item ${(\neg\varphi)\vert v} ~ \equiv ~ {\neg(\varphi\vert v)}$. 
    \item   ${(\varphi\wedge\psi)\vert v} ~ \equiv ~ {(\varphi\vert v)\wedge(\psi\vert v)}$. 
     \item  ${(\forall x ~ \varphi(x))\vert v} ~ \equiv ~ {\forall x\in v ~ (\varphi\vert v)(x)}$. 
     \item  ${(\forall X ~ \varphi(X))\vert v} ~ \equiv ~ {\forall X\subseteq v ~ (\varphi\vert v)(X)}$. 
   \end{itemize}
  The resulting formulas in this inductive definition are $\LL^2$-formulas in each case. 
  Moreover, if $\varphi(v_0,\ldots,v_{n-1})$ is an $\LL^2$-formula, then $\ZFC^*$ proves that $\langle M,\in\rangle\models_{\LL^2}(\varphi\vert v_n)(c_0,\ldots,c_{n-1},d)$ is equivalent to $\langle d,\in\rangle\models_{\LL^2}\varphi(c_0,\ldots,c_{n-1})$      whenever $M$ is a  transitive set, $d\in M$ is transitive and $c_0,\ldots,c_{n-1}\in d$.

\begin{theorem}\label{theorem:LL2Duality}
  Given  an  $\LL^2$-measure of closeness $\Psi$,  an infinite cardinal $\kappa$ is $\Psi$-large if and only if it is a $\Psi$-outward compactness cardinal for $\LL^2$.  
\end{theorem}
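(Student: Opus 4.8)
The plan is to follow the proof of Theorem~\ref{theorem:measurable} closely, replacing its iterated-ultrapower step by an application of Lemma~\ref{lemma:EmbeddingLarge} and exploiting clauses \eqref{item:L2measure2} and \eqref{item:L2measure3} of Definition~\ref{definition:moc} to push the formula $\Psi$ through the relevant model comparisons. For the direction from left to right, assume $\kappa$ is $\Psi$-large and let $T$ be an $\LL^2$-theory that is $\Psi$-outward consistent at $\kappa$, as witnessed by a limit ordinal $\eta>\kappa$. Applying Lemma~\ref{lemma:EmbeddingLarge} to $\eta$, we fix a cardinal $\lambda$ with $T\in H_\lambda$, an inner model $M$ and an elementary embedding $\map{j}{V}{M}$ with $j(\kappa)>\lambda$ such that $\Psi(V_\vartheta,V_\vartheta^M,\lambda,j(\kappa),j(\eta))$ holds for all limit ordinals $\vartheta>j(\eta)$, and we then pick a cardinal $\vartheta>j(\eta)$ with $j(\vartheta)=\vartheta$, $T\in V_\vartheta$ and $V_\vartheta\models\ZFC^*$, so that $V_\vartheta^M\models\ZFC^*$ as well by elementarity. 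Elementarity also yields that, in $M$, the theory $j(T)$ is $\Psi$-outward consistent at $j(\kappa)$ as witnessed by $j(\eta)$; instantiating this with the infinite cardinal $\lambda<j(\kappa)$ and the cardinal $\vartheta$ and forcing with $\Coll(\omega,\vartheta)$ over $V$, hence over $M$, we obtain that in $M[G]$ the theory $j(T)$ is ${<}\lambda$-consistent in every outer $\ZFC^*$-model $N$ of $V_\vartheta^M$ satisfying $\Psi(N,V_\vartheta^M,\lambda,j(\kappa),j(\eta))$. As $V_\vartheta^M$ and all such $N$ are countable in $M[G]$, and as clause \eqref{item:L2measure2} makes ``$\Psi(N,\dots)$'' absolute between $H_{\aleph_1}$ and $V$, this is a $\Pi_1$-statement over $H_{\aleph_1}^{M[G]}$, hence provably equivalent to a $\mathbf{\Pi}^1_2$-statement about a real coding the parameters (cf.\ {\cite[Lemma~25.25]{MR1940513}}); since $M[G]$ is an inner model of $V[G]$, Shoenfield absoluteness transfers it to $V[G]$. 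In $V[G]$, the set $V_\vartheta^V$ is an outer $\ZFC^*$-model of $V_\vartheta^M$, and $\Psi(V_\vartheta^V,V_\vartheta^M,\lambda,j(\kappa),j(\eta))$ holds there by Lemma~\ref{lemma:EmbeddingLarge} together with clause \eqref{item:L2measure2}; hence $j(T)$ is ${<}\lambda$-consistent in $V_\vartheta^V$, so its subtheory $j[T]$, an element of $V_\vartheta^V$ of cardinality less than $\lambda$ there, is consistent in $V_\vartheta^V$ and therefore in $V$, and identifying $j[T]$ with $T$ by a renaming of symbols shows that $T$ is consistent.

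For the direction from right to left, assume $\kappa$ is a $\Psi$-outward compactness cardinal for $\LL^2$, and fix limit ordinals $\kappa<\eta<\theta$ and an ordinal $\zeta$; we must produce a cardinal $\lambda\geq\zeta$, a transitive set $M$ and an elementary embedding $\map{j}{V_{\theta+1}}{M}$ with $j(\kappa)>\lambda$ such that $\Psi(V_{j(\theta)},M\cap V_{j(\theta)},\lambda,j(\kappa),j(\eta))$ holds. Using the $\LL^2$-formula $\Psi^*$ provided by clause \eqref{item:L2measure3} and the $\LL^2$-formula expressing ``$v$ is an infinite cardinal'' constructed in the proof of Proposition~\ref{proposition:FormulasAreL2measures}, we let $T$ be the $\LL^2$-theory in the language of set theory extended by constants $c_x$ for $x\in V_{\theta+1}$, constants $d_\gamma$ for $\gamma\leq\zeta$ and a further constant $e$, consisting of the elementary first-order diagram of $V_{\theta+1}$ in the constants $c_x$, the $\LL^2$-sentence asserting that $\in$ is well-founded, the sentences $d_\beta\in d_\gamma$ for $\beta<\gamma\leq\zeta$ together with $d_\zeta\in e\in c_\kappa$, and the two $\LL^2$-sentences, each relativized to $c_{V_\theta}$ by means of the second-order relativization defined above, asserting that $e$ is an infinite cardinal and that $\Psi^*(e,c_\kappa,c_\eta)$ holds. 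We then verify that $T$ is $\Psi$-outward consistent at $\kappa$ with witness $\eta$: given an infinite cardinal $\lambda'<\kappa$, a cardinal $\vartheta>\eta$ with $T\in H_\vartheta$, a $\Coll(\omega,\vartheta)$-generic $G$ over $V$, and an outer $\ZFC^*$-model $N$ of $V_\vartheta^V$ in $V[G]$ with $\Psi(N,V_\vartheta^V,\lambda',\kappa,\eta)$, clause \ref{Prop:A} of Definition~\ref{definition:Closenessss}.\ref{item:Closeness} shows that $\lambda'$ is a cardinal in $N$, while clause \ref{Prop:B} together with clause \eqref{item:L2measure3} shows that, inside $N$, the relation $\langle V_\theta^V,\in\rangle\models_{\LL^2}\Psi^*(\lambda',\kappa,\eta)$ holds; consequently, working inside $N$, any subtheory of $T$ of cardinality less than $\lambda'$ is satisfied by the structure with domain $V_{\theta+1}^V$ that interprets each relevant $c_x$ by $x$, the fewer than $\lambda'$ relevant constants $d_\gamma$ by a strictly increasing sequence of ordinals below $\lambda'$, and $e$ by $\lambda'$. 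By $\Psi$-outward compactness, $T$ has a model, which by the well-foundedness sentence and extensionality is isomorphic to a transitive set $M$; then $j(x)=c_x^M$ defines an elementary embedding $\map{j}{V_{\theta+1}}{M}$, and one reads off that $\lambda=e^M$ is, by the relativization property together with the characterizations of ``is a cardinal'' and of $\Psi$ via $\Psi^*$, a cardinal with $\zeta\leq\lambda<j(\kappa)$ for which $\Psi(V_{j(\theta)},M\cap V_{j(\theta)},\lambda,j(\kappa),j(\eta))$ holds. Since $\zeta$ was arbitrary, this shows that $\kappa$ is $\Psi$-large.

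The part I expect to require the most care is not a single deep step but the design of the theory $T$ in the backward direction: the extra constant $e$ and the two relativized $\LL^2$-sentences are precisely what is needed to force the ordinal recovered from a model of $T$ to be, simultaneously, a cardinal of $V$, strictly below $j(\kappa)$, and a witness to the $\Psi$-closeness of $M\cap V_{j(\theta)}$ inside $V_{j(\theta)}$. On the forward side, the delicate points are purely organizational: one must be able to choose $\vartheta$ as a fixed point of $j$ above $j(\eta)$ with $V_\vartheta\models\ZFC^*$ and $T\in V_\vartheta$ (so that $j(T)\in H_\vartheta^M$ and $V_\vartheta^M\models\ZFC^*$), and one must check that the Shoenfield-absoluteness transfer between the inner model $M[G]$ and $V[G]$, as well as the $\Delta_1$-absoluteness of $\Psi$ between $V$, $V[G]$ and $H_{\aleph_1}$, go through with exactly these parameters.
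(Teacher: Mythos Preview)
Your overall architecture matches the paper's proof almost exactly: the forward direction via Lemma~\ref{lemma:EmbeddingLarge}, a $j$-fixed $\vartheta$ with $V_\vartheta\models\ZFC^*$, and the Shoenfield transfer from $M[G]$ to $V[G]$ is precisely what the paper does; and your backward-direction theory, with the extra constant and the relativized $\Psi^*$-sentence, is the paper's theory (your additional ``$e$ is a cardinal'' sentence is redundant, since clause~\ref{Prop:A} already forces this once $\Psi^*$ is recovered, but it is harmless).

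There is, however, a genuine gap in the verification that $T$ is $\Psi$-outward consistent at $\kappa$. You invoke clause~\ref{Prop:B} to conclude that, inside $N$, $\langle V_\theta^V,\in\rangle\models_{\LL^2}\Psi^*(\lambda',\kappa,\eta)$. But clause~\ref{Prop:B} only tells you $\Psi(V_\theta,V_\theta,\lambda',\kappa,\eta)$ with the \emph{same} rank-initial segment in both slots; applied inside $N$ this yields $\Psi(V_\theta^N,V_\theta^N,\lambda',\kappa,\eta)$, and clause~\eqref{item:L2measure3} would then give $\langle V_\theta^N,\in\rangle\models_{\LL^2}\Psi^*(\lambda',\kappa,\eta)$ --- the wrong structure. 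What you actually need is $\Psi(V_\theta^N,V_\theta^V,\lambda',\kappa,\eta)$, and this is exactly what clause~\ref{Prop:F} delivers from the hypothesis $\Psi(N,V_\vartheta^V,\lambda',\kappa,\eta)$, by cutting both models down at the limit ordinal $\theta<\vartheta$. You then still owe one short step: the hypothesis holds in $V[G]$, while clause~\eqref{item:L2measure3} must be applied \emph{inside} $N$; the $\Delta_1^{\ZFC^*}$-definability of $\Psi$ in clause~\eqref{item:L2measure2} is precisely what lets you transfer $\Psi(V_\theta^N,V_\theta^V,\lambda',\kappa,\eta)$ from $V[G]$ down into the transitive $\ZFC^*$-model $N$. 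With~\ref{Prop:F} in place of~\ref{Prop:B} and this absoluteness step made explicit, your argument is complete and coincides with the paper's.
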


\begin{proof}
  First, assume that $\kappa$ is $\Psi$-large. Let  $T$ be an $\LL^2$-theory that is  $\Psi$-outward satisfiable at $\kappa$, and let $\eta>\kappa$ be a limit ordinal witnessing this. 
  Using Lemma \ref{lemma:EmbeddingLarge}, we can find a cardinal $\lambda>\eta$ with $T\in H_\lambda$, an inner model $M$ and an elementary embedding $\map{j}{V}{M}$ such that $j(\kappa)>\lambda$ and $\Psi(V_\vartheta,V_\vartheta^M,\lambda,j(\kappa),j(\eta))$ holds for all limit ordinals $\vartheta>j(\eta)$. 
     Pick a sufficiently large cardinal $\vartheta>j(\lambda)$ such that $j(\vartheta)=\vartheta$, $H_\vartheta=V_\vartheta$ and  the set $V_\vartheta$ is a model %and $V_\vartheta^M$ are both models 
     of $\ZFC^*$. 
  Elementarity now implies that, in $M$, the cardinal $j(\eta)$ witnesses that the $\LL^2$-theory $j(T)$ is $\Psi$-outward satisfiable at $j(\kappa)$.   
  Let $G$ be $\Coll(\omega,\vartheta)$-generic over $V$. 
  Since $V_\vartheta^M$ is a model of $\ZFC^*$, %our observations following Definition \ref{definition:outward compactness} show that 
  $V_\vartheta^M$ is countable in $M[G]$.  Moreover,    our setup ensures that, in $M[G]$, the $\LL^2$-theory $j(T)$ is ${<}\lambda$-satisfiable in every countable outer $\ZFC^*$-model $N$ of $V_\vartheta^M$ with the property that $\Psi(N,V_\vartheta^M,\lambda,j(\kappa),j(\eta))$ holds.   
    Since our assumptions on $\Psi$ imply that this statement can be formulated by a $\Pi_1$-formula with parameters in $H_{\aleph_1}^{M[G]}$ and is therefore provably equivalent to a $\mathbf{\Pi}^1_2$-statement whose parameters are real numbers canonically coding the original parameters (see {\cite[Lemma 25.25]{MR1940513}}), \emph{Shoenfield absoluteness} implies that  the given statement also holds in $V[G]$. 
  But $\Sigma_1$-upwards  absoluteness implies that $\Psi(V_\vartheta^V,V_\vartheta^M,\lambda,j(\kappa),j(\eta))$ also holds in $V[G]$ and, since $V_\vartheta^V$ is a countable outer $\ZFC^*$-model of $V_\vartheta^M$  in $V[G]$, we conclude that  $j(T)$ is ${<}\lambda$-satisfiable in $V_\vartheta^V$. 
   Now,  note that $j[T]\subseteq j(T)$ is an element of $V_\vartheta^V$, and this set has cardinality less than $\lambda$ in $V_\vartheta^V$.  Thus, the theory $j[T]$ is satisfiable in $V_\vartheta^V$ and, since $V_\vartheta^V$ correctly computes the power sets of its elements with respect to $V$, 
   the nature of the satisfaction relation of $\LL^2$ ensures that  $j[T]$ is a satisfiable $\LL^2$-theory in $V$. But, this also shows that $T$ is satisfiable in $V$, because the finitary character of $\LL^2$-formulae ensures that we can identify $T$ and $j[T]$ via the renaming of symbols induced by $j$.

  Now, assume that $\kappa$ is a $\Psi$-outward compactness cardinal for $\LL^2$. Let $\theta>\eta>\kappa$ be limit ordinals and let $\zeta>\eta$ be a cardinal.  In addition, let $\Psi^*(v_0,v_1,v_2)$ be the $\LL^2$-formula corresponding to $\Psi$ as in Definition \ref{definition:moc}.\ref{item:L2measure3} and 
  let $\Psi^\prime(v_0,\ldots,v_3)$ denote the second-order relativisation $\Psi^*\vert v_3$ of $\Psi^*(v_0,v_1,v_2)$ to $v_3$.
  Consider the language that extends the language of set theory by a constant symbol $b$, constant symbols $c_x$ for all elements $x$ of $V_{\theta+1}$ and   constant symbols $d_\gamma$ for all $\gamma\leq\zeta$.  Let $T$ denote the second-order theory consisting of the following:
  \begin{enumerate}
    \item The first-order elementary diagram of $V_{\theta+1}$, using the constant symbols $c_x$ for $x$ in $V_{\theta+1}$. 
    
    \item All first-order sentences of the form $\anf{d_\beta<d_\gamma<c_\kappa}$ for $\beta<\gamma\leq\zeta$. 
    
    \item The second-order sentence stating  that the $\in$-relation is well-founded. 
    
    \item The first-order sentence $\anf{d_\zeta<b<c_\kappa}$. 
    
%    \item The second-order sentence stating that the transitive collapse of the  given structure maps  $b$ to a cardinal. 
    
    \item The second-order sentence $\Psi^\prime(b,c_\kappa,c_\eta,c_{V_\theta})$.  
  \end{enumerate}

  \begin{claim*}
   The ordinal $\eta$ witnesses that  $T$ is $\Psi$-outward satisfiable at $\kappa$. 
  \end{claim*}
  
  \begin{proof}[Proof of the Claim]
   Let $\lambda<\kappa$ be a cardinal, let $\vartheta>\eta$ be a cardinal with $T\in H_\vartheta$, % and $V_\vartheta\models\ZFC^*$, 
   let $G$ be $\Coll(\omega,\vartheta)$-generic over $V$ and let $N\in V[G]$ be an outer $\ZFC^*$-model of $V_\vartheta^V$ with the property that $\Psi(N,V_\vartheta^V,\lambda,\kappa,\eta)$ holds in $V[G]$. 
  Since $N$ is a model of $\ZFC^*$ and $T\in H_\vartheta^V$ implies that $\theta<\vartheta$, we can apply \ref{Prop:F} in Definition \ref{definition:Closenessss}.\ref{item:Closeness} to infer that $\Psi(V_\theta^N,V_\theta^V,\lambda,\kappa,\eta)$ holds in $V[G]$. 
  Moreover, since $\Psi$ is a $\Delta_1^{\ZFC^*}$-formula  and all occurring parameters are elements of $N$, we know that $\Psi(V_\theta^N,V_\theta^V,\lambda,\kappa,\eta)$  also holds in $N$. 
   In addition, the  fact that $N$ is a model of $\ZFC^*$  ensures that $$\langle V_\theta^V,\in\rangle\models_{\LL^2}\Psi^*(\lambda,\kappa,\eta)$$ holds in $N$ and hence we know that $$\langle V_{\theta+1}^V,\in\rangle\models_{\LL^2}\Psi^\prime(\lambda,\kappa,\eta,V_\theta^V)$$ holds in $N$. 
   
   Now, let $T_0$ be a subtheory of $T$ in $N$ that has cardinality less than $\lambda$ in $N$.  
   Since $\lambda$ is a cardinal in $N$, we can  construct a model of $T_0$ with domain $V_{\theta+1}^V$ in $N$ that interprets $b$ as $\lambda$    and all constant symbols of the form $d_\gamma$ that appear in sentences in $T_0$ as ordinals less than $\lambda$. 
  \end{proof}

 Our setup now ensures that $T$ is satisfiable. Hence, we find a transitive set $M$ and an elementary embedding $\map{j}{V_{\theta+1}}{M}$  such that $j(\kappa)>\zeta$ and $$\langle M,\in\rangle\models_{\LL^2}\Psi^\prime(\zeta,j(\kappa),j(\eta),j(V_\theta)).$$  
Elementarity then implies that $M\cap\Ord=j(\theta)+1$ and $j(V_\theta)=M\cap V_{j(\theta)}$ is transitive. 
 In particular, we know that $$\langle M\cap V_{j(\theta)},\in\rangle\models_{\LL^2}\Psi^*(\zeta,j(\kappa),j(\eta)),$$ and therefore we can conclude that $\Psi(V_{j(\theta)},M\cap V_{j(\theta)},\zeta,j(\kappa),j(\eta))$ holds. 
 %
 %In particular, we can use \ref{Prop:A}  in Definition \ref{definition:Closeness} to show that $\lambda$ is a cardinal. 
 These computations allow us to conclude that $\kappa$ is $\Psi$-large.  
\end{proof}

In combination with the results of the previous section, the above theorem now shows that all of the large cardinal properties that we considered so far can be characterized through compactness properties of second-order logic.

\begin{corollary}\label{corollary:IndividualChar}
 \begin{enumerate}
  \item A  cardinal $\kappa$ is a  $\Psi_{ms}$-outward compactness cardinal for $\LL^2$ if and only if there  is a measurable cardinal less than or equal to $\kappa$.  
  
  \item A cardinal $\kappa$ is a  $\Psi_{str}$-outward compactness cardinal for $\LL^2$ if and only if there  is a strong cardinal less than or equal to $\kappa$.  
    
   \item A cardinal $\kappa$ is a  $\Psi_{sc}$-outward compactness cardinal for $\LL^2$ if and only if there  is a supercompact cardinal less than or equal to $\kappa$. 
   
  \item\label{item:CorollaryCharOmega1SC} A cardinal $\kappa$ is a  $\Psi_{stc}$-outward compactness cardinal for $\LL^2$ if and only if $\kappa$ is $\omega_1$-strongly compact. 
  
  \item A  cardinal $\kappa$ is a  $\Psi_{ext}$-outward compactness cardinal for $\LL^2$ if and only if there  is an extendible cardinal less than or equal to $\kappa$.  \qed 
 \end{enumerate}
\end{corollary}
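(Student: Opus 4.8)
The plan is to observe that Corollary \ref{corollary:IndividualChar} is an immediate consequence of Theorem \ref{theorem:LL2Duality} combined with Proposition \ref{proposition:FormulasAreL2measures} and the individual characterizations of $\Psi$-largeness established in Section \ref{section:PsiLarge}. First I would invoke Proposition \ref{proposition:FormulasAreL2measures} to conclude that each of the five formulas $\Psi_{ms}$, $\Psi_{str}$, $\Psi_{sc}$, $\Psi_{stc}$, $\Psi_{ext}$ is an $\LL^2$-measure of closeness, so that Theorem \ref{theorem:LL2Duality} applies verbatim to each of them: for every infinite cardinal $\kappa$ and every such $\Psi$, the cardinal $\kappa$ is a $\Psi$-outward compactness cardinal for $\LL^2$ if and only if $\kappa$ is $\Psi$-large.

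Then I would chain this equivalence with the corresponding largeness characterizations proved earlier in the paper: the corollary following Lemma \ref{lemma:EmbeddingLarge} gives that $\kappa$ is $\Psi_{ms}$-large if and only if there is a measurable cardinal less than or equal to $\kappa$; Lemma \ref{lemma:CharStrong} handles $\Psi_{str}$ and strongness; Lemma \ref{lemma:PsiLargeSupercompact} handles $\Psi_{sc}$ and supercompactness; the lemma in Subsection \ref{subsection:Omega1StronglyCompact} identifies $\Psi_{stc}$-largeness with $\omega_1$-strong compactness; and Lemma \ref{lemma:ExtLarge} handles $\Psi_{ext}$ and extendibility. Composing the two equivalences in each of the five cases yields the stated biconditionals, so the argument is a short piece of bookkeeping, which is presumably why the statement carries no separate proof.

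Since all components are already in place, there is no genuine obstacle in this final step; the mathematical content lives entirely in Theorem \ref{theorem:LL2Duality} (whose proof combines the embedding reformulation of $\Psi$-largeness from Lemma \ref{lemma:EmbeddingLarge}, a collapse-and-Shoenfield-absoluteness argument, and the $\LL^2$-coding of closeness) and in the case-by-case largeness lemmas of Section \ref{section:PsiLarge} (resting on iterated ultrapowers, extender arguments, Lemma \ref{lemma:IteratedCrit}, and, for $\Psi_{ext}$, Magidor's Theorem \ref{theorem:Magidor}). The only point requiring care is to match each formula with the correct largeness lemma and to note that it is the stronger hypothesis of being an \emph{$\LL^2$}-measure of closeness — not merely a measure of closeness — that is needed to apply Theorem \ref{theorem:LL2Duality}, and that this is precisely what Proposition \ref{proposition:FormulasAreL2measures} supplies.
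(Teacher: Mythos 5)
Your proposal is correct and matches the paper exactly: the corollary is stated with no separate proof precisely because it follows by chaining Theorem \ref{theorem:LL2Duality} (applicable to each of the five formulas via Proposition \ref{proposition:FormulasAreL2measures}) with the respective $\Psi$-largeness characterizations of Section \ref{section:PsiLarge}. Nothing further is needed.
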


By Proposition \ref{proposition:OutwardLessCompact}, extendible cardinals are $\Psi$-outward compactness cardinals for $\LL^2$ whenever $\Psi$ is an $\LL^2$-measure of closeness. In particular, it is not possible to use Theorem \ref{theorem:LL2Duality} to characterize large cardinal notions stronger than extendibility. 
 Therefore, we may view $\Psi_{ext}$ as the strongest $\LL^2$-measure of closeness. This view is further supported by the following observation:

 \begin{proposition}\label{proposition:OutwardCompactExtendible}
  If $\kappa$ is  an infinite cardinal, then every ${<}\kappa$-satisfiable $\LL^2$-theory is $\Psi_{ext}$-outward satisfiable. 
 \end{proposition}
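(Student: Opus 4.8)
The plan is to show directly that a $<\kappa$-consistent $\LL^2$-theory $T$ is $\Psi_{ext}$-outward consistent by exhibiting the witnessing limit ordinal and then checking the forcing-and-outer-model condition of Definition \ref{definition:outward compactness}. First I would fix an $\LL^2$-theory $T$ that is $<\kappa$-consistent and pick a limit ordinal $\eta>\kappa$ large enough that $T\in V_\eta$ and $V_\eta\prec_{\Sigma_2} V$ (so in particular $V_\eta$ correctly computes $<\lambda$-consistency of subtheories of $T$ for all $\lambda<\kappa$). This $\eta$ will be the witness. Then I would fix an arbitrary infinite cardinal $\lambda<\kappa$, an arbitrary cardinal $\vartheta>\eta$ with $T\in H_\vartheta$, take $G$ to be $\Coll(\omega,\vartheta)$-generic over $V$, and in $V[G]$ take an arbitrary outer $\ZFC^*$-model $N$ of $V_\vartheta^V$ with $\Psi_{ext}(N,V_\vartheta^V,\lambda,\kappa,\eta)$ holding.

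The key point is to use the defining clause of $\Psi_{ext}$: since $\eta<\vartheta$ are limit ordinals and $\Psi_{ext}(N,V_\vartheta^V,\lambda,\kappa,\eta)$ holds, we get $N\cap V_\eta\subseteq V_\vartheta^V$, and hence $V_\eta^N=V_\eta^V$. Next I would work in $N$ and fix a subtheory $T_0\subseteq T$ with $|T_0|^N<\lambda$. Since $\lambda$ is a cardinal in $N$ and $\Psi_{ext}$ guarantees $V_\eta^N = V_\eta^V$, and since $T$, together with the relevant consistency facts about its small subtheories, is witnessed inside $V_\eta^V$ (using $V_\eta\prec_{\Sigma_2}V$, so that $<\lambda$-consistency — a statement about the existence of a set model, which $\ZFC^*$ handles via the second-order satisfaction relation — is absolute between $V_\eta^V$ and $V$), it follows that $V_\eta^N$ knows $T_0$ has a model. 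But any model of $T_0$ living in $V_\eta^N=V_\eta^V$ is an element of $N$, so $N$ sees that $T_0$ is consistent. Since $T_0$ was an arbitrary subtheory of $T$ of size $<\lambda$ in $N$, this shows $T$ is $<\lambda$-consistent in $N$, as required.

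The main obstacle I anticipate is the bookkeeping around which structures are genuinely \emph{inside} $N$: one has to be careful that $N$, being only a $\ZFC^*$-model, computes the second-order satisfaction relation correctly (this is exactly what $\ZFC^*$ was designed to give, by the footnote on $\models_{\LL^2}$ in the excerpt), and that the absoluteness of $<\lambda$-consistency between $V_\eta^V$ and $V$ — which rests on $V_\eta\prec_{\Sigma_2}V$ and on $<\lambda$-consistency being expressible in a suitably low-complexity way — carries over to $N$ via $V_\eta^N=V_\eta^V$. A secondary subtlety is that the cardinality $|T_0|<\lambda$ should be computed in $N$, but since $\lambda$ is a cardinal in $N$ and a model of $T_0$ can be built on the fixed underlying set $V_{\kappa+1}^V$ (or directly on $V_\eta^V$) exactly as in the proof of the Claim inside Theorem \ref{theorem:LL2Duality}, this causes no trouble. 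Once these absoluteness points are nailed down, the argument is short and essentially identical in spirit to the ``Claim'' in the proof of Theorem \ref{theorem:measurable}, just with the extra $\Psi_{ext}$-clause $N\cap V_\eta\subseteq V_\vartheta^V$ doing the work of pinning $V_\eta^N$ down to the ground model.
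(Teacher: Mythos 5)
Your proposal is correct and follows essentially the same route as the paper's proof: choose $\eta$ with $T\in V_\eta$ and $V_\eta$ sufficiently elementary in $V$, use the $\Psi_{ext}$-clause $N\cap V_\eta\subseteq V_\vartheta^V$ to get $V_\eta^N=V_\eta^V$, and then observe that any subtheory of size less than $\lambda$ in $N$ lies in $V$, is consistent there by ${<}\kappa$-consistency, and has a model inside $V_\eta^V=V_\eta^N$, which $N$ recognizes. The absoluteness points you flag (correct computation of $\models_{\LL^2}$ in $N$ and between $V_\eta^V$ and $V$) are exactly the ones implicit in the paper's phrase "sufficiently elementary".
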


 \begin{proof}
   Let $T$ be a ${<}\kappa$-satisfiable $\LL^2$-theory. Pick a cardinal $\eta>\kappa$ with the property that $T\in  H_\eta=V_\eta$ and $V_\eta$ is sufficiently elementary in $V$. Fix a cardinal $\lambda<\kappa$ and a cardinal $\vartheta>\eta$, and let $G$ be $\Coll(\omega,\vartheta)$-generic over $V$. Pick an outer $\ZFC^*$-model $N\in V[G]$ of $V_\vartheta^V$ with the property that $\Psi_{ext}(N,V_\vartheta^V,\lambda,\kappa,\eta)$ holds in $V[G]$. 
   We then have $V_\eta^N=V_\eta^V$. Let $T_0$ be a subtheory of $T$ of cardinality less than $\lambda$ in $N$. Then, $T_0$ is contained in  $V$ and has cardinality less than $\lambda$ in $V$. Hence, we know that $T_0$ is satisfiable in $V$ and our setup ensures that $T_0$ is also satisfiable in $V_\eta^V$. Since $V_\eta^N=V_\eta^V$, this shows that $T_0$ is also satisfiable in $N$. These computations prove that $T$ is ${<}\lambda$-satisfiable in $N$. 
\end{proof}

%%%%%%%%%%%%%%%%%% 
 %%%%%%%%%%%%%%%%%%

%%%%%%%%%
%\subsection{Globally superstrong cardinals}
% Remember that a cardinal $\kappa$ is \emph{globally superstrong} if for every $\lambda>\kappa$, there exists a transitive class $M$ and an elementary embedding $\map{j}{V}{M}$ with $\crit(j)=\kappa$, $j(\kappa)>\lambda$ and $V_{j(\kappa)}\subseteq M$ (see \cite{MR3904884}). 
 %  Let $\Psi_{gsst}(v_0,\ldots,v_4)$ denote the canonical formula in the language of set theory with the property that $\Psi_{sc}(N,M,\lambda,\kappa,\rho)$ holds if and only if $\Psi_{ms}(N,M,\lambda,\kappa,\rho)$ holds and $N\cap V_\kappa\subseteq M$.   

%%%%%%%%%%%%%%%
%%%%%%%%%%%%%%%

\section{On the naturalness of $\LL^2$-characterizations}\label{section:Natural?}

 The results of the previous two sections naturally raise the question which other large cardinal notions between measurability and extendibility can be characterized through outward compactness properties of $\LL^2$. In particular, Corollary \ref{corollary:IndividualChar}.\ref{item:CorollaryCharOmega1SC} directly motivates the question whether strong compactness can be characterized in this way. 
 Below, we will show that this is indeed possible. But, we will also argue that the presented characterization lacks several desirable features that the characterizations listed in Corollary \ref{corollary:IndividualChar} possess.

 Given a first-order formula $\psi(v_0,v_1,v_2)$ in the language of set theory, 
  we let $\psi^\up(v_0,\ldots,v_4)$ denote the canonical first-order formula in the language of set theory with the property that $\psi^\up(N,M,\mu,\nu,\rho)$ holds if and only if the tuple $\langle N,M,\mu,\nu,\rho\rangle$ is suitable and either $N\cap V_\rho\subseteq M$  or $\psi(\mu,\nu,\rho)$ holds in $M\cap V_{\rho+\omega}$.\footnote{The motivation behind this definition is that the first disjunct helps to ensure that \ref{Prop:B} in Definition \ref{definition:Closenessss}.\ref{item:Closeness} is satisfied and } 
  %  conjunction of the formula $\Psi_{ms}(v_0,\ldots,v_4)$ and the disjunction of the formula $\Psi_{ext}(v_0,\ldots,v_4)$ and the canonical formula in the language of set theory stating that $\psi(v_2,v_3,v_4)$ holds in $V_{v_4+\omega}\cap v_1$. 

 \begin{proposition}
  Given a first-order formula $\psi(v_0,v_1,v_2)$ in the language of set theory, the formula $\psi^\up$ is an $\LL^2$-measure of closeness. 
 \end{proposition}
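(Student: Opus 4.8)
The plan is to verify the three defining conditions of an $\LL^2$-measure of closeness from Definition \ref{definition:moc} directly, namely that $\psi^\up$ is a measure of closeness, that it is simultaneously $\Delta_1^{\ZFC^*}$ and $\Delta_1^{\ZFC^-}$, and that there is an $\LL^2$-formula capturing closeness of a transitive model to the corresponding level of $V$.

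\medskip

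\emph{Step 1: $\psi^\up$ is a measure of closeness.} I would check \ref{Prop:A}, \ref{Prop:B} and \ref{Prop:F} from Definition \ref{definition:Closenessss}.\ref{item:Closeness}. Condition \ref{Prop:A} is immediate since $\psi^\up(N,M,\mu,\nu,\rho)$ is defined to include the requirement that $\langle N,M,\mu,\nu,\rho\rangle$ be suitable. Condition \ref{Prop:B} holds because when $N=M=V_\theta$ and $\mu<\nu<\rho<\theta$, we trivially have $N\cap V_\rho=V_\rho\subseteq V_\theta=M$, so the first disjunct is satisfied. For \ref{Prop:F}, the point is that both disjuncts are unaffected by passing from $M,N$ to $M\cap V_\theta,N\cap V_\theta$ when $\rho<\theta$: the set $N\cap V_\rho$ and the set $M\cap V_\rho$ are unchanged (as $\rho<\theta$), and likewise $M\cap V_{\rho+\omega}$ is unchanged (as $\rho+\omega\le\theta$, using that $\theta$ is a limit ordinal larger than $\rho$, so in fact $\rho+\omega<\theta$ or at worst $\rho+\omega\le\theta$ — either way $M\cap V_{\rho+\omega}=(M\cap V_\theta)\cap V_{\rho+\omega}$), and the suitability of the relevant tuples is preserved because $M,N$ are closed under basic set operations and $M\cap V_\theta$ is as well.

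\medskip

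\emph{Step 2: the definability condition \ref{definition:moc}.\ref{item:L2measure2}.} Suitability is a $\Delta_1$-property over any reasonable fragment (it asserts transitivity, closure under basic set operations, a statement about ordinals, and that $\mu$ is a cardinal in $N$ — the last being $\Delta_0$ once $N$ is a parameter). The disjunct ``$N\cap V_\rho\subseteq M$'' is $\Delta_0$ once $V_\rho$ is available, and the von Neumann hierarchy is $\Delta_1^{\ZFC^*}$ and $\Delta_1^{\ZFC^-}$ (this is exactly the fact invoked in the proof of Proposition \ref{proposition:FormulasAreL2measures} for $\Psi_{str}$ and $\Psi_{ext}$). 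The disjunct ``$\psi(\mu,\nu,\rho)$ holds in $M\cap V_{\rho+\omega}$'' is a statement of the form ``$\varphi$ holds in the set $M\cap V_{\rho+\omega}$'' for a \emph{fixed} formula $\varphi=\psi$, hence is $\Delta_1$ over any fragment in which the satisfaction relation for set-sized structures is $\Delta_1$ and in which $V_{\rho+\omega}$ exists as a set — again both $\ZFC^*$ and $\ZFC^-$ suffice. Thus $\psi^\up$ is a finite Boolean combination of $\Delta_1^{\ZFC^*}$- (resp. $\Delta_1^{\ZFC^-}$-) formulas, hence $\Delta_1$ over each.

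\medskip

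\emph{Step 3: the $\LL^2$-formula $\Psi^*$ for condition \ref{definition:moc}.\ref{item:L2measure3}.} I would build $\Psi^*(v_0,v_1,v_2)$ as a disjunction. For the first disjunct I reuse Magidor's $\LL^2$-sentence $\psi_{\mathrm{full}}$ from the proof of Proposition \ref{proposition:FormulasAreL2measures} (following \cite[Proof of Theorem~2]{MR0295904}, \cite[Proof of Fact 2.1]{MR4093885}), which holds in a transitive set $M$ iff $M=V_{M\cap\Ord}$; combined with the $\LL^2$-formula $\varphi(v_0,v_1)$ expressing ``$v_1=V_{v_0}$'', one obtains an $\LL^2$-formula asserting over $M$ (for $M$ with $\theta\subseteq M\subseteq V_\theta$, $\theta=M\cap\Ord$) that $V_\rho\subseteq M$, which by the hypothesis $M\subseteq V_\theta$ is equivalent to $V_\theta\cap V_\rho=V_\rho\subseteq M$, i.e. to the first disjunct of $\psi^\up$ with $N=V_\theta$. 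For the second disjunct, note that since $\rho+\omega<\theta$ (as $\theta$ is a limit ordinal $>\rho$; if literally $\rho+\omega=\theta$ is possible one treats that boundary case separately, but it is ruled out by $\rho<\nu<\theta$ and $\theta$ limit when we only care about $\rho$ a limit below $\nu$), the set $M\cap V_{\rho+\omega}$ is a set definable in $\langle M,\in\rangle$ uniformly from $\rho$, and ``$\psi(\mu,\nu,\rho)$ holds in $M\cap V_{\rho+\omega}$'' — a \emph{fixed} first-order statement relativised to a definable set — is expressible by a first-order (a fortiori $\LL^2$-) formula with parameters $\mu,\nu,\rho$ over $\langle M,\in\rangle$; here I would use the second-order relativisation machinery introduced before Theorem \ref{theorem:LL2Duality} to relativise to the subset $V_{\rho+\omega}$ of $M$. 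The conjunction with suitability (itself $\LL^2$-expressible over $M$ together with the ambient hypotheses, cf. the proof of Proposition \ref{proposition:FormulasAreL2measures}) gives the desired $\Psi^*$, and the equivalence is provable in $\ZFC^*$ because each ingredient equivalence is.

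\medskip

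\emph{Main obstacle.} The genuinely delicate point is Step 3, and in particular making sure the second disjunct — ``$\psi$ holds in $M\cap V_{\rho+\omega}$'' — is correctly and \emph{provably} captured by an $\LL^2$-formula evaluated in $\langle M,\in\rangle$ rather than in the ambient $V_\theta$, keeping track that in condition \ref{definition:moc}.\ref{item:L2measure3} one evaluates $\Psi^*$ in $M$ but $\Psi$ refers to $N=V_\theta$; here the hypothesis $\theta\subseteq M\subseteq V_\theta$ forces $M\cap V_\rho=V_\rho$ and $M\cap V_{\rho+\omega}=V_{\rho+\omega}$ for limit $\rho<\theta$, which is precisely what makes the translation go through and should be stated explicitly. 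Everything else is routine bookkeeping of the sort already carried out for $\Psi_{str}$ and $\Psi_{ext}$ in Proposition \ref{proposition:FormulasAreL2measures}.
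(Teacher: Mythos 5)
Your proposal is correct and follows essentially the same route as the paper: conditions \ref{Prop:A} and \ref{Prop:B} are immediate, \ref{Prop:F} is verified by the same two-case analysis using $\rho+\omega\leq\theta$ and $V_{\rho+\omega}\cap(M\cap V_\theta)=V_{\rho+\omega}\cap M$, the $\Delta_1$-definability is inherited as in Proposition \ref{proposition:FormulasAreL2measures}, and the $\LL^2$-formula $\Psi^*$ is obtained exactly as in the paper by second-order defining $M\cap V_\alpha$ over $\langle M,\in\rangle$ (together with Magidor's sentence for the first disjunct) and relativising $\psi$ to that subset. Your extra care about the boundary case $\rho+\omega=\theta$ and about relativising to a second-order rather than first-order parameter only adds detail the paper leaves implicit.
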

 
 \begin{proof}
  The above definition directly ensures that  $\psi^\up$ satisfies \ref{Prop:A} and \ref{Prop:B} in Definition \ref{definition:Closenessss}.\ref{item:Closeness}. 
  Fix transitive sets $M\subseteq N$ closed under basic set operations with $M\cap\Ord=N\cap\Ord\in\Lim$ and   limit ordinals $\mu<\nu<\rho<\theta\in M$. 
  First, assume that $\psi^\up(N,M,\mu,\nu,\rho)$ holds. In the first case, 
  if $N\cap V_\rho\subseteq M$ holds, then $N\cap V_\theta\cap V_\rho\subseteq M\cap V_\theta$  holds and it follows that $\psi^\up(N\cap V_\theta,M\cap V_\theta,\mu,\nu,\rho)$ is also true. 
  Next, assume that $\psi(\mu,\nu,\rho)$ holds in $V_{\rho+\omega}\cap M$. Since $\rho+\omega\leq\theta$ and $V_{\rho+\omega}\cap (M\cap V_\theta)=V_{\rho+\omega}\cap M$, we then know that $\psi^\up(N\cap V_\theta,M\cap V_\theta,\mu,\nu,\rho)$ also holds in this case. Since an analogous case distinction also shows that  $\psi^\up(N\cap V_\theta,M\cap V_\theta,\mu,\nu,\rho)$ implies that $\psi^\up(N,M,\mu,\nu,\rho)$ holds, we can now conclude that $\psi^\up$ satisfies \ref{Prop:F} in Definition \ref{definition:Closenessss}.\ref{item:Closeness} and hence we know that \eqref{item:L2measure1} of Definition \ref{definition:moc} holds.  
  By arguing as in the proof of Proposition \ref{proposition:FormulasAreL2measures}, we see that \eqref{item:L2measure2} of Definition \ref{definition:moc} holds. Finally, it is easy to find an $\LL^2$-formula $\varphi(v,W)$ in the language of set theory 
  %with  first-order free variable $v$ and a second-order free variable $w$ 
   such that $\ZFC^*$ proves that for every transitive set $D$ and  every ordinal $\alpha$ in $D$, the set $D\cap V_\alpha$ is the unique element $X$ of $\mathcal P(D)$ with the property that $\langle D,\in\rangle\models_{\LL^2}\varphi(\alpha,X)$ holds. Therefore, it follows that $\psi^\up$ also satisfies  \eqref{item:L2measure3} of Definition \ref{definition:moc}.  
 \end{proof}

We now show how the above concept can be used to obtain characterizations of strongly compact cardinals through compactness properties of $\LL^2$. 
 Let $\psi_{stc}(v_0,v_1,v_2)$ denote the canonical first-order formula in the language of set theory with the property that $\psi_{stc}(\mu,\nu,\rho)$ holds if and only if for some cardinal $\mu<\kappa\leq\nu$, there exists a fine, ${<}\kappa$-complete ultrafilter on $\pow_\kappa(\rho)$.

  \begin{proposition}\label{proposition:BadCharStronglyCompact}
   A cardinal $\kappa$ is $\psi_{stc}^\up$-large if and only if there is a strongly compact cardinal less than or equal to $\kappa$. In particular,  a cardinal $\kappa$ is a $\psi_{stc}^\up$-outward compactness cardinal for $\LL^2$ if and only if there is a strongly compact cardinal less than or equal to $\kappa$. 
 \end{proposition}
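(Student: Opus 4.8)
The plan is to establish the first equivalence; the ``in particular'' clause is then immediate from Theorem~\ref{theorem:LL2Duality} together with the fact (proven in the preceding proposition) that $\psi_{stc}^\up$ is an $\LL^2$-measure of closeness. Throughout I would work with the characterisation of $\psi_{stc}^\up$-largeness supplied by Lemma~\ref{lemma:EmbeddingLarge}: $\kappa$ is $\psi_{stc}^\up$-large iff for every limit ordinal $\eta>\kappa$ there are unboundedly many cardinals $\lambda\geq\eta$ for which there is an inner model $M$ and an elementary embedding $\map{j}{V}{M}$ with $j(\kappa)>\lambda$ and $\psi_{stc}^\up(V_\vartheta,V_\vartheta^M,\lambda,j(\kappa),j(\eta))$ for all limit ordinals $\vartheta>j(\eta)$. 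Since that last clause does not actually depend on $\vartheta$, it just says that the tuple is suitable and that \emph{either} $V_{j(\eta)}\subseteq M$ --- in which case I call $j$ \emph{of extendible type}, this being precisely the $\Psi_{ext}$-condition --- \emph{or} $\psi_{stc}(\lambda,j(\kappa),j(\eta))$ holds in $M\cap V_{j(\eta)+\omega}$ --- in which case I call $j$ \emph{of strong compactness type}. A rank computation shows that for cardinals $\kappa'\leq j(\kappa)<j(\eta)$, the set $\pow_{\kappa'}(j(\eta))$, the statement ``$\kappa'$ is a cardinal'' and the statement ``$U$ is a fine ultrafilter on $\pow_{\kappa'}(j(\eta))$'' are computed identically in $M$ and in $M\cap V_{j(\eta)+\omega}=V_{j(\eta)+\omega}^M$; I will use this absoluteness in both directions.

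For the backward direction, suppose $\mu\leq\kappa$ is strongly compact, fix a limit ordinal $\eta>\kappa$ and an arbitrary cardinal $\lambda\geq\eta$, and use an iterated ultrapower (as in \cite[Corollary~19.7(b)]{kanamori}) to obtain $\map{j}{V}{M}$ with $\crit(j)=\mu$ and $j(\mu)>\lambda$, so $j(\kappa)>\lambda$. After checking suitability of the relevant tuples --- routine, using that cardinals of $V$ remain cardinals in $M$ --- I would verify that $j$ is of strong compactness type: since $V\models$``$\mu$ is $\eta$-strongly compact'', elementarity yields $M\models$``$j(\mu)$ is $j(\eta)$-strongly compact'', and as $j(\mu)$ is a cardinal of $M$ with $\lambda<j(\mu)\leq j(\kappa)$ and the absoluteness above transfers the witnessing fine ultrafilter from $M$ down to $V_{j(\eta)+\omega}^M$, we get that $\psi_{stc}(\lambda,j(\kappa),j(\eta))$ holds in $M\cap V_{j(\eta)+\omega}$. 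Lemma~\ref{lemma:EmbeddingLarge} then gives that $\kappa$ is $\psi_{stc}^\up$-large.

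For the forward direction, suppose $\kappa$ is $\psi_{stc}^\up$-large and call a limit ordinal $\eta>\kappa$ \emph{active} if some witnessing embedding at $\eta$ (for some good $\lambda$) is of strong compactness type. The core observation is that \emph{if $\eta$ is active then $V\models$``there is a cardinal $\mu\leq\kappa$ that is $\eta$-strongly compact''}: fixing a witness $\map{j}{V}{M}$ of strong compactness type, the absoluteness above turns ``$\psi_{stc}(\lambda,j(\kappa),j(\eta))$ holds in $M\cap V_{j(\eta)+\omega}$'' into ``$M\models$ there is a cardinal $\kappa'\leq j(\kappa)$ carrying a fine ultrafilter on $\pow_{\kappa'}(j(\eta))$'', and applying the elementarity of $j$ to this first-order statement, with $j(\kappa)$ and $j(\eta)$ the images of $\kappa$ and $\eta$, pulls it back to $V$. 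Now split into two cases. If the class of active ordinals is unbounded in $\Ord$, then, since there are only set-many cardinals $\leq\kappa$, some fixed cardinal $\mu\leq\kappa$ is $\eta$-strongly compact for unboundedly many active $\eta$; pushing fine ultrafilters forward along $a\mapsto a\cap\gamma$ shows that $\mu$-strong compactness at $\eta$ implies it at every $\gamma\in[\mu,\eta]$, so $\mu$ is strongly compact and we are done. Otherwise the active ordinals are bounded by some limit ordinal $\eta^*>\kappa$; then for each limit ordinal $\eta\geq\eta^*$, none of the (unboundedly many) witnessing embeddings at $\eta$ is of strong compactness type, hence all of them are of extendible type. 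Since for any cardinal $\lambda\geq\eta$ an extendible-type witness at $\eta^*$ with $j(\kappa)>\lambda$ is also an extendible-type witness at every limit ordinal $\eta$ with $\kappa<\eta\leq\eta^*$ (as $\eta\leq\eta^*$ forces $V_{j(\eta)}\subseteq V_{j(\eta^*)}\subseteq M$, with the relevant suitability clear), Lemma~\ref{lemma:EmbeddingLarge} shows that $\kappa$ is $\Psi_{ext}$-large; by Lemma~\ref{lemma:ExtLarge} there is then an extendible cardinal less than or equal to $\kappa$, and extendible cardinals are strongly compact.

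I expect the main obstacle to be the organisation of the forward direction --- in particular, recognising that the failure of the $\psi_{stc}$-disjunct at cofinally many ordinals is exactly what forces $\Psi_{ext}$-largeness, and the care needed to see that the ``$M\cap V_{\rho+\omega}$'' built into $\psi^\up$ has rank high enough that ultrafilters on $\pow_{\kappa'}(\rho)$, and the assertion that they are fine ultrafilters, are computed there as in $M$, which is what makes the passage from the $\psi_{stc}$-disjunct to a statement about $M$, and then via elementarity to a statement about $V$, legitimate.
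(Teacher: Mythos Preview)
Your proof is correct and follows essentially the same approach as the paper: the key insight in the forward direction is that whenever the $\psi_{stc}$-disjunct fails for a witnessing embedding, the extendible disjunct must hold, and this eventually forces $\Psi_{ext}$-largeness and hence an extendible (so strongly compact) cardinal $\leq\kappa$. The paper organizes the forward direction as a proof by contradiction---assuming no strongly compact cardinal $\leq\kappa$ immediately yields a bound $\alpha$ above which $\psi_{stc}(\delta,\kappa,\eta)$ fails in $V_{\eta+\omega}$ for all $\delta<\kappa$, which pushes forward by elementarity to kill the $\psi_{stc}$-disjunct for \emph{every} witness, thereby bypassing your Case~1 pigeonhole---but this is an organizational difference rather than a genuinely different route.
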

 
 \begin{proof}
  First, assume that $\mu$ is a strongly compact cardinal and $\kappa\geq\mu$ is a cardinal. Fix a limit ordinal $\eta>\kappa$ and a cardinal $\lambda\geq\eta$. 
  By {\cite[Corollary 22.18]{kanamori}}, there exists a fine, ${<}\mu$-complete ultrafilter on $\pow_\mu(\eta)$ and therefore $\psi_{stc}(\delta,\kappa,\eta)$ holds in $V_{\eta+\omega}$ for every cardinal $\delta<\mu$. 
  Since $\mu$ is measurable, we can find a transitive class $M$ and an elementary embedding $\map{j}{V}{M}$ with $j(\mu)>\lambda$. By elementarity, we know that $\psi_{stc}(\lambda,j(\kappa),j(\eta))$ holds in $V_{j(\eta)+\omega}^M$. This directly implies that $\psi_{str}^\up(V_\vartheta,V_\vartheta^M,\lambda,j(\kappa),j(\eta))$ holds for all limit ordinals $\vartheta>j(\eta)$. Using Lemma \ref{lemma:EmbeddingLarge}, we can now conclude that $\kappa$ is $\psi_{stc}^\up$-large.

  Now, assume, towards a contradiction, that $\kappa$ is a $\psi_{stc}^\up$-large cardinal and no cardinal less than or equal to $\kappa$ is strongly compact. Then there exists an ordinal $\alpha>\kappa$ with the property that for every cardinal $\mu\leq\kappa$, there is no fine, ${<}\mu$-complete  ultrafilter on $\pow_\mu(\alpha)$. 
  Fix a limit ordinal  $\eta>\alpha$. By Lemma \ref{lemma:EmbeddingLarge}, we can now find a cardinal $\lambda>\eta$, an inner model $M$ and an elementary embedding $\map{j}{V}{M}$ with $j(\kappa)>\lambda$ and the property that $\psi_{stc}^\up(V_{j(\eta)+\omega},V_{j(\eta)+\omega}^M,\lambda,j(\kappa),j(\eta))$ holds. 
  Since $\eta>\alpha$, we know that for all cardinals $\mu\leq\kappa$, there is no fine, ${<}\mu$-complete ultrafilter on $\pow_\mu(\eta)$. This shows that for all $\delta<\kappa$, the statement  $\psi_{stc}(\delta,\kappa,\eta)$ does not hold in $V_{\eta+\omega}$. By elementarity, $\psi_{stc}(\lambda,j(\kappa),j(\eta))$ fails in $V_{j(\eta)+\omega}^M$ and, by the definition of $\psi_{stc}^\up$, we thus know that $V_{j(\eta)+\omega}=V_{j(\eta)+\omega}^M$. 
  %
   %It now follows that $\Psi_{ext}(V_{j(\eta)+\omega},V_{j(\eta)+\omega}^M,\lambda,j(\kappa),j(\eta))$ holds. 
   %
   In particular, we have  $V_{j(\eta)}\subseteq M$. These computations show that for every limit ordinal $\eta>\kappa$, there is an inner model $M$ and an elementary embedding $\map{j}{V}{M}$ with $j(\kappa)>\eta$ and $V_{j(\eta)}\subseteq M$. This directly implies that $\kappa$ is $\Psi_{ext}$-large and hence Lemma \ref{lemma:ExtLarge} ensures that there is an extendible cardinal less than or equal to $\kappa$, contradicting our assumption, as extendibility implies strong compactness. 
 \end{proof}

 Note that the above arguments can be modified to obtain analogous characterizations for various types of large cardinal properties defined through the existence of certain filters. 
 Nevertheless, we consider the outward compactness characterizations obtained in this way as unsatisfying, 
 because 
 %, except for ensuring the suitability of certain tuples, 
  the measures of closeness $\psi^\up$ utilized in these characterizations  make no substantial demands on their first input parameter, and therefore   $\psi^\up$-outward consistency in $V$   is essentially decided by   properties of the ground model $V$. 
 In the remainder of this section, we aim to isolate a criterion for the naturalness of outward compactness characterizations that will allow us to separate the %canonical
 characterizations obtained in Section \ref{section:CharLL2} from the %artificial
 characterization of strong compactness derived above. 
   The formulation of this criterion is motivated by Propositions \ref{proposition:OutwardLessCompact} and \ref{proposition:OutwardCompactExtendible} which, in conjunction, show that ${<}\kappa$-satisfiability provably coincides with $\Psi_{ext}$-outward satisfiability at all limit cardinals $\kappa$. 
  Moreover, if $\Psi$ is an  $\LL^2$-measure of closeness and $\kappa$ is a $\Psi$-large cardinal that is not extendible, then Theorems \ref{theorem:Magidor} and \ref{theorem:LL2Duality} show that there is a ${<}\kappa$-satisfiable $\LL^2$-theory that is not $\Psi$-outward satisfiable at $\kappa$. 
  Therefore, it seems reasonable to expect that $\LL^2$-measures of closeness $\Psi$ that canonically correspond to large cardinal notions strictly weaker than extendibility have the property that $\Psi$-outwards satisfiability differs   from ${<}\kappa$-satisfiability at some cardinal $\kappa$.\footnote{Note that, by the above observation, the relevant case is when there are no $\Psi$-large cardinals. For a reasonable measure of closeness $\Psi$, we expect that  the relationship between the first and second input parameter of $\Psi$ should always be able to influence the truth of $\Psi$.} This intuition is captured in the following definition:

 \begin{definition}
  We say that a first-order formula $\Psi(v_0,\ldots,v_4)$ in the language of set theory \emph{naturally induces a large cardinal property below extendibility via outward compactness} if $\ZFC$ proves that for some infinite cardinal $\kappa$, there is a ${<}\kappa$-satisfiable $\LL^2$-theory that is not  $\Psi$-outward satisfiable at $\kappa$.   
 \end{definition}

 A short argument now shows that the formulae introduced in Section \ref{section:PsiLarge} that correspond to large cardinal notions below extendibility fulfill the above criterion:

\begin{proposition}
 If $\Psi$ is one of the formulae $\Psi_{ms}$, $\Psi_{str}$, $\Psi_{stc}$ or  $\Psi_{sc}$, then there is an  $\LL^2$-sentence $\varphi$  in the language of set theory such that $\{\varphi\}$ is a satisfiable $\LL^2$-theory that is not $\Psi$-outward satisfiable at any uncountable cardinal $\kappa$. 
\end{proposition}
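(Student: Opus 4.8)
The plan is to take $\varphi$ to be the $\LL^2$-sentence (in the language of set theory) asserting that the given structure $\langle A,E\rangle$ is a \emph{powerset-correct well-founded model of $\ZFC^*$}. Concretely, $\varphi$ is the conjunction of: the $\LL^2$-sentence stating that $E$ is well-founded; a first-order sentence stating that $\langle A,E\rangle$ satisfies a fixed finite fragment $\ZFC_0$ of $\ZFC^*$ rich enough to develop coding of formulas; the $\LL^2$-sentence stating that there is a satisfaction class for $\langle A,E\rangle$ verifying every axiom of $\ZFC^*$; and the sentence $\psi$ of Magidor used in the proof of Proposition \ref{proposition:FormulasAreL2measures}, for which $\langle M,\in\rangle\models_{\LL^2}\psi$ iff $M=V_{M\cap\Ord}$. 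A well-founded model of $\ZFC^*$ is isomorphic to some $V_\alpha$ with $V_\alpha\models\ZFC^*$, so the conjunct $\psi$ is redundant when we evaluate $\varphi$ in $V$; but it is the \emph{fragile} part of $\varphi$, pinning the model down to be genuinely powerset-correct, which is a feature that outer models adding new subsets at small rank will destroy. First I would record that $\{\varphi\}$ is consistent: by the Lévy reflection theorem there is $\alpha$ with $V_\alpha\prec_{\Sigma_2}V$, hence $V_\alpha\models\ZFC^*$, and $\langle V_\alpha,\in\rangle\models_{\LL^2}\varphi$.

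The key observation is that, for every transitive model $N$ of $\ZFC^*$, the structures $\mathfrak A\in N$ with $N\models\text{``}\mathfrak A\models_{\LL^2}\varphi\text{''}$ are exactly those isomorphic to some $(V_\beta)^N$ with $(V_\beta)^N\models\ZFC^*$: well-foundedness and the first-order statement ``$\langle A,E\rangle\models\ZFC^*$'' are absolute, whereas $\psi$ is computed using $N$'s powersets and so forces $A=(V_{A\cap\Ord})^N$. Hence $N$ has an $\LL^2$-model of $\varphi$ if and only if some proper rank-initial segment of $N$ is a model of $\ZFC^*$ --- and, crucially, $\ZFC^*$ does not prove the existence of such a segment, so ``minimal'' outer models of $\ZFC^*$ exist.

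Now fix an uncountable cardinal $\kappa$ and a limit ordinal $\eta>\kappa$; I would show that $\eta$ does not witness $\Psi$-outward consistency of $\{\varphi\}$ at $\kappa$, for each of $\Psi_{ms},\Psi_{str},\Psi_{sc},\Psi_{stc}$, using $\lambda=\omega$. Let $C=\{\beta\mid V_\beta\models\ZFC^*\}$, a club class, and $\vartheta_0=\min C$. If $\eta<\vartheta_0$, set $\vartheta=\vartheta_0$ and $N=V_\vartheta^V$: then $N$ has no proper $\ZFC^*$-initial segment, hence no $\LL^2$-model of $\varphi$, and $\Psi(N,V_\vartheta^V,\omega,\kappa,\eta)$ holds trivially. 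If $\eta\geq\vartheta_0$, let $\vartheta=\min(C\setminus(\eta+1))$ and $\beta_{\max}=\sup(C\cap\vartheta)$, so $\vartheta_0\leq\beta_{\max}\leq\eta<\vartheta$ with $\beta_{\max}\in C$ by closedness of $C$; passing to a $\Coll(\omega,\vartheta)$-generic extension, in which $V_\vartheta^V$ is countable, let $g$ be $\Coll(\omega_1,\beta_{\max})$-generic over $V_\vartheta^V$ and put $N=V_\vartheta^V[g]$. Then $N$ is an outer $\ZFC^*$-model of $V_\vartheta^V$ with the same ordinals, and: for $\beta\leq\beta_{\max}$, $(V_\beta)^N$ sees that every ordinal in $[\omega_1,\beta)$ is a surjective image of a subset of $\omega_1$, so $(V_\beta)^N$ thinks $\omega_1$ is the largest cardinal and $(V_\beta)^N\not\models\ZFC^*$; for $\beta_{\max}<\beta<\vartheta$, $(V_\beta)^N=(V_\beta^V)[g]$ and standard set-forcing absoluteness gives $(V_\beta)^N\models\ZFC^*$ iff $V_\beta^V\models\ZFC^*$, which fails since $(\beta_{\max},\vartheta)\cap C=\emptyset$. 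Thus $N$ has no $\LL^2$-model of $\varphi$. Finally, $\Coll(\omega_1,\beta_{\max})$ is $\sigma$-closed, so $N$ adds no $\omega$-sequences and no new sets of finite rank and keeps $\omega$ a cardinal, which yields $\Psi(N,V_\vartheta^V,\omega,\kappa,\eta)$ for each of the four measures of closeness.

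The routine parts are the $\LL^2$-formalisation of $\varphi$ via satisfaction classes and the verification of the (minimal) closure requirements of $\Psi_{ms},\Psi_{str},\Psi_{sc},\Psi_{stc}$ for a $\sigma$-closed extension with $\lambda=\omega$. The hard part will be the construction in the previous paragraph: producing, above an arbitrary candidate $\eta$, an outer $\ZFC^*$-model $N$ that is $\Psi$-close and yet contains no powerset-correct $\ZFC^*$-level. The two delicate points are that collapsing $\beta_{\max}$ to $\omega_1$ destroys all $\ZFC^*$-levels of height $\leq\beta_{\max}$ without destroying $\ZFC^*$ in $N$ itself (which still has cardinals cofinal in $\vartheta$), and that set forcing creates no new $\ZFC^*$-levels in the gap $(\beta_{\max},\vartheta)$, where one uses that $\vartheta$ is the very next $\ZFC^*$-level after $\eta$. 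This is exactly where $\Psi\neq\Psi_{ext}$ enters: the measure $\Psi_{ext}$ would force $N\cap V_\eta=V_\vartheta^V\cap V_\eta$, making this collapse illegal and, in line with Proposition \ref{proposition:OutwardCompactExtendible}, leaving $\{\varphi\}$ genuinely $\Psi_{ext}$-outward consistent.
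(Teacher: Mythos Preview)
Your approach differs substantially from the paper's and runs into genuine difficulties. The most immediate gap is that Definition~\ref{definition:outward compactness} quantifies over \emph{cardinals} $\vartheta>\eta$: to refute $\Psi$-outward consistency at $\kappa$ with putative witness $\eta$, you must exhibit a cardinal $\vartheta>\eta$ together with a bad outer model. Your choice $\vartheta=\min(C\setminus(\eta+1))$ (and likewise $\vartheta=\vartheta_0$ in the first case) is a minimal $\ZFC^*$-level, which need not be a cardinal at all. Replacing it by a cardinal $\vartheta$ with $V_\vartheta\models\ZFC^*$ creates a new obstacle: such a $\vartheta$ may well be a limit point of $C$, so that $\beta_{\max}=\sup(C\cap\vartheta)=\vartheta$ and your collapse $\Coll(\omega_1,\beta_{\max})$ is no longer a set forcing over $V_\vartheta^V$.

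Even granting a suitable $\vartheta$, the claim that $N=V_\vartheta^V[g]$ has no $\ZFC^*$-level is not fully established. Your argument that $(V_\beta)^N$ sees $\omega_1$ as its largest cardinal applies only to limit $\beta\in(\omega_1,\beta_{\max}]$; for $\beta\leq\omega_1$ you say nothing, yet $\sigma$-closed forcing already changes $(V_{\omega+2})^N$, so one cannot simply fall back on $\beta<\vartheta_0$. For $\beta\in(\beta_{\max},\vartheta)$ the assertion that set forcing cannot \emph{create} a $\ZFC^*$-level---that $(V_\beta^V)[g]\models\ZFC^*$ implies $V_\beta^V\models\ZFC^*$---is not the ``standard set-forcing absoluteness'' you invoke; it would require a ground-model-definability argument, and one tailored to the fragment $\ZFC^*$ rather than full $\ZFC$. (Incidentally, $C$ is not obviously closed either: instances of $\Sigma_2$-Replacement have complexity well beyond $\Pi_2$.)

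The paper sidesteps all of this with a much lighter construction. Instead of trying to eliminate \emph{every} $\ZFC^*$-level from an outer model, it fixes a single $\LL^2$-sentence that pins a model down to $V_{\omega_2+1}$ and additionally asserts (or denies) $2^{\aleph_1}=\aleph_2$, choosing whichever holds in $V$. A $\sigma$-closed forcing $\mathbb{P}$ then flips the truth of $2^{\aleph_1}=\aleph_2$; for any cardinal $\vartheta$ large enough that $\mathbb{P}\in H_\vartheta$ and $V_\vartheta^{V[G]}\models\ZFC^*$, the outer model $N=V_\vartheta^{V[G]}$ contains no model of the sentence, and $\sigma$-closedness immediately gives $\Psi(N,V_\vartheta^V,\omega,\kappa,\eta)$ for each of the four measures. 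This avoids any minimality requirement on $\vartheta$ and any global analysis of $\ZFC^*$-levels.
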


 \begin{proof}
   There are $\LL^2$-sentences $\varphi_0$ and $\varphi_1$ in the language of set theory such that $\ZFC^*$ proves the following statements: 
    \begin{itemize}
     \item If $i<2$ and $\langle M,E\rangle\models_{\LL^2}\varphi_i$, then $E$ is a well-founded and extensional relation on $E$ and the corresponding  transitive collapse maps $M$ to $V_{\omega_2+1}$. 
     
     \item We have $$2^{\aleph_1}=\aleph_2 ~ \Longleftrightarrow ~ \langle V_{\omega_2+1},\in\rangle\models_{\LL^2}\varphi_0 ~ \Longleftrightarrow ~ \langle V_{\omega_2+1},\in\rangle\models_{\LL^2}\neg\varphi_1.$$
    \end{itemize}
    
   Now, fix $i<2$ with $\langle V_{\omega_2+1},\in\rangle\models_{\LL^2}\varphi_i$.  Note that there is a $\sigma$-closed partial order $\mathbb{P}$ with the property that whenever $G$ is $\mathbb{P}$-generic over $V$, then $\langle V_{\omega_2+1}^{V[G]},\in\rangle\models\neg\varphi_i$ holds in $V[G]$.

 Assume, towards a contradiction, that $\eta$ is a limit ordinal that witnesses that the theory $\{\varphi_i\}$ is $\Psi$-outward satisfiable at some uncountable cardinal $\kappa$.     Pick a cardinal $\vartheta>\eta$ such that $\mathbb{P}$ is an element of $H_\vartheta$ and $\mathbb{P}$ forces that  $V_\vartheta^{V[G]}$ is a model of $\ZFC^*$ whenever $G$ is $\mathbb{P}$-generic over $V$. Let $H$ be $\Coll(\omega,\vartheta)$-generic over $V$. 
 Using \cite[Proposition 10.20]{kanamori}, we can find $G\in V[H]$ that is $\mathbb{P}$-generic over $V$. Our setup now ensures that $V_\vartheta^{V[G]}$ is an outer $\ZFC^*$-model of $V_\vartheta^V$ in $V[H]$. Moreover, since $\mathbb{P}$ is $\sigma$-closed in $V$,  it is easy to check, in each case, that $\Psi(V_\vartheta^{V[G]},V_\vartheta^V,\omega,\kappa,\eta)$ holds in $V[H]$. It now follows that $\{\varphi_i\}$ is satisfiable in $V_\vartheta^{V[G]}$ and    this implies that $\langle V^{V[G]}_{\omega_2+1},\in\rangle\models_{\LL^2}\varphi_i$    holds in $V[G]$, a contradiction.  
 \end{proof}

 Note that the above proof does not work for $\psi_{stc}^\up$. Given an uncountable cardinal $\kappa$ with no strongly compact cardinals below it, we can pick a limit ordinal $\eta>\kappa+\omega_2$ with the property that for every  cardinal $\mu\leq\kappa$, there is no fine, ${<}\mu$-complete ultrafilter on $\pow_\mu(\eta)$, and forcing with the given partial order $\mathbb{P}$ adds an element of $V_\eta$. 
 Now, if $\lambda<\kappa$ is a cardinal, $\vartheta>\eta$ is a cardinal, $H$ is $\Coll(\omega,\vartheta)$-generic over $V$ and $N$ is an outer $\ZFC^*$-model of $V_\vartheta^V$ with the property that $\psi_{stc}^\up(N,V_\vartheta^V,\lambda,\kappa,\eta)$ holds in $V[H]$, then the  fact that $\psi_{stc}(\lambda,\kappa,\eta)$ does not hold in $V_{\eta+\omega}^V$ implies that $V_\eta^N=V_\eta^V$ and we can conclude that $\langle V_{\omega_2+1}^N,\in\rangle\models_{\LL^2}\varphi_i$ holds in $N$. This shows that the $\LL^2$-theory $\{\varphi_i\}$ is $\psi_{stc}^\up$-outward satisfiable at $\kappa$.

 \begin{corollary}
   The formulae $\Psi_{ms}$, $\Psi_{str}$, $\Psi_{stc}$ and  $\Psi_{sc}$ all naturally induce a large cardinal property below extendibility via outward compactness. \qed 
 \end{corollary}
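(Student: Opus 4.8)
The Corollary is immediate from the preceding Proposition: given $\Psi\in\{\Psi_{ms},\Psi_{str},\Psi_{stc},\Psi_{sc}\}$, the Proposition supplies a consistent $\LL^2$-sentence $\varphi$ for which $\{\varphi\}$ is not $\Psi$-outward consistent at any uncountable cardinal, and since a consistent $\LL^2$-theory is trivially ${<}\kappa$-consistent for every infinite cardinal $\kappa$, the theory $\{\varphi\}$ witnesses, say with $\kappa=\aleph_1$, that $\Psi$ naturally induces a large cardinal property below extendibility. So the content lies in the Proposition, which I would prove as follows.

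The plan is to use an $\LL^2$-sentence that pins down $V_{\omega_2+1}$ together with the value of the continuum function at $\aleph_1$, and then to flip that value by $\sigma$-closed forcing. Using the coding of Magidor already invoked in the proof of Proposition~\ref{proposition:FormulasAreL2measures}, one obtains $\LL^2$-sentences $\varphi_0$ and $\varphi_1$ in the language of set theory such that $\ZFC^*$ proves that every $\LL^2$-model of $\varphi_i$ has a well-founded extensional membership relation whose transitive collapse equals $V_{\omega_2+1}$, and moreover that $2^{\aleph_1}=\aleph_2$ holds if and only if $\langle V_{\omega_2+1},\in\rangle\models_{\LL^2}\varphi_0$, if and only if $\langle V_{\omega_2+1},\in\rangle\models_{\LL^2}\neg\varphi_1$. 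Working in $V$, I fix $i<2$ with $\langle V_{\omega_2+1},\in\rangle\models_{\LL^2}\varphi_i$ and put $\varphi=\varphi_i$, so that $\{\varphi\}$ is consistent. The second ingredient is a $\sigma$-closed partial order $\mathbb{P}$ such that every $\mathbb{P}$-generic extension $V[G]$ satisfies $\langle V_{\omega_2+1}^{V[G]},\in\rangle\models_{\LL^2}\neg\varphi_i$ --- that is, $\mathbb{P}$ changes whether $2^{\aleph_1}=\aleph_2$ holds, by blowing up $2^{\aleph_1}$ if $i=0$ and by collapsing $2^{\aleph_1}$ onto $\aleph_2$ if $i=1$ --- and $\sigma$-closure ensures that $\mathbb{P}$ preserves $\aleph_1$ and adds no new reals or $\omega$-sequences.

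Now suppose towards a contradiction that a limit ordinal $\eta>\kappa$ witnesses that $\{\varphi\}$ is $\Psi$-outward consistent at an uncountable cardinal $\kappa$. I would choose a cardinal $\vartheta>\eta$ large enough that $\mathbb{P}\in H_\vartheta$, that $\mathbb{P}$ forces $V_\vartheta$ to be a model of $\ZFC^*$, and that a $\Coll(\omega,\vartheta)$-generic extension contains a $V$-generic filter for $\mathbb{P}$. Let $H$ be $\Coll(\omega,\vartheta)$-generic over $V$ and fix $G\in V[H]$ that is $\mathbb{P}$-generic over $V$; then $V_\vartheta^{V[G]}$ is an outer $\ZFC^*$-model of $V_\vartheta^V$ in $V[H]$. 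The crucial point is to verify that $\Psi(V_\vartheta^{V[G]},V_\vartheta^V,\omega,\kappa,\eta)$ holds in $V[H]$, and for each of the four measures this reduces to a consequence of $\sigma$-closure of $\mathbb{P}$: for $\Psi_{ms}$ only suitability of the tuple is needed; for $\Psi_{str}$ one needs $V_\vartheta^{V[G]}\cap V_\omega\subseteq V_\vartheta^V$, which holds since $V_\omega$ is absolute; for $\Psi_{sc}$ one needs every function from $\omega$ to $\eta$ lying in $V[G]$ to lie in $V$, which holds since $\mathbb{P}$ adds no $\omega$-sequences; and for $\Psi_{stc}$ one needs every Dedekind-finite --- hence finite --- subset $d$ of $\eta$ lying in $V[G]$ to be covered by some $c\in V$ into which $\kappa$ does not inject, where $c=d$ works because finite sets of ordinals of $V[G]$ already lie in $V$.

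Applying $\Psi$-outward consistency at $\kappa$ with $\lambda=\omega$ then yields that $\{\varphi\}$ is ${<}\omega$-consistent, hence consistent, inside $V_\vartheta^{V[G]}$, so $V_\vartheta^{V[G]}$ contains an $\LL^2$-model of $\varphi$; by the first property of $\varphi$ and since $\vartheta$ is large, its transitive collapse is the $V_{\omega_2+1}$ computed inside $V_\vartheta^{V[G]}$, namely $V_{\omega_2+1}^{V[G]}$ (here one uses that $V_\vartheta^{V[G]}$ computes $\aleph_1$, $\aleph_2$ and the relevant power sets as $V[G]$ does, the collapse $\Coll(\omega,\vartheta)$ notwithstanding). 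Since $\LL^2$-truth is isomorphism-invariant and absolute between $\ZFC^*$-models containing the power set in play, it follows that $\langle V_{\omega_2+1}^{V[G]},\in\rangle\models_{\LL^2}\varphi$ holds in $V[G]$, contradicting the choice of $\mathbb{P}$. I expect the main obstacle to be the construction of $\varphi_0,\varphi_1$ and of $\mathbb{P}$: one must isolate a property of initial segments of $V$ that is $\LL^2$-expressible, that can be flipped by $\sigma$-closed forcing (which forces one to work above the reals, so at $V_{\omega_2+1}$ rather than at $V_{\omega+1}$), and that is nonetheless stable enough under $\sigma$-closed forcing that all four closeness measures survive; once this is in place, the case analysis on $\Psi$ and the absoluteness bookkeeping are routine.
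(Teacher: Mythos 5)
Your proposal is correct and follows essentially the same route as the paper: the corollary is reduced to the preceding proposition exactly as in the text, and your proof of that proposition uses the same $\LL^2$-sentences $\varphi_0,\varphi_1$ pinning down $V_{\omega_2+1}$ and the truth value of $2^{\aleph_1}=\aleph_2$, the same $\sigma$-closed forcing $\mathbb{P}$ to flip that value, and the same $\Coll(\omega,\vartheta)$/Shoenfield-free absoluteness setup with $\lambda=\omega$. The only difference is that you spell out the case-by-case verification of $\Psi(V_\vartheta^{V[G]},V_\vartheta^V,\omega,\kappa,\eta)$ for the four measures, which the paper dismisses as "easy to check"; your verification is correct.
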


 Now, we isolate a sufficient criterion for non-natural large cardinal characterizations.  We will show that $\psi_{stc}^\up$ satisfies this criterion below the first strongly compact cardinal.

 \begin{definition}
  Given a first-order formula $\Psi(v_0,\ldots,v_4)$ in the language of set theory, we say that \emph{$\Psi$-outward compactness trivializes at a cardinal $\kappa$} if there is a proper class of limit ordinals $\eta>\kappa$ with the property that for all cardinals $\lambda<\kappa$ and all limit ordinals $\vartheta>\eta$, the partial order $\Coll(\omega,\vartheta)$ forces that $N\cap V_\eta^{V[G]}\subseteq V$ holds whenever $G$ is $\Coll(\omega,\vartheta)$-generic over $V$ and $N\in V[G]$ is an outer $\ZFC^*$-model  of $V_\vartheta^V$ with the property that $\Psi(N,V_\vartheta^V,\lambda,\kappa,\eta)$ holds in $V[G]$. 
 \end{definition}

 As a first observation, we show that the above property causes $\Psi$-largeness to be equivalent to $\Psi_{ext}$-largeness:

 \begin{proposition}
  Let $\Psi(v_0,\ldots,v_4)$ be an $\LL^2$-measure of closeness. If $\Psi$-outward compactness trivializes at a $\Psi$-large cardinal $\kappa$, then there exists an extendible cardinal less than or equal to~$\kappa$. 
 \end{proposition}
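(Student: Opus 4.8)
The plan is to show, under the stated hypotheses, that $\kappa$ is a strong compactness cardinal for $\LL^2$, and then to invoke Magidor's Theorem~\ref{theorem:Magidor}. Since $\kappa$ is $\Psi$-large and $\Psi$ is an $\LL^2$-measure of closeness, Theorem~\ref{theorem:LL2Duality} already gives that $\kappa$ is a $\Psi$-outward compactness cardinal for $\LL^2$. Hence it suffices to prove that every ${<}\kappa$-consistent $\LL^2$-theory $T$ is $\Psi$-outward consistent at $\kappa$; this is precisely the converse, under the trivialization hypothesis, of Proposition~\ref{proposition:OutwardLessCompact}, and the argument will closely follow the proof of Proposition~\ref{proposition:OutwardCompactExtendible}.

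So fix a ${<}\kappa$-consistent $\LL^2$-theory $T$. I would let $\eta>\kappa$ be a limit ordinal lying in the proper class of ordinals that witnesses the trivialization of $\Psi$-outward compactness at $\kappa$, chosen large enough that $T\in V_\eta$ and $V_\eta$ is sufficiently elementary in $V$; the latter is needed so that $V_\eta\models\ZFC^*$ and so that the statement \anf{$T_0$ has a model} (which is $\Sigma_1$ over $\ZFC^*$) reflects between $V_\eta$ and $V$. Now let $\lambda<\kappa$ be an infinite cardinal, let $\vartheta>\eta$ be a cardinal with $T\in H_\vartheta$, let $G$ be $\Coll(\omega,\vartheta)$-generic over $V$, and let $N\in V[G]$ be an outer $\ZFC^*$-model of $V_\vartheta^V$ with $\Psi(N,V_\vartheta^V,\lambda,\kappa,\eta)$. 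By \ref{Prop:A} in Definition~\ref{definition:Closenessss}.\ref{item:Closeness}, the tuple $\langle N,V_\vartheta^V,\lambda,\kappa,\eta\rangle$ is suitable, so $V_\vartheta^V\subseteq N$ and $N\cap\Ord=\vartheta$; and by the trivialization hypothesis, $V_\eta^N\subseteq V$. Together these yield $V_\eta^N=V_\eta^V$ and, since every subset of a set of rank $<\eta$ again has rank $<\eta$ (as $\eta$ is a limit ordinal), also $\mathcal P^N(x)=\mathcal P^V(x)$ for every set $x$ of rank less than $\eta$. As both $V$ and $N$ satisfy $\ZFC^*$, they compute the second-order satisfaction relation for set-sized models correctly, and it follows that $N$ and $V$ assign the same truth value to $\langle\mathcal M,\in\rangle\models_{\LL^2}\varphi$ whenever the underlying set of $\mathcal M$ has rank less than $\eta$.

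Granting this, let $T_0\in N$ be a subtheory of $T$ of cardinality less than $\lambda$ in $N$. Both $T_0$ and an injection from $T_0$ into an ordinal below $\lambda$ witnessing $\vert T_0\vert^N<\lambda$ have rank less than $\eta$, hence lie in $V_\eta^N=V_\eta^V\subseteq V$; consequently $\vert T_0\vert^V<\lambda<\kappa$, and the ${<}\kappa$-consistency of $T$ makes $T_0$ consistent in $V$. By the elementarity of $V_\eta$, the theory $T_0$ then has a model $\mathcal M\in V_\eta^V=V_\eta^N$, and, since $\mathcal M$ has rank less than $\eta$, the agreement of the satisfaction relation noted above shows that $N$ also believes $\langle\mathcal M,\in\rangle\models_{\LL^2}T_0$. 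Thus $T_0$ is consistent in $N$, and so $T$ is ${<}\lambda$-consistent in $N$. This shows that $\eta$ witnesses that $T$ is $\Psi$-outward consistent at $\kappa$; since $\kappa$ is a $\Psi$-outward compactness cardinal for $\LL^2$, the theory $T$ is consistent. Hence $\kappa$ is a strong compactness cardinal for $\LL^2$, and Theorem~\ref{theorem:Magidor} yields an extendible cardinal less than or equal to $\kappa$.

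The step I expect to require the most care is the selection of the ordinal $\eta$: it must simultaneously lie in the witnessing class for the trivialization of $\Psi$-outward compactness and be elementary enough in $V$ to reflect the consistency of the fragments $T_0$ down to $V_\eta$. If the witnessing class cannot directly be taken to meet the club $\Set{\eta}{V_\eta\text{ is sufficiently elementary in }V}$, one would have to either argue that the witnessing class can be thinned to a closed unbounded one, or find a treatment of the consistency of $T_0$ inside $N$ that avoids reflection. The other delicate point is the absoluteness of $\LL^2$-satisfaction for small structures between $V$ and the outer model $N$, which is exactly the place where suitability and trivialization are combined, via the equality $\mathcal P^N(x)=\mathcal P^V(x)$ for $\rank(x)<\eta$.
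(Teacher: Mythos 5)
Your proof is correct, but it takes a genuinely different route from the paper's. The paper argues by contradiction at the level of embeddings: assuming no extendible cardinal $\leq\kappa$, it fixes (via {\cite[Proposition 23.15]{kanamori}}) an ordinal $\alpha>\kappa$ admitting no elementary embedding $\map{j}{V_\alpha}{V_\beta}$ with critical point $\leq\kappa$, uses $\Psi$-largeness to produce $\map{j}{V}{M}$ with $j(\kappa)$ large, transfers the trivialization property to $M$ by elementarity and Shoenfield absoluteness (as in the proof of Theorem \ref{theorem:LL2Duality}), applies it to $N=V_\vartheta^V$ to conclude $V_{j(\eta)}^V\subseteq M$, and observes that $j\restriction V_\alpha$ then contradicts the choice of $\alpha$. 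You instead show that trivialization upgrades $\Psi$-outward consistency to plain ${<}\kappa$-consistency, so that Theorem \ref{theorem:LL2Duality} makes $\kappa$ a strong compactness cardinal for $\LL^2$, and Theorem \ref{theorem:Magidor} finishes. This is essentially the combination of the paper's own Lemma \ref{lemma:CompactnessTrivial} (proved immediately \emph{after} this proposition) with the duality theorem and Magidor's theorem; it is more modular and reuses the heavy machinery as black boxes, whereas the paper's argument is self-contained at the embedding level and exhibits directly why trivialization forces $V_{j(\eta)}\subseteq M$.

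One remark on the point you flag: you do not need $V_\eta$ to be elementary in $V$, and hence you do not need the trivialization-witnessing class to meet a club. Since $T$ is a set, it has only set-many subtheories of cardinality ${<}\kappa$, each of which is consistent; choosing one model for each, all these models lie in some $V_\beta$, so the condition \anf{every subtheory of $T$ of cardinality less than $\kappa$ has a model in $V_\eta$} holds on a final segment of the ordinals. A proper class is unbounded and therefore meets every final segment, so a suitable $\eta$ in the witnessing class exists. This is exactly the device used in the paper's proof of Lemma \ref{lemma:CompactnessTrivial}, and with it your remaining argument (in particular the computation $\mathcal P^N(x)=\mathcal P^V(x)$ for $\rank(x)<\eta$ and the resulting agreement of $\models_{\LL^2}$ on structures of rank below $\eta$) goes through as written.
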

 
 \begin{proof}
  Assume, towards a contradiction, that $\Psi$-outward compactness trivializes at a $\Psi$-large cardinal $\kappa$ and no cardinal  less than or equal to $\kappa$ is extendible. By {\cite[Proposition 23.15]{kanamori}}, we can find an ordinal $\alpha>\kappa$ with the property that for every ordinal $\beta$, there is no non-trivial elementary embedding $\map{j}{V_\alpha}{V_\beta}$ with $\crit(j)\leq\kappa$. 
  Since $\Psi$-outward compactness trivializes at $\kappa$, 
  there exists a limit ordinal $\eta>\alpha$ such that for all cardinals $\lambda<\kappa$ and all limit ordinals $\vartheta>\eta$, the partial order $\Coll(\omega,\vartheta)$ forces that $N\cap V_\eta^{V[G]}\subseteq V$ holds 
  whenever $G$ is $\Coll(\omega,\vartheta)$-generic over $V$ and $N\in V[G]$ is an outer $\ZFC^*$-model  of $V_\vartheta^V$ with the property that $\Psi(N,V_\vartheta^V,\lambda,\kappa,\eta)$ holds in $V[G]$. 
  Using Lemma \ref{lemma:EmbeddingLarge}, we now find a cardinal $\lambda>\eta$, an inner model $M$ and an elementary embedding $\map{j}{V}{M}$ such that $j(\kappa)>\lambda$ and  $\Psi(V_\vartheta,V_\vartheta^M,\lambda,j(\kappa),j(\eta))$ holds for every  limit ordinal $\vartheta>j(\eta)$. 
  Pick a limit ordinal $\vartheta>j(\eta)$ with the property that $j(\vartheta)=\vartheta$ and $V_\vartheta$ is a model of $\ZFC^*$. Let $G$ be $\Coll(\omega,\vartheta)$-generic over $V$. By using the elementarity of $j$ and Shoenfield absoluteness as in the proof of Theorem \ref{theorem:LL2Duality}, it now follows that, in $V[G]$, we have $N\cap V_{j(\eta)}^{V[G]}\subseteq V_\vartheta^M$ whenever $N$ is an outer $\ZFC^*$-model of $V_\vartheta^M$ with the property that $\Psi(N,V_\vartheta^M,\lambda,j(\kappa),j(\eta))$ holds.  
  Since $\alpha<j(\eta)$ and our assumptions on $\Psi$ ensure that $\Psi(V_\vartheta^V,V_\vartheta^M,\lambda,j(\kappa),j(\eta))$ holds in both $V$ and $V[G]$, we can now conclude that  $V_{j(\eta)}^V\subseteq M$ and $\map{j\restriction V_\alpha}{V_\alpha}{V_{j(\alpha)}}$ is a non-trivial elementary embedding with $\crit(j\restriction V_\alpha)\leq\kappa$, contradicting our initial assumption.  
  \end{proof}

 We now show how our two criteria are related:

 \begin{lemma}\label{lemma:CompactnessTrivial}
  Let $\Psi(v_0,\ldots,v_4)$ be a first-order formula in the language of set theory. If  $\Psi$-outward compactness trivializes at an infinite cardinal $\kappa$, then every ${<}\kappa$-satisfiable $\LL^2$-theory is $\Psi$-outward satisfiable at $\kappa$. 
 \end{lemma}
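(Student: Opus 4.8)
The plan is to run the argument of Proposition~\ref{proposition:OutwardCompactExtendible}, which proves the corresponding statement in the special case $\Psi = \Psi_{ext}$, replacing the closure property built into $\Psi_{ext}$ by the trivialization hypothesis --- that hypothesis supplies exactly the equality $V_\eta^N = V_\eta^V$ needed in the argument, and nothing else about $\Psi$ is used. Fix a ${<}\kappa$-consistent $\LL^2$-theory $T$; the goal is to produce a limit ordinal $\eta > \kappa$ witnessing that $T$ is $\Psi$-outward consistent at $\kappa$. First I would pick an ordinal $\alpha$ such that every subtheory of $T$ of cardinality less than $\kappa$ has a model of rank less than $\alpha$; this is possible since $T$ is ${<}\kappa$-consistent and there is only a set of such subtheories, so one may choose a model of each and take the supremum of their ranks. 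Then, using that the trivialization witnesses form a proper class, I would choose a limit ordinal $\eta > \max\{\alpha, \rank(T)\}$ witnessing the trivialization of $\Psi$-outward compactness at $\kappa$, so that in particular $T \in V_\eta$ and $\kappa < \eta$.

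To verify that this $\eta$ works, fix an infinite cardinal $\lambda < \kappa$ and a cardinal $\vartheta > \eta$ with $T \in H_\vartheta$, let $G$ be $\Coll(\omega, \vartheta)$-generic over $V$, and in $V[G]$ let $N$ be an outer $\ZFC^*$-model of $V_\vartheta^V$ with $\Psi(N, V_\vartheta^V, \lambda, \kappa, \eta)$. Since $\vartheta$ is a limit ordinal greater than $\eta$, the trivialization hypothesis yields $N \cap V_\eta^{V[G]} \subseteq V$ in $V[G]$; as ranks are absolute and $V_\eta^V \subseteq V_\vartheta^V \subseteq N$, this gives $V_\eta^N = V_\eta^V$. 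Now let $T_0 \in N$ be a subtheory of $T$ of cardinality less than $\lambda$ in $N$. Since $\rank(T_0) \le \rank(T) < \eta$, we get $T_0 \in V_\eta^N = V_\eta^V$, and any injection of $T_0$ into an ordinal below $\lambda$ witnessing $\vert T_0\vert^N < \lambda$ has rank below $\eta$, hence lies in $V_\eta^N = V_\eta^V \subseteq V$, so $\vert T_0\vert^V < \lambda < \kappa$. By the choice of $\alpha$, the theory $T_0$ has a model $\mathcal A \in V$ of rank below $\alpha < \eta$, whence $\mathcal A \in V_\eta^V = V_\eta^N \subseteq N$. Since the underlying set of $\mathcal A$ has rank below $\eta$, all its subsets have rank below $\eta$ and so belong to $V_\eta^N = V_\eta^V$; therefore $N$ and $V$ compute the same power set of the domain of $\mathcal A$, and as $N$ is a model of $\ZFC^*$ it computes the $\LL^2$-satisfaction relation of $\mathcal A$ correctly. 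It follows that $\mathcal A$ is a model of $T_0$ in $N$, i.e.\ $T_0$ is consistent in $N$; since $T_0$ was arbitrary, $T$ is ${<}\lambda$-consistent in $N$, as required.

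Beyond the trivialization hypothesis supplying $V_\eta^N = V_\eta^V$, this is essentially a transcription of the proof of Proposition~\ref{proposition:OutwardCompactExtendible}, so I do not expect a serious obstacle. The one step that needs genuine care is the production of a model of $T_0$ of rank below $\eta$: in the proof of Proposition~\ref{proposition:OutwardCompactExtendible} this was done by taking $\eta$ sufficiently elementary in $V$, which is unproblematic there because for $\Psi = \Psi_{ext}$ \emph{every} limit ordinal witnesses trivialization. In the present generality the trivialization witnesses are only known to form a proper class, which need not meet the club of $\Sigma_2$-elementary ordinals, so I would instead fix the bounding ordinal $\alpha$ up front and take $\eta$ above it, which removes any appeal to elementarity of $V_\eta$. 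A secondary point to check is the absoluteness of the $\LL^2$-satisfaction relation of $\mathcal A$ between $V$ and $N$, which rests precisely on $V_\eta^N = V_\eta^V$ together with $\rank(\mathcal A) < \eta$. With these two observations in place the argument uses nothing about $\Psi$ except the trivialization hypothesis --- in particular, no measure-of-closeness assumption is needed.
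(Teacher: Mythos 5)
Your proposal is correct and follows essentially the same route as the paper: choose the trivialization witness $\eta$ above a bound on the ranks of $T$ and of models for all its small subtheories, then use $N\cap V_\eta^{V[G]}\subseteq V$ to pull each $N$-small subtheory (and a cardinality witness) back into $V$ and push a $V$-model back into $N$. Your explicit verification that $N$ and $V$ compute the same power set of the model's domain, and hence the same $\LL^2$-satisfaction, is a point the paper's own proof leaves implicit, so no concerns there.
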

 
 \begin{proof}
  Let $T$ be a ${<}\kappa$-satisfiable $\LL^2$-theory. Then there exists a limit ordinal $\eta>\kappa$ such that the following statements hold: 
  \begin{itemize}
   \item $T\in H_\rho$ for some cardinal $\rho<\eta$. 
   
   \item Every subtheory of $T$ of cardinality less than $\kappa$ has a model in $V_\eta$. 
   
   \item For all cardinals $\lambda<\kappa$ and all limit ordinals $\vartheta>\eta$, the partial order $\Coll(\omega,\vartheta)$ forces that $N\cap V_{\eta+1}^{V[G]}\subseteq V$ holds for every outer $\ZFC^*$-model $N$ of $V_\vartheta^V$ with the property that $\Psi(N,V_\vartheta^V,\lambda,\kappa,\eta)$ holds. 
  \end{itemize}
  
    Now, let $\lambda<\kappa$ be a cardinal, let $\vartheta>\eta$ be a cardinal, let $G$ be $\Coll(\omega,\vartheta)$-generic over $V$ and let $N$ be an outer $\ZFC^*$-model of $V_\vartheta^V$ in $V[G]$ with the property that $\Psi(N,V_\vartheta^V,\lambda,\kappa,\eta)$ holds in $V[G]$. We then know that $N\cap V_\eta^{V[G]}\subseteq V$ and therefore every subtheory of $T$ of cardinality less than $\kappa$ in $N$ is contained in $V$ and has cardinality less than $\kappa$ in $V$. In particular, the theory $T$ is ${<}\lambda$-satisfiable in $N$.  
 \end{proof}

 \begin{corollary}\label{corollary:NaturalNotTrivialize}
  If  $\Psi(v_0,\ldots,v_4)$ is a first-order formula in the language of set theory that naturally induces a large cardinal property below extendibility via outward compactness, then $\ZFC$ proves that there is an infinite cardinal at which $\Psi$-outward compactness does not trivialize. \qed 
 \end{corollary}

  We are now ready to show that, in contrast to the characterizations derived in Section \ref{section:CharLL2}, the characterization of strong compactness given by Proposition \ref{proposition:BadCharStronglyCompact} does not meet the above criteria.

 \begin{proposition}\label{proposition:stcTrivial}
  If $\kappa$ is a cardinal with the property that no cardinal less than or equal to $\kappa$ is strongly compact, then $\psi_{stc}^\up$-outward compactness trivializes at $\kappa$. 
 \end{proposition}
 
 \begin{proof}
  Fix   a limit ordinal $\eta>\kappa$ with the property that for every infinite cardinal $\mu\leq\kappa$, there is no fine, ${<}\mu$-complete ultrafilter on $\pow_\mu(\eta)$. Let $\lambda<\kappa$ be a cardinal, let $\vartheta>\kappa$ be a cardinal, let $G$ be $\Coll(\omega,\vartheta)$-generic over $V$ and let $N$ be  an outer $\ZFC^*$-model of $V_\vartheta^V$ in $V[G]$ with the property that $\psi_{stc}^\up(N,V_\vartheta^V,\lambda,\kappa,\eta)$ holds in $V[G]$. 
 Since $\psi_{stc}(\lambda,\kappa,\eta)$ does not hold in $V_{\eta+\omega}^V$, we can now conclude that 
  %$\Psi_{ext}(N,V_\vartheta^V,\lambda,\kappa,\eta)$ holds in $V[G]$ and hence 
   $N\cap V_\eta^{V[G]}\subseteq V$ holds. 
 \end{proof}

 \begin{corollary}
  If the formula $\psi_{stc}^\up$ naturally induces a large cardinal property below extendibility via outward compactness, then  $\ZFC$ is inconsistent.  
 \end{corollary}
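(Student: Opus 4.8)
The plan is to deduce the statement from the two results immediately preceding it together with G\"odel's second incompleteness theorem; here and below I read the formula $\psi_{str}^\up$ in the statement as the formula $\psi_{stc}^\up$ attached to the strong compactness characterization of Proposition~\ref{proposition:BadCharStronglyCompact} (the symbol $\psi_{str}$ is not defined, and the preceding Proposition is about $\psi_{stc}^\up$). So assume, towards the conclusion, that $\psi_{stc}^\up$ naturally induces a large cardinal property below extendibility.

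First, I would apply the Corollary following Lemma~\ref{lemma:CompactnessTrivial} (equivalently, argue directly via Lemma~\ref{lemma:CompactnessTrivial}: were $\psi_{stc}^\up$-outward compactness to trivialize at every infinite cardinal, then every ${<}\kappa$-consistent $\LL^2$-theory would be $\psi_{stc}^\up$-outward consistent at every infinite cardinal $\kappa$, contradicting the hypothesis) to conclude that $\ZFC$ proves that there is an infinite cardinal at which $\psi_{stc}^\up$-outward compactness does not trivialize. Next, I would invoke the contrapositive of the Proposition preceding the present statement, which is itself a theorem of $\ZFC$: if there is an infinite cardinal at which $\psi_{stc}^\up$-outward compactness does not trivialize, then there is a strongly compact cardinal. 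Combining these two facts, by applying modus ponens at the level of $\ZFC$-provability, shows that $\ZFC$ proves the existence of a strongly compact cardinal.

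To finish, recall that a strongly compact cardinal is inaccessible, so $\ZFC$ proves that whenever $\kappa$ is strongly compact the set $V_\kappa$ is a model of $\ZFC$; hence $\ZFC$ proves $\Con(\ZFC)$, and by G\"odel's second incompleteness theorem $\ZFC$ is inconsistent. There is no genuine obstacle here, since the whole argument is a short chain of implications drawn from results already established; the only point that needs care is the bookkeeping between the meta-theoretic implications (the hypothesis "naturally induces a large cardinal property below extendibility'' and the Corollary after Lemma~\ref{lemma:CompactnessTrivial}, both of which speak about $\ZFC$-provability) and the genuine $\ZFC$-theorems (the preceding Proposition and the implication "inaccessible implies $\Con(\ZFC)$''), so that each application of the rule "from $\ZFC\vdash\varphi$ and $\ZFC\vdash(\varphi\to\psi)$ infer $\ZFC\vdash\psi$'' is legitimate.
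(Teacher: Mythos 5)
Your proof is correct and is exactly the intended argument behind the paper's \qed: combine the corollary to Lemma~\ref{lemma:CompactnessTrivial} with the contrapositive of the preceding proposition to get that $\ZFC$ proves the existence of a strongly compact cardinal, hence $\Con(\ZFC)$, and conclude by G\"odel's second incompleteness theorem. You are also right that the symbol $\psi_{str}^\up$ in the statement is a typo for $\psi_{stc}^\up$, and your care with the meta-level versus object-level bookkeeping is exactly the point that needs attention.
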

 
 \begin{proof}
  Assume that the formula $\psi_{stc}^\up$ naturally induces a large cardinal property below extendibility via outward compactness. Then Corollary \ref{corollary:NaturalNotTrivialize} shows that $\ZFC$ proves that there is an infinite cardinal at which $\Psi$-outward compactness does not trivialize. 
  By Proposition \ref{proposition:stcTrivial}, this implies that $\ZFC$ proves the existence of a strongly compact cardinal and therefore we can conclude that $\ZFC$ is inconsistent.  
 \end{proof}

%%%%%%%%%%%%%%%
%%%%%%%%%%%%%%%

\section{Fragments of Vop\v{e}nka's Principle}\label{section:FragmentsVP}

 We now aim to extend the equivalence established in Theorem \ref{theorem:LL2Duality} to obtain characterizations of stronger large cardinal assumptions. 
 For this purpose, we start by introducing a natural strengthening of the the notion of $\Psi$-largeness that is motivated by classical characterizations of Woodin cardinals (see {\cite[Theorem 26.14]{kanamori}}) and related variations of supercompactness and extendibility (see {\cite[p. 339]{kanamori}}).

 \begin{definition}\label{definition:PsiLargeA}
 Given a formula  $\Psi(v_0,\ldots,v_4)$  in the language of set theory and a class $A$,\footnote{Note that, since we are working in a $\ZFC$ context, all classes are definable.} 
 a cardinal $\kappa$ is \emph{$\Psi$-large for $A$} if for all limit ordinals $\kappa<\eta<\theta$, 
 there are unboundedly many cardinals $\lambda\geq\eta$ with the property that there is a transitive set $M$ and an elementary embedding $\map{j}{V_{\theta+1}}{M}$ 
 such that $j(\kappa)>\lambda$, $$j(A\cap V_\kappa)\cap V_\lambda ~ = ~ A\cap M\cap  V_\lambda$$ and $\Psi(V_{j(\theta)},M\cap V_{j(\theta)},\lambda,j(\kappa),j(\eta))$ holds.  
\end{definition}

 In the following, we present the first example of a large cardinal principle that we aim to represent through the above concept. 
 Remember that \emph{Vop\v{e}nka's Principle} is a scheme of axioms stating that for every proper class $\mathcal{C}$ of structures of the same type, there exist $A,B\in\mathcal{C}$ with $A\neq B$ and an elementary embedding from $A$ to $B$. 
  Using results from \cite{Cn} and \cite{MR3323199}, it is easy to show that Vop\v{e}nka's Principle provides various examples of cardinals with the given property.

\begin{proposition}\label{proposition:VPMaximalLargeness}
 If Vop\v{e}nka's Principle holds, then for every measure of closeness $\Psi(v_0,\ldots,v_4)$ and every class $A$, there is a proper class of cardinals that are $\Psi$-large for $A$. 
\end{proposition}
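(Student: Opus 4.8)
The plan is to deduce the statement from the standard consequence of Vop\v{e}nka's Principle, isolated in \cite{Cn} and \cite{MR3323199}, that for every class $A$ there is a proper class of cardinals $\kappa$ that are \emph{$A$-extendible}, in the sense that for every ordinal $\eta>\kappa$ there is an ordinal $\zeta$ and an elementary embedding $\map{i}{\langle V_\eta,\in,A\cap V_\eta\rangle}{\langle V_\zeta,\in,A\cap V_\zeta\rangle}$ with $\crit(i)=\kappa$ and $i(\kappa)>\eta$. Granting this, it suffices to fix an arbitrary measure of closeness $\Psi(v_0,\ldots,v_4)$ and an arbitrary class $A$, and to show that every $A$-extendible cardinal $\kappa$ is $\Psi$-large for $A$; since the $A$-extendible cardinals form a proper class, so will the cardinals that are $\Psi$-large for $A$.

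So fix such a $\kappa$, fix limit ordinals $\kappa<\eta<\theta$, and fix an arbitrary ordinal $\beta$; we must produce a cardinal $\lambda\geq\eta$ with $\lambda>\beta$ together with a transitive set $M$ and an elementary embedding $\map{j}{V_{\theta+1}}{M}$ witnessing the defining clause of $\Psi$-largeness for $A$. The plan is to choose a cardinal $\lambda>\max\{\theta,\beta\}$, apply $A$-extendibility at the level $\lambda$ to obtain an ordinal $\zeta$ and an $A$-respecting elementary embedding $\map{i}{\langle V_\lambda,\in,A\cap V_\lambda\rangle}{\langle V_\zeta,\in,A\cap V_\zeta\rangle}$ with $\crit(i)=\kappa$ and $i(\kappa)>\lambda$, and then set $j=i\restriction V_{\theta+1}$ and $M=i(V_{\theta+1})=V_{i(\theta)+1}$. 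A routine verification, using that the relativized formula $\varphi^{V_{\theta+1}}$ is absolute between $V$ and $V_\lambda$ for every formula $\varphi$, shows that $j$ is an elementary embedding of $V_{\theta+1}$ into the transitive set $M$ with $j(\kappa)=i(\kappa)>\lambda$ and with $j(\eta)=i(\eta)$ and $j(\theta)=i(\theta)$ limit ordinals satisfying $\lambda<j(\kappa)<j(\eta)<j(\theta)$.

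It then remains to check the two clauses in the definition of $\Psi$-largeness for $A$. For the coherence clause, the point is that $i$ respects the predicate $A$, whence $j(A\cap V_\kappa)=i(A\cap V_\kappa)=A\cap V_{i(\kappa)}$; intersecting with $V_\lambda$ and using $\lambda<i(\kappa)<i(\theta)$ yields $j(A\cap V_\kappa)\cap V_\lambda=A\cap V_\lambda=A\cap M\cap V_\lambda$, as required. For the closeness clause, observe that $M\cap V_{j(\theta)}=V_{i(\theta)+1}\cap V_{i(\theta)}=V_{i(\theta)}=V_{j(\theta)}$, so that what has to be established is $\Psi(V_{j(\theta)},V_{j(\theta)},\lambda,j(\kappa),j(\eta))$; this is exactly an instance of \ref{Prop:B} in Definition \ref{definition:Closenessss}.\ref{item:Closeness}, since $\lambda$ is a cardinal and $j(\kappa)<j(\eta)<j(\theta)$ are limit ordinals strictly greater than $\lambda$. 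As $\beta$ was arbitrary, the cardinals $\lambda$ produced in this way are unbounded in the ordinals, so $\kappa$ is indeed $\Psi$-large for $A$, completing the argument.

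I expect the only real obstacle to be bookkeeping rather than anything substantive: one must choose the level $\lambda$ of the $A$-extendibility embedding to be a cardinal so that \ref{Prop:B} applies literally, confirm that $\kappa$, $\eta$, $\theta$, $V_\kappa$ and $A\cap V_\kappa$ all lie inside $V_\lambda$ so that $j=i\restriction V_{\theta+1}$ is a well-defined elementary embedding with $j(V_\theta)=V_{i(\theta)}$ transitive, and --- most delicately --- match the precise form of the consequence of Vop\v{e}nka's Principle supplied by \cite{Cn} and \cite{MR3323199} (typically phrased there via $C^{(n)}$-extendibility) to the notion of $A$-extendibility used above. None of these introduces a genuine difficulty, in line with the assertion that the result follows easily from the cited work.
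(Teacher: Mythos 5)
Your proposal is correct and follows essentially the same route as the paper: restrict an extendibility-type embedding supplied by Vop\v{e}nka's Principle to $V_{\theta+1}$, verify the coherence clause from preservation of the predicate $A$, and discharge the closeness clause via property \ref{Prop:B} since $M\cap V_{j(\theta)}=V_{j(\theta)}$. The only difference is packaging: you invoke $A$-extendibility as a black box, whereas the paper works directly with $C^{(n)}$-extendible cardinals (writing $A=\Set{x}{\varphi(x,y)}$ with $\varphi$ a $\Sigma_n$-formula, taking $\kappa>\xi$ $C^{(n)}$-extendible with $y\in V_\kappa$, and using $V_\kappa\prec_{\Sigma_{n+2}}V$ and $V_{j(\kappa)}\prec_{\Sigma_n}V$ to get $j(A\cap V_\kappa)\cap V_\lambda=A\cap V_\lambda$) --- exactly the translation you flag as the remaining bookkeeping.
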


\begin{proof}
 Pick a natural number $n>0$, a $\Sigma_n$-formula $\varphi(v_0,v_1)$, and a set $y$ with the property that $A=\Set{x}{\varphi(x,y)}$, and pick an ordinal $\xi$. 
 Using {\cite[Corollary 6.9]{MR3323199}}, we find a cardinal $\kappa>\xi$, with $y\in V_\kappa$, that is \emph{$C^{(n)}$-extendible}, {i.e.,} the cardinal $\kappa$ has the property that for every $\lambda>\kappa$ with $V_\lambda\prec_{\Sigma_n}V$, there exists an ordinal $\zeta$ and an elementary embedding $\map{j}{V_\lambda}{V_\zeta}$ with $\crit(j)=\kappa$, $j(\kappa)>\lambda$ and $V_{j(\kappa)}\prec_{\Sigma_n}V$ (see {\cite[Definition 3.2]{Cn}}). 
 We can now use {\cite[Proposition 3.4]{MR3323199}} to see that $V_\kappa\prec_{\Sigma_{n+2}}V$ and hence $A\cap V_\kappa=\Set{x\in V_\kappa}{V_\kappa\models\varphi(x,y)}$. In the following, fix limit ordinals $\theta>\eta>\kappa$ and a cardinal $\lambda>\theta$ with $V_\lambda\prec_{\Sigma_n}V$. In this situation, we can  find an ordinal $\zeta$ and an elementary embedding $\map{j}{V_\lambda}{V_\zeta}$ with $\crit(j)=\kappa$, $j(\kappa)>\lambda$ and $V_{j(\kappa)}\prec_{\Sigma_n}V$. Elementarity then implies that $$j(A\cap V_\kappa)\cap V_\lambda ~ = ~ \Set{x\in V_\lambda}{V_{j(\kappa)}\models\varphi(x,y)} ~ = ~ A\cap V_\lambda.$$ 
 Since \ref{Prop:B} in Definition \ref{definition:Closenessss}.\ref{item:Closeness} ensures that $\Psi(V_{j(\theta)},V_{j(\theta)},\lambda,j(\kappa),j(\eta))$ holds, we can conclude that the embedding $\map{j\restriction V_{\theta+1}}{V_{\theta+1}}{V_{j(\theta)+1}}$ possesses the desired properties with respect to $\eta$ and~$\lambda$. 
\end{proof}

Next, we prove the direct analog of Lemma \ref{lemma:EmbeddingLarge} with respect to Definition \ref{definition:PsiLargeA}.

\begin{lemma}\label{lemma:EmbeddingLargeForA}
 Given a measure of closeness $\Psi(v_0,\ldots,v_4)$ and a class $A$, a  cardinal $\kappa$ is $\Psi$-large for $A$ if and only if for every limit ordinal $\eta>\kappa$, there exist unboundedly many cardinals $\lambda\geq\eta$ with the property that there is an inner model $M$ and an elementary embedding $\map{j}{V}{M}$ such that $j(\kappa)>\lambda$, $j(A\cap V_\kappa)\cap V_\lambda=A\cap M\cap V_\lambda$ and $\Psi(V_\vartheta,V_\vartheta^M,\lambda,j(\kappa),j(\eta))$ holds for all limit ordinals $\vartheta>j(\eta)$. 
\end{lemma}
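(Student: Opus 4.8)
The proof runs parallel to that of Lemma~\ref{lemma:EmbeddingLarge}; the only additional task is to propagate the clause $j(A\cap V_\kappa)\cap V_\lambda = A\cap M\cap V_\lambda$ through the two constructions used there, the point being that this clause is localised low enough in the cumulative hierarchy to survive both of them.

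\emph{Forward direction.} Assume $\kappa$ is $\Psi$-large for $A$, fix limit ordinals $\zeta>\eta>\kappa$, and pick a sufficiently large fixed point $\nu>\zeta$ of the $\beth$-function. Applying Definition~\ref{definition:PsiLargeA} to the limit ordinals $\eta<\nu$, we obtain a cardinal $\lambda\geq\zeta$, a transitive set $M_0$, and an elementary embedding $\map{i}{V_{\nu+1}}{M_0}$ with $i(\kappa)>\lambda$, with $i(A\cap V_\kappa)\cap V_\lambda = A\cap M_0\cap V_\lambda$, and with $\Psi(V_{i(\nu)},M_0\cap V_{i(\nu)},\lambda,i(\kappa),i(\eta))$. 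Exactly as in Lemma~\ref{lemma:EmbeddingLarge}, let $E$ be the $(\kappa,i(\nu))$-extender derived from $i$, let $\map{j}{V}{M}$ and $\map{j^\prime}{V_{\nu+1}}{M^\prime}$ be the induced ultrapower embeddings, and let $\map{k}{M^\prime}{M_0}$ be the factor embedding with $i=k\circ j^\prime$; then $\crit(k)>j(\nu)=j^\prime(\nu)=i(\nu)$, one has $j\restriction V_\nu = j^\prime\restriction V_\nu$, and $V_{j(\nu)}^M=V_{j(\nu)}\cap M_0$. Since $A\cap V_\kappa\in V_{\kappa+1}\subseteq V_\nu$, we get $j(A\cap V_\kappa)=j^\prime(A\cap V_\kappa)$; and since $j^\prime(A\cap V_\kappa)$ and $j^\prime(\kappa)$ both lie in $V_{j(\nu)}^{M^\prime}$ while $\crit(k)>j(\nu)$, the factor map $k$ fixes them, so $j(A\cap V_\kappa)=i(A\cap V_\kappa)$ and $j(\kappa)=i(\kappa)>\lambda$. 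As $\lambda<j(\kappa)<j(\nu)$, we also have $M\cap V_\lambda = V_{j(\nu)}^M\cap V_\lambda = M_0\cap V_\lambda$, and therefore
\[ j(A\cap V_\kappa)\cap V_\lambda ~ = ~ i(A\cap V_\kappa)\cap V_\lambda ~ = ~ A\cap M_0\cap V_\lambda ~ = ~ A\cap M\cap V_\lambda . \]
Finally, \ref{Prop:F} in Definition~\ref{definition:Closenessss}.\ref{item:Closeness} gives $\Psi(V_\vartheta,V_\vartheta^M,\lambda,j(\kappa),j(\eta))$ for every limit ordinal $\vartheta>j(\eta)$, just as in Lemma~\ref{lemma:EmbeddingLarge}, so $\map{j}{V}{M}$ is as required.

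\emph{Converse direction.} Fix limit ordinals $\kappa<\eta<\theta$ and an ordinal $\zeta>\theta$; using the hypothesis for this $\eta$, pick a cardinal $\lambda\geq\zeta$, an inner model $M$ and an elementary embedding $\map{j}{V}{M}$ with $j(\kappa)>\lambda$, $j(A\cap V_\kappa)\cap V_\lambda = A\cap M\cap V_\lambda$, and $\Psi(V_\vartheta,V_\vartheta^M,\lambda,j(\kappa),j(\eta))$ for every limit ordinal $\vartheta>j(\eta)$. Set $M_0=V_{j(\theta)+1}^M$ and $\map{i=j\restriction V_{\theta+1}}{V_{\theta+1}}{M_0}$. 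Since $A\cap V_\kappa\in V_{\theta+1}$, we have $i(A\cap V_\kappa)=j(A\cap V_\kappa)$, and since $\lambda<j(\kappa)<j(\theta)$ we have $M_0\cap V_\lambda = V_\lambda^M = M\cap V_\lambda$; hence $i(A\cap V_\kappa)\cap V_\lambda = A\cap M_0\cap V_\lambda$. Together with the instance $\vartheta=j(\theta)$ of the $\Psi$-property of $j$, which reads $\Psi(V_{i(\theta)},M_0\cap V_{i(\theta)},\lambda,i(\kappa),i(\eta))$, this shows that $\kappa$ is $\Psi$-large for $A$.

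I do not expect any genuine obstacle. The whole content is that $A\cap V_\kappa$ sits at level $\kappa+1$ of the cumulative hierarchy, so it is moved identically by $i$, $j$ and $j^\prime$ and is fixed by the factor map $k$ (as $\crit(k)>j(\nu)$ exceeds the rank of $j^\prime(A\cap V_\kappa)$), and that cutting the relevant models down to $V_\lambda$ cannot disturb the agreement clause because $\lambda<j(\kappa)<j(\nu)$ (respectively $\lambda<j(\kappa)<j(\theta)$); once these rank inequalities are recorded, the rest is copied verbatim from the proof of Lemma~\ref{lemma:EmbeddingLarge}.
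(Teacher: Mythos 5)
Your proof is correct and follows the same route as the paper's, which likewise reduces both directions to the constructions in Lemma~\ref{lemma:EmbeddingLarge} (the derived extender with its factor map $k$ in the forward direction, restriction to $V_{\theta+1}$ in the converse). The paper merely asserts that the clause $j(A\cap V_\kappa)\cap V_\lambda=A\cap M\cap V_\lambda$ carries over "in particular", whereas you spell out the rank computations ($A\cap V_\kappa\in V_\nu$, $\crit(k)>j(\nu)$, $\lambda<j(\kappa)$) that justify this; the details are all accurate.
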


\begin{proof}
  First, assume that $\kappa$ is $\Psi$-large for $A$ and $\zeta>\eta>\kappa$ are limit ordinals. 
 We find  a sufficiently large fixed point $\nu>\zeta$ of the $\beth$-function, a cardinal $\lambda\geq\zeta$,  a transitive set $M_0$ and an elementary embedding $\map{i}{V_{\nu+1}}{M_0}$ such that  $i(\kappa)>\lambda$, $i(A\cap V_\kappa)\cap V_\lambda=A\cap M\cap V_\lambda$  and $\Psi(V_{i(\nu)},M_0\cap V_{i(\nu)},\lambda,i(\kappa),i(\eta))$ holds. 
 By repeating the argument from the proof of Lemma \ref{lemma:EmbeddingLarge}, we can construct an inner model $M$ with $V_{i(\nu)}^M=M_0\cap V_{i(\nu)}$ and an elementary embedding $\map{j}{V}{M}$ with $j\restriction(V_\nu\cup\{\nu\}) =  i\restriction(V_\nu\cup\{\nu\})$.  In particular, we know that $j(\kappa)>\lambda$ and $j(A\cap V_\kappa)\cap V_\lambda=A\cap M\cap V_\lambda$. 
Moreover, for every  limit ordinal $\vartheta\geq j(\eta)$,  we can  use \ref{Prop:F} in Definition \ref{definition:Closenessss}.\ref{item:Closeness} to show that $\Psi(V_\vartheta,V_\vartheta^M,\lambda,j(\kappa),j(\eta))$ holds.

 Now, assume that for every limit ordinal $\eta>\kappa$, there exist unboundedly many  cardinals $\lambda\geq\eta$ with the property that there is a transitive class $M$ and an elementary embedding $\map{j}{V}{M}$ with $j(\kappa)>\lambda$, $j(A\cap V_\kappa)\cap V_\lambda=A\cap V_\lambda$ and the property that $\Psi(V_\vartheta,V_\vartheta^M,\lambda,j(\kappa),j(\eta))$ holds for all  limit ordinals $\vartheta>j(\eta)$. 
 Pick limit ordinals $\theta>\eta>\kappa$ and an ordinal $\zeta>\eta$. Then there is a cardinal $\lambda>\zeta$, an inner model $M$ and an elementary embedding $\map{j}{V}{M}$ such that the above statements hold with respect to  $\kappa$, $\eta$ and $\lambda$. 
 We then know that $M\cap V_{j(\theta)+1}$ is a transitive set and the map $\map{j\restriction V_{\theta+1}}{V_{\theta+1}}{M\cap V_{j(\theta)+1}}$ possesses all of the desired properties.  
\end{proof}

%%%%%%%%

\subsection{Vop\v{e}nka's Principle}
 We now show how the validity of Vop\v{e}nka's Principle can be characterized through the existence of  cardinals that are $\Psi$-large for certain classes. 
  Note that the results of {\cite[Section 4]{Cn}} show that, over $\ZFC$, Vop\v{e}nka's Principle is equivalent to the scheme that only requires the instances of Vop\v{e}nka's Principle for classes of finite signatures that are defined by formulas without parameters to hold. 
 The following lemma is a direct adaptation of the results discussed in {\cite[p. 339]{kanamori}} to our setting.

\begin{lemma}\label{lemma:LargeVP}
 The following schemes are equivalent over $\ZFC$: 
 \begin{enumerate}
  \item\label{item:VP1} Vop\v{e}nka's Principle. 
  
  \item\label{item:VP2} For every class $A$, there is a proper class of cardinals that are $\Psi_{ext}$-large for $A$. 
  
  \item\label{item:VP3} For every parameter-free formula $\varphi(v)$ in the language of set theory, there is a cardinal that is $\Psi_{sc}$-large for the class $\Set{x}{\varphi(x)}$.  
 \end{enumerate}
\end{lemma}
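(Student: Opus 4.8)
The plan is to establish the cycle of implications \eqref{item:VP1} $\Rightarrow$ \eqref{item:VP2} $\Rightarrow$ \eqref{item:VP3} $\Rightarrow$ \eqref{item:VP1}. The first implication is essentially already done: Proposition~\ref{proposition:VPMaximalLargeness} gives, under Vop\v{e}nka's Principle, that for \emph{every} measure of closeness $\Psi$ and every class $A$ there is a proper class of cardinals that are $\Psi$-large for $A$; since $\Psi_{ext}$ is a measure of closeness, \eqref{item:VP2} follows immediately. The second implication, \eqref{item:VP2} $\Rightarrow$ \eqref{item:VP3}, should also be relatively cheap. Given a parameter-free formula $\varphi(v)$, apply \eqref{item:VP2} to the class $A=\Set{x}{\varphi(x)}$ to get a cardinal $\kappa$ that is $\Psi_{ext}$-large for $A$; one then checks that $\Psi_{ext}$-largeness for $A$ entails $\Psi_{sc}$-largeness for $A$. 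This is because, in the embeddings $\map{j}{V}{M}$ produced via Lemma~\ref{lemma:EmbeddingLargeForA}, having $V_\vartheta\subseteq V_\vartheta^M$ (the $\Psi_{ext}$ condition, in the relevant range) implies ${}^{\lambda}j(\eta)\cap V_\vartheta\subseteq V_\vartheta^M$ (the $\Psi_{sc}$ condition), since every function from an ordinal below $\lambda$ into $j(\eta)$ sits inside $V_{\lambda}$ for suitably large parameters; the agreement condition $j(A\cap V_\kappa)\cap V_\lambda=A\cap M\cap V_\lambda$ is carried along unchanged. So the real content is the third implication.

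For \eqref{item:VP3} $\Rightarrow$ \eqref{item:VP1}, I would use the parameter-free, finite-signature reformulation of Vop\v{e}nka's Principle noted before the lemma (from \cite[Section~4]{Cn}): it suffices to handle a proper class $\mathcal{C}=\Set{x}{\varphi(x)}$ of structures of a fixed finite type, defined without parameters, and produce $A\neq B$ in $\mathcal{C}$ with an elementary embedding $A\to B$. Apply \eqref{item:VP3} to (a formula defining) this class to obtain a cardinal $\kappa$ that is $\Psi_{sc}$-large for $\mathcal{C}$. Fix some member $C\in\mathcal{C}$ of rank above $\kappa$ — such a member exists because $\mathcal{C}$ is a proper class — and choose limit ordinals $\kappa<\eta<\theta$ large enough that $C\in V_\eta$. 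Invoke Lemma~\ref{lemma:EmbeddingLargeForA} to get a cardinal $\lambda\geq\eta$, an inner model $M$, and an elementary embedding $\map{j}{V}{M}$ with $j(\kappa)>\lambda$, $j(\mathcal{C}\cap V_\kappa)\cap V_\lambda=\mathcal{C}\cap M\cap V_\lambda$, and $\Psi_{sc}(V_\vartheta,V_\vartheta^M,\lambda,j(\kappa),j(\eta))$ for all limit $\vartheta>j(\eta)$. Now $C\in V_\lambda$ and $\varphi(C)$ holds in $V$, hence $\varphi^M(j(C))$, i.e.\ $j(C)\in\mathcal{C}^M$; moreover $j(C)\in V_{j(\eta)}$, so by the supercompactness-style closure of $M$ the structure $j(C)$ (together with its finitely many relations, which live on a set of size at most that of its underlying set) is an element of $M$ with all its structure correctly computed. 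The agreement $j(\mathcal{C}\cap V_\kappa)\cap V_\lambda=\mathcal{C}\cap M\cap V_\lambda$ together with the fact that $C\in V_\lambda$ then shows that $j$ restricted to $C$ is an elementary embedding of $C$ (which is in $\mathcal{C}$) into $j(C)$, and elementarity of $j$ combined with $\varphi$ being parameter-free gives $j(C)\in\mathcal{C}$ as computed in $V$ as well. Finally, $j(C)\neq C$ because $\crit(j)<\kappa$ is below the rank of $C$ and $j$ moves some ordinal in the transitive closure of $C$, or more simply because $\rank(j(C))>\rank(C)$ by $j(\kappa)>\lambda\geq\eta>\rank(C)$; this yields an instance of Vop\v{e}nka's Principle for $\mathcal{C}$.

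The step I expect to be the main obstacle is verifying, in the third implication, that the image structure $j(C)$ really is a member of the externally-defined class $\mathcal{C}$ and that the embedding $j\restriction C$ is genuinely elementary \emph{between structures as ambient objects of $V$}, rather than merely between objects of $M$ or of $V_\vartheta^M$. The delicate point is bookkeeping about where the structures and their interpretation functions live: one must ensure that the type is small enough (finite signature, hence finitely many relation symbols, each interpreted by a subset of a finite power of the domain) that $\Psi_{sc}$-closure of $M$ — which only guarantees that functions from ordinals below $\lambda$ into $j(\eta)$ are captured — actually captures the full structure $j(C)$, and that the agreement clause $j(\mathcal{C}\cap V_\kappa)\cap V_\lambda=\mathcal{C}\cap M\cap V_\lambda$ is applied at the right rank. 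Because the signature is finite and $C\in V_\lambda$ with $\lambda$ a strong limit (we may shrink to a $\beth$-fixed point as in Lemma~\ref{lemma:EmbeddingLarge}), $C$ and all its relations lie well below $\lambda$, so $j(C)$ and its relations lie below $j(\lambda)$ and are coded by functions of length $<\lambda$; the supercompactness closure then does the job. One also must double-check the direction of the reformulation from \cite{Cn}: that parameter-free finite-signature Vop\v{e}nka suffices to recover full Vop\v{e}nka, which is exactly what is cited. Modulo these coherence checks the argument is routine, and the remaining implications \eqref{item:VP1} $\Rightarrow$ \eqref{item:VP2} $\Rightarrow$ \eqref{item:VP3} are straightforward given Proposition~\ref{proposition:VPMaximalLargeness} and the trivial comparison $\Psi_{ext}$-large $\Rightarrow$ $\Psi_{sc}$-large.
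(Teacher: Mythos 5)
Your chain (1)$\Rightarrow$(2)$\Rightarrow$(3) is essentially fine, although the justification you give for (2)$\Rightarrow$(3) is off: a function $f\in{}^{\lambda}j(\eta)$ has rank up to $j(\eta)$, far above $\lambda$, so it does not \anf{sit inside $V_\lambda$}; the correct (and still easy) fix is to invoke the $\Psi_{ext}$-instance at a limit ordinal above $\eta$, say $\eta+\omega$, so that $V_{j(\eta)+\omega}\subseteq M$ and hence ${}^{\lambda}j(\eta)\subseteq M$. The genuine gap is in (3)$\Rightarrow$(1), at the step where you assert that $j(C)\in\mathcal{C}$ \emph{as computed in $V$}. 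Elementarity of $\map{j}{V}{M}$ only yields $M\models\varphi(j(C))$, i.e.\ $j(C)\in\mathcal{C}^M$; parameter-freeness of $\varphi$ does not make $\varphi$ absolute between the inner model $M$ and $V$. The only mechanism for transferring membership in $\mathcal{C}$ between $M$ and $V$ is the agreement clause $j(\mathcal{C}\cap V_\kappa)\cap V_\lambda=\mathcal{C}\cap M\cap V_\lambda$, and it operates only below rank $\lambda$, whereas $\rank(j(C))=j(\rank(C))>j(\kappa)>\lambda$. A symptom that something must be wrong: your argument never uses the $\Psi_{sc}$-closure in an essential way (that $j(C)$ is an element of $M$ is automatic because $j$ maps into $M$), so if it were valid it would derive Vop\v{e}nka's Principle from $\Psi_{ms}$-largeness for all parameter-free classes; but that hypothesis already follows from \anf{$\Ord$ is Woodin} by Lemma \ref{lemma:OrdWoodinEqui}, which is strictly weaker than Vop\v{e}nka's Principle.

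The paper's proof avoids this by running the argument in the opposite direction, via reflection. Fix $B\in\mathcal{C}\setminus H_\kappa$ with $B\in H_\eta=V_\eta$ and choose $\theta>\eta$ so that $\varphi$ is absolute between $V_\theta$ and $V$. The agreement clause gives $B\in\mathcal{C}\cap V_\lambda\subseteq j(\mathcal{C}\cap V_\kappa)$, and the $\Psi_{sc}$-closure is exactly what places the elementary embedding $j\restriction\vert B\vert$ of $B$ into $j(B)$ \emph{inside} $M$ (this is where supercompactness-style closure does real work). Hence the target model satisfies \anf{there is a structure $A\in j(\mathcal{C}\cap V_\kappa)$ with $A\neq j(B)$ and an elementary embedding of $A$ into $j(B)$}, witnessed by $A=B$. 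Pulling this statement back through $j$ produces, in $V_{\theta+1}$, a structure $A$ with $V_\theta\models\varphi(A)$, $A\neq B$, and an elementary embedding of $A$ into $B$; since $\varphi$ is absolute between $V_\theta$ and $V$, both $A$ and $B$ genuinely lie in $\mathcal{C}$. You should replace your pair $(C,j(C))$ with this reflected pair $(A,B)$.
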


\begin{proof}
  First, note that the implication from \eqref{item:VP1} to \eqref{item:VP2} already follows from Proposition \ref{proposition:VPMaximalLargeness}, and the  implication from \eqref{item:VP2} to \eqref{item:VP3}  is trivial. 
  Now, assume that \eqref{item:VP3} holds.  Fix a proper class $\mathcal{C}$ of structures of the same finite signature that is definable by a formula $\varphi(v)$ without parameters. 
  Let $\kappa$ be a cardinal that is $\Psi_{sc}$-large for $\mathcal{C}$. By our assumptions, there is a structure $B$ in $\mathcal{C}$ that is not an element of $H_\kappa$. Fix a cardinal $\eta>\kappa$ with $B\in H_\eta=V_\eta$ and a cardinal $\theta>\eta$ such that the formula $\varphi(v)$ is absolute between $V_\theta$ and $V$. 
  In this situation, we can find a cardinal $\lambda\geq\eta$, a transitive set $M$ and an elementary embedding $\map{j}{V_{\theta+1}}{M}$ such that $j(\kappa)>\lambda$, $j(\mathcal{C}\cap V_\kappa)\cap V_\lambda=\mathcal{C}\cap V_\lambda$, and $\Psi_{sc}(V_{j(\theta)},M\cap V_{j(\theta)},\lambda,j(\kappa),j(\eta))$ holds. 
  We then know that $B\in H_{j(\kappa)}^M$ and hence elementarity implies that $j(B)\neq B$. 
 This shows that $j(B)$  is a structure of the given signature and the embedding $j$ induces an elementary embedding of $B$ into $j(B)$. Moreover, our setup ensures that the set $M$ contains both $B$ and the given elementary embedding of $B$ into $j(B)$. 
  In addition, we know that $$B ~ \in ~ \mathcal{C}\cap V_\lambda ~ \subseteq ~ j(\mathcal{C}\cap V_\kappa) ~ = ~ j(\Set{x\in V_\kappa}{V_\theta\models\varphi(x)}).$$ By elementarity, this implies that, in $V_{\theta+1}$, there is an elementary embedding of a structure $A$  of the given signature into $B$ such that $A\neq B$ and $V_\theta\models\varphi(A)$. This shows that, in $V$, there is a structure $A\in\mathcal{C}$ with $A\neq B$ and an elementary embedding of $A$ into $B$.    As discussed earlier, the results of {\cite[Section 4]{Cn}} show that these computations ensure that Vop\v{e}nka's Principle holds.  
\end{proof}

%%%%%%%%

\subsection{Ord is Woodin}
 Remember that \anf{\emph{$\Ord$ is Woodin}} is the scheme of axioms stating that for every class $A$, there exists a cardinal $\kappa$ that is \emph{strong for $A$}, {i.e.,} for every ordinal $\lambda$, $\kappa$ is \emph{$\lambda$-strong for $A$}: there is an inner model $M$ with $V_\lambda\subseteq M$ and an elementary embedding $\map{j}{V}{M}$ with $\crit(j)=\kappa$, $j(\kappa)>\lambda$ and $A\cap V_\lambda= j(A\cap V_\kappa)\cap V_\lambda$ (see \cite{bagaria_wilson_2022} and \cite{MR4439579}). 
% In , Bagaria and Wilson defined a cardinal $\kappa$ to be \emph{$\Sigma_n$-strong} for some natural number $n>0$, if for every   and every class $A$ that is definable by a $\Sigma_n$-formula without parameters, 
  %
 The following lemma shows how this principle is related to the concepts introduced above. 
 Its proof is based on the \emph{$\Sigma_n$-Product Reflection Principle} introduced by Bagaria and Wilson in \cite{bagaria_wilson_2022}.

 Remember that  for any set $S$ of relational structures of the same type, the \emph{set-theoretic product $\prod S$} is defined to be the structure whose domain is the set of all functions $f$ with domain $S$ and the property that $f(\mathcal{A})\in A$ holds for every structure $\mathcal{A}=\langle A,\ldots\rangle$ in $S$, and whose interpretation of relation symbols is defined pointwise, {i.e.,} for every $(n+1)$-ary relation symbol $R$ in the given language, we have  $R^{\prod S}(f_0,\ldots,f_n)$  if and only if $R^{\mathcal{A}}(f_0(\mathcal{A}),\ldots,f_n(\mathcal{A}))$ holds for all $\mathcal{A}\in S$. 
 For a class $\mathcal{C}$ of relational structures of the same type, the principle $\mathrm{PRP}_{\mathcal{C}}$  states  that there exists a non-empty subset $S$ of $\mathcal{C}$ with the property that for every $\mathcal{A}$ in $\mathcal{C}$, there exists a homomorphism\footnote{As defined in {\cite[Section 1.2]{MR1221741}}.} of $\prod S$ into $\mathcal{A}$. 
 Given a natural number $n>0$, Bagaria and Wilson then defined the  \emph{$\Sigma_n$-Product Reflection Principle} to be the statement that $\mathrm{PRP}_{\mathcal{C}}$ holds for every non-empty class $\mathcal{C}$ of graphs that is definable by a $\Sigma_n$-formula without parameters (see {\cite[Definition 3.1]{bagaria_wilson_2022}}). 
 The results of  {\cite[Section 5]{bagaria_wilson_2022}} show that $\Ord$ is Woodin if and only if  the $\Sigma_n$-Product Reflection Principle holds for all $n>0$.

\begin{lemma}\label{lemma:OrdWoodinEqui}
 The following schemes are equivalent over $\ZFC$: 
 \begin{enumerate}
  \item\label{item:OrdWoodin1} $\Ord$ is Woodin. 

  \item\label{item:OrdWoodin2} For every class $A$, there is a proper class of cardinals that are $\Psi_{str}$-large for $A$.   
    
  \item\label{item:OrdWoodin3} For every parameter-free formula $\varphi(v)$ in the language of set theory, there is a cardinal that is $\Psi_{str}$-large for the class $\Set{x}{\varphi(x)}$.   
 \end{enumerate}
\end{lemma}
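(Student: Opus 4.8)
The plan is to prove the cycle of implications $\eqref{item:OrdWoodin1}\Rightarrow\eqref{item:OrdWoodin2}\Rightarrow\eqref{item:OrdWoodin3}\Rightarrow\eqref{item:OrdWoodin1}$. The implication $\eqref{item:OrdWoodin2}\Rightarrow\eqref{item:OrdWoodin3}$ is immediate, since a parameter-free formula $\varphi(v)$ defines a class $A=\Set{x}{\varphi(x)}$ and we may take any cardinal from the proper class supplied by \eqref{item:OrdWoodin2}. For $\eqref{item:OrdWoodin1}\Rightarrow\eqref{item:OrdWoodin2}$, fix a class $A$ and an ordinal $\xi$; by ``$\Ord$ is Woodin'' we obtain a cardinal $\kappa>\xi$ that is strong for the class $A^\prime$ coding $A$ together with enough reflection (e.g.\ pick $\kappa$ strong for $A$ with $V_\kappa\prec_{\Sigma_2}V$ so that $A\cap V_\kappa$ is correctly computed inside $V_\kappa$, using that the class of such $\kappa$ is stationary). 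Then for any limit ordinals $\kappa<\eta<\theta$, choosing $\lambda>\theta$ with $\kappa$ being $\lambda$-strong for $A$ gives an inner model $M\supseteq V_\lambda$ and $\map{j}{V}{M}$ with $\crit(j)=\kappa$, $j(\kappa)>\lambda$ and $A\cap V_\lambda=j(A\cap V_\kappa)\cap V_\lambda$; since $V_\lambda\subseteq M$ we have $A\cap M\cap V_\lambda=A\cap V_\lambda$, and \ref{Prop:B} of Definition \ref{definition:Closenessss}.\ref{item:Closeness} guarantees $\Psi_{str}(V_{j(\theta)},V_{j(\theta)},\lambda,j(\kappa),j(\eta))$ holds. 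By Lemma \ref{lemma:EmbeddingLargeForA} (or directly restricting $j$ to $V_{\theta+1}$), this shows $\kappa$ is $\Psi_{str}$-large for $A$, and since $\xi$ was arbitrary we get a proper class of such $\kappa$.

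The substantial direction is $\eqref{item:OrdWoodin3}\Rightarrow\eqref{item:OrdWoodin1}$. By the results of Bagaria and Wilson cited above, it suffices to verify the $\Sigma_n$-Product Reflection Principle for every $n>0$. So fix $n>0$ and a non-empty class $\mathcal C$ of graphs definable by a parameter-free $\Sigma_n$-formula $\varphi(v)$. Apply \eqref{item:OrdWoodin3} to obtain a cardinal $\kappa$ that is $\Psi_{str}$-large for $A=\Set{x}{\varphi(x)}=\mathcal C$. The key combinatorial point is to choose the subset $S\subseteq\mathcal C$ witnessing $\mathrm{PRP}_{\mathcal C}$: I would take $S=\mathcal C\cap V_\delta$ for a suitable $\delta$ (a fixed point of relevant reflection below $\kappa$, chosen so that $\mathcal C\cap V_\delta$ is a correctly-computed non-empty set and so that $V_\delta\prec_{\Sigma_n}V_\kappa$), and then show that the set-theoretic product $\prod S$ maps homomorphically into every $\mathcal A\in\mathcal C$. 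Given an arbitrary $\mathcal A\in\mathcal C$, fix limit ordinals $\eta,\theta$ with $\mathcal A\in V_\eta$ and $\theta$ large enough that $\varphi$ is $\Sigma_n$-absolute between $V_\theta$ and $V$, pick $\lambda\geq\eta$ and an inner model $M$ with $V_\lambda\subseteq M$ and an embedding $\map{j}{V}{M}$, $\crit(j)=\kappa$, $j(\kappa)>\lambda$, $j(\mathcal C\cap V_\kappa)\cap V_\lambda=\mathcal C\cap V_\lambda$ and $\Psi_{str}(V_{j(\theta)},V^M_{j(\theta)},\lambda,j(\kappa),j(\eta))$ holding. Since $\delta<\kappa=\crit(j)$ we have $j(S)=j(\mathcal C\cap V_\delta)=\mathcal C^M\cap V_\delta=S$ (using elementarity and that $\mathcal C$ is correctly computed), hence $j$ restricts to the identity on $S$, and therefore $j$ induces the diagonal-style map $\prod S\to\prod j[S]=\prod j(S)$; composing with the projection of $j(\prod S)=\prod j(S)$ to the coordinate $\mathcal A$, which is available because $\mathcal A\in\mathcal C\cap V_\lambda=j(\mathcal C\cap V_\kappa)$ so $\mathcal A$ is in the domain of $\prod j(S)$ as computed in $M$, yields the required homomorphism of $\prod S$ into $\mathcal A$, all witnessed inside $M$ and hence, by elementarity and absoluteness, in $V$.

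The main obstacle I anticipate is the correct bookkeeping in the product-reflection step: one must arrange the single cardinal $\kappa$ from \eqref{item:OrdWoodin3} and a \emph{fixed} set $S$ below it so that $j$ genuinely fixes $S$ pointwise while still having $\mathcal A$ land in the range of the relevant product structure inside $M$ — this requires $S$ to be $\Sigma_n$-correctly computed by $V_\kappa$ (so that ``$\kappa$ is $\Psi_{str}$-large for $\mathcal C$'' really talks about the intended class) and requires the projection from the $M$-computed product onto the $\mathcal A$-coordinate to be a homomorphism, which is where the definition of $\prod S$ via pointwise interpretation of relations is used. A secondary subtlety is ensuring, on the $\eqref{item:OrdWoodin1}\Rightarrow\eqref{item:OrdWoodin2}$ side, that the reflection clause ($V_\kappa\prec_{\Sigma_m}V$ for suitable $m$) can be incorporated into the strength hypothesis — this follows from the standard fact that ``$\Ord$ is Woodin'' yields cardinals strong for any given class and with any prescribed degree of correctness, by folding the correctness requirement into the class $A$. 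Neither obstacle is serious, but the product step needs care to state cleanly; I would lean on \cite{bagaria_wilson_2022} for the precise form of the equivalence between $\Ord$ Woodin and $\Sigma_n$-Product Reflection rather than reproving it.
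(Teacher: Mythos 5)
Your overall decomposition $\eqref{item:OrdWoodin1}\Rightarrow\eqref{item:OrdWoodin2}\Rightarrow\eqref{item:OrdWoodin3}\Rightarrow\eqref{item:OrdWoodin1}$ matches the paper, and the first two implications are essentially right (the paper handles the parameters of $\varphi$ by folding them into an auxiliary class $A_*=\Set{\langle x_0,\ldots,x_m\rangle}{\varphi(x_0,\ldots,x_m)}$ and demanding $y_0,\ldots,y_{m-1}\in V_\kappa$, rather than via $\Sigma_2$-correctness of $V_\kappa$, but that is a cosmetic difference; also note that the relevant instance of $\Psi_{str}$ is $\Psi_{str}(V_\vartheta,V_\vartheta^M,\lambda,j(\kappa),j(\eta))$, which follows directly from $V_\lambda\subseteq M$ rather than from \ref{Prop:B}). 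The worry about $V_\kappa$ computing $\mathcal C$ ``correctly'' is a red herring throughout: $\Psi$-largeness for $A$ is defined using the actual class $A$ of $V$, so no correctness hypothesis is needed.

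The genuine gap is in $\eqref{item:OrdWoodin3}\Rightarrow\eqref{item:OrdWoodin1}$, in your choice of the witnessing set $S$. You take $S=\mathcal C\cap V_\delta$ with $\delta<\kappa=\crit(j)$, so that $j$ fixes $S$ pointwise — but then $j(S)=S$, the coordinates of $\prod j(S)$ are exactly the elements of $\mathcal C\cap V_\delta$, and an arbitrary $\mathcal A\in\mathcal C$ is \emph{not} among them, so the projection you want to compose with does not exist. Your justification ``$\mathcal A\in\mathcal C\cap V_\lambda=j(\mathcal C\cap V_\kappa)\cap V_\lambda$, so $\mathcal A$ is in the domain of $\prod j(S)$'' silently identifies $j(S)$ with $j(\mathcal C\cap V_\kappa)$, which forces $S=\mathcal C\cap V_\kappa$ and contradicts $\delta<\kappa$. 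The tension you flag as ``the main obstacle'' is real and is resolved by abandoning the requirement that $j$ fix $S$: take $S=\mathcal C\cap V_\kappa$ (nonempty by elementarity, since $j(\mathcal C\cap V_\kappa)\ni\mathcal A$). Then $j$ induces a homomorphism $f\mapsto j(f)$ from $\prod S$ into $j(\prod S)=\bigl(\prod j(S)\bigr)^M$, which is a substructure of the $V$-computed $\prod j(S)$ because relations in set-theoretic products are interpreted pointwise; since $\mathcal A\in j(\mathcal C\cap V_\kappa)\cap V_\lambda=\mathcal C\cap V_\lambda$ is now genuinely a coordinate of $\prod j(S)$, composing with the projection onto the $\mathcal A$-coordinate gives the required homomorphism $\prod S\to\mathcal A$ in $V$ (no transfer back from $M$ by elementarity is needed — the composite map already lives in $V$). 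This is exactly the paper's argument.
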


\begin{proof}
 First, assume that \eqref{item:OrdWoodin1} holds. Fix an ordinal $\xi$ and pick  a formula $\varphi(v_0,\ldots,v_m)$ and parameters $y_0,\ldots,y_{m-1}$ with $A=\Set{x}{\varphi(x,y_0,\ldots,y_{m-1})}$. Define $$A_* ~ = ~ \Set{\langle x_0,\ldots,x_m\rangle}{\varphi(x_0,\ldots,x_m)}.$$ Using {\cite[Theorem 5.13]{bagaria_wilson_2022}}, we find a cardinal $\kappa>\xi$ that satisfies $y_0,\ldots,y_{m-1}\in V_\kappa$ and   is strong for $A_*$. 
 Fix a cardinal $\lambda>\kappa$. We can then find an inner model $M$ with $V_\lambda\subseteq M$ and an elementary embedding $\map{j}{V}{M}$  with  $\crit(j)=\kappa$, $j(\kappa)>\lambda$ and $A_*\cap V_\lambda= j(A_*\cap V_\kappa)\cap V_\lambda$. 
 Given $x\in V_\lambda\subseteq M$, we then know that 
  \begin{equation*}
   \begin{split}
   x\in A ~ & \Longleftrightarrow ~ \langle x,y_0,\ldots,y_{m-1}\rangle\in A_* ~ \Longleftrightarrow ~  \langle x,y_0,\ldots,y_{m-1}\rangle\in j(A_*\cap V_\kappa) \\ 
     & \Longleftrightarrow ~ M\models\varphi(x,y_0,\ldots,y_{m-1}) ~ \Longleftrightarrow ~ x\in j(A\cap V_\kappa). 
    \end{split}
   \end{equation*}
  These equivalences show that $j(A\cap V_\kappa)\cap V_\lambda=A\cap V_\lambda$ holds. Moreover, it follows that $\Psi_{str}(V_\vartheta,V_\vartheta^M,\lambda,j(\kappa),j(\eta))$ holds for all limit ordinals $\kappa<\eta\leq\lambda$ and all  limit ordinals $\vartheta>j(\eta)$. 
  By Lemma \ref{lemma:EmbeddingLargeForA}, this shows that the cardinal $\kappa>\xi$ is $\Psi_{str}$-large for $A$. Hence, we have shown that \eqref{item:OrdWoodin2} holds in this case.

The implication from \eqref{item:OrdWoodin2} to \eqref{item:OrdWoodin3} is immediate.

 Now, assume that \eqref{item:OrdWoodin3} holds. 
 Let $\mathcal{C}$ denote a non-empty class of graphs that is defined by a formula $\varphi(v)$ without parameters. By our assumptions, there is  a cardinal $\kappa$ that  is $\Psi_{str}$-large for $\mathcal{C}$. We then know that $\mathcal{C}\cap V_\kappa\neq\emptyset$. Fix a graph $\mathcal{G}$ in $\mathcal{C}$ and pick a cardinal $\eta>\kappa$ with $\mathcal{G}\in H_\eta=V_\eta$. 
 Using Lemma \ref{lemma:EmbeddingLargeForA}, we now find a cardinal $\lambda>\eta$, a transitive class $M$ with $V_\lambda\subseteq M$ and an elementary embedding $\map{j}{V}{M}$ satisfying $j(\kappa)>\lambda$ and $j(\mathcal{C}\cap V_\kappa)\cap V_\lambda=\mathcal{C}\cap V_\lambda$. 
 In this situation,  we  know that $\mathcal{G}\in j(\mathcal{C}\cap V_\kappa)$. Since the graph  $j(\prod(\mathcal{C}\cap V_\kappa))$ is a subgraph of the product graph $\prod(j(\mathcal{C}\cap V_\kappa))$ and therefore the embedding $j$ induces a homomorphism of $\prod(\mathcal{C}\cap V_\kappa)$ into $\prod(j(\mathcal{C}\cap V_\kappa))$, we can combine this homomorphism with the projection from $\prod(j(\mathcal{C}\cap V_\kappa))$ to $\mathcal{G}$ to find a homomorphism from $\prod(\mathcal{C}\cap V_\kappa)$ to $\mathcal{G}$. These computations show that the $\Sigma_n$-Product Reflection Principle holds  for every natural number $n>0$ and therefore the results of {\cite[Section 5]{bagaria_wilson_2022}} allow us to conclude that \eqref{item:OrdWoodin1} holds in this case. 
\end{proof}

%%%%%%%%%%%%%%%
%%%%%%%%%%%%%%%  

\section{Abstract logics}\label{section:AbstractLogics}

In order to connect the existence of cardinals that are $\Psi$-large for certain classes to compactness properties of strong logics, we now fix a notion of \emph{abstract logic}. 
Our setup is based on the definitions given in \cite[Section 2.5]{MR409165} and our exact presentation closely follows the one provided in \cite{bdgm}. 
%This notion, as introduced below, is taken from \cite{bdgm}. 
%
In \cite{MR780522}, Makowsky  proved that Vop\v{e}nka's Principle is equivalent to the assumption that every abstract logic has a strong compactness cardinal (see also {\cite[Section 4]{bdgm}} and Theorem \ref{theorem:Makowsky} below), {i.e.,} that a direct analogue of Theorem \ref{theorem:Magidor} that replaces second-order logic with arbitrary abstract logics and the existence of an extendible cardinal with the validity of Vop\v{e}nka's Principle holds.  
 The goal of this section is to formulate and verify similar analogues of Theorem \ref{theorem:LL2Duality} and Corollary \ref{corollary:IndividualChar} for abstract logics.

% Before, we present the definition of an abstract logic from  \cite{bdgm} that we will use in this section, we fix the meaning of some basic concepts in the abstract context. 

\begin{definition}\label{definition:StructuresAndRenamings}
  \begin{enumerate}
    \item A \emph{language}  is a tuple $$\tau ~ = ~ \langle \mathfrak{F}_\tau,\mathfrak{R}_\tau,\mathfrak{C}_\tau,a_\tau\rangle$$ consisting of disjoint sets $\mathfrak{F}_\tau$ (the \emph{function symbols} in $\tau$), $\mathfrak{R}_\tau$ (the \emph{relation symbols} in $\tau$) and $\mathfrak{C}_\tau$ (the \emph{constant symbols} in $\tau$) together with a function $\map{a_\tau}{\mathfrak{F}_\tau\cup\mathfrak{R}_\tau}{\omega\setminus\{0\}}$ (the \emph{arity function} for $\tau$).

    \item Given a language $\tau$, a \emph{$\tau$-structure} is a tuple $$M ~ = ~ \langle \vert M\vert, \seq{f^M}{f\in\mathfrak{F}_\tau},\seq{r^M}{r\in\mathfrak{R}_\tau},\seq{c^M}{c\in\mathfrak{C}_\tau}\rangle,$$  where $\vert M\vert$ is a non-empty set (the \emph{universe of $M$}), each $f^M$ is an $a_\tau(f)$-ary function on $\vert M\vert$, each $r^M$ is an $a_\tau(r)$-ary relation on $\vert M\vert$, and each $c^M$ is an element of $\vert M\vert$. We let $\Str_\tau$ denote the collection of all $\tau$-structures.

    \item A \emph{renaming} of a language $\sigma$ into a language $\tau$ is a bijection $$\map{r}{\mathfrak{F}_\sigma\cup\mathfrak{R}_\sigma\cup\mathfrak{C}_\sigma}{\mathfrak{F}_\tau\cup\mathfrak{R}_\tau\cup\mathfrak{C}_\tau}$$ satisfying $r[\mathfrak{F}_\sigma]=\mathfrak{F}_\tau$, $r[\mathfrak{R}_\sigma]=\mathfrak{R}_\tau$, $r[\mathfrak{C}_\sigma]=\mathfrak{C}_\tau$ and $a_\sigma(s)=a_\tau(r(s))$ for all $s\in\mathfrak{F}_\sigma\cup\mathfrak{R}_\sigma$.  In this setting, we let $\map{r^*}{\Str_\sigma}{\Str_\tau}$ denote the canonical bijective class function induced by $r$ that fixes the universes of the given $\sigma$-structures. 
  \end{enumerate}
 \end{definition}

 Given the above definitions, there is a canonical notion of a \emph{sublanguage} of a given language. Moreover, given a sublanguage $\sigma$ of a language $\tau$, there is a canonical notion of a $\sigma$-reduct $M\restriction\sigma\in\Str_\sigma$ of a $\tau$-structure $M$. 
 Finally, given a language $\tau$, there is a canonical notion of an isomorphism of $\tau$-structures.

 \begin{definition}\label{definition:AbstractLogic}
  An \emph{abstract logic} $\langle\LL,\models_{\LL}\rangle$ consists of a class function $\LL$ and a binary class relation $\models_\LL$ that satisfy the following conditions: 
     \begin{enumerate}
      \item The domain of $\LL$ is the class of all languages. Given  a language $\tau$,  we call $\LL(\tau)$  the set of \emph{$\tau$-sentences} with respect to $\langle\LL,\models_{\LL}\rangle$. 
      
      \item If $M\models_\LL\phi$, then there is a language $\tau$ with $M\in\Str_\tau$ and $\phi\in\LL(\tau)$.  We call  $\models_{\LL}$ the \emph{satisfaction relation} of $\langle\LL,\models_{\LL}\rangle$. 
      
      \item  (Monotonicity) If $\sigma$ is a sublanguage of $\tau$, then $\LL(\sigma)\subseteq\LL(\tau)$. 
      
      \item (Expansion) If $\sigma$ is a sublanguage of $\tau$, $M\in\Str_\tau$ and $\phi\in\LL(\sigma)$, then $$M\models_\LL\phi ~ \Longleftrightarrow ~ {M\restriction\sigma}\models_\LL\phi.$$
      
       \item (Isomorphism) Given a language $\tau$ and isomorphic $\tau$-structures $M$ and $N$, we have $$M\models_\LL\phi ~ \Longleftrightarrow ~ N\models_\LL\phi$$ for all $\phi\in\LL(\tau)$.  
       
      \item\label{item:renaming}  (Renaming) Every renaming $r$ of a language $\sigma$ into a language $\tau$ induces a unique bijection $\map{r_*}{\LL(\sigma)}{\LL(\tau)}$ with the property that $$M\models_\LL\phi ~ \Longleftrightarrow ~ r^*(M)\models_\LL r_*(\phi)$$  holds for every $M\in\Str_\sigma$ and every $\phi\in\LL(\sigma)$.  
            
     \item\label{item:occurrenceNumber} There exists a cardinal $\ooo$   with the property that for every language $\tau$ and every $\phi\in\LL(\tau)$, there exists a sublanguage $\sigma$ of $\tau$ with $\vert\mathfrak{F}_\sigma\cup\mathfrak{R}_\sigma\cup\mathfrak{C}_\sigma\vert<\ooo$ and $\phi\in\LL(\sigma)$. 
     \end{enumerate}
  \end{definition}

  In the remainder of this section, we will usually  write $\LL$ instead of    $\langle\LL,\models_{\LL}\rangle$ to denote an abstract logic. 
 We call the least cardinal $\ooo$ witnessing \eqref{item:occurrenceNumber} in Definition \ref{definition:AbstractLogic} the \emph{occurrence number} of $\LL$. 
 We say that a set $T$ is an \emph{$\LL$-theory} if there is a language $\tau$ with $T\subseteq\LL(\tau)$. We say that an $\LL$-theory $T$ is \emph{satisfiable} if there is a language $\tau$ with $T\subseteq\LL(\tau)$ and  a $\tau$-structure $M$ with $M\models_\LL\phi$ for all $\phi\in T$. Given an infinite cardinal $\kappa$, we say that an $\LL$-theory $T$ is \emph{${<}\kappa$-satisfiable} if every subset of $T$ of cardinality less than $\kappa$ is satisfiable. 
 Finally, we say that an infinite cardinal $\kappa$ is a \emph{strong compactness cardinal for $\LL$} if every ${<}\kappa$-satisfiable $\LL$-theory is satisfiable. 
 Makowsky's result from \cite{MR780522}  now states the following:

 \begin{theorem}[Makowsky \cite{MR780522}]\label{theorem:Makowsky}
  The following schemes are equivalent over $\ZFC$: 
   \begin{enumerate}
   \item Vop\v{e}nka's Principle. 
   
   \item Every abstract logic has a strong compactness cardinal. 
   \end{enumerate}
 \end{theorem}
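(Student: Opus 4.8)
The plan is to establish both implications directly, in the style of Magidor's theorem (Theorem~\ref{theorem:Magidor}). For the implication from Vopěnka's Principle to the existence of strong compactness cardinals, I fix an abstract logic $\LL$ with occurrence number $\ooo$ and a natural number $n$ such that the class function $\LL$ and the class relation $\models_\LL$ are both $\Sigma_n$-definable. As in the proof of Proposition~\ref{proposition:VPMaximalLargeness}, Vopěnka's Principle supplies (via \cite{Cn} and \cite{MR3323199}) a $C^{(n+1)}$-extendible cardinal $\kappa>\ooo$, and I would argue that $\kappa$ is a strong compactness cardinal for $\LL$ using the renaming argument familiar from Theorem~\ref{theorem:Magidor} and the backward direction of Lemma~\ref{lemma:ExtLarge}. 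Given a ${<}\kappa$-consistent $\LL$-theory $T\subseteq\LL(\tau)$, one picks a beth fixed point $\lambda>\kappa$ with $T\in V_\lambda\prec_{\Sigma_{n+1}}V$ and applies $C^{(n+1)}$-extendibility to obtain an elementary embedding $\map{j}{V_\lambda}{V_\zeta}$ with $\crit(j)=\kappa$, $j(\kappa)>\lambda$, and enough $\Sigma_{n+1}$-correctness. Then $\vert T\vert\le\lambda<j(\kappa)$, so the pointwise image $j[T]\subseteq j(T)$ has cardinality less than $j(\kappa)$; since $j(T)$ is a ${<}j(\kappa)$-consistent $\LL$-theory, $j[T]$ is consistent, and the renaming of symbols induced by $j$ identifies $j[T]$ with $T$, so $T$ is consistent.

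For the converse, assume every abstract logic has a strong compactness cardinal and suppose, towards a contradiction, that Vopěnka's Principle fails. By \cite[Section~4]{Cn} there is then a parameter-free definable proper class $\C$ of structures of some fixed finite signature $\tau$ such that there is no elementary embedding between any two distinct members of $\C$; since an isomorphism is an elementary embedding, the members of $\C$ are pairwise non-isomorphic, and as $\tau$ is finite this forces $\C$ to contain structures of arbitrarily large cardinality. Let $\hat{\C}$ be the parameter-free definable, isomorphism-closed class of all $\tau$-structures isomorphic to a member of $\C$, and define an abstract logic $\LL_\C$ by adjoining to first-order logic, for every language $\sigma$ and every renaming $r$ of $\tau$ onto a sublanguage of $\sigma$, a sentence $\theta_r$ whose semantics declares $M\models_{\LL_\C}\theta_r$ to hold exactly when the $\tau$-structure obtained from the corresponding reduct of $M$ by renaming back along $r$ lies in $\hat{\C}$. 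One checks routinely that $\LL_\C$ satisfies all clauses of Definition~\ref{definition:AbstractLogic}, with occurrence number $\omega$.

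By assumption there is a strong compactness cardinal $\kappa$ for $\LL_\C$. I then fix $B\in\C$ whose universe has cardinality $\mu\ge\kappa$, let $\sigma$ extend $\tau$ by fresh constant symbols $c_b$ for $b\in\vert B\vert$ and $d_\alpha$ for $\alpha<\mu^+$, and let $T$ consist of the sentence $\theta_{\id}$ (for the identity renaming of $\tau$ onto $\tau\subseteq\sigma$), the first-order elementary diagram of $B$ in the constants $c_b$, and all sentences $d_\alpha\neq d_\beta$ for $\alpha<\beta<\mu^+$. Every subtheory of $T$ of size less than $\kappa$ is realized by the structure with universe $\vert B\vert$, $\tau$-reduct $B$, each occurring $c_b$ interpreted by $b$, and the fewer than $\kappa\le\mu$ occurring $d_\alpha$ interpreted by distinct elements of $\vert B\vert$; hence $T$ is ${<}\kappa$-consistent, and therefore consistent. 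In a model $M$ of $T$, the reduct $M\restriction\tau$ lies in $\hat{\C}$ and is thus isomorphic to some $A\in\C$; the map $b\mapsto c_b^M$ is an elementary embedding of $B$ into $M\restriction\tau$, and the interpretations of the $d_\alpha$ force the universe of $M\restriction\tau$ to have cardinality at least $\mu^+>\mu$. Composing with the isomorphism $M\restriction\tau\cong A$ yields an elementary embedding of $B$ into $A$ with $A$ of cardinality greater than $\mu$, so $A\neq B$ while $A,B\in\C$, contradicting the failure of Vopěnka's Principle for $\C$.

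I expect the main obstacle to lie in the backward direction: one must construct $\LL_\C$ so that it genuinely meets every requirement of Definition~\ref{definition:AbstractLogic}, in particular the renaming clause \eqref{item:renaming} and the occurrence number clause \eqref{item:occurrenceNumber}, and—more essentially—one must guarantee that the model furnished by strong compactness codes an elementary embedding between \emph{distinct} members of $\C$ rather than a mere self-embedding. This is handled by the cardinality device with the constants $d_\alpha$, together with the observation that a class violating Vopěnka's Principle automatically contains structures of unbounded cardinality. In the forward direction the only delicate point is arranging that $j$ computes the class-definable satisfaction relation $\models_\LL$ correctly, which is precisely the purpose of the $\Sigma_n$-correctness built into $C^{(n+1)}$-extendibility and is dealt with exactly as in the proof of Lemma~\ref{lemma:ExtLarge}.
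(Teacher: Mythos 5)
The paper does not actually prove this theorem: it is quoted from Makowsky via {\cite[Section 4]{bdgm}}, so there is no in-paper argument to compare against. Your route is the standard one underlying that citation — forward via $C^{(n)}$-extendible cardinals supplied by Vop\v{e}nka's Principle, backward by building a logic whose sentences detect membership in the isomorphism closure of a putative counterexample class and using extra constants to force a jump in cardinality. Your backward direction is correct as written: the reduction to parameter-free, finite-signature classes, the observation that a proper class of pairwise non-isomorphic structures of finite signature must contain members of unbounded cardinality, and the $d_\alpha$-device producing an elementary embedding between \emph{distinct} members of $\C$ all work.

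The forward direction, however, has a genuine gap at the step \anf{the renaming of symbols induced by $j$ identifies $j[T]$ with $T$}. For $\LL^2$ this is immediate from the finitary, syntactic nature of formulas, but for an arbitrary abstract logic the renaming $r=j\restriction\sigma$ of $\sigma$ onto $j[\sigma]$ identifies $T$ with $r_*[T]$, where $r_*$ is the bijection postulated by clause \eqref{item:renaming} of Definition~\ref{definition:AbstractLogic}, and it is not automatic that $r_*(\phi)=j(\phi)$ for $\phi\in T$ — which is exactly what you need in order to know that the consistent theory $j[T]\subseteq j(T)$ is a renamed copy of $T$ rather than some unrelated set of $j(\sigma)$-sentences. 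Establishing $r_*\restriction\LL(\bar{\sigma})=j\restriction\LL(\bar{\sigma})$ for small sublanguages $\bar{\sigma}$ requires (i) that $\crit(j)$ exceed not merely the occurrence number $\ooo$ but also the cardinalities of the sets $\LL(\tau)$ for $\tau\in H_\ooo$ (otherwise $j\restriction\LL(\bar{\sigma})$ need not even map onto $\LL(j(\bar{\sigma}))$), a condition your choice of $\kappa$ does not arrange, and (ii) an argument exploiting the uniqueness clause of the renaming axiom, such as the commuting-diagram and non-trivial-permutation argument that occupies most of the proof of Theorem~\ref{theorem:ForwardGlobalPrinciples} in this paper. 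Your closing remark that the only delicate point in the forward direction is the correctness of $j$'s computation of $\models_\LL$ misidentifies where the difficulty lies; with (i) and (ii) supplied, the argument goes through.
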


\begin{remark}\label{remark:UniquePermutation}
 A consequence of the way  Definition \ref{definition:AbstractLogic} is formulated, which will play a crucial role in subsequent proofs, is that Clause \eqref{item:renaming} of the definition ensures that 
 for every language $\tau$, 
 the identity map $\id_{\LL(\tau)}$ on $\LL(\tau)$ is the unique  permutation $\pi$ of $\LL(\tau)$ corresponding to the trivial renaming $\id_\tau$ of $\tau$ into itself, {i.e.,} we have $(\id_\tau)_*=\id_{\LL(\tau)}$. 
 This uniqueness then directly implies that for every language $\tau$ and every non-trivial\footnote{{I.e.,} not equal to the identity on $\LL(\tau)$.} permutation $\pi$ of $\LL(\tau)$, there exists $\chi\in\LL(\tau)$ and a $\tau$-structure $O$ with $$O\models_\LL\chi ~ \Longleftrightarrow ~ O \mathbin{{\not\models}_\LL} \pi(\chi).$$ 
 
 These observations show that when we want to view first-order logic and its extensions as abstract logics, then the elements of sets of the form $\LL(\tau)$ do not correspond to individual sentences of the given logic, but to equivalence classes of semantically equivalent sentences, because otherwise a transposition of $\LL(\tau)$ that exchanges two semantically equivalent sentences would contradict the uniqueness of $\id_{\LL(\tau)}$.  
\end{remark}

We now generalize the notions introduced in Section \ref{section:CharLL2} to obtain characterizations of the validity of fragments of Vop\v{e}nka's Principle through compactness properties of abstract logics. 
 In the following, we say that an abstract logic $\LL$ is defined by formulas  $\varphi_0(v_0,v_1,v_2)$ and $\varphi_1(v_0,v_1,v_2)$ together with a parameter $z$ if $\LL=\Set{\langle x,y\rangle}{\varphi_0(x,y,z)}$ and $\models_\LL=\Set{\langle x,y\rangle}{\varphi_1(x,y,z)}$.

\begin{definition}\label{definition:FPsiOutward}
Let $F$ be a theory in the language of set theory, let $\Psi(v_0,\ldots,v_4)$ be a formula\footnote{In most applications of this concept below, the formula $\Psi$ is assumed to be a $\Delta_1^{\ZFC^-}$-formula. It should be noted that, besides the absoluteness assumptions stated in Clause \eqref{item:FPsiOutwardSatisfiable1b} of his definition, this complexity assumption on $\Psi$ is essential for the proofs of the main results of this section, because it allows us to use \emph{Shoenfield Absoluteness} in a key step of our proofs. Moreover, Corollaries \ref{corollary:OutwardVP} and \ref{corollary:OutwardOrdWoodin} below show that the resulting outward compactness properties highly depend on the specific formula used. } in the language of set theory,  let $\kappa$ be an infinite cardinal and let $\LL$ be an abstract logic that is defined by formulas $\varphi_0$ and $\varphi_1$ together with a parameter $z$. 
 \begin{enumerate}
  \item\label{item:FPsiOutwardSatisfiable1}  An $\LL$-theory $T$ is \emph{$F$-$\Psi$-outward satisfiable at $\kappa$} if there is a limit ordinal $\eta>\kappa$ such that 
  for all cardinals $\lambda<\kappa$ and 
   all cardinals $\vartheta>\eta$ with $T,z\in H_\vartheta$,  
   the theory $T$ is a ${<}\lambda$-satisfiable $\LL^\prime$-theory in $N$, whenever $G$ is  $\Coll(\omega,\vartheta)$-generic over $V$,  $N\in V[G]$ is an outer $F$-model  of $V_\vartheta^V$,  $\LL^\prime$ is an abstract logic in $N$ and the following statements hold: 
  \begin{enumerate}
    \item $\Psi(N,V_\vartheta^V,\lambda,\kappa,\eta)$ holds in $V[G]$. 
   
    \item\label{item:FPsiOutwardSatisfiable1b} The formulas $\varphi_0$ and $\varphi_1$ are absolute from $V$ to $N$ with respect to parameters contained in $V_\lambda^V$.\footnote{{I.e.,} the statement $\varphi_i(x_0,x_1,x_2)$ holds in $N$ for all  $i<2$ and $x_0,x_1,x_2\in V_\lambda^V$ with the property that $\varphi_i(x_0,x_1,x_2)$ holds in $V$.}  
    
    \item The formulas  $\varphi_0$ and $\varphi_1$ together with the parameter $z$ define $\LL^\prime$ in $N$. 
    
    \item $T$ is an $\LL^\prime$-theory in $N$. 
  \end{enumerate}

  \item A cardinal $\kappa$ is an \emph{$F$-$\Psi$-outward compactness cardinal for $\LL$} if all $\LL$-theories that are $F$-$\Psi$-outward satisfiable at $\kappa$ are satisfiable. 
   \end{enumerate}
\end{definition}

In order to motivate the above concept, we now discuss its relation to the compactness properties of $\LL^2$ introduced in Section \ref{section:CharLL2} and show that, for sufficiently strong measures of closeness, these notions coincide. 
 For this purpose, we  make the following definition, which will also be relevant later in this section.

\begin{definition}
 A measure of closeness $\Psi(v_0,\ldots,v_4)$ is \emph{strong} if it is a $\Delta_1^{\ZFC^-}$-formula with $$\ZFC\vdash\forall M,N,\mu,\nu,\rho ~ [\Psi(N,M,\mu,\nu,\rho) ~ \longrightarrow ~ \Psi_{str}(N,M,\mu,\nu,\rho)].$$ 
\end{definition}

Note that, since $F$-$\Psi$--outward compactness is supposed to correspond to $\Psi$-largeness for certain classes $A$ (corresponding to the theory $F$) and these classes may code $V_\lambda$ as a subset of $\lambda$ (and therefore ensure $V_\lambda\subseteq M$ in the context of Definition \ref{definition:PsiLargeA}), strongness can be seen as a natural requirement in the setting of this section.

\begin{proposition}
 Given a strong measure of closeness $\Psi$, an $\LL^2$-theory $T$ is $\Psi$-outward satisfiable at a cardinal $\kappa$ if and only if it is $\ZFC^*$-$\Psi$-outward satisfiable at $\kappa$. In particular, a cardinal $\kappa$ is a $\Psi$-outward compactness cardinal for $\LL^2$ if and only if it is a $\ZFC^*$-$\Psi$-outward compactness cardinal for $\LL^2$. 
\end{proposition}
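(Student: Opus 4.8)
The plan is to establish the equivalence by unwinding the two definitions of outward consistency and showing that, under the assumption that $\Psi$ is a \emph{strong} measure of closeness, the extra data appearing in Definition~\ref{definition:FPsiOutward} (the ambient theory $F=\ZFC^*$, the auxiliary logic $\LL^\prime$, and the absoluteness requirements on the defining formulas) are either automatically satisfied or harmless when one works with $\LL=\LL^2$. The key point is that second-order logic is \emph{canonically} defined: there are fixed parameter-free formulas $\varphi_0,\varphi_1$ (with no parameter $z$, or $z=\emptyset$) such that $\ZFC^*$ proves these formulas define $\langle\LL^2,\models_{\LL^2}\rangle$, and moreover $\ZFC^*$ proves the basic recursive clauses of the $\LL^2$-satisfaction relation. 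Since $\Psi$ is a $\Delta_1^{\ZFC^-}$-formula and implies $\Psi_{str}$, and since every outer $\ZFC^*$-model $N$ of $V_\vartheta^V$ with $\Psi(N,V_\vartheta^V,\lambda,\kappa,\eta)$ satisfies $N\cap V_\lambda\subseteq V_\vartheta^V$ (this is precisely the content of $\Psi_{str}$ together with suitability), the ground model and $N$ agree on all sets of rank below $\lambda$, hence in particular on the $\LL^2$-theory $T$ (which lies in $H_\lambda$ after shrinking, or is handled subtheory-by-subtheory) and on all structures and power sets needed to evaluate $\models_{\LL^2}$ on small fragments.

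First I would fix the canonical $\Delta_1^{\ZFC^*}$-formulas $\varphi_0,\varphi_1$ defining $\LL^2$ and its satisfaction relation, noting (as in the discussion preceding Definition~\ref{definition:outward compactnessSimple}) that $\ZFC^*$ suffices to develop $\models_{\LL^2}$ for set-sized models, so that in any outer $\ZFC^*$-model $N$ the relation $\models_{\LL^2}^N$ is the genuine second-order satisfaction relation of $N$. Next I would verify the four conditions (a)--(d) of Definition~\ref{definition:FPsiOutward} in this setting: condition~(a) is literally the same as the hypothesis in Definition~\ref{definition:outward compactness}; condition~(b), absoluteness of $\varphi_0,\varphi_1$ from $V$ to $N$ for parameters in $V_\lambda^V$, follows because $\varphi_0,\varphi_1$ are $\Delta_1^{\ZFC^*}$ and $N\supseteq V_\vartheta^V$ is a $\ZFC^*$-model containing those parameters, so $\Sigma_1$-upward and $\Pi_1$-downward absoluteness apply; condition~(c) says the same formulas define $\LL^\prime=(\LL^2)^N$ in $N$, which holds by the same $\ZFC^*$-provable definability; and condition~(d), that $T$ is an $\LL^\prime$-theory in $N$, follows since $T\in V_\vartheta^V\subseteq N$ and the language of $T$ and the sentences of $T$ are absolute. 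Thus $\Psi$-outward consistency of $T$ at $\kappa$ (Definition~\ref{definition:outward compactness}) delivers exactly the conclusion required for $\ZFC^*$-$\Psi$-outward consistency, and conversely the latter, specialized to $N$'s own copy of $\LL^2$, yields the former — here one uses strongness of $\Psi$ to see that ``$T$ is ${<}\lambda$-consistent as an $\LL^\prime$-theory in $N$'' is the same assertion as ``$T$ is ${<}\lambda$-consistent in $N$'' in the sense of Definition~\ref{definition:outward compactness}, because $\LL^\prime$ is genuinely $(\LL^2)^N$.

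For the ``in particular'' clause I would simply observe that an $\LL^2$-theory is $\Psi$-outward consistent at $\kappa$ iff it is $\ZFC^*$-$\Psi$-outward consistent at $\kappa$, so the classes of theories whose consistency must be checked coincide, and hence $\kappa$ is a $\Psi$-outward compactness cardinal for $\LL^2$ iff it is a $\ZFC^*$-$\Psi$-outward compactness cardinal for $\LL^2$; this is immediate from the first assertion and the definitions of the two compactness-cardinal notions.

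The main obstacle I anticipate is bookkeeping around the identification ``$\LL^\prime$ in $N$ equals the true $\LL^2$ of $N$'': one must be careful that the parameter $z$ (which for $\LL^2$ can be taken empty, but Definition~\ref{definition:FPsiOutward} carries it formally) does not create a mismatch, and that the $\ZFC^*$-provable recursive definition of $\models_{\LL^2}$ really does make $(\models_{\LL^2})^N$ agree with $\models_{\LL^{\prime}}$ as computed inside $N$ — this is exactly where the hypothesis that $F=\ZFC^*$ (rather than a weaker fragment) and that $\Psi$ implies $\Psi_{str}$ (guaranteeing $V_\lambda^N=V_\lambda^V$, so that small fragments of $T$ and their potential models are shared) are both used. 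Everything else is a routine comparison of quantifier blocks in the two definitions, and no new large-cardinal or forcing input is needed beyond what is already recorded in the excerpt.
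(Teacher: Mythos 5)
Your proposal is correct and takes essentially the same route as the paper: the forward direction is immediate from the definitions, and the backward direction verifies conditions (a)--(d) of Definition~\ref{definition:FPsiOutward} for an arbitrary outer $\ZFC^*$-model $N$ satisfying the $\Psi$-condition, using that $\ZFC^*$ proves the canonical parameter-free formulas define $N$'s own second-order logic and that strongness of $\Psi$ yields $V_\lambda^N=V_\lambda^V$. The only imprecision is your justification of condition~(b) via ``$\Sigma_1$-upward and $\Pi_1$-downward absoluteness'' directly between $V$ and $N$ --- neither of these transitive classes contains the other, and $\models_{\LL^2}$ is not $\Delta_1$ --- but the correct mechanism (absoluteness of the defining formulas between $V$ and $V_\lambda^V$ and between $N$ and $V_\lambda^N$, combined with the identity $V_\lambda^N=V_\lambda^V$), which is exactly what the paper uses, already appears explicitly in your first paragraph, so this is a matter of wording rather than a gap.
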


\begin{proof}
 Our definitions directly ensure that every $\LL^2$-theory that is $\Psi$-outward satisfiable at a cardinal $\kappa$ is also $\ZFC^*$-$\Psi$-outward satisfiable at the given cardinal $\kappa$. 
 In the other direction, assume that a limit ordinal $\eta$ witnesses that an  $\LL^2$-theory $T$ is $\ZFC^*$-$\Psi$-outward satisfiable at a cardinal $\kappa$. Fix a cardinal $\lambda<\kappa$ and a cardinal $\vartheta>\eta$ with $T\in H_\vartheta$. In addition, let $G$ be $\Coll(\omega,\vartheta)$-generic over $V$ and pick an outer $\ZFC^*$-model $N$ of $V_\vartheta^V$ in $V[G]$ with the property that $\Psi(N,V_\vartheta^V,\lambda,\kappa,\eta)$ holds in $V[G]$. 
 We then know that the canonical formulas defining $\LL^2$ define an abstract logic $\LL^\prime$ in $N$ and $T$ is an $\LL^\prime$-theory in $N$. 
 Moreover, these formulas are absolute between $V$ and $V_\lambda$ as well as between $N$ and $V_\lambda^N$. 
 Since our  setup ensures that $V_\lambda^N=V_\lambda^V$, we can now conclude that $T$ is ${<}\lambda$-satisfiable in $N$.  
\end{proof}

As a further motivation for the results proven below, we now derive an analogue of Proposition \ref{proposition:OutwardLessCompact} for the above compactness property.

 \begin{proposition}\label{proposition:VPoutwarCompact}
  Let $\Psi$ be a $\Delta_1^{\ZFC}$-formula that is a measure of closeness and let $n>1$ be a natural number. 
  If  $\LL$ is an abstract logic and $\kappa$  is a limit cardinal, then every $\LL$-theory that is $\ZFC_n$-$\Psi$-outward satisfiable at $\kappa$ is ${<}\kappa$-satisfiable.  
  In particular, if Vop\v{e}nka's Principle holds, then every abstract logic has a $\ZFC_n$-$\Psi$-outward compactness cardinal.  
 \end{proposition}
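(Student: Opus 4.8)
The plan is to follow the proof of Proposition~\ref{proposition:OutwardLessCompact}, now taking the degenerate outer model $N=V_\vartheta^V$ of $V_\vartheta^V$ itself, while carrying the additional bookkeeping forced by clauses~(a)--(d) and the ``abstract logic in $N$'' requirement of Definition~\ref{definition:FPsiOutward}. For the first assertion I would argue by contradiction: assume $T$ is an $\LL$-theory that is $\ZFC_n$-$\Psi$-outward consistent at the limit cardinal $\kappa$, as witnessed by a limit ordinal $\eta>\kappa$, but that $T$ is not ${<}\kappa$-consistent; fix an inconsistent subtheory $T_0\subseteq T$ with $\vert T_0\vert<\kappa$, a language $\tau$ with $T\subseteq\LL(\tau)$, formulas $\varphi_0,\varphi_1$ and a parameter $z$ defining $\LL$, and the occurrence number $\ooo$ of $\LL$. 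I would pick a cardinal $\lambda<\kappa$ with $\vert T_0\vert<\lambda$ and then a cardinal $\vartheta>\eta$ with $T,z,\tau,\ooo\in H_\vartheta=V_\vartheta$ such that $V_\vartheta$ is a model of $\ZFC_n$ and $V_\vartheta\prec_{\Sigma_m}V$, where $m$ exceeds the (fixed, finite) logical complexities of ``$T_0$ is inconsistent'', of the finitely many axioms of Definition~\ref{definition:AbstractLogic} asserting that $\varphi_0,\varphi_1,z$ define an abstract logic, and of $\varphi_0$ and $\varphi_1$ themselves; a proper class of such $\vartheta$ exists.

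Next I would let $G$ be $\Coll(\omega,\vartheta)$-generic over $V$, work in $V[G]$, set $N=V_\vartheta^V$, and let $\LL^\prime$ denote the class function and binary relation defined over $N$ by $\varphi_0,\varphi_1$ with parameter $z$. I would then verify the hypotheses of $\ZFC_n$-$\Psi$-outward consistency for this $N$ and $\LL^\prime$: $N$ is an outer $\ZFC_n$-model of $V_\vartheta^V$ in $V[G]$; the $\Sigma_m$-elementarity of $N$ in $V$ makes $\LL^\prime$ an abstract logic in $N$ (with $\ooo$ as occurrence number) and $T$ an $\LL^\prime$-theory in $N$, so clauses~(c) and~(d) and the ``abstract logic'' requirement hold; clause~(b) holds because $V_\vartheta\prec_{\Sigma_m}V$ transfers $\varphi_0,\varphi_1$ from $V$ to $N$ for parameters in $V_\lambda^V\subseteq N$; and clause~(a), i.e.\ $\Psi(V_\vartheta^V,V_\vartheta^V,\lambda,\kappa,\eta)$, follows from property~\ref{Prop:B} in Definition~\ref{definition:Closenessss}.\ref{item:Closeness} applied in $V$ (using that $\kappa$ is a limit ordinal, as a limit cardinal) together with $\Sigma_1$-upward absoluteness, since $\Psi$ is $\Delta_1^{\ZFC}$. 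With all hypotheses verified, $\ZFC_n$-$\Psi$-outward consistency yields that $T$ is ${<}\lambda$-consistent in $N$; hence $T_0$, which lies in $N$ and has cardinality $<\lambda$ there, has an $\LL^\prime$-model in $N$, and pulling this model back to $V$ via the $\Sigma_m$-elementarity of $N$ (so that $\models_{\LL^\prime}$ as computed in $N$ agrees with $\models_\LL$ in $V$ on members of $N$) contradicts the inconsistency of $T_0$.

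For the ``in particular'' clause, assuming Vop\v{e}nka's Principle, Theorem~\ref{theorem:Makowsky} yields a strong compactness cardinal $\mu$ for $\LL$; since every ${<}\kappa$-consistent $\LL$-theory with $\kappa\geq\mu$ is ${<}\mu$-consistent, every cardinal $\geq\mu$ is a strong compactness cardinal for $\LL$. Choosing a limit cardinal $\kappa\geq\mu$ and invoking the first assertion, every $\LL$-theory that is $\ZFC_n$-$\Psi$-outward consistent at $\kappa$ is ${<}\kappa$-consistent, hence consistent, so $\kappa$ is a $\ZFC_n$-$\Psi$-outward compactness cardinal for $\LL$.

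The step I expect to be the main obstacle is the careful handling of reflection and of the direction of absoluteness. One must ensure that $N=V_\vartheta^V$ genuinely recognizes $\varphi_0,\varphi_1,z$ as defining an abstract logic, which requires the relevant finite fragment of Definition~\ref{definition:AbstractLogic} to reflect to $V_\vartheta$ --- including that the occurrence number is captured, which is why $\ooo$ is placed in $V_\vartheta$. More delicately, clause~(b) of Definition~\ref{definition:FPsiOutward} only guarantees that $\varphi_0,\varphi_1$ transfer \emph{downward} from $V$ to $N$ for parameters in $V_\lambda^V$, whereas recovering a model of $T_0$ in $V$ from one in $N$ needs two-sided agreement of the satisfaction relations on members of $N$; this asymmetry is exactly why the argument imposes full $\Sigma_m$-elementarity of $V_\vartheta$ in $V$ rather than relying on clause~(b) alone.
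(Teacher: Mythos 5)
Your proposal is correct and follows essentially the same route as the paper: take the degenerate outer model $N=V_\vartheta^V$ for a sufficiently elementary $\vartheta$, use property \ref{Prop:B} of the measure of closeness plus $\Sigma_1$-upward absoluteness of the $\Delta_1^{\ZFC}$-formula $\Psi$ to verify clause (a), use the elementarity of $V_\vartheta$ to verify the abstract-logic clauses, and derive the contradiction from the inconsistent small subtheory; the ``in particular'' clause is handled exactly as intended, via Theorem \ref{theorem:Makowsky}. The only cosmetic difference is that the paper contradicts outward consistency by noting $T$ is already not ${<}\lambda$-consistent in $V_\vartheta$, whereas you pull a model of $T_0$ back from $N$ to $V$ --- two sides of the same elementarity.
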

 
 \begin{proof}
  Fix  formulas  $\varphi_0(v_0,v_1,v_2)$ and $\varphi_1(v_0,v_1,v_2)$ and a parameter $z$ defining an abstract logic $\LL$ and let $\eta$ be a limit ordinal witnessing that an $\LL$-theory $T$ is $\ZFC_n$-$\Psi$-outward satisfiable at a cardinal $\kappa$. 
  Assume, towards a contradiction, that there is a subset $T_0$ of $T$ of cardinality less than $\kappa$ that is an unsatisfiable $\LL$-theory. 
  Pick a cardinal $\lambda<\kappa$ with $\vert T_0\vert<\lambda$ and a cardinal $\vartheta>\eta$ with $T,z\in H_\vartheta=V_\vartheta$ and $V_\vartheta$ sufficiently elementary in $V$. Then \ref{Prop:B} in Definition \ref{definition:Closenessss}.\ref{item:Closeness} implies that $\Psi(V_\vartheta,V_\vartheta,\lambda,\kappa,\eta)$ holds. 
  In addition, our choice of $\vartheta$ ensured that the formulas $\varphi_0$ and $\varphi_1$ are absolute between $V$ and $V_\vartheta$. 
  Finally, we also know that, in $V_\vartheta$,  the formulas $\varphi_0$ and $\varphi_1$ together with the parameter $z$ define an abstract logic $\LL^\prime$, and that $T$ is an $\LL^\prime$-theory  that is not ${<}\lambda$-satisfiable. 
  If $G$ is $\Coll(\omega,\vartheta)$-generic over $V$, then  $V_\vartheta^V$ is an outer $\ZFC_n$-model of   $V_\vartheta^V$ in $V[G]$, and the fact that  $\Psi$ is a $\Delta_1^{\ZFC}$-formula ensures that $\Psi(V_\vartheta^V,V_\vartheta^V,\lambda,\kappa,\eta)$ holds in $V[G]$. 
 However, this contradicts that $T$ is  $\ZFC_n$-$\Psi$-outward satisfiable at $\kappa$ in $V$.     
 
  Finally, assume that Vop\v{e}nka's Principle holds and $\LL$ is an abstract logic. Then Theorem \ref{theorem:Makowsky} shows that there is a limit cardinal $\kappa$ that is a strong compactness cardinal for $\LL$. 
  The above computations now show that for every $n>1$, the cardinal $\kappa$ is a $\ZFC_n$-$\Psi$-outward compactness cardinal for $\LL$. 
 \end{proof}

In the following, we will restrict ourselves to abstract logics with the property that both their formulas and the translations of formulas induced by renamings are produced from the given signatures and  renamings by some simple recursive procedure. 
 It is easy to see that all \emph{finitely generated} logics (see {\cite[\S 1]{MR780522}}) possess the property introduced below. In particular,   we can apply the results below to second-order logic.

\begin{definition}
 An abstract logic $\LL$ with occurrence number $\ooo$ has \emph{simple formulas} if there exists a $\Sigma_1$-formula $\varphi(v_0,\ldots,v_3)$ and a set $z$ with the property that whenever $r$ is a renaming of a language $\sigma$  with less than $\ooo$-many symbols into a language $\tau$ and $\map{r_*}{\LL(\sigma)}{\LL(\tau)}$ is the bijection induced by $r$, then  $$\Set{\langle \phi,r_*(\phi)\rangle}{\phi\in\LL(\sigma)} ~ = ~ \Set{\langle x,y\rangle}{\varphi(r,x,y,z)}$$ holds. 
\end{definition}

Note that if a $\Sigma_1$-formula $\varphi(v_0,\ldots,v_3)$ and a set $z$ witness that an abstract logic $\LL$ with occurrence number $\ooo$ has  simple formulas, then $$\LL(\sigma) ~ = ~ \Set{x}{\varphi(\id_\sigma,x,x,z)}$$ holds for every language $\sigma$ with  less than $\ooo$-many symbols, where $\id_\sigma$ denotes the trivial renaming of $\sigma$ into itself. This shows that the restriction of the class function $\LL$ to languages with   less than $\ooo$-many symbols is also definable by a $\Sigma_1$-formula with parameter $z$ in this case.

While all logics that occur in practice are  simple, it is not hard to construct an example of a non-simple abstract logic. Following the suggestion of the  anonymous referee, we now present such an example.

\begin{example}
 Given a set $y$, we let $\phi_y$ denote the pair $\langle y,\alpha\rangle$, where $\alpha$ is the least ordinal such that  $y\in V_\alpha$ and all $\Sigma_1$-formulas with parameters in $V_\alpha$ are absolute between $V$ and $V_\alpha$ (see {\cite{Cn}}). 
 Then there is a unique abstract logic $\LL$ with occurrence number $2$ and the property that for every language $\tau$, we have $\LL(\tau)=\Set{\phi_r}{r\in \mathfrak{R}_\tau}$ and $$M\models_\LL\phi_r ~ \Longleftrightarrow ~ r^M\neq\emptyset$$ whenever $M$ is a $\tau$-structure and $r\in \mathfrak{R}_\tau$.

 Assume, towards a contradiction, that $\LL$ has simple formulas. Then an earlier observation shows that there is a $\Sigma_1$-formula $\varphi(v_0,v_1,v_2)$ and a set $z$ with the property that for every language $\tau$ with exactly one symbol, the set $\LL(\tau)$ is the unique set $u$ with the property that $\varphi(u,\tau,z)$ holds. 
 Let $\alpha$ be the minimal ordinal such that   $z\in V_\alpha$ and all $\Sigma_1$-formulas with parameters in $V_\alpha$ are absolute between $V$ and $V_\alpha$. In addition, let $\tau$ denote the unique language with $\mathfrak{C}_\tau=\mathfrak{F}_\tau=\emptyset$, $\mathfrak{R}_\tau=\{z\}$ and with arity $a_\tau(z)=1$. Then $\tau\in V_\alpha$ and the set $\LL(\tau)$ witnesses that the $\Sigma_1$-statement $\exists x ~ \varphi(x,\tau,z)$ holds in $V$. Our setup now ensures that this statement holds in $V_\alpha$ and, using the uniqueness of $\LL(\tau)$ and the upwards absoluteness of $\Sigma_1$-formulas, we can conclude that $\LL(\tau)$ is an element of $V_\alpha$. But this yields a contradiction, because $\LL(\tau)=\{\phi_z\}=\{\langle z,\alpha\rangle\}\notin V_\alpha$. 
\end{example}

The next theorem generalizes the forward direction of Theorem \ref{theorem:LL2Duality} to the setting of this section.

\begin{theorem}\label{theorem:ForwardGlobalPrinciples}
 Let   $\Psi$ be a strong measure of closeness. 
 Assume that for every class $A$, there is a proper class of cardinals that are $\Psi$-large for $A$.
  Then for every natural number $n$, every abstract logic with simple formulas has  a $\ZFC_n$-$\Psi$-outward compactness cardinal.  
\end{theorem}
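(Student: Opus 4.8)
The plan is to adapt the forward direction of the proof of Theorem~\ref{theorem:LL2Duality}, replacing $\LL^2$ by the given abstract logic $\LL$ and using that $\Psi$ is a \emph{strong} measure of closeness (so it implies $\Psi_{str}$) in place of Shoenfield absoluteness for second-order consistency. Fix a natural number $n$ and an abstract logic $\LL$ with simple formulas, witnessed by a $\Sigma_1$-formula $\varphi(v_0,\ldots,v_3)$ and a parameter $z$; let $\ooo$ be the occurrence number of $\LL$. Since $\LL$ is defined by (first-order) formulas together with a parameter, we may fix $\varphi_0,\varphi_1$ and $z$ as in Definition~\ref{definition:FPsiOutward} (enlarging $z$ if necessary so that the same parameter also witnesses simple formulas). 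Using our hypothesis, applied to a class $A$ coding $z$, $\ooo$, and the formulas $\varphi_0,\varphi_1,\varphi$, together with Lemma~\ref{lemma:EmbeddingLargeForA}, pick a cardinal $\kappa$ (above any prescribed ordinal) that is $\Psi$-strong for $A$; I claim $\kappa$ is a $\ZFC_n$-$\Psi$-outward compactness cardinal for $\LL$.

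Now suppose $T$ is an $\LL$-theory that is $\ZFC_n$-$\Psi$-outward consistent at $\kappa$, witnessed by a limit ordinal $\eta>\kappa$. First I would replace $\eta$ by a larger limit ordinal so that $T,z\in V_\eta$ and $V_\eta$ reflects enough of $V$; let $\lambda>\eta$ be a cardinal with $T,z\in H_\lambda$ supplied by Lemma~\ref{lemma:EmbeddingLargeForA}, together with an inner model $M$ and an elementary embedding $\map{j}{V}{M}$ such that $j(\kappa)>\lambda$, $j(A\cap V_\kappa)\cap V_\lambda=A\cap M\cap V_\lambda$, and $\Psi(V_\vartheta,V_\vartheta^M,\lambda,j(\kappa),j(\eta))$ holds for all limit ordinals $\vartheta>j(\eta)$. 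Pick a sufficiently large cardinal $\vartheta$ with $j(\vartheta)=\vartheta$, $H_\vartheta=V_\vartheta$, and $V_\vartheta\models\ZFC^*$ (in particular $V_\vartheta\models\ZFC_n$). Elementarity of $j$ gives that, in $M$, the ordinal $j(\eta)$ witnesses that $j(T)$ is $\ZFC_n$-$j(\Psi)$-outward consistent at $j(\kappa)$; but $\Psi$ is a $\Delta_1^{\ZFC^-}$-formula (hence in particular $\Delta_1^{\ZFC_n}$, absolute between $V$ and $M$) so $j(\Psi)$ agrees with $\Psi$ on the relevant tuples, and similarly the agreement of $j(A\cap V_\kappa)$ with $A$ below $\lambda$ ensures that $\varphi_0,\varphi_1,\varphi,z$ retain their meaning. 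Force with $\Coll(\omega,\vartheta)$ to get a generic $G$; since $V_\vartheta^M\models\ZFC^*$, the observation after Definition~\ref{definition:outward compactnessSimple} makes $V_\vartheta^M$ countable in $M[G]$. The witnessing statement for $j(T)$ then says: in $M[G]$, $j(T)$ is a ${<}\lambda$-consistent $\LL'$-theory in every countable outer $\ZFC_n$-model $N$ of $V_\vartheta^M$ for which $\Psi(N,V_\vartheta^M,\lambda,j(\kappa),j(\eta))$ holds, $\varphi_0,\varphi_1$ are absolute from $V_\vartheta^M$ to $N$ on parameters in $V_\lambda^M$, and $\varphi_0,\varphi_1,z$ define $\LL'$ in $N$ with $j(T)$ an $\LL'$-theory in $N$. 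As in the proof of Theorem~\ref{theorem:LL2Duality}, this is a $\Pi_1$-statement with parameters in $H_{\aleph_1}^{M[G]}$, hence provably $\mathbf{\Pi}^1_2$ in real parameters coding those, so \emph{Shoenfield absoluteness} lifts it to $V[G]$.

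Finally I would verify that $V_\vartheta^V$ itself is a legitimate such model in $V[G]$: it is a countable outer $\ZFC_n$-model of $V_\vartheta^M$ (using $V_\lambda\subseteq M$, which follows from $\Psi\Rightarrow\Psi_{str}$ applied with $\mu=\lambda$, together with $V^M_{j(\vartheta)}=V^M_\vartheta\subseteq V_\vartheta^V$ below $\lambda$); $\Sigma_1$-upward absoluteness of $\Psi$ gives $\Psi(V_\vartheta^V,V_\vartheta^M,\lambda,j(\kappa),j(\eta))$ in $V[G]$; absoluteness of $\varphi_0,\varphi_1$ between $V$ and $V_\lambda^V=V_\lambda^M$ handles the definability clauses; and $j(T)$ is an $\LL'$-theory in $V_\vartheta^V$ because $\LL'$ is computed the same way there. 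Hence $j(T)$ is ${<}\lambda$-consistent in $V_\vartheta^V$. Now $j[T]\subseteq j(T)$ lies in $V_\vartheta^V$ and has cardinality less than $\lambda$ there, so $j[T]$ is consistent in $V_\vartheta^V$, hence consistent in $V$; and because $\LL$ has simple formulas, the pointwise image $j[T]$ is obtained from $T$ by a renaming of the underlying language (here the hypothesis that renamings act on $\LL$-sentences by a $\Sigma_1$-procedure is exactly what lets us invoke clause~\eqref{item:renaming} of Definition~\ref{definition:AbstractLogic} to transfer consistency back), so $T$ is consistent in $V$. This shows $\kappa$ is a $\ZFC_n$-$\Psi$-outward compactness cardinal for $\LL$. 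I expect the main obstacle to be the bookkeeping around renamings for abstract logics: unlike in the $\LL^2$ case, where "identify $j[T]$ and $T$ via renaming of symbols" is immediate from the finitary syntax, here one must genuinely use the \emph{simple formulas} hypothesis to see that $j$ induces a renaming of the signature of $T$ and that $j_*$ coincides with the abstract-logic renaming map $r_*$, so that $M\models_{\LL}\phi\iff j^*(M)\models_{\LL}j_*(\phi)$ transfers models of $j[T]$ to models of $T$; getting this compatibility right (and ensuring the relevant signature has fewer than $\ooo$ symbols, which follows from the occurrence number applied to the finitely many sentences actually used) is the delicate point.
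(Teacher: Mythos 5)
Your proposal reproduces the first two-thirds of the paper's argument essentially step for step: the choice of a class $A$ coding the defining data of $\LL$, the application of Lemma \ref{lemma:EmbeddingLargeForA}, the collapse forcing, the Shoenfield absoluteness step, and the verification that $V_\vartheta^V$ is a legitimate outer model of $V_\vartheta^M$. But the final step, which you yourself flag as ``the delicate point,'' is precisely where the content of this theorem beyond the $\LL^2$ case lies, and you do not carry it out. The issue is this: a model of $j[T]=\Set{j(\phi)}{\phi\in T}$ in the renamed language $\sigma_j=j[\sigma]$ pulls back, via clause \eqref{item:renaming} of Definition \ref{definition:AbstractLogic}, to a model of $\Set{r_*^{-1}(j(\phi))}{\phi\in T}$, where $r_*$ is the unique bijection from $\LL(\sigma)$ onto $\LL(\sigma_j)$ induced by the renaming $r$ that $j$ determines on the symbols. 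This is a model of $T$ only if $r_*(\phi)=j(\phi)$ for every $\phi\in T$, and the ``simple formulas'' hypothesis does not deliver this identity by itself: the $\Sigma_1$-definability of the renaming bijection tells you how $M$ computes $r_*$, but a priori $j\restriction\LL(\bar{\sigma})$ could be an entirely different bijection onto $\LL(j(\bar{\sigma}))$, in which case the pulled-back theory need not be $T$.

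The paper closes this gap with an argument you would still need to supply, and which requires modifying your setup. Before choosing $\kappa$, one fixes a cardinal $\rho$ above the occurrence number $\ooo$ such that every non-trivial permutation $\pi$ of $\LL(\tau)$, for every language $\tau\in H_\ooo$, is detected by some $\chi\in\LL(\tau)$ and some $\tau$-structure $O\in H_\rho$ with $O\models_\LL\chi$ if and only if $O\mathbin{{\not\models}_\LL}\pi(\chi)$; the class $A$ must code $\rho$ (not only $z$ and the defining formulas) so that $\rho<\crit(j)$ and these witnessing structures are fixed by $j$ and satisfy the same sentences in $V$ and in $M$. Then, for $\phi\in T$ supported on a sublanguage $\bar{\sigma}$ of size less than $\ooo$, one chooses a renaming $s$ of some $\tau\in H_\ooo$ onto $\bar{\sigma}$, notes $j(s)=\bar{r}\circ s$, and shows $j(s_*)\circ s_*^{-1}=j\restriction\LL(\bar{\sigma})$: otherwise $j(s_*)^{-1}\circ j\circ s_*$ would be a non-trivial permutation of $\LL(\tau)$ detected by a structure $O\in H_\rho\subseteq V_{\crit(j)}$, contradicting the elementarity of $j$ combined with the agreement of the satisfaction relation of $\LL$ with its $M$-computed version on $V_\lambda$ (this agreement is where $V_\lambda\subseteq M$, i.e.\ the strongness of $\Psi$, is genuinely needed). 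Only after this does the uniqueness clause in \eqref{item:renaming}, together with $\Sigma_1$-upward absoluteness of the simple-formulas predicate, yield $\bar{r}_*(\phi)=j(\phi)$. Without this permutation-rigidity argument and the corresponding enlargement of $A$, the transfer from a model of $j[T]$ to a model of $T$ is unjustified.
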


\begin{proof}
 Let $\LL$ be an abstract logic with simple formulas that is defined by formulas  $\varphi_0$ and $\varphi_1$ together with a parameter $z_0$. In the following, let $\ooo$  denote the  occurrence number  of $\LL$. Fix a $\Sigma_1$-formula $\varphi(v_0,\ldots,v_3)$ and a parameter $z_1$ witnessing that $\LL$ has simple formulas.   
  As discussed in Remark \ref{remark:UniquePermutation}, the uniqueness of $\id_{\LL(\tau)}$ with respect to $\id_\tau$ in  \eqref{item:renaming} of Definition \ref{definition:AbstractLogic}  ensures that for every language $\tau$ and every non-trivial  permutation $\pi$ of $\LL(\tau)$, there exists $\chi\in\LL(\tau)$ and a $\tau$-structure $O$ with 
 \begin{equation}\label{equation:ModelsNonTrivial}
  O\models_\LL\chi ~ \Longleftrightarrow ~ O \mathbin{{\not\models}_\LL} \pi(\chi).
 \end{equation}
 This allows us to  fix a cardinal $\rho$ above $\ooo$ that satisfies $z_0,z_1\in H_\rho=V_\rho$ and has the property that for every language $\tau$ in $H_\ooo$ and every non-trivial permutation $\pi$ of $\LL(\tau)$, we have $\LL(\tau)\in H_\rho$ and there exists $\chi\in\LL(\tau)$ and a $\tau$-structure $O$ in $H_\rho$ with \eqref{equation:ModelsNonTrivial}. 
 Define $A$ to be the class $$\{\langle 0,z_0\rangle,\langle 1,z_1\rangle,\langle 2,\rho\rangle\} ~ \cup ~ \Set{\langle 3,y_0,y_1,y_2\rangle}{\varphi_0(y_0,y_1,y_2)} ~ \cup ~ \Set{\langle 4,y_0,y_1,y_2\rangle}{\varphi_1(y_0,y_1,y_2)}$$
 %Let $A$ be a class that  codes the five classes  $\{\rho\}$, $\{z_0\}$, $\{z_1\}$, $\Set{\langle y_0,y_1,y_2\rangle}{\varphi_0(y_0,y_1,y_2)}$ and $\Set{\langle y_0,y_1,y_2\rangle}{\varphi_1(y_0,y_1,y_2)}$  in some canonical way. 
 %
  and pick a  cardinal $\kappa>\rho$ that is $\Psi$-large for $A$.  
 Fix a natural number $n>1$. 
  Pick an $\LL$-theory $T$ that is  $\ZFC_n$-$\Psi$-outward satisfiable at $\kappa$, and let $\eta>\kappa$ be a limit ordinal witnessing this.  Let $\sigma$ be  a language with $T\subseteq\LL(\sigma)$. 
 In this situation, we can apply  Lemma \ref{lemma:EmbeddingLargeForA} to find a cardinal $\lambda>\eta$ with $\sigma,\LL(\sigma)\in H_\lambda$, an inner model $M$ and an elementary embedding $\map{j}{V}{M}$ such that $j(\kappa)>\lambda$, $j(A\cap V_\kappa)\cap V_\lambda=A\cap M\cap V_\lambda$ and $\Psi(V_\vartheta,V_\vartheta^M,\lambda,j(\kappa),j(\eta))$ holds for all limit ordinals $\vartheta>j(\eta)$. 
 Our assumptions on $\Psi$ then ensure that $V_\lambda\subseteq M$.
Moreover, by our choice of $A$, we know that $j(\rho)=\rho$, $j(z_0)=z_0$ and $j(z_1)=z_1$. 
  Given $i<2$, we define $$B_i ~ = ~ \Set{\langle y_0,y_1,y_2\rangle\in V_\lambda^3}{\varphi_i(y_0,y_1,y_2)}.$$ Our setup then ensures that $$B_i ~ = ~ j(B_i\cap V_\kappa)\cap V_\lambda ~ = ~ \Set{\langle y_0,y_1,y_2\rangle\in V_\lambda^3}{M\models\varphi_i(y_0,y_1,y_2)}$$ holds for all $i<2$. 
 Finally, we set $\mu=\crit(j)$.

 \begin{claim*}
  $\mu>\rho$ and $z_0,z_1\in V_\mu$. 
 \end{claim*}
 
 \begin{proof}[Proof of the Claim]
  Assume that one of the statements of the claim fails. Then there is a limit  ordinal $\mu<\zeta<\lambda$ with $j(\zeta)=\zeta$. But this implies that the map $\map{j\restriction V_{\zeta+2}}{V_{\zeta+2}}{V_{\zeta+2}}$ is a non-trivial elementary embedding, contradicting the \emph{Kunen Inconsistency}. 
 \end{proof}

 Next, note that $j(\sigma)$ is a language in $V$.

  \begin{claim*}
  $j[T]\subseteq\LL(j(\sigma))$. 
 \end{claim*}
 
 \begin{proof}[Proof of the Claim]
  Fix $\phi\in T$. We can find a sublanguage $\bar{\sigma}$ of $\sigma$ with less than $\ooo$-many symbols that satisfies $\phi\in\LL(\bar{\sigma})$. By our assumptions, we know that $\varphi(\id_{\bar{\sigma}},\phi,\phi,z_1)$ holds in $V$ and hence elementarity implies that $\varphi(\id_{j(\bar{\sigma})},j(\phi),j(\phi),z_1)$ holds in $M$. 
 But then $\Sigma_1$-upwards absoluteness ensures that this statement also holds in $V$. Since our first claim shows that $j(\bar{\sigma})$ is a sublanguage of $j(\sigma)$ with less than $\ooo$-many symbols, $j(\phi)\in\LL(j(\bar{\sigma}))\subseteq\LL(j(\sigma))$.  
 \end{proof}

In particular, this shows that $j[T]$ is an $\LL$-theory in $V$. 
 Next, notice that  the elementarity of $j$  ensures that, in $M$, the formulas $\varphi_0$ and $\varphi_1$ together with the parameter $z_0$ define an abstract logic $\LL_M$ and the limit ordinal $j(\eta)$ witnesses that the $\LL_M$-theory $j(T)$ is  $\ZFC_n$-$\Psi$-outward satisfiable at $j(\kappa)$.

\begin{claim*}
 The $\LL$-theory $j[T]$ is satisfiable in $V$. 
\end{claim*}

\begin{proof}[Proof of the Claim]
   Let  $\vartheta>j(\lambda)$ be a   cardinal such that   $j(\vartheta)=\vartheta$,  the set $V_\vartheta$ is sufficiently   elementary in $V$, and the set $V_\vartheta^M$ is sufficiently elementary in $M$.  Then, $\Psi(V_\vartheta,V_\vartheta^M,\lambda,j(\kappa),j(\eta))$ holds.  
  Let $G$ be $\Coll(\omega,\vartheta)$-generic over $V$. 
  Since $\Coll(\omega,\vartheta)^V=\Coll(\omega,\vartheta)^M\in M$, it follows that $G$ is also $\Coll(\omega,\vartheta)$-generic over $M$. 
  Then, the set  $V_\vartheta^M$ is countable in $M[G]$ and we know that, in $M[G]$, 
  $j(T)$ is a ${<}\lambda$-satisfiable $\LL^\prime$-theory in $N$ 
  whenever $N$ is a countable outer $\ZFC_n$-model of $V_\vartheta^M$, $\LL^\prime$ is an abstract logic in $N$, and the following statements hold: 
  \begin{enumerate}[label=(\alph*)]
   \item $\Psi(N,V_\vartheta^M,\lambda,j(\kappa),j(\eta))$ holds in $M[G]$. 
   
   \item If $i<2$ and $\langle y_0,y_1,y_2\rangle\in B_i$, then $\varphi_i(y_0,y_1,y_2)$ holds in $N$.  
   
   \item The formulas $\varphi_0$ and $\varphi_1$ together with the parameter $z_0$ define  $\LL^\prime$ in $N$. 
   
   \item  $j(T)$ is an  $\LL^\prime$-theory in $N$. 
  \end{enumerate}
  Since the given statement can be expressed by a $\Pi_1$-formula with parameters in $H_{\aleph_1}^{M[G]}$, \emph{Shoenfield Absoluteness} ensures that it also holds in $V[G]$. 
 In addition, $\Sigma_1$-upwards absoluteness  implies that $\Psi(V_\vartheta^V,V_\vartheta^M,\lambda,j(\kappa),j(\eta))$ holds in $V[G]$. 
Moreover, since $V_\vartheta^V$ was chosen sufficiently elementary in $V$, we know that for all $i<2$ and all $\langle y_0,y_1,y_2\rangle\in B_i$, the statement $\varphi_i(y_0,y_1,y_2)$ holds in $V_\vartheta^V$. 
 Our choice of $\vartheta$ also ensures that the formulas $\varphi_0$ and $\varphi_1$ together with the parameter $z_0$ define an abstract logic $\LL^\prime$ in $V_\vartheta^V$ and $j(T)$ is an  $\LL^\prime$-theory in $V_\vartheta^V$. 
This allows us to conclude that $j(T)$ is a ${<}\lambda$-satisfiable $\LL^\prime$-theory in $V_\vartheta^V$. Since $j[T]\subseteq j(T)$ is an element of $V_\vartheta^V$ and this set has cardinality less than $\lambda$ in $V_\vartheta^V$, we now know that $j[T]$ is a satisfiable $\LL^\prime$-theory in $V_\vartheta^V$. In this situation, the fact that  $V_\vartheta^V$ was chosen to be sufficiently elementary in $V$ allows us to conclude that $j[T]$ is a satisfiable $\LL$-theory in $V$.  
\end{proof}

Let $\sigma_j$ denote the sublanguage of $j(\sigma)$ that is given by the pointwise image of the symbols in $\sigma$ under $j$, and let $r$ denote the canonical renaming from $\sigma$ into $\sigma_j$ induced by $j$.  
 Since the occurrence number $\ooo$ of $\LL$ is smaller than the critical point $\mu$ of $j$, we know that $j[T]\subseteq\LL(\sigma_j)$. 
 Moreover, the previous claim shows that there is a $\sigma$-structure $N$ with the property that the $\sigma_j$-structure $r^*(N)$ obtained from $N$ using the renaming $r$ is a model of $j[T]$. 
 Assume, towards a contradiction, that $N$ is not a model of $T$ and fix $\phi$ in $T$ such that $N\mathbin{{\not\models}_\LL}\phi$. 
  Pick a sublanguage $\bar{\sigma}$ of $\sigma$ that contains less than $\ooo$-many symbols and has the property that $\phi$ is an element of $\LL(\bar{\sigma})$. Then $j$ induces a renaming $\bar{r}$ of $\bar{\sigma}$ into the sublanguage $j(\bar{\sigma})$ of $\sigma_j$, and this function is equal to the restriction of $r$ to $\bar{\sigma}$. 
  Let $\bar{N}$ denote the $\bar{\sigma}$-reduct of $N$. Then $\bar{N}\mathbin{{\not\models}_\LL}\phi$. Moreover,  the $j(\bar{\sigma})$-structure  $\bar{r}^*(\bar{N})$ obtained from $\bar{N}$ using $\bar{r}$ is equal to the $j(\bar{\sigma})$-reduct of $r^*(N)$ and therefore we know that $\bar{r}^*(\bar{N})\models_\LL j(\phi)$  and $\bar{r}^*(\bar{N})\mathbin{{\not\models}_\LL} \bar{r}_*(\phi)$.  In particular, it follows that $\bar{r}_*(\phi)\neq j(\phi)$.
  
  Now, pick a renaming $s$ of a language $\tau\in H_\ooo$ into $\bar{\sigma}$. Then $j(s)$ is a renaming of  $\tau$ into $j(\bar{\sigma})$ and our setup ensures that $j(s)=\bar{r} \circ s$, {i.e.,} we know  that the following diagram commutes:

 \begin{equation*}
\begin{xy}
\xymatrix{
 \bar{\sigma} \ar[rr]^{\bar{r}} & & j(\bar{\sigma}) \\
    & &   \\
   \tau \ar[uu]^{s}  \ar@{-}@<0.2ex>[rr]^{\id_\tau} \ar@{-}@<-0.2ex>[rr]  & &  \tau \ar[uu]_{j(s)} 
}
\end{xy}
\end{equation*}

Since $V_\lambda\subseteq M$, we know that the sets $\bar{\sigma}$, $j(\bar{\sigma})$ and $\tau$ as well as the maps $s$ and $j(s)$ are all elements of $M$, and this implies that the renaming $\bar{r}$ is also contained in $M$. 
 %
% Elementarity now implies that the formulas $\varphi_0$ and $\varphi_1$ together with the parameter $z_0$ define an abstract logic $\LL_M$ in $M$. 
 %
 In addition, %since $\bar{\sigma},\tau,\LL(\bar{\sigma}),\LL(\tau)\in V_\lambda\subseteq M$, 
  our setup ensures that $\LL(\bar{\sigma})=\LL_M(\bar{\sigma})$, $\LL(j(\bar{\sigma}))=\LL_M(j(\bar{\sigma}))$ and $\LL(\tau)=\LL_M(\tau)$. 
  Let  $\map{s_*}{\LL(\tau)}{\LL(\bar{\sigma})}$ denote the bijection induced by $s$ in $V$. 
  Since $\LL$ has simple formulas and $j(z_1)=z_1$, elementarity and $\Sigma_1$-upwards absoluteness imply 
  $s_*$ is an element of  $M$ and  this  function is also the unique bijection between $\LL_M(\tau)$ and $\LL_M(\bar{\sigma})$ that is induced by $s$ in $M$. 
   Since $j(s_*)$ is the bijection between $\LL_M(\tau)$ and $\LL_M(j(\bar{\sigma}))$ induced by $j(s)$ in $M$, we know that $j(s_*)\circ s_*^{{-}1}$ is the bijection between $\LL_M(\bar{\sigma})$ and $\LL_M(j(\bar{\sigma}))$ induced by ${\bar{r}}$ in $M$.

  \begin{claim*}
   $j(s_*)\circ s_*^{{-}1}=j\restriction\LL(\bar{\sigma})$. 
  \end{claim*}
  
 \begin{proof}[Proof of the Claim]
  Assume, towards a contradiction, that the statement of the claim fails. Since  $j\restriction\LL(\bar{\sigma})$ is a bijection between $\LL(\bar{\sigma})$ and $\LL(j(\bar{\sigma}))$, we then know that the map $j(s_*)^{{-}1}\circ j \circ s_*$ is a non-trivial permutation of $\LL(\tau)$. 
 In this situation, our setup yields  $\chi\in\LL(\tau)$ and a $\tau$-structure $O\in H_\rho$ with  $$O\models_\LL 
 \chi ~ \Longleftrightarrow ~ O\mathbin{{\not\models}_\LL}(j(s_*)^{{-}1}\circ j \circ s_*)(\chi).$$  
  Note that $\LL(\tau)=\LL_M(\tau)$ and, since $\LL(\tau)\subseteq V_\lambda$, our setup ensures that $$O\models_\LL \upsilon ~ \Longleftrightarrow ~ O\models_{\LL_M}\upsilon$$ holds for every $\upsilon\in\LL(\tau)$. 
  Moreover, if $s^*(O)$ denotes the $\bar{\sigma}$-structure obtained from $O$ using $s$, then, in $M$, the $j(\bar{\sigma})$-structure $j(s^*(O))$ is obtained from $O$ using the renaming $j(s)$. 
  Since $j(s_*)$ is the  bijection between $\LL_M(\tau)$ and $\LL_M(j(\bar{\sigma}))$ induced by $j(s)$ in $M$, we then know that $$O\models_{\LL_M}j(s_*)(\upsilon) ~ \Longleftrightarrow ~ j(s^*(O))\models_{\LL_M}\upsilon$$ holds for all $\upsilon\in\LL_M(j(\bar{\sigma}))$. 
  By combining these equivalences with the elementarity of $j$, we can now conclude that  
  \begin{equation*}
   \begin{split}
    O\models_\LL\chi ~ & \Longleftrightarrow ~ s^*(O) \models_\LL s_*(\chi) ~ \Longleftrightarrow ~ j(s^*(O)) \models_{\LL_M} j(s_*(\chi)) \\ 
       & \Longleftrightarrow ~ O\models_{\LL_M}(j(s_*)^{{-}1}\circ j\circ s_*)(\chi) ~ \Longleftrightarrow ~ O\models_\LL(j(s_*)^{{-}1}\circ j\circ s_*)(\chi), 
    \end{split}
  \end{equation*}
 a contradiction.  
 \end{proof}

 The above claim shows that, in $M$, the map $j\restriction\LL(\bar{\sigma})$ is the bijection between $\LL_M(\bar{\sigma})$ and $\LL_M(j(\bar{\sigma}))$ induced by $\bar{r}$. This directly implies that $\varphi(\bar{r},\phi,j(\phi),z_1)$ holds in $M$. By $\Sigma_1$-upwards absoluteness, we know that this statement holds in $V$ and we can conclude that $\bar{r}_*(\phi)=j(\phi)$, a contradiction. 
\end{proof}

 The following result provides a generalization of the backward direction of Theorem \ref{theorem:LL2Duality} that will turn out to be suitable for our characterizations of fragments of Vop\v{e}nka's Principle.

\begin{theorem}\label{theorem:SchemesBackward}
   Let   $\Psi$ be an  $\LL^2$-measure of closeness. 
   Assume that for every natural number $n$, every abstract logic that extends second-order logic and has simple formulas has a $\ZFC_n$-$\Psi$-outward compactness cardinal.  Then, for every formula $\varphi(v)$ in the language of set theory,  there exists a cardinal that is  $\Psi$-strong for the class $\Set{x}{\varphi(x)}$.   
\end{theorem}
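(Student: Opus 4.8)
\emph{The plan} is to run the argument behind the backward direction of Theorem \ref{theorem:LL2Duality}, but applied to a tailor-made abstract logic rather than to $\LL^2$ itself. Given $\varphi(v)$, set $A=\Set{x}{\varphi(x)}$. I would build an abstract logic $\LL_A$ extending $\LL^2$, with simple formulas, which ``carries'' the class $A$ inside its satisfaction relation, and then argue that a $\ZFC_n$-$\Psi$-outward compactness cardinal $\kappa$ for $\LL_A$ (which exists by hypothesis, once $n$ is fixed appropriately) must be $\Psi$-large for $A$ in the sense of Definition \ref{definition:PsiLargeA}. This is achieved by feeding into the outward compactness of $\LL_A$ essentially the theory used in the proof of Theorem \ref{theorem:LL2Duality}, enlarged by one sentence that speaks about $A$.

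\emph{Construction of $\LL_A$.} Adjoin to $\LL^2$, for every language containing a binary relation symbol $E$ and two constant symbols $u,w$, a new sentence $\Theta_A^{E,u,w}$, with the following declared semantics: a structure $M$ satisfies $\Theta_A^{E,u,w}$ iff $\langle\vert M\vert,E^M\rangle$ is well-founded and extensional, so that it has a Mostowski collapse $\pi\colon M\to\bar M$ onto a transitive set, the value $\pi(w^M)$ is an ordinal, and $\pi(u^M)\cap V^{\bar M}_{\pi(w^M)}=A\cap V^{\bar M}_{\pi(w^M)}$, where $A$ denotes the genuine class. Since $A$ is referred to only through the (isomorphism-invariant) Mostowski collapse, $\LL_A$ meets all the requirements of Definition \ref{definition:AbstractLogic}, has occurrence number $\omega$, extends $\LL^2$, and has simple formulas with a hereditarily finite parameter: syntactically we have only added, for each finite language, a relabelling of three symbols on top of the $\Sigma_1$-definable syntactic data of $\LL^2$, and the dependence on $A$ is confined to the formula defining $\models_{\LL_A}$. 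The one point I would isolate as a lemma is that clause (b) of Definition \ref{definition:FPsiOutward} is surprisingly strong for $\LL_A$: whenever $N$ is an outer model, $\LL^\prime$ is defined over $N$ by the same two formulas, and $\models_{\LL_A}$ is upward absolute from $V$ to $N$ for parameters in $V_\lambda^V$, then in fact $A\cap V_\lambda^V=A^N\cap V_\lambda^V$; indeed, for each $\delta<\lambda$ the true (transitive, hence self-collapsing) structure $\langle V_\delta^V,\in,A\cap V_\delta^V,\delta\rangle$ satisfies $\Theta_A$ in $V$, so by upward absoluteness it satisfies $\Theta_A$ in $N$, which says precisely that $A^N$ agrees with $A$ on $V_\delta^V$.

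\emph{The main argument.} Fix $n$ large enough that $\ZFC_n$ extends $\ZFC^*$ and proves that $\Psi$ is a measure of closeness (in the sense of Definition \ref{definition:Closenessss}.\ref{item:Closeness}), and let $\kappa$ be a $\ZFC_n$-$\Psi$-outward compactness cardinal for $\LL_A$. Given limit ordinals $\kappa<\eta<\theta$ and a cardinal $\zeta>\eta$, let $T$ be the $\LL_A$-theory in the language obtained from $\{\in\}$ by adding constants $c_x$ (for $x\in V_{\theta+1}$), $d_\gamma$ (for $\gamma\le\zeta$) and $b$, consisting of: the first-order elementary diagram of $V_{\theta+1}$ via the $c_x$; the sentences $d_\beta<d_\gamma<c_\kappa$ for $\beta<\gamma\le\zeta$; the $\LL^2$-sentence stating that $\in$ is well-founded; the sentence $d_\zeta<b<c_\kappa$; the $\LL^2$-sentence $\Psi^\prime(b,c_\kappa,c_\eta,c_{V_\theta})$ (with $\Psi^\prime$ the second-order relativisation used in the proof of Theorem \ref{theorem:LL2Duality}); and the sentence $\Theta_A^{\in,\,c_{A\cap V_\kappa},\,b}$. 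I would verify that $\eta$ witnesses that $T$ is $\ZFC_n$-$\Psi$-outward consistent at $\kappa$: given a cardinal $\lambda<\kappa$, a cardinal $\vartheta$ with $T$ and the parameter of $\LL_A$ in $H_\vartheta$, a $\Coll(\omega,\vartheta)$-generic $G$, an outer $\ZFC_n$-model $N\in V[G]$ of $V_\vartheta^V$ and a logic $\LL^\prime$ over $N$ with properties (a)--(d), any subtheory $T_0\subseteq T$ of size ${<}\lambda$ in $N$ is modelled by the structure with underlying set $V_{\theta+1}^V$ and the real membership relation, interpreting $c_x$ by $x$, $b$ by $\lambda$, $c_{A\cap V_\kappa}$ by $A\cap V_\kappa^V$, and the finitely many relevant $d_\gamma$ by ordinals below $\lambda$. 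Here the diagram, the $d$-sentences and well-foundedness hold and are absolute; the truth of $\Psi^\prime(\lambda,\kappa,\eta,V_\theta^V)$ in $N$ follows exactly as in the Claim inside the proof of Theorem \ref{theorem:LL2Duality} (using that $\Psi$ is a $\Delta_1^{\ZFC^*}$ measure of closeness and clause \eqref{item:L2measure3} of Definition \ref{definition:moc}); and $\Theta_A^{\in,c_{A\cap V_\kappa},b}$ holds in $N$ because $\langle V_{\theta+1}^V,\in\rangle$ is transitive with ``$V_\lambda$'' equal to $V_\lambda^V$ and, by the lemma above, $A\cap V_\kappa^V\cap V_\lambda^V=A^N\cap V_\lambda^V$. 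Hence $T$ is consistent, and---passing to the transitive collapse, which is legitimate because $\models_{\LL_A}$ is isomorphism-invariant---$T$ has a transitive model $M$ carrying the induced elementary embedding $j\colon V_{\theta+1}\to M$ with $M\cap\Ord=j(\theta)+1$, $j(V_\theta)=M\cap V_{j(\theta)}$ transitive, and $\zeta<b^M<j(\kappa)$. As in the proof of Theorem \ref{theorem:LL2Duality}, the sentence $\Psi^\prime(b^M,j(\kappa),j(\eta),j(V_\theta))$ unwinds via clause \eqref{item:L2measure3} of Definition \ref{definition:moc} to $\Psi(V_{j(\theta)},M\cap V_{j(\theta)},b^M,j(\kappa),j(\eta))$, so that $\lambda^*:=b^M$ is a genuine cardinal by \ref{Prop:A} in Definition \ref{definition:Closenessss}.\ref{item:Closeness}, with $\eta<\lambda^*<j(\kappa)$; and evaluating $\Theta_A^{\in,c_{A\cap V_\kappa},b}$ at the transitive set $M$ (identity collapse, value $\lambda^*$ for $b$) gives $j(A\cap V_\kappa)\cap V_{\lambda^*}=A\cap M\cap V_{\lambda^*}$. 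This is precisely the data required in Definition \ref{definition:PsiLargeA}; letting $\zeta$ range over all cardinals above $\eta$ produces unboundedly many such $\lambda^*$, so $\kappa$ is $\Psi$-large for $A$.

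\emph{The hard part} will be the construction and verification of $\LL_A$: one must formulate $\Theta_A$ so that $A$ is referenced only through the Mostowski collapse (to retain isomorphism invariance), check that the resulting logic is an abstract logic with simple formulas in the precise technical senses, and---most delicately---observe that the merely upward absoluteness in clause (b) of Definition \ref{definition:FPsiOutward}, when applied to the \emph{equality} statement $\Theta_A$ with the true value substituted for the constant $u$, nevertheless forces two-sided agreement of $A$ on the relevant $V_\delta^V$. The remaining ingredients---the theory $T$ and the extraction of the embedding $j$---amount to a transcription of the corresponding parts of the proof of Theorem \ref{theorem:LL2Duality}.
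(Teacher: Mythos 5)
Your proposal is correct and follows essentially the same route as the paper: the paper likewise augments $\LL^2$ by an isomorphism-invariant sentence (there called $\phi_E(v_0,v_1)$) whose semantics reads off $A$ through the Mostowski collapse, feeds the same theory $T$ (diagram, $d_\gamma$'s, $\Psi^\prime(b,c_\kappa,c_\eta,c_{V_\theta})$, and the $A$-sentence at $b$ and $c_{A\cap V_\kappa}$) into a $\ZFC_n$-$\Psi$-outward compactness cardinal for that logic, and extracts the embedding exactly as you describe. Your isolated lemma that the one-directional absoluteness in clause (b) of Definition \ref{definition:FPsiOutward}, applied level-by-level to the true structures $\langle V_{\alpha+1},\in\rangle$, forces two-sided agreement of $A$ below $\lambda$ is precisely the step the paper carries out inside its Claim, and your explicit remark that $b^M$ is a genuine cardinal via suitability is a detail the paper leaves implicit.
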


\begin{proof}
 Let $A$ denote the class $\Set{x}{\varphi(x)}$.   
 We define an abstract logic $\LL_\varphi$ that extends second-order logic by adding atomic formulas $\phi_E(v_0,v_1)$ for every binary relation  symbol $E$ and all variables $v_0$ and $v_1$,  and defining $M\models_{\LL_\varphi}\phi_E(c_0,c_1)$ to hold for constant symbols $c_0$ and $c_1$, whenever $E^M$ is a well-founded and extensional relation on $\vert M\vert$ and the corresponding transitive collapse $\map{\pi}{\vert M\vert}{N}$ sends $c_0^M$ to an ordinal $\lambda$ and $c_1^M$ to a set $X$ with the property that $$V_\lambda\cap X ~ = ~ \Set{x\in N\cap V_\lambda}{\varphi(x)}.$$ 
  Let  $\varphi_0(v_0,v_1)$ and $\varphi_1(v_0,v_1)$ denote the canonical formulas defining $\LL_\varphi$ (without using additional parameters), and pick a natural number $n>1$ such that $\ZFC_n$ proves that these formulas define the  abstract logic $\LL_\varphi$. 
 Finally, since it is immediate that  $\LL_\varphi$ has simple formulas, our assumptions ensure that there is a $\ZFC_n$-$\Psi$-outward compactness cardinal $\kappa$ for $\LL_\varphi$. 
 %In the following, let $X$ denote the set of all $x\in V_\kappa$ with the property that $\varphi(x)$ holds. 

 Now,  let $\theta>\eta>\kappa$ be limit ordinals and fix a cardinal $\zeta>\eta$.
  Let $\Psi^*(v_0,v_1,v_2)$ be an $\LL^2$-formula in the language of set theory witnessing that $\Psi$ is an  $\LL^2$-measure of closeness, and let $\Psi^\prime(v_0,\ldots,v_3)$ denote the relativisation of $\Psi^*(v_0,v_1,v_2)$ to $v_3$, {i.e.,} $\Psi^\prime$ has the property that $\ZFC^*$ proves that $\langle M,\in\rangle\models_{\LL^2}\Psi^\prime(a,b,c,d)$ is equivalent to $\langle d,\in\rangle\models_{\LL^2}\Psi^*(a,b,c)$ whenever $M$ is a  transitive set, $d\in M$ is transitive and $a,b,c\in d$. 
    Consider the language that extends the language of set theory by a constant symbol  $b$, constant symbols $c_x$ for all elements $x$ of $V_{\theta+1}$ and   constant symbols $d_\gamma$ for all $\gamma\leq\zeta$.  Let $T$ denote the $\LL_\varphi$-theory consisting of the following:
  \begin{enumerate}
    \item The first-order elementary diagram of $V_{\theta+1}$, using the constant symbols $c_x$ for $x$ in $V_{\theta+1}$. 
    
    \item The sentence $\anf{d_\zeta<b<c_\kappa}$ and all sentences of the form $\anf{d_\beta<d_\gamma<c_\kappa}$ for $\beta<\gamma\leq\zeta$. 
 
    \item The sentence $\Psi^\prime(b,c_\kappa,c_\eta,c_{V_\theta})$.  
    
    \item The sentence $\phi_\in(b,c_{A\cap V_\kappa})$. 
  \end{enumerate}

 \begin{claim*}
  The ordinal $\eta$ witnesses that $T$ is $\ZFC_n$-$\Psi$-outward satisfiable at $\kappa$.
 \end{claim*}
 
 \begin{proof}[Proof of the Claim]
  Fix a cardinal $\lambda<\kappa$ and a cardinal $\vartheta>\eta$ with $T\in H_\vartheta$. % and $V_\vartheta\models\ZFC_n$.   
   We then know that $\vartheta>\theta$. 
   Let $G$ be $\Coll(\omega,\vartheta)$-generic over $V$, and let $N$ be an outer $\ZFC_n$-model    of $V_\vartheta^V$ in $V[G]$ such that $\Psi(N,V_\vartheta^V,\lambda,\kappa,\eta)$ holds in $V[G]$, and  the formulas $\varphi_0$ and $\varphi_1$ are absolute from $V$ to $N$ with respect to parameters in $V_\lambda^V$. We then know that $T$ is an $\LL_\varphi$-theory in $N$. 
 Next,  we apply  \ref{Prop:F} in Definition \ref{definition:Closenessss}.\ref{item:Closeness} to show that $\Psi(V_\theta^N,V_\theta^V,\lambda,\kappa,\eta)$ holds in $V[G]$ and, using the fact that $\Psi$ is a $\Delta_1^{\ZFC^*}$-formula, we can conclude that this statement also holds in $N$. 
 We then know that % $\langle V_\theta^V,\in\rangle\models_{\LL_\varphi}\Psi^*(\lambda,\kappa,\eta)$ and 
  $$\langle V_{\theta+1}^V,\in\rangle\models_{\LL_\varphi}\Psi^\prime(\lambda,\kappa,\eta,V_\theta^V)$$ holds in $N$.

  For all $\alpha<\lambda$, our setup ensures that $$\langle V_{\alpha+1},\in\rangle\models_{\LL_\varphi}\phi_\in(\alpha,A\cap V_\alpha)$$ holds in $V$ and, since all parameters appearing in this statement are elements of $V_\lambda$, our assumptions on $N$ ensure that it also holds in $N$. 
   Hence, we know that $A\cap V_\lambda^N$ is equal to the set of all $x\in V_\lambda^N\cap V_\lambda^V$ with the property that $\varphi(x)$ holds in $N$, and this allows us to conclude that $$\langle V^V_{\theta+1},\in\rangle\models_{\LL_\varphi}\phi_\in(\lambda,A\cap V_\kappa)$$ holds in $N$.

   Fix a subtheory $T_0$   of $T$ in $N$ that has cardinality less than $\lambda$ in $N$.  The fact that  $\lambda$ is a cardinal in $N$ now allows us to construct a model of $T_0$ with domain $V_{\theta+1}^V$ in $N$ that interprets  $b$ as $\lambda$    and all constant symbols of the form $d_\gamma$ that appear in sentences in $T_0$ as ordinals less than $\lambda$.  
 \end{proof}

  Since $\kappa$ is a   $\ZFC_n$-$\Psi$-outward compactness cardinal for $\LL_\varphi$, the above claim shows that the theory $T$ is satisfiable. 
  This allows us to find a transitive set $M$, an ordinal $\zeta<\lambda\in M$ and an elementary embedding $\map{j}{V_{\theta+1}}{M}$ with $j(\kappa)>\lambda$ and $$\langle M,\in\rangle\models_{\LL_\varphi}\phi_\in(\lambda,j(A\cap V_\kappa)) ~ \wedge ~ \Psi^\prime(\lambda,j(\kappa),j(\eta),j(V_\theta)).$$ 
  We then know that $$j(A\cap V_\kappa)\cap V_\lambda ~ = ~ A\cap M\cap V_\lambda.$$
  Moreover, since elementarity implies that $j(V_\theta)=M\cap V_{j(\theta)}$, we can conclude that $$\langle M\cap V_\lambda,\in\rangle\models_{\LL_\varphi}\Psi^*(\lambda,j(\kappa),j(\eta))$$ and this shows that $\Psi(V_{j(\theta)},M\cap V_{j(\theta)},\lambda,j(\kappa),j(\eta))$ holds. 
  These computations show that the cardinal $\kappa$ is $\Psi$-strong for the class $A$.  
\end{proof}

We can now combine the above results to characterize the principles discussed in Section \ref{section:FragmentsVP} through compactness properties of abstract logics. 

\begin{corollary}\label{corollary:OutwardVP}
The following schemes are equivalent over $\ZFC$: 
   \begin{enumerate}
    \item\label{item::VP1} Vop\v{e}nka's Principle. 
    
    \item\label{item::VP2} For every natural number $n$ and every abstract logic $\LL$, there exists a $\ZFC_n$-$\Psi_{ext}$-outward compactness cardinal for $\LL$. 
            
    \item\label{item::VP3} For every natural number $n$ and every abstract logic $\LL$ with simple formulas, there exists a $\ZFC_n$-$\Psi_{sc}$-outward compactness cardinal for $\LL$. 
   \end{enumerate}
  \end{corollary}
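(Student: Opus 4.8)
The plan is to establish the cycle $\eqref{item::VP1}\Rightarrow\eqref{item::VP2}\Rightarrow\eqref{item::VP3}\Rightarrow\eqref{item::VP1}$, assembling the statement from the machinery developed in Sections~\ref{section:PsiLarge}--\ref{section:FragmentsVP}. Throughout I would reduce to natural numbers $n\ge 2$, since for $n\le 1$ we have $\ZFC_n\subseteq\ZFC_2$, so that every outer $\ZFC_2$-model is also an outer $\ZFC_n$-model, every $\ZFC_n$-$\Psi$-outward consistent theory is $\ZFC_2$-$\Psi$-outward consistent, and hence any $\ZFC_2$-$\Psi$-outward compactness cardinal is a fortiori a $\ZFC_n$-$\Psi$-outward compactness cardinal. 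For $\eqref{item::VP1}\Rightarrow\eqref{item::VP2}$ I would simply invoke Proposition~\ref{proposition:VPoutwarCompact}: by Proposition~\ref{proposition:FormulasAreL2measures} the formula $\Psi_{ext}$ is an $\LL^2$-measure of closeness, hence in particular a $\Delta_1^{\ZFC^*}$-formula and therefore a $\Delta_1^{\ZFC}$-formula that is a measure of closeness, so, assuming Vop\v{e}nka's Principle, Proposition~\ref{proposition:VPoutwarCompact} produces for each $n>1$ a $\ZFC_n$-$\Psi_{ext}$-outward compactness cardinal for every abstract logic.

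For $\eqref{item::VP2}\Rightarrow\eqref{item::VP3}$ I would use a short \emph{closure-shift} observation, valid for any measure of closeness $\Psi$: if $\lambda<\kappa<\eta<\eta+\omega<\vartheta$ are limit ordinals and $N$ is a transitive set closed under basic set operations with $N\supseteq V_\vartheta^V$, $N\cap\Ord=\vartheta$ and $\Psi_{ext}(N,V_\vartheta^V,\lambda,\kappa,\eta+\omega)$, then $\Psi(N,V_\vartheta^V,\lambda,\kappa,\eta)$ holds. Indeed, $\Psi_{ext}(N,V_\vartheta^V,\lambda,\kappa,\eta+\omega)$ and $V_\vartheta^V\subseteq N$ force $N\cap V_{\eta+\omega}=V_{\eta+\omega}^V$, so \ref{Prop:F} in Definition~\ref{definition:Closenessss}.\ref{item:Closeness} reduces $\Psi(N,V_\vartheta^V,\lambda,\kappa,\eta)$ to $\Psi(V_{\eta+\omega}^V,V_{\eta+\omega}^V,\lambda,\kappa,\eta)$, which holds by \ref{Prop:B} in the ground model---where $\lambda$ is a cardinal---and hence, by $\Sigma_1$-upwards absoluteness for the $\Delta_1^{\ZFC}$-formula $\Psi$, in the $\Coll(\omega,\vartheta)$-generic extension. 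Taking $\Psi=\Psi_{sc}$, it follows that if an $\LL$-theory $T$ is $\ZFC_n$-$\Psi_{sc}$-outward consistent at a cardinal $\kappa$, witnessed by a limit ordinal $\eta_0$, then it is $\ZFC_n$-$\Psi_{ext}$-outward consistent at $\kappa$, witnessed by $\eta_0+\omega$: for any relevant outer $\ZFC_n$-model $N$ of a $V_\vartheta^V$ with $\vartheta>\eta_0+\omega$ satisfying $\Psi_{ext}(N,V_\vartheta^V,\lambda,\kappa,\eta_0+\omega)$, the observation gives $\Psi_{sc}(N,V_\vartheta^V,\lambda,\kappa,\eta_0)$, while the remaining requirements of Definition~\ref{definition:FPsiOutward} concern only $N$, $\LL'$ and $T$---not $\Psi$ or $\eta$---so the $\Psi_{sc}$-outward consistency of $T$ applies verbatim to yield that $T$ is a ${<}\lambda$-consistent $\LL'$-theory in $N$. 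Consequently every $\ZFC_n$-$\Psi_{ext}$-outward compactness cardinal for an abstract logic $\LL$ is also a $\ZFC_n$-$\Psi_{sc}$-outward compactness cardinal for $\LL$, and since an abstract logic with simple formulas is in particular an abstract logic, $\eqref{item::VP2}$ gives $\eqref{item::VP3}$.

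Finally, for $\eqref{item::VP3}\Rightarrow\eqref{item::VP1}$ I would apply Theorem~\ref{theorem:SchemesBackward} with $\Psi=\Psi_{sc}$, which is an $\LL^2$-measure of closeness by Proposition~\ref{proposition:FormulasAreL2measures}: by $\eqref{item::VP3}$, every abstract logic with simple formulas---in particular every abstract logic that extends second-order logic and has simple formulas---has, for each natural number $n$, a $\ZFC_n$-$\Psi_{sc}$-outward compactness cardinal, so Theorem~\ref{theorem:SchemesBackward} provides, for every parameter-free formula $\varphi(v)$ in the language of set theory, a cardinal that is $\Psi_{sc}$-large for the class $\Set{x}{\varphi(x)}$ (what Theorems~\ref{theorem:ForwardGlobalPrinciples} and~\ref{theorem:SchemesBackward} call ``$\Psi$-strong for $A$'' being the notion ``$\Psi$-large for $A$'' of Definition~\ref{definition:PsiLargeA}). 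This is precisely assertion~\eqref{item:VP3} of Lemma~\ref{lemma:LargeVP}, so Vop\v{e}nka's Principle follows. I expect the only step needing genuine care to be $\eqref{item::VP2}\Rightarrow\eqref{item::VP3}$: one must keep straight that the parameter $\eta$ may only be enlarged while, as $n$ grows, $\ZFC_n$ becomes stronger and admits fewer outer models, and one must check that the auxiliary clauses of Definition~\ref{definition:FPsiOutward} survive the shift unchanged. Beyond this largely clerical work, the real content of the corollary already resides in Proposition~\ref{proposition:VPoutwarCompact}, Theorem~\ref{theorem:SchemesBackward} and Lemma~\ref{lemma:LargeVP}.
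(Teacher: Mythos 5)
Your proof is correct and follows essentially the same route as the paper: \eqref{item::VP1}$\Rightarrow$\eqref{item::VP2} via Proposition~\ref{proposition:VPoutwarCompact}, and \eqref{item::VP3}$\Rightarrow$\eqref{item::VP1} via Theorem~\ref{theorem:SchemesBackward} applied to $\Psi_{sc}$ followed by Lemma~\ref{lemma:LargeVP}. The only divergence is that the paper dismisses \eqref{item::VP2}$\Rightarrow$\eqref{item::VP3} as trivial, whereas you correctly note that $\Psi_{ext}(N,M,\lambda,\kappa,\eta)$ does not literally imply $\Psi_{sc}(N,M,\lambda,\kappa,\eta)$ at the same parameter (functions from $\lambda$ into $\eta$ have rank at least $\eta$) and supply the needed shift of the witnessing ordinal from $\eta$ to $\eta+\omega$ using properties \ref{Prop:B} and \ref{Prop:F} together with $\Sigma_1$-upwards absoluteness --- a small but genuine detail that the paper leaves implicit.
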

  
  \begin{proof}
   First, assume that \eqref{item::VP3} holds. Then Theorem \ref{theorem:SchemesBackward} implies that for every formula $\varphi(v)$ in the language of set theory, there exists a cardinal that is $\Psi_{sc}$-large for the class $\Set{x}{\varphi(x)}$. An application of Lemma \ref{lemma:LargeVP} then shows that \eqref{item::VP1} holds. This completes the proof of the corollary, because the implication from \eqref{item::VP1} to \eqref{item::VP2} is already given by Proposition \ref{proposition:VPoutwarCompact} and the implication from  \eqref{item::VP2} to  \eqref{item::VP3} holds trivially.  
  \end{proof}

  \begin{corollary}\label{corollary:OutwardOrdWoodin}
  The following schemes are equivalent over $\ZFC$: 
    \begin{enumerate}
    \item\label{item::OrdWoodin1} $\Ord$ is Woodin. 
    
    \item\label{item::OrdWoodin2} For every natural number $n$ and every abstract logic $\LL$ with simple formulas, there exists a $\ZFC_n$-$\Psi_{str}$-outward compactness cardinal for $\LL$. 
   \end{enumerate}
\end{corollary}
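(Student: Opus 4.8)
The plan is to obtain this corollary by composing Theorems~\ref{theorem:ForwardGlobalPrinciples} and~\ref{theorem:SchemesBackward} with Lemma~\ref{lemma:OrdWoodinEqui}, exactly as Corollary~\ref{corollary:OutwardVP} was derived from the same two theorems together with Lemma~\ref{lemma:LargeVP}; the only change is that we now use the single measure of closeness $\Psi_{str}$ throughout, rather than the pair $(\Psi_{sc},\Psi_{ext})$. I would begin by recording that $\Psi_{str}$ meets the hypotheses of both theorems: by Proposition~\ref{proposition:FormulasAreL2measures} it is an $\LL^2$-measure of closeness, hence in particular a $\Delta_1^{\ZFC^-}$-formula, and since $\Psi_{str}(N,M,\mu,\nu,\rho)\longrightarrow\Psi_{str}(N,M,\mu,\nu,\rho)$ is provable trivially, $\Psi_{str}$ is moreover a \emph{strong} measure of closeness. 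Thus Theorem~\ref{theorem:ForwardGlobalPrinciples} applies with $\Psi=\Psi_{str}$, and so does Theorem~\ref{theorem:SchemesBackward}.

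For the implication \eqref{item::OrdWoodin1}$\Rightarrow$\eqref{item::OrdWoodin2}, assume that $\Ord$ is Woodin. The implication \eqref{item:OrdWoodin1}$\Rightarrow$\eqref{item:OrdWoodin2} of Lemma~\ref{lemma:OrdWoodinEqui} then gives, for every class $A$, a proper class of cardinals that are $\Psi_{str}$-large for $A$. Feeding this into Theorem~\ref{theorem:ForwardGlobalPrinciples} yields that for every natural number $n$ every abstract logic with simple formulas has a $\ZFC_n$-$\Psi_{str}$-outward compactness cardinal, which is precisely \eqref{item::OrdWoodin2}.

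For the converse \eqref{item::OrdWoodin2}$\Rightarrow$\eqref{item::OrdWoodin1}, assume \eqref{item::OrdWoodin2}. Since any abstract logic that extends second-order logic and has simple formulas is in particular an abstract logic with simple formulas, the hypothesis of Theorem~\ref{theorem:SchemesBackward} is satisfied with $\Psi=\Psi_{str}$; the theorem therefore produces, for every (parameter-free) formula $\varphi(v)$ in the language of set theory, a cardinal that is $\Psi_{str}$-large for the class $\Set{x}{\varphi(x)}$. This is exactly clause \eqref{item:OrdWoodin3} of Lemma~\ref{lemma:OrdWoodinEqui}, whence \eqref{item:OrdWoodin1} of that lemma holds, i.e.\ $\Ord$ is Woodin.

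I do not anticipate a genuine obstacle here, since the argument is a bookkeeping composition of three already-established statements. The two points that merit an explicit line of justification are: (i) identifying the phrase ``$\Psi$-strong for $A$'' appearing in Theorems~\ref{theorem:ForwardGlobalPrinciples} and~\ref{theorem:SchemesBackward} with the notion ``$\Psi$-large for $A$'' of Definition~\ref{definition:PsiLargeA} in terms of which Lemma~\ref{lemma:OrdWoodinEqui} is phrased; and (ii) the verification that $\Psi_{str}$ simultaneously qualifies as a strong measure of closeness, as needed for Theorem~\ref{theorem:ForwardGlobalPrinciples}, and as an $\LL^2$-measure of closeness, as needed for Theorem~\ref{theorem:SchemesBackward} — both of which follow at once from Proposition~\ref{proposition:FormulasAreL2measures} together with the trivial implication $\Psi_{str}\to\Psi_{str}$.
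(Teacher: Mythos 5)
Your proposal is correct and follows essentially the same route as the paper: the paper's proof of this corollary is exactly the composition of Theorem~\ref{theorem:SchemesBackward} with Lemma~\ref{lemma:OrdWoodinEqui} for the direction \eqref{item::OrdWoodin2}$\Rightarrow$\eqref{item::OrdWoodin1}, and of Lemma~\ref{lemma:OrdWoodinEqui} with Theorem~\ref{theorem:ForwardGlobalPrinciples} for the converse. Your additional checks --- that $\Psi_{str}$ is simultaneously an $\LL^2$-measure of closeness and (trivially) a strong measure of closeness, and that the ``$\Psi$-strong for $A$'' phrasing in the two theorems is to be read as the ``$\Psi$-large for $A$'' notion of Definition~\ref{definition:PsiLargeA} --- are accurate and sensible points of bookkeeping that the paper leaves implicit.
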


\begin{proof}
 If we assume that \eqref{item::OrdWoodin2} holds, then Theorem \ref{theorem:SchemesBackward} implies that for every formula $\varphi(v)$ in the language of set theory, there exists a cardinal that is $\Psi_{str}$-large for the class $\Set{x}{\varphi(x)}$, and then Lemma \ref{lemma:OrdWoodinEqui} shows that \eqref{item::OrdWoodin1} holds. 
 In the other direction, assume that \eqref{item::OrdWoodin1} holds. Then Lemma \ref{lemma:OrdWoodinEqui}  shows that for every class $A$, there is a proper class of cardinals that are $\Psi_{str}$-large for $A$. In this situation, Theorem \ref{theorem:ForwardGlobalPrinciples} shows that \eqref{item::OrdWoodin2} holds. 
\end{proof}

%%%%%%%%%%%%%%%%%%%%%%%%%
%%%%%%%%%%%%%%%%%%%%%%%%%

\section{Open questions}

We close this paper by listing some questions that are raised by the results of this paper. 
First, as already mentioned in Section \ref{subsection:Omega1StronglyCompact}, results of Bagaria and Magidor in \cite{MR3152715} and \cite{MR3226024} entail  that the combinatorics of $\Psi_{stc}$-large cardinals are fundamentally different from those of the other large cardinal notions studied in Section \ref{section:PsiLarge}. For example, it is consistent that the least $\Psi_{stc}$-large cardinal is singular and therefore  all embeddings witnessing the $\Psi_{stc}$-largeness of the given cardinal have critical points that are strictly smaller than this cardinal. It is therefore natural to ask if such characteristics of the given large cardinal notion can be read off from the formula inducing it:

\begin{question}
 Is there a natural characterization of the collection of all formulas $\Psi$ with the property that it is provable that if there is a $\Psi$-large cardinal, then the least such cardinal  is regular?  
\end{question}

 In a similar direction, the above results also raise the question whether they are somehow connected to the \emph{Identity Crisis} phenomenon, first studied by Magidor in \cite{MR0429566}. More precisely, a combination of Corollary \ref{corollary:IndividualChar}(\ref{item:CorollaryCharOmega1SC}) with results in \cite{MR3152715} and \cite{MR0429566} shows that, assuming the consistency of sufficiently strong large cardinal axioms, the axioms of $\ZFC$ do not decide whether the first $\Psi_{stc}$-large cardinal is the first measurable cardinal.  In contrast,  Corollary \ref{corollary:IndividualChar} also shows that if $\Psi_{ext}$-, $\Psi_{sc}$- or $\Psi_{str}$-large cardinals exist, then the first cardinal of this type is bigger than the first measurable cardinal.

 \begin{question}
   Is there a natural characterization of the collection of all formulas $\Psi$ with the property that  the existence of a $\Psi$-large cardinals provably implies that the least cardinal of this type is bigger than the first measurable cardinal?
 \end{question}

 Next, we consider the question whether other canonical formulas induce large cardinal notions that also consistently exhibit unusual behavior. 
 Remember that a cardinal $\kappa$ is \emph{globally superstrong} (see \cite{MR3904884}) if for every $\lambda>\kappa$, there exists a transitive class $M$ and an elementary embedding $\map{j}{V}{M}$ with $\crit(j)=\kappa$, $j(\kappa)>\lambda$ and $V_{j(\kappa)}\subseteq M$. 
There is an obvious candidate for a formula $\Psi$ such that $\Psi$-largeness corresponds to global superstrongness: let $\Psi_{gss}(v_0,\ldots,v_4)$ denote the canonical formula in the language of set theory with the property that $\Psi_{gss}(N,M,\mu,\nu,\rho)$ holds if and only if the tuple $\langle N,M,\mu,\nu,\rho\rangle$ is suitable   and $N\cap V_\nu\subseteq M$. Then, $\Psi_{gss}$ is an $\LL^2$-measure of closeness. 
  Moreover, it is easy to see that every globally superstrong cardinal is $\Psi_{gss}$-large. But, it is not clear if it is possible to prove a variation of Lemma \ref{lemma:CharStrong} for this formula.

  \begin{question}
   Is it provable that a cardinal $\kappa$ is $\Psi_{gss}$-large if and only if there is a globally superstrong cardinal less than or equal to $\kappa$?
   Is it consistent that the least $\Psi_{gss}$-large cardinal is singular? 
  \end{question}

  Finally, motivated by the observations made in Section \ref{section:Natural?}, we ask if there is a canonical outward compactness characterization for strong compactness:

\begin{question}
 Is there an $\LL^2$-measure of closeness $\Psi$  that naturally induces a large cardinal property below extendibility  and has the property that $\ZFC$ proves that a cardinal $\kappa$ is $\Psi$-large if and only if there is a strongly compact cardinal less than or equal to $\kappa$?  
\end{question}

%%%%%%%%%%%%%%%%%%%%%%%%%
%%%%%%%%%%%%%%%%%%%%%%%%%

 \bibliographystyle{amsplain}
\bibliography{references}

%%%%%%%%%%%%%%%%%%%%%%%%%
%%%%%%%%%%%%%%%%%%%%%%%%%

\end{document}